\documentclass[12pt]{amsart}%

\usepackage{amsmath,amsthm,amssymb,enumitem,verbatim,stmaryrd,xcolor,microtype,graphicx,aliascnt,needspace}
\usepackage[T1]{fontenc}
\usepackage[utf8]{inputenc}
\usepackage[english]{babel} 
\usepackage[top=3.5cm,bottom=3.55cm,left=3.5cm,right=3.5cm]{geometry}
\usepackage[bookmarksdepth=2,linktoc=page,colorlinks,linkcolor={red!70!black},citecolor={red!80!black},urlcolor={blue!80!black},pdfpagemode=UseOutlines,pdfstartview={fitH}]{hyperref}
\usepackage{tikz}\usetikzlibrary{matrix,arrows,decorations.markings,shapes,backgrounds}
\usepackage{tikz-cd}
\usepackage[mathcal]{euscript}                               

\usepackage{mathptmx}                                        
\usepackage[colorinlistoftodos,bordercolor=orange,backgroundcolor=orange!20,linecolor=orange,textsize=footnotesize]{todonotes}\setlength{\marginparwidth}{2,5cm} \makeatletter \providecommand \@dotsep{5} \def\listtodoname{List of Todos} \def\listoftodos{\@starttoc{tdo}\listtodoname} \makeatother   

\allowdisplaybreaks 
\setcounter{tocdepth}{1}   

\usepackage{etoolbox}
\makeatletter
\patchcmd{\@startsection}{\@afterindenttrue}{\@afterindentfalse}{}{}             
\patchcmd{\part}{\bfseries}{\bfseries\LARGE}{}{}
\patchcmd{\section}{\scshape}{\bfseries}{}{}\renewcommand{\@secnumfont}{\bfseries} 
\patchcmd{\@settitle}{\uppercasenonmath\@title}{\large}{}{}
\patchcmd{\@setauthors}{\MakeUppercase}{}{}{}
\addto{\captionsenglish}{} 
\makeatother

\usepackage{fancyhdr}

\pagestyle{fancy}
\fancyhead{}
\fancyfoot{}
\fancyhead[OR,EL]{\footnotesize \thepage}
\setlength{\headheight}{12pt}

\addto\extrasenglish{}  
\theoremstyle{plain}
\newtheorem{thm}{Theorem}[section] 
\newaliascnt{theorem}{thm}\aliascntresetthe{theorem}
\newaliascnt{lemma}{thm}\newtheorem{lemma}[lemma]{Lemma}\aliascntresetthe{lemma}
\newaliascnt{cor}{thm}\newtheorem{cor}[cor]{Corollary}\aliascntresetthe{cor}
\newaliascnt{prop}{thm}\newtheorem{prop}[prop]{Proposition}\aliascntresetthe{prop}

\newtheorem*{thm*}{Theorem}
\newtheorem*{lem*}{Lemma}
\newtheorem*{cor*}{Corollary}

\theoremstyle{definition}
\newaliascnt{df}{thm}\newtheorem{df}[df]{Definition}\aliascntresetthe{df}
\newaliascnt{rem}{thm}\newtheorem{rem}[rem]{Remark}\aliascntresetthe{rem}
\newaliascnt{example}{thm}\aliascntresetthe{example}
\newaliascnt{ex}{thm}\newtheorem{ex}[ex]{Example}\aliascntresetthe{ex}

\newtheorem*{df*}{Definition}
\newtheorem*{ex*}{Example}
\newtheorem*{rem*}{Remark}

\numberwithin{equation}{section}

\DeclareFontFamily{OT1}{pzc}{}                                
\DeclareFontShape{OT1}{pzc}{m}{it}{<-> s * [1.10] pzcmi7t}{}
\DeclareMathAlphabet{\mathpzc}{OT1}{pzc}{m}{it}
\DeclareSymbolFont{sfoperators}{OT1}{bch}{m}{n} 
\DeclareSymbolFontAlphabet{\mathsf}{sfoperators} 
\makeatletter\def\operator@font{\mathgroup\symsfoperators}\makeatother 
\DeclareSymbolFont{cmletters}{OML}{cmm}{m}{it}              
\DeclareSymbolFont{cmsymbols}{OMS}{cmsy}{m}{n}
\DeclareSymbolFont{cmlargesymbols}{OMX}{cmex}{m}{n}
\DeclareMathSymbol{\myjmath}{\mathord}{cmletters}{"7C}     \let\jmath\myjmath 
\DeclareMathSymbol{\myamalg}{\mathbin}{cmsymbols}{"71}     
\DeclareMathSymbol{\mycoprod}{\mathop}{cmlargesymbols}{"60}\let\coprod\mycoprod
\DeclareMathSymbol{\myalpha}{\mathord}{cmletters}{"0B}     \let\alpha\myalpha 
\DeclareMathSymbol{\mybeta}{\mathord}{cmletters}{"0C}      \let\beta\mybeta
\DeclareMathSymbol{\mygamma}{\mathord}{cmletters}{"0D}     \let\gamma\mygamma
\DeclareMathSymbol{\mydelta}{\mathord}{cmletters}{"0E}     \let\delta\mydelta
\DeclareMathSymbol{\myepsilon}{\mathord}{cmletters}{"0F}   \let\epsilon\myepsilon
\DeclareMathSymbol{\myzeta}{\mathord}{cmletters}{"10}      \let\zeta\myzeta
\DeclareMathSymbol{\myeta}{\mathord}{cmletters}{"11}       \let\eta\myeta
\DeclareMathSymbol{\mytheta}{\mathord}{cmletters}{"12}     \let\theta\mytheta
\DeclareMathSymbol{\myiota}{\mathord}{cmletters}{"13}      \let\iota\myiota
\DeclareMathSymbol{\mykappa}{\mathord}{cmletters}{"14}     \let\kappa\mykappa
\DeclareMathSymbol{\mylambda}{\mathord}{cmletters}{"15}    \let\lambda\mylambda
\DeclareMathSymbol{\mymu}{\mathord}{cmletters}{"16}        \let\mu\mymu
\DeclareMathSymbol{\mynu}{\mathord}{cmletters}{"17}        \let\nu\mynu
\DeclareMathSymbol{\myxi}{\mathord}{cmletters}{"18}        \let\xi\myxi
\DeclareMathSymbol{\mypi}{\mathord}{cmletters}{"19}        \let\pi\mypi
\DeclareMathSymbol{\myrho}{\mathord}{cmletters}{"1A}       \let\rho\myrho
\DeclareMathSymbol{\mysigma}{\mathord}{cmletters}{"1B}     \let\sigma\mysigma
\DeclareMathSymbol{\mytau}{\mathord}{cmletters}{"1C}       \let\tau\mytau
\DeclareMathSymbol{\myupsilon}{\mathord}{cmletters}{"1D}   \let\upsilon\myupsilon
\DeclareMathSymbol{\myphi}{\mathord}{cmletters}{"1E}       \let\phi\myphi
\DeclareMathSymbol{\mychi}{\mathord}{cmletters}{"1F}       \let\chi\mychi
\DeclareMathSymbol{\mypsi}{\mathord}{cmletters}{"20}       \let\psi\mypsi
\DeclareMathSymbol{\myomega}{\mathord}{cmletters}{"21}     \let\omega\myomega
\DeclareMathSymbol{\myvarepsilon}{\mathord}{cmletters}{"22}\let\varepsilon\myvarepsilon
\DeclareMathSymbol{\myvartheta}{\mathord}{cmletters}{"23}  \let\vartheta\myvartheta
\DeclareMathSymbol{\myvarpi}{\mathord}{cmletters}{"24}     \let\varpi\myvarpi
\DeclareMathSymbol{\myvarrho}{\mathord}{cmletters}{"25}    \let\varrho\myvarrho
\DeclareMathSymbol{\myvarsigma}{\mathord}{cmletters}{"26}  \let\varsigma\myvarsigma
\DeclareMathSymbol{\myvarphi}{\mathord}{cmletters}{"27}    \let\varphi\myvarphi

\DeclareMathOperator{\Spec}{Spec}
\DeclareMathOperator{\Proj}{Proj}

\DeclareMathOperator{\eq}{eq}
\DeclareMathOperator{\coeq}{coeq}
\DeclareMathOperator{\Bands}{{Bands}}
\DeclareMathOperator{\BandSpaces}{{BandSpaces}}
\DeclareMathOperator{\FBands}{{FBands}}
\DeclareMathOperator{\Idylls}{Idylls}
\DeclareMathOperator{\Tracts}{Tracts}
\DeclareMathOperator{\Rings}{{Rings}}

\DeclareMathOperator{\Fields}{Fields}
\DeclareMathOperator{\Pastures}{Pastures}
\DeclareMathOperator{\PartFields}{PartFields}
\DeclareMathOperator{\Null}{{Null}}
\DeclareMathOperator{\OBlpr}{OBlpr}
\DeclareMathOperator{\Mon}{{Mon}}
\DeclareMathOperator{\HypRings}{HypRings}
\DeclareMathOperator{\HypFields}{HypFields}
\DeclareMathOperator{\FuzzRings}{FuzzRings}
\DeclareMathOperator{\Alg}{Alg}
\DeclareMathOperator{\Aff}{Aff}
\DeclareMathOperator{\BAff}{BAff}
\DeclareMathOperator{\Sch}{Sch}
\DeclareMathOperator{\BSch}{BSch}
\DeclareMathOperator{\MSch}{MSch}
\DeclareMathOperator{\OBSch}{OBSch}
\DeclareMathOperator{\Top}{Top}
\DeclareMathOperator{\Sh}{Sh}
\DeclareMathOperator{\Hom}{Hom}
\DeclareMathOperator{\Gr}{Gr}
\DeclareMathOperator{\Fl}{Fl}
\DeclareMathOperator{\SL}{SL}
\DeclareMathOperator{\MacPh}{MacPh}
\DeclareMathOperator{\colim}{colim}

\DeclareMathOperator{\nullker}{nullker}
\DeclareMathOperator{\im}{im}
\DeclareMathOperator{\sign}{sign}
\DeclareMathOperator{\hypersum}{\,\raisebox{-2.2pt}{\larger[2]{$\boxplus$}}\,}

\newcommand\A{{\mathbb A}}

\newcommand\C{{\mathbb C}}
\newcommand\D{{\mathbb D}}

\newcommand\F{{\mathbb F}}
\newcommand\G{{\mathbb G}}

\newcommand\K{{\mathbb K}}

\newcommand\N{{\mathbb N}}

\renewcommand\P{{\mathbb P}}
\newcommand\Q{{\mathbb Q}}
\newcommand\R{{\mathbb R}}
\renewcommand\S{{\mathbb S}}
\newcommand\T{{\mathbb T}}
\newcommand\U{{\mathbb U}}

\newcommand\Z{{\mathbb Z}}

\newcommand\cB{{\mathcal B}}
\newcommand\cC{{\mathcal C}}

\newcommand\cF{{\mathcal F}}
\newcommand\cG{{\mathcal G}}

\newcommand\cI{{\mathcal I}}

\newcommand\cO{{\mathcal O}}

\newcommand\cS{{\mathcal S}}
\newcommand\cT{{\mathcal T}}
\newcommand\cU{{\mathcal U}}
\newcommand\cV{{\mathcal V}}

\newcommand\cX{{\mathcal X}}

\newcommand\br{{\mathbf r}}

\newcommand\fm{{\mathfrak m}}
\newcommand\fn{{\mathfrak n}}
\newcommand\fp{{\mathfrak p}}
\newcommand\fq{{\mathfrak q}}

\renewcommand\geq{\geqslant}
\renewcommand\leq{\leqslant}
\newcommand{\0}{\mathbf{0}}
\newcommand{\id}{\textup{id}}
\newcommand{\pr}{\textup{pr}}
\newcommand{\res}{\textup{res}}
\newcommand{\rev}{\textup{rev}}
\newcommand{\fuse}{\textup{fuse}}
\newcommand{\oblpr}{\textup{oblpr}}
\newcommand{\band}{\textup{band}}
\newcommand{\fine}{\textup{fine}}
\newcommand{\weak}{\textup{weak}}
\newcommand{\str}{\textup{str}}
\newcommand{\an}{\textup{an}}
\newcommand{\trop}{\textup{trop}}

\newcommand{\nul}{\textup{null}}
\newcommand{\kernel}{\textup{ker}}
\newcommand{\Tits}{\textup{Tits}}
\newcommand{\Fun}{\F_1}
\newcommand{\Funpm}{{\F_1^\pm}}
\newcommand{\utop}{\underline{(-)}}
\newcommand{\gen}[1]{\langle #1 \rangle}
\newcommand{\genn}[1]{\langle\!\!\langle #1 \rangle\!\!\rangle}
\newcommand{\bandquot}[2]{#1\!\sslash\!#2}
\newcommand{\bandgenquot}[2]{#1\!\sslash\!\gen{#2}}
\newcommand{\bandgennquot}[2]{#1\!\sslash\!\genn{#2}}

\newcommand{\pastgennquot}[2]{#1\!\sslash\!\genn{#2}}

\newcommand{\onto}{\tikz{\draw[white] (-0.1,-0.1) -- (0.5,-0.1); \draw[->>] (0,0) -- (0.4,0);}}
\newcommand{\tinymatrix}[6]{\Big[\begin{smallmatrix} #1 & #4 \\ #2 & #5 \\ #3 & #6 \end{smallmatrix}\Big]}
\newcommand{\tinyvector}[3]{{}\Big[\begin{smallmatrix} #1 \\ #2 \\ #3 \end{smallmatrix}\Big]}


\title{New building blocks for $\Fun$-geometry: bands and band schemes}

\author{Matthew Baker}
\address{\rm Matthew Baker, School of Mathematics, Georgia Institute of Technology, Atlanta, USA}
\email{mbaker@math.gatech.edu}

\author{Tong Jin}
\address{\rm Tong Jin, School of Mathematics, Georgia Institute of Technology, Atlanta, USA}
\email{tongjin@gatech.edu}

\author{Oliver Lorscheid}
\address{\rm Oliver Lorscheid, University of Groningen, the Netherlands}
\email{o.lorscheid@rug.nl}

\fancyhead[OC]{\footnotesize Bands and band schemes}
\fancyhead[EC]{\footnotesize Matthew Baker, Tong Jin and Oliver Lorscheid}

\hypersetup{pdftitle={Bands and band schemes},pdfauthor={Matthew Baker, Tong Jin and Oliver Lorscheid}}

\subjclass[2020]{Primary 08A99, Secondary 4C99}


\begin{document}

\begin{abstract}
We develop and study a generalization of commutative rings called \emph{bands}, along with the corresponding geometric theory of \emph{band schemes}.
Bands generalize both hyperrings, in the sense of Krasner, and partial fields in the sense of Semple and Whittle. They form a ring-like counterpart to the field-like category of \emph{idylls} introduced by the first and third author in previous work.

The first part of the paper is dedicated to establishing fundamental properties of bands analogous to basic facts in commutative algebra.
In particular, we introduce various kinds of ideals in a band and explore their properties, and we study localization, quotients, limits, and colimits.

The second part of the paper studies band schemes. After giving the definition, we present some examples of band schemes, along with basic properties of band schemes and morphisms thereof, and we describe functors into some other scheme theories.

In the third part, we discuss some ``visualizations'' of band schemes, which are different topological spaces that one can functorially associate to a band scheme $X$.
\end{abstract}

\maketitle

\begin{small} \tableofcontents \end{small}


\section*{Introduction}

\subsection*{Motivation and overview of the paper}
In \cite{Baker-Bowler19}, the first author and N. Bowler developed a new theory of matroids with coefficients, clarifying and extending earlier work of Dress and Wenzel \cite{Dress-Wenzel91,Dress-Wenzel92}. In this context, they introduced algebraic objects called {\em tracts}, which generalize not only fields but also partial fields and hyperfields.
In \cite{Baker-Lorscheid21b}, the authors introduced a slightly more restrictive class of algebraic objects called {\em idylls}, which are still general enough to contain both partial fields and hyperfields. They also realized the category of idylls as a full subcategory of the category of {\em ordered blueprints}, which was introduced by the third author in \cite{Lorscheid22} and \cite{Lorscheid23}. 
The category of ordered blueprints also contains the category of rings\footnote{All rings in this paper will be commutative rings with identity.} (and more generally hyperrings in the sense of Krasner).

\medskip

Since the publication of \cite{Baker-Lorscheid21b}, we have realized that it is unnecessarily cumbersome to use ordered blueprints in the context of matroids with coefficients, as there is a significantly simpler ring-like category -- the category of bands -- which contains idylls as a full subcategory. 
Loosely speaking, bands are to idylls as rings are to fields, and the definition of a band is closely modeled on the definition of an idyll. 

\medskip

The first part of the present paper is dedicated to developing some of the basic commutative algebra properties of bands. 
In particular, we introduce three different kinds of ideals in a band, which we call $m$-ideals, $k$-ideals, and null ideals, and we explore their properties. In each case, there are corresponding notions of maximal and prime ideals which we also explore. 
We also discuss localization by multiplicative sets, as well as free algebras, quotients, limits (e.g. products), and colimits (e.g. tensor products).
Although some of the results we prove are special cases of more general results about ordered blueprints, we believe it is pedagogically useful to give independent proofs using the simpler language of bands.

\medskip

In \cite{Lorscheid22,Lorscheid23,Lorscheid18}, the third author developed a geometric theory of ordered blue schemes, which are constructed by gluing together --- as local building blocks --- the spectra of ordered blueprints (suitably defined). 
In \cite{Baker-Lorscheid21b}, the authors defined a family of ordered blue schemes ${\rm Mat}(r,n)$ such that ${\rm Mat}(r,n)(K) = \G(r,n)(K)$ for all fields $K$, where $\G(r,n)$ is the Grassmannian parametrizing $r$-dimensional subspaces of $K^n$. Moreover, if $\K$ is the {\em Krasner hyperfield} --- the final object in the category of idylls --- ${\rm Mat}(r,n)(\K)$ is the set of {\em matroids} of rank $r$ on $\{ 1,\ldots,n \}$.
More generally, for any idyll $F$, ${\rm Mat}(r,n)(F)$ is the set of strong $F$-matroids of rank $r$ on $\{ 1,\ldots,n \}$ in the sense of Baker--Bowler~\cite{Baker-Bowler19}.

\medskip

Again, it is not necessary for this particular application to develop the more general theory of ordered blue schemes; it suffices to develop the somewhat simpler theory of {\em band schemes}. This is what we do in the second part of the present paper. Along the way, we clarify some of the subtleties which show up both in the theory of band schemes and the more general theory of ordered blue schemes; in particular, we explain how certain natural desiderata force us to use a particular definition of $\Spec(B)$ for a band $B$ which might at first seem counterintuitive. More precisely, we explain why one needs to define the spectrum --- as a topological space --- using prime {\em $m$-ideals} rather than prime $k$-ideals, prime null ideals, or some other notions of points. 

\medskip

We present some examples of band schemes, along with basic properties of band schemes and morphisms thereof, and we describe functors into some other scheme theories.
For example, given a band scheme $X$, the functor taking a band $B$ to the set $X(B) := {\rm Hom}(\Spec B,X)$ of $B$-points of $X$ can be restricted to rings, and the resulting functor turns out to be representable by a scheme $X^{\rm sch}$.
We also show that fibre products exist in the category of band schemes. In particular, given a band $B$, a band morphism $B' \to B$, and a band scheme $X$ over $B$, one can define the {\em base extension} $X_{B'}$.

\medskip

In the third and final section, we discuss various ``visualizations'' of band schemes. The basic idea is that there are various different topological spaces which one can associate to a band scheme $X$, such as its set $X(\K)$ of points over the Krasner hyperfield, which we call the {\em kernel space} of $X$ and denote by $X^{\rm ker}$, or the set of closed points of $X(\K)$, which we call the {\em Tits space} of $X$ and denote by $X^{\Tits}$. 
When $X=\Spec(B)$ is affine, $X^{\rm ker}$ is homeomorphic to the space of prime $k$-ideals of $B$ endowed with a suitable topology.
We also define the {\em null space} $X^{\rm null}$ of $X$, which when $X=\Spec(B)$ is affine corresponds to the space of prime null ideals, again endowed with a suitable topology. 
Each of these spaces has some uses; for example, while the map of topological spaces underlying a closed immersion between band schemes need not be closed, the induced map of null spaces is always closed.
We also discuss what we call the weak and strong Zariski topologies on a band scheme $X$.
In addition, we discuss the so-called ``fine topology'' on the point-set $X(B)$ when $X$ is a band scheme and $B$ is a topological band.

\subsubsection*{Additional motivation}
Matroids are far from the only motivation for introducing the theory of band schemes. Here are some additional sources of motivation, which --- taken together --- hopefully justify the careful development of band schemes from first principles which we undertake in the present paper:

\begin{enumerate}
\item {\bf Toric varieties.} Every toric variety $X$ defined over a field $k$ admits a band scheme model $\cX$, which is defined over the initial object $\Funpm$ in the category of bands. (By a \emph{model}, we mean that $\cX$ is a band scheme over $\Funpm$ whose base extension to $k$ is isomorphic to $X$). For more details, see \autoref{ex: toric band schemes}.
\item {\bf Models for algebraic groups and geometry over $\Fun$.} If $G$ is a reductive algebraic group over a field $k$, together with a ``sufficiently nice'' representation of $G$, there is an associated band scheme model $\cG$ defined over $\Funpm$ with the property that the Weyl group of $G$ is isomorphic to $\cG^{\Tits}$, the set of closed points of $\cG(\K)$. This provides a satisfying conceptual framework for studying ``Tits' dream'' in $\Fun$-geometry. For more details, see \cite{Lorscheid-Thas23} or 
\autoref{subsection: the Tits space} below.
\item {\bf Berkovich analytification.} An $\R$-valuation on a field $k$ is the same thing as a homomorphism from $k$ to $\T$, where $\T$ denotes the tropical hyperfield (identified with its associated topological band).
If $X=\Spec R$ is an affine $k$-scheme of finite type, considered as a band scheme over $k$, the set $X(\T)=\Hom_k(R,\T)$ with the fine topology is canonically homeomorphic to the Berkovich analytification of $X$. For more details, see \cite[Thm.~3.5]{Lorscheid22}.
\item {\bf Tropicalization.} Continuing with the notation from (3), choosing generators $a_1,\dotsc,a_n$ for $R$ as a $k$-algebra yields a presentation $R=k[a_1,\dotsc,a_n]/I$ for some ideal $I$, and from this presentation one can naturally define a band scheme $\cX$ over $k$ with the property that $\cX(\T)$ is canonically homeomorphic to the tropicalization $X^\trop$ of $X$ with respect to the embedding into $\A_k^n$ given by $a_1,\dotsc,a_n$. For more details, see \autoref{ex: tropicalization} below and \cite[Thm.~3.5]{Lorscheid22}.
\end{enumerate}

\subsection*{More detailed overview of the paper}
Here is a more detailed overview of the contents of the present paper.

\subsubsection*{Definition of a band}

A \emph{pointed monoid} $B$ is a set $B$ together with an associative and commutative multiplication $\cdot: B \times B \to B$, and two elements $0, 1 \in B$ such that $0 \cdot a = 0$ and $1 \cdot a = a$ for all $a \in B$.

\medskip

Let $B$ be a pointed monoid. Identifying $0 \in B$ with the additive identity element in the semiring $\N[B] = \ \big\{ \sum a_i \, \big| \, a_i\in B \big\}$ defines a semiring
\[\textstyle
 B^+ \ = \ \N[B]/\gen{0\sim 0_{\N[B]}} \ = \ \big\{ \sum a_i \, \big| \, a_i\in B-\{0\}\big\}.
\]

\medskip

 A \emph{band} is a pointed monoid $B$ together with an ideal $N_B \subset B^+$, called the \emph{null set of $B$}, such that for every $a \in B$, there exists a unique element $b \in B$ (denoted $-a$) such that $a+b\in N_B$.  

\subsubsection*{The categories of bands and idylls}

A \emph{band morphism} is a multiplicative map $f:B\to C$ with $f(0)=0$ and $f(1)=1$ such that $\sum a_i \in N_B$ implies $\sum f(a_i) \in N_C$. This defines the category $\Bands$.

\medskip

A band in which $0 \ne 1$ and every nonzero element has a multiplicative inverse is called an \emph{idyll}. We define $\Idylls$ as the full subcategory of $\Bands$ formed by idylls.

\medskip

The \emph{regular partial field} $\F_1^{\pm} = \{0, 1, -1\}$ is the idyll with the obvious multiplication and null set
\[
 N_{\F_1^{\pm}} =  \big\{0, \; 1 - 1, \; 1 - 1 + 1 -1, \; \dots\}.
\]
It is the initial object in both $\Bands$ and $\Idylls$.

\medskip

The \emph{Krasner hyperfield} is the pointed monoid $\K=\{0,1\}$ with the obvious multiplication, together with the null set $N_\K=\{0,\; 1+1,\; 1+1+1,\; \dotsc\}$. It is the terminal object in the category of idylls.

\medskip

The terminal object in $\Bands$ is the trivial band $\{0\}$ with $0=1$.

\subsubsection*{Examples}

Here are a few important examples of bands:

\begin{enumerate}
\item {\bf Rings.} Every ring $R$ is naturally a band, with the same underlying pointed monoid and with null set 
 \[ \textstyle
  N_R \ = \ \big\{\sum a_i \, \big| \, \sum a_i=0\text{ as elements of }R \big\}. 
 \]
This defines a fully faithful embedding of the category of rings into the category of bands.
\item {\bf Partial Fields.} Every partial field $P$ is naturally an idyll, with the same underlying pointed monoid and with null set $N_P=\{\sum a_i\mid \sum a_i=0\text{ in }P\}$. 
This defines a fully faithful embedding of the category of partial fields into the category of idylls.
\item {\bf Hyperrings.} A commutative hyperring $R$ in the sense of Krasner is a band with the same pointed monoid $R$ and with null set
\[\textstyle
 N_R \ = \ \big\{ \sum a_i \, \big| \, 0\in\hypersum a_i \big\}.
\]
This defines a fully faithful embedding of the category of hyperrings into the category of bands.
As a particularly important example of a hyperring, we mention the \emph{tropical hyperfield} $\T$, whose underlying pointed monoid is $\R_{\geq0}$ and whose null set is
\[\textstyle
 N_\T = \big\{ 0 \big\} \bigcup \big\{\sum a_i \, \big| \, \text{the maximum of }\{a_i\}\text{ appears twice}\big\}.
\]
\end{enumerate}

\subsubsection*{Quotients}

A \emph{quotient} of a band $B$ is an isomorphism class of surjective morphisms $\pi:B\to C$. 

\medskip

A \emph{null ideal} of $B$ is an ideal $I$ of $B^+$ that contains $N_B$ such that
if $a-c\in I$ and $c+\sum b_j\in I$, then $a+\sum b_j\in I$.

\medskip

Let $f:B\to C$ be a band morphism, the \emph{null kernel of $f$} is
 \[\textstyle
  \nullker f \ = \ \big\{ \sum a_i\in B^+ \, \big| \, \sum f(a_i)\in N_C \big\}.
 \]
One checks easily that the null kernel of a morphism is always a null ideal.

\medskip

Given a null ideal $I$ of $B$, we define an equivalence relation $\sim$ on $B$ by the rule $a\sim b$ if and only if $a-b\in I$.
Then the pointed monoid $\bandquot BI=B/\sim$ together with the null set
        \[\textstyle
         N_{\bandquot BI} \ = \ \big\{ \sum [a_i] \, \big| \, \sum a_i \in I \big\}
        \]
is a band, and the quotient map $\pi_I:B\to \bandquot BI$ is a band morphism with null kernel $I$.
 
\medskip
We will see in \autoref{cor: bijection between null ideals and quotients} that the association $I\mapsto \bandquot BI$ establishes a bijection
 \[
  \Phi: \ \big\{\text{null ideals of $B$} \big\} \ \longrightarrow \ \{ \text{quotients of $B$}\big\}.
 \]

\subsubsection*{Free algebras and presentations}

Let $k$ be a band. A \emph{$k$-algebra} is a band $B$ together with a morphism from $k$ to $B$.
If $B$ is a band and $\{x_i\mid i\in I\}$ is a set of indeterminates, there is a free $B$-algebra $B[x_i] := B[x_i\mid i\in I]$ with the universal property (\autoref{prop: universal property of the free algebra}) that to give a homomorphism from $B[x_i]$ to a $B$-algebra $C$ is the same thing as specifying a target value $c_i \in C$ for each $x_i$.

\medskip

For a subset $S$ of $B^+$, we denote by $\bandgenquot BS$ the quotient of $B$ by the null ideal $\gen S$ generated by $S$.

\medskip

Every band is a quotient of a free algebra over $\Funpm$ and can thus be written in the form $\bandgenquot{\Funpm[x_i]}{S}$. 
For example, we have $\F_2 = \bandgenquot{\Funpm}{1+1}$ and $\K = \bandgenquot{\Funpm}{1+1,\ 1+1+1}$.

\subsubsection*{Localizations}

If $B$ is a band and $S$ is a multiplicative subset of $B$ (i.e., $1 \in S$ and $S$ is closed under multiplication), we can define the \emph{localization} of $B$ at $S$, denoted $S^{-1}B$, as follows. 
As a pointed monoid, $S^{-1}B$ is $(S\times B)/\sim$, where $(s,a)\sim (s',a')$ if and only if there is a $t\in S$ such that $tsa'=ts'a$. 
Writing $\frac as$ for the equivalence class of $(s,a)$ in $S^{-1}B$, the null set of $S^{-1}B$ is defined as
  \[\textstyle
  N_{S^{-1}B} \ = \ \gen{ \sum \frac{a_i}1 \mid \sum a_i\in N_B }_{(S^{-1}B)^+}.
 \]
 
There is a natural map $\iota_S:B\to S^{-1}B$ defined by $\iota_S(a)=\frac a1$, and this construction has the expected universal property (\autoref{prop: universal property of the localization}): a band morphism from $S^{-1}B$ to $C$ is the same thing as a band morphism $f: B\to C$ with $f(S)\subset C^\times$. 

\subsubsection*{m-Ideals and k-ideals}

An \emph{$m$-ideal} of a band $B$ is a subset $I \subseteq B$ such that $0 \in I$ and $B \cdot I = I$. 

\medskip

A \emph{$k$-ideal} is an $m$-ideal $I$ with the additional property that if $a + \sum b_i \in N_B$ and $b_i \in I$, then $a \in I$. 

\medskip

A \emph{prime $m$-ideal} (resp. \emph{prime $k$-ideal}) is an $m$-ideal (resp. $k$-ideal) $\fp$ of $B$ for which $S=B-\fp$ is a multiplicative set. 
Given a prime $m$-ideal $\fp$ with complement $S$ in $B$, the \emph{localization of $B$ at $\fp$} is $B_\fp=S^{-1}B$. 

\medskip

As in commutative algebra, there is a bijection between prime $m$-ideals (resp. $k$-ideals) of $S^{-1}B$  and prime $m$-ideals (resp. $k$-ideals) of $B$ which are disjoint from $S$.

\medskip

Every band $B$ has a unique maximal proper $m$-ideal, namely $\fm=B-B^\times$, which is a prime $m$-ideal since $B^\times$ is a multiplicative set. The $m$-ideal $\fm$ is in general not a $k$-ideal.
Every maximal $k$-ideal of $B$ is prime (\autoref{proposition: maximal k-ideals are prime}).

\subsubsection*{Radicals}

If $I$ is an $m$-ideal of a band $B$, the \emph{radical of $I$} is 
 \[
  \sqrt{I} \ = \ \{a \in B \mid a^n \in I\text{ for some $n \geq1$}\}. 
 \]
As in commutative algebra, we have 
\[
 \sqrt{I} \ = \ \bigcap_{\substack{\text{prime $m$-ideals $\fp$}\\ \text{that contain $I$}}} \fp,
\]
and if $I$ is a $k$-ideal, then 
\[
 \sqrt{I} \ = \ \bigcap_{\substack{\text{prime $k$-ideals $\fp$}\\ \text{that contain $I$}}} \fp.
\]
In particular, every prime $m$-ideal (resp. $k$-ideal) is \emph{radical}, i.e., $\sqrt{I}=I$; see \autoref{prop: radical ideal}.

\subsubsection*{Limits and colimits}
The category of bands is complete and cocomplete.
In particular, it admits the following constructions:

\begin{enumerate}
\item {\bf Products.} The \emph{product} of a family of bands $\{B_i\}$ is defined as the Cartesian product $\prod B_i$ of the underlying monoids, together with the null set
\[\textstyle
 N_{\prod B_i} \ = \ \{ \sum_j (a_{ji})_{i\in I} \in (\prod B_i)^+ \mid \sum a_{ji}\in N_{B_i}\text{ for every }i\in I\}.
\]
This construction satisfies the universal property of products (\autoref{prop: universal property of the product}).
\item {\bf Equalizers.}
Given band morphisms $f:B\to C$ and $g:B\to C$, their \emph{equalizer} is the pointed submonoid $\eq(f,g)=\{a\in B\mid f(a)=g(a)\}$ of $B$, together with the null set
\[\textstyle
 N_{\eq(f,g)} \ = \ \{ \sum a_i\in\eq(f,g)^+\mid \sum a_i\in N_B\}.
\]
This construction satisfies the universal property of equalizers (\autoref{prop: universal property of the equalizer}).
\item{\bf Tensor products.}
Given a band $k$ and a non-empty family of $k$-algebras $\{B_i\}_{i\in I}$ with structure maps $\alpha_{B_i}:k\to B_i$, the \emph{tensor product of $\{B_i\}_{i \in I}$ over $k$} is the pointed monoid
\[\textstyle
 \bigotimes_k B_i \ = \ \{ a\in\prod B_i\mid a_i=1\text{ for all but finitely many }i\in I\} \ / \ \sim \; ,
\]
where $\sim$ is a suitable equivalence relation. 
This construction satisfies the universal property of tensor products (\autoref{prop: universal property of the tensor product}).
\end{enumerate}

\subsubsection*{Definition of a band scheme}

Let $B$ be a band. Its \emph{prime spectrum $\Spec B$} is the set of all prime $m$-ideals $\fp$ of $B$, together with the topology generated by the \emph{principal open subsets} 
\[
 U_h \ = \ \{ \fp\in\Spec B \mid h\notin\fp\}
\]
for $h\in B$.

\medskip

We endow $\Spec B$ with the structure sheaf $\cO_X$ on $X=\Spec B$ in $\Bands$ characterized by $\cO_X(U_h)=B[h^{-1}]$ for $h\in B$. 

\begin{ex*}
 The affine $n$-space over an idyll $F$ is $\A^n_F=\Spec F[T_1,\dotsc,T_n]$. The prime $m$-ideals of $F[T_1,\dotsc,T_n]$ are of the form $\fp_I=\gen{T_i\mid i\in I}_m$ for subsets $I$ of $\{1,\dotsc,n\}$. 
\end{ex*}

A \emph{band space} is a topological space $X$ together with a sheaf $\cO_X$ in $\Bands$. The \emph{stalk at $x\in X$} is the band
 \[
  \cO_{X,x} \ = \ \underset{x\in U\subset X\text{ open}}{\colim} \cO_X(U).
 \]
 
\medskip

A \emph{morphism of band spaces} is a continuous map $\varphi:X\to Y$ between band spaces together with a sheaf morphism $\varphi^\#:\cO_Y\to\varphi_\ast\cO_X$  such that for every $x\in X$ and $y=\varphi(x)$, the induced morphism of stalks $\varphi_x^\#:\cO_{Y,y}\to \cO_{X,x}$ sends non-units to non-units. This defines the category $\BandSpaces$ of band spaces.

\medskip

An \emph{affine band scheme} is a band space that is isomorphic to the spectrum of a band. A \emph{band scheme} is a band space in which every point has an affine open neighborhood. 
A \emph{morphism of band schemes} is a morphism of the associated band spaces. This defines the category $\BSch$ of band schemes.

Analogous to usual scheme theory, a band morphism $f:B\to C$ induces a morphism $f^\ast:\Spec C\to\Spec B$ of band schemes by taking inverse images of prime $m$-ideals of $C$. This enhances the construction of the spectrum to a functor $\Spec:\Bands\to\BSch$. 

\medskip

The fact that every band $B$ has a unique maximal $m$-ideal leads to various simplifications, when compared with usual scheme theory.
For example, every affine open subset $U$ of a band scheme $X$ is a principal open subset, and the image $\varphi(U)$ of an affine open $U$ of $X$ under a morphism $\varphi:X\to Y$ is contained in an affine open $V$ of $Y$. 

\subsubsection*{Motivation for the definition of a band scheme}

Since we have already seen various kinds of ideals in the theory of bands ($m$-ideals, $k$-ideals, and null ideals), the reader may wonder why we have used prime $m$-ideals to define the spectrum of a band.

To help motivate this choice, we show that $\Spec$ is the unique contravariant functor from $\Bands$ to $\BandSpaces$ with the following properties (cf.\ \autoref{subsection: covering families} for more details):

\begin{enumerate}
\item For all bands $B$ and all $h \in B$, the induced map of topological spaces $U_h := \Spec B[h^{-1}] \to \Spec B$ is an open embedding, and the collection $\{ U_h \}_{h \in B}$ forms a base for the topology of $\Spec B$.
 \item The functor $\Spec$ is fully faithful, i.e., for every pair of bands $B,B'$, the induced map $\Hom_{\Bands}(B,B') \to \Hom_{\BandSpaces}(\Spec B', \Spec B)$ is a bijection.
 \item The global section functor is left adjoint to $\Spec$, i.e., if $\Gamma X=\cO_X(X)$ denotes the band of global sections of $\cO_X$, then for all bands $B$ and band spaces $X$ there is a functorial bijection
\[
\Phi: \ \Hom_{\BandSpaces}(X,\ \Spec B) \ \longrightarrow \ \Hom_{\Bands}(B,\ \Gamma X).
\] 
 \item For every band $B$, the topological space $\Spec B$ is sober, and thus determined up to homeomorphism by its lattice of open subsets.
\end{enumerate}

\subsubsection*{Properties of band schemes}

In analogy with usual scheme theory, there is an inclusion-reversing bijective correspondence between radical $m$-ideals of $B$ and closed subsets of $X$. This restricts to a bijection between prime $m$-ideals and irreducible closed subsets. See \autoref{thm: Nullstellensatz}.

\medskip

Let $X$ be a band scheme and let $x$ be a point of $X$. Let $\cO_{X,x}$ be the stalk at $x$ and $\fm_x=\cO_{X,x}-\cO_{X,x}^\times$ its maximal $m$-ideal. 
The \emph{residue field at $x$} is the quotient $k(x)=\bandquot{\cO_{X,x}}{\gen{\fm_x}}$. 
There is a canonical morphism $\kappa_x:\Spec k(x)\to X$ which satisfies the expected universal property: for every idyll $F$, every morphism $\Spec F\to X$ with image $\{x\}$ factors uniquely through $\kappa_x$. See \autoref{prop: universal property of the residue field}.

\medskip

Note that the ``residue field'' of a point in a band scheme is in general not a field, but rather an idyll (or the $\0$ band).

\subsubsection*{Properties of morphisms of band schemes}

If $\psi_X:X\to Z$ and $\psi_Y:Y\to Z$ are morphisms of band schemes, the \emph{fibre product of $X$ and $Y$ over $Z$} is defined as the topological fibre product $X\times_ZY$, together with the structure sheaf that sends an affine open of the form $U\times_WV$ (with $U$, $V$ and $W$ affine) to $\Gamma U\otimes_{\Gamma W}\Gamma V$, together with the obvious restriction maps. 
This construction satisfies the usual universal property of fibre products. See \autoref{thm: limits and colimits for band schemes}.

\medskip

The algebraic definitions of open immersions, closed immersions, and separated morphisms in usual scheme theory all generalize in a meaningful way to band schemes, cf.~\autoref{subsection: open and closed immersions} and \autoref{subsection: separated morphisms}.
However, the naive topological characterization of closed immersions in terms of closed maps fails for the underlying topological space of a band scheme. For example, the diagonal embedding of $\A^1$ into $\A^2$ is a closed immersion, but the corresponding map of underlying topological spaces is not closed. This ``bug'' can be removed by passing to null spaces, cf.~\autoref{subsection: the null space}.

\subsubsection*{Functors into other scheme theories}

Band schemes map to several other types of schemes in a functorial way. 
For example:

\begin{enumerate}
\item {\bf Base extension to usual schemes.}
A band $B$ comes with the universal ring $B^+_\Z  \ := \ \Z[B]/\gen{N_B}.$
 This construction globalizes to a \emph{base extension functor}
\[
 (-)^+_\Z: \ \BSch \ \longrightarrow \ \Sch
\]
from the category $\BSch$ of band schemes into the category $\Sch$ of usual schemes. See \autoref{prop: base extension to schemes}.
\item  {\bf The underlying monoid schemes.} 
The forgetful functor $\cF:\Bands\to\Mon_0$ from bands to pointed monoids globalizes to a functor $\cF:\BSch\to\MSch$ from band schemes to monoid schemes. See \autoref{prop: functor to the underlying monoid scheme}.
\end{enumerate}

\subsubsection*{Visualizations}

Besides its underlying topological space, one can associate several other topological spaces to a band scheme in a functorial way. We call these associated spaces \emph{visualizations}, because they exhibit certain useful properties of band schemes. 
Here are some examples:

\begin{enumerate}
\item {\bf The Zariski topology.} 
In classical algebraic geometry, rational point sets $X(B)$ come equipped with a Zariski topology. In the context of band schemes, there are more subsets supporting closed subschemes than complements of open subsets. Consequently, rational point sets can be equipped with two different Zariski topologies, which we call the \emph{weak} and \emph{strong} Zariski topologies. See \autoref{section: the Zariski topology} for more details.
\item {\bf The fine topology.}
If $k$ is a band and $B$ is a $k$-algebra, the choice of a topology for $B$ endows the set $X(B):=\Hom_k(\Spec B,X)$ of $B$-rational points of a $k$-scheme $X$ with a natural topology which we call the \emph{fine topology}.
See \autoref{subsection: the fine topology} for more details, as well as \cite{Lorscheid23} and \cite{Lorscheid-Salgado16}.
\item {\bf The kernel space.} 
If $X=\Spec B$ is an affine band scheme, its \emph{kernel space} is the subspace of $X$ consisting of all prime $k$-ideals of $B$. 
This construction globalizes to give a functor $X \mapsto X^\kernel$ from band schemes to topological spaces.
With respect to the natural topology on the Krasner hyperfield $\K$, there is a natural homeomorphism between $X^\kernel$ and the fine topology on $X(\K)$.
A point $x \in X$ belongs to $X^\kernel$ if and only if the residue field $k(x)$ is nonzero (in which case it is automatically an idyll).
See \autoref{subsection: the kernel space} for more details.
\item {\bf The Tits space.} 
If $X=\Spec B$ is an affine band scheme, its \emph{Tits space} is the subspace of $X$ consisting of all maximal $k$-ideals of $B$. 
This construction globalizes to give a functor $X \mapsto X^\Tits$ from band schemes to topological spaces.
See \autoref{subsection: the Tits space} for more details.
\item {\bf The null space.}
A null ideal $I$ of a band $B$ is \emph{prime} if $S=B^+-I$ is a multiplicative set in $B^+$. The \emph{null space of $B$} is the set $\Null(B)$ of prime null ideals of $B$, together with the topology generated by \emph{basic opens} of the form $U_h \ = \ \big\{\fp\in\Null(B) \, \big| \, h\notin\fp \big\}$ for $h\in B^+$. 
This construction globalizes to give a functor $X \mapsto X^\nul$ from band schemes to topological spaces.
There is a canonical morphism $X^\nul\to X$ whose image is $X^\kernel$.
If $f:X\to Y$ is a closed immersion of band schemes, the corresponding morphism $f^\nul : X^\nul \to Y^\nul$ is a closed embedding of topological spaces (recall that this fails with the null space replaced by the usual underlying topological space of $X$).
For more details, see \autoref{subsection: the null space}.
\end{enumerate}

For a comparison of these different visualizations, see \autoref{subsection: comparison of visualizations}.

\subsection*{Acknowledgements}
We thank Manoel Jarra for useful feedback on a previous draft. We also thank the anonymous referees for their careful corrections to the paper. The first author was supported by NSF grant DMS-2154224 and a Simons Fellowship in Mathematics. The second author was supported by NSF grant DMS-2154224. 


\section{Bands}

\subsection{Definitions}

A \emph{pointed monoid} $B$ is a set $B$ together with an associative and commutative multiplication $\cdot: B \times B \to B$, and two elements $0, 1 \in B$ such that $0 \cdot a = 0$ and $1 \cdot a = a$ for all $a \in B$. We also write $ab$ for $a \cdot b$. 
A \emph{morphism of pointed monoids} is a monoid homomorphism which sends $0$ to $0$. This makes pointed monoids into a category.

Let $B$ be a pointed monoid. 
The \emph{free semiring over $B$}, denoted $\N[B]$, is characterized by the universal property that any morphism of monoids from $B$ to the underlying monoid of a semiring $S$ extends uniquely to a semiring homomorphism from $B^+$ to $S$. We think of elements of $\N[B]$ as formal sums of the form $\sum a_i$ with $a_i\in B$, with the empty sum serving as the additive identity element.

Identifying $0 \in B$ with the additive identity element in $\N[B]$\footnote{Formally, ``identifying $0 \in B$ with the additive identity element in $\N[B]$'' means we define $B^+$ as the quotient of $\N[B]$ by the equivalence relation generated by $0 + \sum a_i \sim \sum a_i$ for all $\sum a_i \in \N[B]$, together with its natural addition and multiplication operations.} defines a semiring $B^+$, which is characterized by the universal property that any morphism of \emph{pointed} monoids from $B$ to the underlying \emph{pointed} monoid of a semiring $S$ extends uniquely to a semiring homomorphism from $B^+$ to $S$.
We call $B^+$ the \emph{ambient semiring of $B$}, and write elements of $B^+$ as formal sums of the form $\sum a_i$ with $a_i\in B-\{0\}$.
Note that $B$ embeds as a submonoid of $B^+$. 

An \emph{ideal} of $B^+$ is a subset $I$ such that \ $0 \in I$, \ \ $I + I = I$ \ and \ $B \cdot I = I$. 

\begin{df}
 A \emph{band} is a pointed monoid $B$ together with an ideal $N_B \subset B^+$, called the \emph{null set of $B$}, such that for every $a \in B$, there exists a unique element $b \in B$ such that $a+b\in N_B$.  
\end{df}

We call the unique element $b$ with $a+b\in N_B$ the \emph{additive inverse of $a$} and denote it by $-a$. This means, in particular, that $B$ comes with a distinguished element $-1$, the additive inverse of $1$. We write $a-b$ for $a+(-b)$. 

\begin{lemma}\label{lemma: first properties of bands}
 Let $B$ be a band. Then
 \begin{enumerate}
  \item $B \cap N_B = \{0\}$. 
  \item $(-1)^2 = 1$ and $-a=(-1)\cdot a$ for all $a\in B$.
 \end{enumerate}
\end{lemma}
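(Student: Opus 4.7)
My plan is to derive both parts directly from the uniqueness clause in the definition of a band, using the ideal axioms for $N_B$ (namely $0\in N_B$, $N_B+N_B=N_B$, and $B\cdot N_B=N_B$) to produce two candidates for an additive inverse and then collapse them.

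For (1), I would first observe that $0+0=0\in N_B$, so $0$ is an additive inverse of $0\in B$. If $a\in B\cap N_B$, then $0+a=a\in N_B$ as well (using either the identification of $0\in B$ with the additive identity of $B^+$, or the fact that $N_B+N_B\subseteq N_B$ with $0\in N_B$), so $a$ is also an additive inverse of $0$. The uniqueness clause then forces $a=0$.

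For (2), the strategy is to multiply the defining relation $1+(-1)\in N_B$ by well-chosen elements of $B$, using the ideal axiom $B\cdot N_B\subseteq N_B$ together with distributivity in $B^+$. Multiplying by $-1$ produces $(-1)+(-1)^2\in N_B$, which exhibits $(-1)^2$ as an additive inverse of $-1$; since $1$ is visibly such an inverse, uniqueness gives $(-1)^2=1$. Multiplying by an arbitrary $a\in B$ produces $a+(-1)\cdot a\in N_B$, exhibiting $(-1)\cdot a$ as an additive inverse of $a$, so $(-1)\cdot a=-a$ by uniqueness.

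I do not anticipate any real obstacle: everything reduces to bookkeeping with the ideal axioms and the uniqueness of additive inverses. The only place where one must be mildly careful is to distinguish $0\in B$ from the additive identity $0_{B^+}$ (so that ``$a\in N_B$'' is read both as a statement about $a\in B$ and about its image in $B^+$), but the footnote in the construction of $B^+$ makes this identification legitimate.
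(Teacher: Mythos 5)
Your proposal is correct and follows essentially the same route as the paper: part (1) by exhibiting both $0$ and $a$ as additive inverses of $0$ and invoking uniqueness, and part (2) by multiplying $1+(-1)\in N_B$ by elements of $B$ (the paper merely derives $-a=(-1)\cdot a$ first and then specializes to $a=-1$, whereas you do these two steps in the opposite order). No gaps.
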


\begin{proof}
 By definition, $0\in B\cap N_B$. If $a \in B \cap N_B$, then since $0 - 0 \in N_B$ and $0 - a \in N_B$, we know $0 = a$, i.e.,\ $B\cap N_B=\{0\}$, as claimed.
 
 Since $1+(-1)\in N_B$ and $N_B$ is stable under multiplication by $B$, we have $a+(-1)\cdot a\in N_B$. Thus $-a=(-1)\cdot a$. For $a=-1$, this yields $-1+(-1)^2\in N_B$. Since $1$ is the unique additive inverse of $-1$, we conclude that $(-1)^2=1$, which completes the proof. 
\end{proof}

\begin{df}
 A \emph{band morphism} is a multiplicative map $f:B\to C$ with $f(0)=0$ and $f(1)=1$ such that $\sum a_i \in N_B$ implies $\sum f(a_i) \in N_C$. This defines the category $\Bands$ of bands.
\end{df}

Note that a band morphism $f:B\to C$ extends by linearity to a semiring homomorphism $f^+:B^+\to C^+$, via the rule $f^+(\sum a_i)=\sum f(a_i)$. We say that the null set $N_B$ of a band $B$ is \emph{generated by a subset $S$ of $B^+$} if $S\subset N_B$ and if every element of $N_B$ is a $B^+$-linear combination of elements in $S$. We write $N_B=\gen S_{B^+}$ in this case.

\begin{lemma}\label{lemma: morphisms can be tested on generators}
 Let $B$ be a band whose null set is generated by $S$, and let $f:B\to C$ be a multiplicative map between bands such that $f(1)=1$ and $\sum f(a_i)\in N_C$ for every $\sum a_i\in S$. Then $f$ is a band morphism.
\end{lemma}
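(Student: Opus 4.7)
The plan is to lift $f$ to the ambient semirings and exploit the fact that $N_B$ is an ideal generated by $S$.

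First I would observe that since $f$ is a multiplicative map of pointed monoids (so $f(0)=0$) with $f(1)=1$, the assignment
\[
f^+\Bigl(\sum a_i\Bigr) \ = \ \sum f(a_i)
\]
defines a well-defined semiring homomorphism $f^+\colon B^+ \to C^+$: well-defined because any $a_i=0$ collapses to the empty sum in $C^+$, additive by construction, and multiplicative by distributing and using the multiplicativity of $f$ on $B$. Thus $f$ extends by linearity to a semiring homomorphism, exactly as remarked just before the lemma.

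Next, I would use the hypothesis that $N_B = \langle S\rangle_{B^+}$. By definition of this generation, every $n \in N_B$ can be written as a finite $B^+$-linear combination
\[
n \ = \ \sum_{k} c_k \cdot s_k, \qquad c_k \in B^+, \ s_k \in S.
\]
Applying the semiring homomorphism $f^+$ gives
\[
f^+(n) \ = \ \sum_{k} f^+(c_k) \cdot f^+(s_k).
\]
By hypothesis $f^+(s_k) = \sum f(a_i) \in N_C$ for each $s_k = \sum a_i \in S$. Since $N_C$ is an ideal of $C^+$, each term $f^+(c_k)\cdot f^+(s_k)$ lies in $N_C$, and hence so does their sum. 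Therefore $f^+(N_B) \subset N_C$, which is precisely the condition that $\sum a_i \in N_B$ implies $\sum f(a_i) \in N_C$.

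Combined with the assumptions that $f$ is multiplicative, $f(0)=0$, and $f(1)=1$, this shows that $f$ satisfies all the axioms of a band morphism. There is no real obstacle here; the only subtle point is verifying that $f^+$ is genuinely a \emph{semiring} homomorphism (both additive and multiplicative), after which the result is a direct application of the ideal property of $N_C$ to the generating expression for elements of $N_B$.
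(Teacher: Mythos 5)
Your proof is correct and follows essentially the same route as the paper: extend $f$ by linearity to the semiring homomorphism $f^+\colon B^+\to C^+$, write each element of $N_B$ as a $B^+$-linear combination of elements of $S$, and use that $N_C$ is an ideal of $C^+$ to conclude $f^+(N_B)\subset N_C$. The extra care you take in checking that $f^+$ is genuinely a semiring homomorphism is a point the paper delegates to the remark preceding the lemma, but the substance of the argument is identical.
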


\begin{proof}
 Since $N_B=\gen{S}_{B^+}$, every element of $N_B$ can be written as $\sum x_iy_i$ with $x_i \in B^+$ and $y_i \in S$. By assumption, $f^+(y_i)\in N_C$. Since $N_C$ is an ideal of $C^+$, also $f^+(\sum x_iy_i)=\sum f^+(x_i)f^+(y_i)\in N_C$, which shows that $f:B\to C$ is a morphism.
\end{proof}

\begin{df}
 Let $B$ be a band. A \emph{unit of $B$} is an element $a\in B$ for which there is a $b\in B$ (its \emph{inverse}) such that $ab=1$. The \emph{unit group of $B$} is the group $B^\times$ of units of $B$. An \emph{idyll} is a band $B$ with $B^\times=B-\{0\} \ne \emptyset$. We denote by $\Idylls$ the full subcategory of $\Bands$ that consists of all idylls. 

 A \emph{zero divisor of $B$} is an element $a\in B$ for which there is a nonzero element $b\in B$ such that $ab=0$. An element $a\in B$ is \emph{nilpotent} if $a^n=0$ for some $n\geq1$.

 A \emph{fusion band} is a band $B$ that satisfies the following \emph{fusion axiom} for all elements $c,a_1,\dotsc,a_n,b_1,\dotsc,b_m\in B$: 
 \begin{enumerate}[label=\rm(F)]    
  \item\label{F} If \ \ $-c + \sum_{i = 1}^n a_i$ \ \ and \ \ $c+\sum_{j = 1}^m b_j$ \ \ are in $N_B$, then \ \ $\sum_{i = 1}^n a_i + \sum_{j = 1}^m b_j \in N_B$. 
 \end{enumerate}
 We denote by $\FBands$ the full subcategory of $\Bands$ that consists of fusion bands. 
\end{df}

\begin{lemma}\label{lemma: multiplicative fusion}
 A fusion band $B$ satisfies the following \emph{multiplicative fusion rule} for all $c,d,a_1,\dotsc,a_n,b_1,\dotsc,b_m\in B$: 
 \begin{enumerate}[label=\rm(F*)]
  \item\label{F*} If \ \ $c-\sum_{i = 1}^n a_i$ \ \ and \ \ $d-\sum_{j = 1}^m b_j$ \ \ are in $N_B$, then \ \ $cd-\sum_{i=1}^n\sum_{j=1}^m a_ib_j \in N_B$. 
 \end{enumerate} 
\end{lemma}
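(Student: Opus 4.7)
The plan is to use that $N_B$ is an ideal of $B^+$ to multiply each of the given null sums by an appropriate element, and then apply the additive fusion axiom \ref{F} iteratively to ``splice'' the resulting relations together, eliminating the intermediate terms $a_i d$ one at a time.

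First I would multiply the hypothesis $c - \sum_{i} a_i \in N_B$ by $d$. Since $N_B$ is an ideal of $B^+$ and $(-1) \cdot d = -d$, this produces
\[
cd - \sum_{i=1}^{n} a_i d \ \in \ N_B.
\]
Similarly, multiplying the hypothesis $d - \sum_{j} b_j \in N_B$ by $a_i$ yields, for each $i = 1, \dots, n$,
\[
a_i d - \sum_{j=1}^{m} a_i b_j \ \in \ N_B.
\]

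Now I would run an induction on the index $i$, producing after the $i$-th step the relation
\[
cd \ + \ \sum_{k=1}^{i} \sum_{j=1}^{m} (-a_k b_j) \ + \ \sum_{k=i+1}^{n} (-a_k d) \ \in \ N_B.
\]
The base case $i=0$ is exactly the first displayed identity. For the inductive step, apply \ref{F} with the ``pivot'' element taken to be $c' := -a_{i+1}d$: the relation $a_{i+1}d - \sum_j a_{i+1}b_j \in N_B$ plays the role of $-c' + \sum a_i \in N_B$ (using that $-(-a_{i+1}d) = a_{i+1}d$, which follows from \autoref{lemma: first properties of bands}(2)), while the inductive hypothesis, rewritten by isolating the summand $-a_{i+1}d$, plays the role of $c' + \sum b_j \in N_B$. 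The conclusion of \ref{F} is precisely the desired relation at level $i+1$.

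Taking $i = n$ yields $cd + \sum_{i,j}(-a_i b_j) \in N_B$, which is $cd - \sum_{i=1}^{n}\sum_{j=1}^{m} a_i b_j \in N_B$, as required. I do not expect any genuine obstacle here; the only mildly delicate points are the book-keeping of signs (handled once and for all by \autoref{lemma: first properties of bands}(2)) and making sure each invocation of \ref{F} has the summands distributed correctly between the two null expressions.
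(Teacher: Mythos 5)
Your proposal is correct and follows essentially the same strategy as the paper's proof: multiply the two hypotheses by suitable elements (using that $N_B$ is an ideal) to produce the relations $cd-\sum a_i d$ and $a_i d-\sum_j a_ib_j$, then apply the fusion axiom \ref{F} iteratively to eliminate the cross terms one at a time. The only difference is cosmetic — the paper pivots on the terms $cb_j$ (eliminating them for $j=1,\dotsc,m$) while you pivot on the terms $a_id$ (eliminating them for $i=1,\dotsc,n$), which is the same argument with the roles of the two factors interchanged.
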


\begin{proof}
 Since $N_B$ is stable under multiplication by $B$, we find that it contains also the elements $x_1=cd-\sum_{j=1}^m cb_j$ and $y_j=cb_j-\sum_{i=1}^n a_ib_j$. Applying the fusion axiom successively (for $k=1,\dotsc,m$) to $x_k$ and $y_k$ with respect to $cb_k$, which appears in both terms with opposite signs, defines a new element $x_{k+1}=cd-\sum_{j=k+1}^m cb_j-\sum_{i=1}^n \sum_{j=1}^k a_ib_j$ in $N_B$. Since $x_m=cd-\sum_{i=1}^n\sum_{j=1}^m a_ib_j$, the claim follows.
\end{proof}

\subsection{Examples}
In this section, we identify rings, partial fields, hyperrings, pastures and fuzzy rings as particular instances of bands. On the other hand, every band is an ordered blueprint, and every idyll is a tract. More precisely, all these descriptions yield fully faithful embeddings of categories, as summarized in \autoref{fig: diagram of subcategories}. 

Since the rest of this paper is independent from this section, we allow ourselves to be brief in our explanations and to omit details at times. The reader can find more details on the relation between certain categories mentioned below in \cite[Section 2]{Baker-Lorscheid21b}.
\

\subsubsection{Rings}
 Every ring $R$ is naturally a fusion band with null set 
 \[ \textstyle
  N_R \ = \ \big\{\sum a_i \, \big| \, \sum a_i=0\text{ as elements of }R \big\}. 
 \]
A map $f:R\to S$ between rings is a ring homomorphism if and only if it is a band morphism. In other words, this defines a fully faithful embedding $\Rings\to\FBands$. The band associated with a field is an idyll.

\subsubsection{Partial fields}
\label{ex:partfields}
The \emph{regular partial field} $\F_1^{\pm} = \{0, 1, -1\}$ is the idyll with the obvious multiplication and null set
\[
 N_{\F_1^{\pm}} =  \big\{0, \; 1 - 1, \; 1 - 1 + 1 -1, \; \dots\}.
\]
It is initial in $\Bands$: given a band $B$, there is a unique morphism $f:\Funpm\to B$, given by $f(0)=0$ and $f(\pm1)=\pm1$ (cf. Section~\autoref{sec:initial-terminal} below).
 
More generally, every partial field $P$ is a fusion idyll with the same underlying monoid and with null set $N_P=\{\sum a_i\mid \sum a_i=0\text{ in }P\}$. A map $f:P\to Q$ between partial fields is a partial field morphism if and only if it is an idyll morphism. This defines a fully faithful embedding $\PartFields\to\Idylls$.

\subsubsection{Hyperrings}
The \emph{Krasner hyperfield} is the band $\K=\{0,1\}$ with the obvious multiplication and null set $N_\K=\{0,\; 1+1,\; 1+1+1,\; \dotsc\}$. It is terminal in $\Idylls$: given an idyll $F$, there is a unique morphism $t_F:F\to\K$, given by $t_F(0)=0$ and $t_F(a)=1$ for $a\neq 0$.
 
More generally, a commutative hyperring $R$ in the sense of Krasner is a fusion band with the same pointed monoid $R$ and null set
\[\textstyle
 N_R \ = \ \big\{ \sum a_i \, \big| \, 0\in\hypersum a_i \big\}.
\]
A map $f:R\to S$ of hyperrings is a hyperring morphism if and only if it is band morphism. In other words, this defines a fully faithful embedding $\HypRings\to\FBands$. The band associated with a hyperfield is a fusion idyll, which yields a fully faithful embedding $\HypFields\to\Idylls$.
 
Other examples of hyperfields of interest, realized as idylls, are the \emph{sign hyperfield} $\S=\{0,1,-1\}$ with null set 
\[
 N_\S = \big\{n.1+m.(-1)\, \big| \, n=m=0\text{ or }nm\neq0 \big\} 
\]
and the \emph{tropical hyperfield} $\T=\R_{\geq0}$ with null set 
\[\textstyle
 N_\T = \big\{ 0 \big\} \bigcup \big\{\sum a_i \, \big| \, \text{the maximal element of }\{a_i\}\text{ appears twice}\big\}.
\]

\subsubsection{Pastures}
As a computational device to study matroid representations over (partial) fields and certain well-behaved hyperfields, such as $\S$ and $\T$, the first and third author introduce pastures in \cite{Baker-Lorscheid20}. To recall, a \emph{pasture} is a pointed monoid $P$ together with a subset $N_P$ of $P^3$ that is invariant under multiplication by $P$ and permutation of factors such that $P^\times=P-\{0\}$, and such that for every $a\in P$, there is a unique $b\in P$ with $a+b+0\in N_P$. Morphisms of pastures are structure preserving maps, which defines the category $\Pastures$. The category of pastures contains the categories $\PartFields$ and $\HypFields$ as full subcategories; cf.\ \cite[section 2.1.5]{Baker-Lorscheid20}.
 
A pasture $P$ defines a fusion band (which is in fact an idyll) $B=P$ with null set $N_B=\genn{a+b+c\mid (a,b,c)\in N_P}$ (cf.\ \autoref{def: fusion ideals} for the definition of $\genn{-}$). This extends to functorial inclusions $\Pastures\to\FBands$ and $\Pastures\to\Idylls$.

These observations allow us to consider pastures as bands. Indeed, we can characterize pastures as those idylls $P$ for which $N_P=\genn{a+b+c\mid (a,b,c)\in N_P}$ (i.e., for which the null set of $P$ is generated, as a fusion ideal, by the three-term relations in $P$).

\subsubsection{Hereditary fusion bands}
By analogy with the above characterization of pastures, it is natural to introduce the category of bands $B$ for which $N_B=\genn{a+b+c\mid (a,b,c)\in N_B}$. We call these \emph{hereditary fusion bands}, since the higher-order relations in the null set are ``inherited,'' via fusion, by the three-term relations.

Following \cite{BakerLorscheid21b}, univariate polynomials over hereditary fusion bands satisfy the following analogue of the division theorem (the proof follows from the same computation as that given in  \cite[Lemma A]{BakerLorscheid21b}):
\begin{prop} \label{prop:divides}
Let $B$ be a hereditary fusion band and $f(T) = c_0 + c_1 T + \cdots + c_n T^n$ a polynomial, considered as a formal expression in the indeterminate $T$ with all $c_i \in B$ and $c_n \neq 0$.
Then the following are equivalent for an element $a \in B$:
\begin{enumerate}
\item $f(a)=0$, in the sense that $c_0 + c_1 a + \cdots + c_n a^n \in N_B$. 
\item $T-a$ divides $f(T)$, in the sense that there exists a polynomial $g(T) = d_0 + d_1 T + \cdots + d_{n-1}T^{n-1}$ with $d_i \in B$ such that 
\begin{equation} \label{eq:divides}
c_0 = -a d_0, c_i = -a d_i +d_{i-1} \textrm{ for } i = 1,\ldots,n-1, \textrm{ and } c_n = d_{n-1}.
\end{equation}
\end{enumerate}
\end{prop}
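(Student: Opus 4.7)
The plan is to prove both directions separately: direction (2) $\Rightarrow$ (1) is a direct calculation using the fusion axiom, while direction (1) $\Rightarrow$ (2) proceeds by induction on $n$, with the hereditary fusion hypothesis playing a crucial role in the inductive step.

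For (2) $\Rightarrow$ (1), I interpret the given synthetic-division data as the system of null relations $c_0 + a d_0 \in N_B$, $-d_{i-1} + c_i + a d_i \in N_B$ for $1 \leq i \leq n-1$, and $-d_{n-1} + c_n \in N_B$. Multiplying the $i$-th relation by $a^i$ produces relations in which the term $a^{i+1} d_i$ appears in consecutive relations with opposite signs. Iterated application of the fusion axiom \ref{F} then telescopes the whole system to
\[
c_0 + c_1 a + c_2 a^2 + \cdots + c_n a^n \in N_B,
\]
i.e., $f(a) = 0$.

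For (1) $\Rightarrow$ (2), I induct on $n$. In the base case $n = 1$, set $d_0 := c_1$; the relation $c_n = d_{n-1}$ holds trivially, and the uniqueness of additive inverses in $B$ applied to $c_0 + c_1 a \in N_B$ forces $c_0 = -a c_1 = -a d_0$. For the inductive step with $n \geq 2$, set $d_{n-1} := c_n$ and reduce to finding an element $d_{n-2} \in B$ satisfying $d_{n-2} - c_{n-1} - a c_n \in N_B$. Once such a $d_{n-2}$ is in hand, fusing the $a^{n-1}$-multiple of this relation with the hypothesis $f(a) \in N_B$ cancels the $c_{n-1} a^{n-1}$ and $c_n a^n$ terms and yields
\[
c_0 + c_1 a + \cdots + c_{n-2} a^{n-2} + d_{n-2} a^{n-1} \in N_B,
\]
so the inductive hypothesis applied to the polynomial $h(T) := c_0 + \cdots + c_{n-2} T^{n-2} + d_{n-2} T^{n-1}$ furnishes the remaining coefficients $d_0, \ldots, d_{n-3}$ and closes the induction.

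The main obstacle is the existence of $d_{n-2}$ in the inductive step; this is precisely where the hereditary fusion hypothesis becomes essential. Since $N_B$ is generated as a fusion ideal by its three-term elements, the null relation $f(a) \in N_B$ is obtainable from three-term generators via $B^+$-multiplication, sums, and iterated fusion. Tracking the portion of such a derivation involving $c_{n-1} a^{n-1}$ and $c_n a^n$ and reorganizing the fusion steps produces a three-term generator of the form $d_{n-2} - c_{n-1} - a c_n \in N_B$, supplying the required element $d_{n-2} \in B$. This is the direct translation to the band setting of the computation carried out for pastures in \cite[Lemma A]{Baker-Lorscheid21b}.
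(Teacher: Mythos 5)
Your direction (2) $\Rightarrow$ (1) is correct: reading the equalities in \eqref{eq:divides} as null relations, multiplying the $i$-th one by $a^i$, and telescoping with the fusion axiom \ref{F} (one cancelled term per application) does give $f(a)\in N_B$, and this half indeed uses only the fusion axiom, not heredity. (The paper does not write out a proof of its own; it defers entirely to the computation in Lemma A of the cited reference, so the comparison can only be against the intended computation.)

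The direction (1) $\Rightarrow$ (2) has a genuine gap in the inductive step. The fusion axiom \ref{F} cancels exactly \emph{one} element of $B$ occurring with opposite signs in two null relations; it does not let you cancel $c_{n-1}a^{n-1}$ and $c_na^n$ simultaneously, and the two-term version of fusion is false: in the sign hyperfield $\S$ one has $1+1-1\in N_\S$ and $1-1+1\in N_\S$ but $1+1\notin N_\S$. Concretely, take $B=\S$, $f(T)=1+T-T^2$ and $a=1$, so $f(1)=1+1-1\in N_\S$. Then $d_0=1$ satisfies your three-term condition $d_0-c_1-ac_2=1-1+1\in N_\S$, yet the reduced relation $c_0+ad_0=1+1\notin N_\S$ that your induction requires fails for this choice (only $d_0=-1$ works). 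So producing \emph{some} $d_{n-2}$ with $d_{n-2}-c_{n-1}-ac_n\in N_B$ and then "fusing it back into" $f(a)\in N_B$ cannot work. What the hereditary hypothesis has to deliver --- and what the computation behind the cited Lemma A actually extracts --- is a decomposition of the $(n+1)$-term relation $f(a)\in N_B$ into a chain of three-term relations whose intermediate partial sum \emph{simultaneously} witnesses $c_{n-1}a^{n-1}+c_na^n-d_{n-2}a^{n-1}\in N_B$ and $c_0+\dotsb+c_{n-2}a^{n-2}+d_{n-2}a^{n-1}\in N_B$ (for one and the same $d_{n-2}$); neither relation follows from the other together with $f(a)\in N_B$ by fusion. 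Your sentence about "tracking the portion of such a derivation \ldots and reorganizing the fusion steps" is precisely where this simultaneous decomposition (including the fact that the intermediate partial sum can be taken of the form $d_{n-2}a^{n-1}$) has to be proved, and as written it is asserted rather than established.
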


Note that the equalities in \eqref{eq:divides} simply express the fact that every coefficient of the formal expansion of $f(T) - (T-a)g(T)$ belongs to the null set of $B$.

\autoref{prop:divides} forms the basis for the inductive definition of multiplicities of roots of polynomials over hyperfields given in \cite{BakerLorscheid21b}.
Clearly a condition such as ``hereditary'' is necessary in order to have such a result, since condition (1) involves the entire null set of $B$ while (2) only involves the three-term relations in $B$.

By definition, a pasture is the same thing as a hereditary fusion band which is also an idyll.

\subsubsection{Fuzzy rings}
Fuzzy rings were introduced by Dress in \cite{Dress86} as a device to generalize matroid theory; also cf.\ \cite[Appendix B]{Baker-Bowler19} and \cite{Giansiracusa-Jun-Lorscheid16} for definitions and further details. A fuzzy ring $(K,+,\cdot,\epsilon,K_0)$ defines the band $B=K^\times\cup\{0\}$ with null set 
\[ 
 N_B \ = \ \big\{ \sum a_i \in K_0 \, \big| \, a_i\in K^\times \big\}.
\]
This association extends naturally to a fully faithful inclusion of categories $\FuzzRings\to\Idylls$. Note that despite the terminology ``fuzzy ring'', $B$ is in fact an idyll. 

Note that axiom (FR5) (in \cite{Dress86}) of a fuzzy ring $K$ is a strong version of the multiplicative fusion rule \ref{F*} for the associated band $B$, but that $B$ is not necessarily a fusion band, i.e.,\ the \emph{additive} fusion axiom \ref{F} fails in general for $B$.

\subsubsection{Idylls as tracts}
\label{subsubsection: idylls as tracts}
Tracts were introduced by the first author and Bowler in \cite{Baker-Bowler19} as a natural algebraic setting for the theory of matroids with coefficients.
Roughly speaking, a tract is an idyll for which the null set is not required to be closed under addition; cf.\ \cite{Baker-Bowler19} for details. Consequently, the category of idylls is a full subcategory of the category of tracts.

\subsubsection{Bands as ordered blueprints}
\label{subsubsection: bands to oblpr}

Ordered blueprints were introduced by the third author as a natural algebraic setting for tropicalization, analytification, and $\F_1$-geometry; cf.\ \cite{Lorscheid22} and \cite{Lorscheid23}. In \cite{Baker-Lorscheid21b}, scheme theory over ordered blueprints is used to construct the moduli space of matroids. In this work, it becomes apparent that matroid theory is insensitive to a part of the structure of an ordered blueprint. From this perspective, a band can be seen as the part of an ordered blueprint that is essential to matroid theory. 
 
 The precise relation between bands and ordered blueprints is as follows (see \cite{Baker-Lorscheid21b} and \cite{Lorscheid23} for the definition of ordered blueprints).
 
 A band $B$ with null set $N_B$ is naturally a blueprint $B^\oblpr$ with monoid $B$, ambient semiring $B^+$
 and the partial order $\leq$ on $B^+$ that consists of all relations $0\leq \sum a_i$ for which $\sum a_i\in N_B$. This extends naturally to a fully faithful embedding $(-)^\oblpr:\Bands\to\OBlpr$. Consequently, the unique morphism $\Funpm\to B$ into a band $B$ yields a morphism $\Funpm\to B^\oblpr$ of ordered blueprints, where we denote the ordered blueprint associated with $\Funpm$ by the same symbol $\Funpm$ by abuse of notation. This shows that the image of $(-)^\oblpr$ is contained in the category $\OBlpr_\Funpm$ of $\Funpm$-algebras, which is a full subcategory of $\OBlpr$.
 
 The embedding $(-)^\oblpr:\Bands\to\OBlpr_\Funpm$ has a left-inverse and right-adjoint $(-)^\band:\OBlpr_\Funpm\to\Bands$, which sends an ordered blueprint $B$ to the band $B^\band=B$ with null set
 \[\textstyle
  N_{B} \ = \ \big\{ \sum a_i \, \big| \, 0\leq \sum a_i \big\}.
 \]

 There is a second natural functor $(-)^\rev:\Bands\to\OBlpr$, which sends a band $B$ to the ordered blueprint $(B,B^+,\leq')$ whose partial order $\leq'$ is generated by all relations $b\leq'\sum a_i$ for which $-b+\sum a_i\in N_B$. This ordered blueprint is \emph{reversible}\footnote{The term reversible stems from the analogy with the reversibility axiom of hyperrings.} in the sense that $b\leq'\sum a_i$ if and only if $0\leq' -b+\sum a_i$, cf.\ \cite[Def.\ 5.6.32]{Lorscheid18}. In other words, the image of $(-)^\rev$ is contained in the full subcategory $\OBlpr^\rev$ of $\OBlpr$ that consists of all reversible ordered blueprints. Note that $\OBlpr^\rev$ is a subcategory of $\OBlpr_\Funpm$.
 
 The restriction to $(-)^\rev:\FBands\to\OBlpr^\rev$ is a fully faithful embedding of categories. Note that if $B$ is a reversible ordered blueprint, then $B^\band$ is a fusion band. Therefore the restriction of $(-)^\band$ to $(-)^\band:\OBlpr^\rev\to\FBands$ is a left-inverse and right-adjoint to $(-)^\rev$.
 
 In conclusion, the composition $(-)^\band\circ(-)^\rev:\Bands\to\FBands$ is a left adjoint to the inclusion $\FBands\to\Bands$, which finds an alternative description in terms of \autoref{prop: band quotients}: it sends a band $B$ to the fusion band $B^\fuse=\bandgennquot{B}{N_B}$; cf.\ \autoref{rem: reflection onto fusion bands}. In particular, $\FBands$ is a reflective subcategory of $\Bands$.

\subsubsection{Conclusion}

We summarize the previous discussions in the commutative diagram of \autoref{fig: diagram of subcategories}, which extends the diagram from \cite[Thm.\ 2.21]{Baker-Lorscheid21b}. All arrows are fully faithful embeddings of categories, and the diagram commutes. 
Note that the top two rows of the diagram feature ``field-like'' objects, in the sense that all nonzero elements are invertible, and the lower two rows consider generalizations of rings.

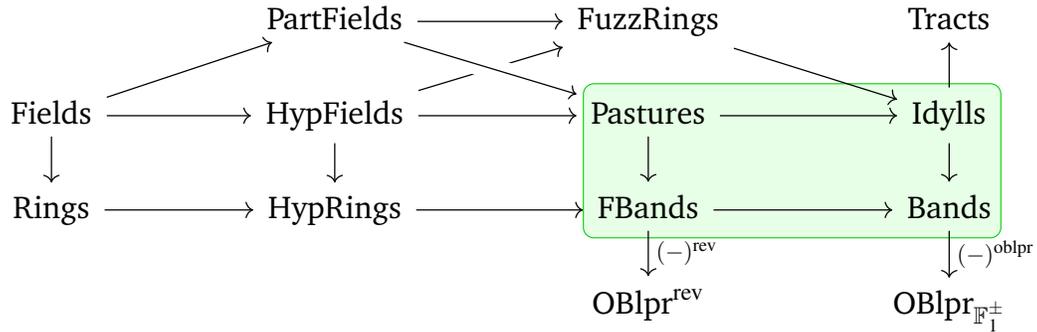
\begin{figure}[htb]
 \[
  \begin{tikzcd}[column sep=55pt, row sep=15pt, execute at end picture={\begin{scope}[on background layer]\draw[rounded corners,draw=green!80!black,fill=green!10] (1.0,-0.92) rectangle (6.55,1.13);\end{scope};}]
                                 & \PartFields \ar[r]               & \FuzzRings \ar[dr] & \Tracts\\
   \Fields \ar[ur] \ar[r] \ar[d] & \HypFields \ar[ur] \ar[r] \ar[d] & \Pastures \ar[r] \ar[d] \ar[<-,ul,crossing over] & \Idylls \ar[d] \ar[u] \\
   \Rings \ar[r]                 & \HypRings \ar[r]                & \FBands \ar[r] \ar[d,"(-)^\rev"]  & \Bands \ar[d,"(-)^\oblpr"] \\
                                 &                                  & \OBlpr^\rev                          & \OBlpr_\Funpm
  \end{tikzcd}
 \] 
 \caption{A commutative diagram of fully faithful embeddings of categories}
 \label{fig: diagram of subcategories}
\end{figure}

\subsection{Free algebras}
\label{subsection: free algebras}

Let $k$ be a band. A \emph{$k$-algebra} is a band $B$ together with a morphism $\alpha_B:k\to B$, which we call the \emph{structure map}. A ($k$-linear) morphism of $k$-algebras is a band morphism $f:B\to C$ between $k$-algebras that commutes with the respective structure maps, i.e.,\ $\alpha_C=f\circ\alpha_B$. This defines the category $\Alg_k$ of $k$-algebras. If $k$ is a ring, then we denote by $\Alg_k^+$ the full subcategory of $\Alg_k$ whose objects are $k$-algebras in the usual sense, i.e.,\ rings $R$ together with a ring homomorphism $k\to R$.

Let $B$ be a band, and let $I$ be an index set. We define the \emph{free $B$-algebra} in $\{x_i\mid i\in I\}$ as follows. Its underlying set is the free pointed monoid
\[ \textstyle
 B[x_i] \ = \ B[x_i \mid i\in I] \ = \ \big\{ a \cdot \prod_{i\in I} x_i^{n_i} \, \big| \, a\in B,\ (n_i)\in\bigoplus_{i\in I}\N \big\}
\]
generated by $\{x_i\}$ over $B$, where $0\cdot \prod_{i\in I} x_i^{n_k}$ is identified with $0$ for all $(n_i)\in\bigoplus_{i\in I}\N$. We write $a$ for $a\cdot\prod x_i^0$ and denote by $\iota:B\hookrightarrow B[x_i]$ the tautological inclusion, which extends to an inclusion $\iota^+:B^+\hookrightarrow B[x_i]^+$ of semirings. The null set of $B[x_i]$ is 
\[ \textstyle
 N_{B[x_i]} \ = \ \gen{\iota^+(N_B)}_{B[x_i]^+} \ = \ \gen{\sum a_i\in B[x_i]^+\mid \sum a_i\in N_B}_{B[x_i]^+}.
\]

Note that by definition, $\iota:B\to B[x_i]$ is a band morphism. We write $x_i$ for $1\cdot\prod x_j^{n_j}$ with $n_j=\delta_{i,j}$ and denote by $\iota_0:\{x_i\}\hookrightarrow B[x_i]$ the tautological inclusion. The free algebra $B[x_i]$ together with $\iota$ and $\iota_0$ satisfies the following universal property.

\begin{prop}\label{prop: universal property of the free algebra}
 For every band morphism $\alpha_C: B \to C$ and for every map $f_0:\{x_i\}\to C$, there is a unique band morphism $f: B[x_i] \to C$ such that $f_0=f\circ\iota_0$ and $\alpha_C=f\circ\iota$, i.e.,\ the diagrams 
 \[
  \begin{tikzcd}[column sep=60]
   \{x_i\} \arrow{dr}{f_0} \arrow[swap]{d}{\iota_0} \\
   B[x_i] \arrow[swap,dashed]{r}{f} & C 
  \end{tikzcd}
  \hspace{1.5cm}\text{and}\hspace{1.5cm}
  \begin{tikzcd}[column sep=60]
   B \arrow{dr}{\alpha_C} \arrow[swap]{d}{\iota} \\
   B[x_i] \arrow[swap,dashed]{r}{f} & C 
  \end{tikzcd}
 \]
commute. 
\end{prop}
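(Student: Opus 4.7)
The plan is to first construct $f$ as a morphism of pointed monoids by the formula that is forced upon us by the two commutative diagrams, and then verify that this pointed monoid morphism is automatically a band morphism using \autoref{lemma: morphisms can be tested on generators}, exploiting the fact that $N_{B[x_i]}$ was defined as a $B[x_i]^+$-ideal generated by $\iota^+(N_B)$.

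Concretely, I would define $f:B[x_i]\to C$ on monoid elements by the rule
\[\textstyle
 f\bigl(a\cdot\prod_{i\in I}x_i^{n_i}\bigr) \ = \ \alpha_C(a)\cdot\prod_{i\in I}f_0(x_i)^{n_i},
\]
with $f(0)=0$. This is well-defined since the identification $0\cdot\prod x_i^{n_i}\sim 0$ in $B[x_i]$ is compatible with $\alpha_C(0)=0$ in $C$. Multiplicativity is immediate from the multiplicativity of $\alpha_C$ together with commutativity of $C$, and $f(1)=\alpha_C(1)\cdot 1=1$. By inspection, $f\circ\iota=\alpha_C$ and $f\circ\iota_0=f_0$, so both triangles commute.

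To upgrade $f$ from a morphism of pointed monoids to a band morphism, I would invoke \autoref{lemma: morphisms can be tested on generators}: since $N_{B[x_i]}=\gen{\iota^+(N_B)}_{B[x_i]^+}$, it suffices to check that for every generator, i.e.,\ for every $\sum a_j\in N_B$ with $a_j\in B$, the image $f^+(\sum \iota(a_j))=\sum\alpha_C(a_j)$ lies in $N_C$. This is exactly the statement that $\alpha_C:B\to C$ is a band morphism, so it holds by hypothesis.

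For uniqueness, any band morphism $g:B[x_i]\to C$ with $g\circ\iota=\alpha_C$ and $g\circ\iota_0=f_0$ is determined on the generating set $\iota(B)\cup\iota_0(\{x_i\})$, hence on all of $B[x_i]$ by multiplicativity; this forces $g=f$. I do not expect any genuine obstacle in this proof --- the construction is the standard free-object argument, and the key design feature making it work is precisely that the null set of $B[x_i]$ was chosen to be the smallest $B[x_i]^+$-ideal extending $N_B$, so no additional compatibility needs to be verified beyond what is already guaranteed by $\alpha_C$ being a band morphism.
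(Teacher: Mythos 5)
Your proof is correct and follows essentially the same route as the paper: define $f$ by the forced formula $f(a\prod x_i^{n_i})=\alpha_C(a)\prod f_0(x_i)^{n_i}$, note this gives uniqueness and a well-defined monoid morphism by freeness, and then verify the band-morphism condition on the generators $\sum\iota(a_j)$ of $N_{B[x_i]}$ via \autoref{lemma: morphisms can be tested on generators}, using that $\alpha_C$ is a band morphism. No gaps.
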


\begin{proof}
 Since $f_0=f\circ\iota_0$ and $\alpha_C=f\circ\iota$, the band morphism $f: B[x_i] \to C$ satisfies $f(a\prod x_i^{n_i})=\alpha_C(a)\prod f_0(x_i)^{n_i}$ if it exists. This establishes uniqueness, and if we can show that $f$ is a well-defined band morphism, then it follows by definition that $f_0=f\circ\iota_0$ and $\alpha_C=f\circ\iota$.
 
 Since $B[x_i]$ is freely generated by $\{x_i\}$ as a pointed monoid over $B$, the map $f$ is a well-defined monoid morphism. Consider a generator $\sum\iota(a_i)$ of $N_{B[x_i]}$, i.e.,\ $\sum a_i\in N_B$. Then $\sum f(\iota(a_i))=\sum\alpha_C(a_i)\in N_C$ since $\alpha_C$ is a band morphism. By \autoref{lemma: morphisms can be tested on generators}, $f:B[x_i]\to C$ is a band morphism, as claimed.
\end{proof}

\subsubsection{Monoid algebras}\label{subsubsection: monoid algebras}
The construction of the free algebras generalizes to monoid algebras over $B$, which we describe briefly in the following. Given a band $B$ and a (multiplicatively written) commutative monoid $A$, we define the \emph{monoid algebra of $A$ over $B$} as the pointed monoid $B[A]=(B\times A)/\sim$ where the equivalence relation $\sim$ is generated by $(0,a)\sim (0,a')$ for all $a,a'\in A$. We write $ba=b\cdot a$ for the equivalence class of $(b,a)$ in $B[A]$. The product of $B[A]$ is given by $(ba)\cdot(b'a')=(bb')(aa')$, its unit is $1=1\cdot 1$, and its zero is $0=0\cdot 1$. The map $\iota:B\to B[A]$ with $\iota(b)=b\cdot 1$ is an inclusion of pointed monoids, and we write simply $b$ for $\iota(b)=b\cdot 1$. 

The pointed monoid $B[A]$ comes with the null set $N_{B[A]}=\gen{\iota^+(N_B)}_{B[A]^+}$, which turns $\iota:B\to B[A]$ into a band morphism and $B[A]$ into a $B$-algebra. The map $\iota_A:A\to B[A]$ with $\iota_A(a)=1\cdot a$ is a monoid morphism. 

The $B$-algebra $B[A]$ together with $\iota_A$ satisfies the following universal property: given a $B$-algebra $C$, with structure map $\alpha_C:B\to C$, and a monoid morphism $f_A:A\to C$, there is a unique  $B$-linear map $f:B[A]\to C$ such that $f_A=f\circ\iota_A$, i.e., the diagram 
 \[
  \begin{tikzcd}[column sep=60]
   A \arrow{dr}{f_A} \arrow[swap]{d}{\iota_A} \\
   B[A] \arrow[swap,dashed]{r}{f} & C 
  \end{tikzcd}
 \]
commutes. This can be proved in a similar way as \autoref{prop: universal property of the free algebra}.

We recover the free algebra $B[x_i]$ as the monoid algebra $B[A]$ for the monoid $A$ of finite products $x_{i_1}\dotsb x_{i_r}$ of the symbols $x_i$. More general monoid algebras appear in the theory of toric band schemes; cf.\ \autoref{ex: toric band schemes}.

\subsection{Quotients}

A quotient of a band $B$ is an isomorphism class of surjections $\pi:B\to C$. In this section, we study the quotients of $B$.

\begin{df}\label{def: fusion ideals}
 A \emph{null ideal} of $B$ is an ideal $I$ of $B^+$ that contains $N_B$ and satisfies the \emph{substitution rule}
 \begin{enumerate}[label=\rm(SR)] 
  \item\label{SR} if $a-c\in I$ and $c+\sum b_j\in I$, then $a+\sum b_j\in I$.
 \end{enumerate}
 A \emph{fusion ideal} or \emph{$f$-ideal} of $B$ is a null ideal that satisfies the \emph{fusion rule}
 \begin{enumerate}[label=\rm(FR)]    
  \item\label{FR} if $\sum a_i-c\in I$ and $c+\sum b_j\in I$, then $\sum a_i + \sum b_j\in I$. 
 \end{enumerate}
 Given a subset $S$ of $B^+$, we denote by $\gen{S}$ the null ideal generated by $S$ and by $\genn{S}$ the fusion ideal generated by $S$.
\end{df}
 
Note that the substitution rule \ref{SR} is a special case of the fusion rule \ref{FR}.

\begin{df}
 Let $f:B\to C$ be a band morphism. The \emph{null kernel of $f$} is the subset
 \[\textstyle
  \nullker f \ = \ \big\{ \sum a_i\in B^+ \, \big| \, \sum f(a_i)\in N_C \big\}
 \]
 of $B^+$.
\end{df}

\begin{lemma}\label{lemma:nullker}
 The null kernel $\nullker f$ of a band morphism $f:B\to C$ is a null ideal. If $C$ is a fusion band, then $\nullker f$ is a fusion ideal.
\end{lemma}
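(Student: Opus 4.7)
The plan is to verify directly that $\nullker f$ satisfies each of the defining conditions of a null ideal: being an ideal of $B^+$, containing $N_B$, and satisfying the substitution rule. For the fusion case an additional verification of the fusion rule is needed.

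First I would check the ideal axioms for $\nullker f \subseteq B^+$. Since $f(0) = 0 \in N_C$, the empty sum lies in $\nullker f$. Closure under addition follows because $N_C$ is closed under addition in $C^+$: given $\sum a_i, \sum b_j \in \nullker f$, the images $\sum f(a_i)$ and $\sum f(b_j)$ both lie in $N_C$, so does their sum. Closure under multiplication by $B$ is similar, using that $f$ is multiplicative and that $N_C$ is stable under multiplication by $C$. The containment $N_B \subseteq \nullker f$ is precisely the defining property of a band morphism.

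For the substitution rule, suppose $a - c \in \nullker f$ and $c + \sum b_j \in \nullker f$. The first condition reads $f(a) + (-f(c)) \in N_C$. Since $C$ is itself a band, the uniqueness of additive inverses (applied in $C$) forces $-f(c) = -f(a)$, hence $f(a) = f(c)$, using $-(-x) = x$ as established in \autoref{lemma: first properties of bands}. The second condition then yields $f(a) + \sum f(b_j) = f(c) + \sum f(b_j) \in N_C$, so $a + \sum b_j \in \nullker f$, as required.

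Finally, when $C$ is a fusion band, the fusion rule for $\nullker f$ transfers directly from the fusion axiom in $C$: given $\sum a_i - c \in \nullker f$ and $c + \sum b_j \in \nullker f$, the $f$-images $-f(c) + \sum f(a_i)$ and $f(c) + \sum f(b_j)$ both lie in $N_C$, and the fusion axiom applied in $C$ with respect to $f(c)$ yields $\sum f(a_i) + \sum f(b_j) \in N_C$, i.e., $\sum a_i + \sum b_j \in \nullker f$. The only mild subtlety here is noticing that the substitution rule requires no additional hypothesis beyond $C$ being a band, since in any band the condition $x + (-y) \in N_C$ for $x, y \in C$ already forces $x = y$ by uniqueness of additive inverses; everything else is a straightforward transfer of axioms from $N_C$ to $\nullker f$ along $f$.
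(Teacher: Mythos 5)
Your proof is correct and follows essentially the same route as the paper: both reduce the substitution rule to the observation that $f(a)-f(c)\in N_C$ forces $f(a)=f(c)$ by uniqueness of additive inverses in $C$, and both transfer the fusion rule directly from the fusion axiom in $C$. The only cosmetic difference is that the paper checks the ideal axioms by noting $\nullker f=(f^+)^{-1}(N_C)$ and that preimages of ideals under semiring homomorphisms are ideals, whereas you verify them by hand.
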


\begin{proof}
 The band morphism $f$ extends uniquely to a semiring homomorphism $f^+:B^+\to C^+$ via $f^+(\sum a_i)=\sum f(a_i)$. The null kernel of $f$ is nothing other than the inverse image of $N_C$  under $f^+$. Since the inverse image of an ideal is an ideal, $\nullker f$ is an ideal. In order to verify the substitution rule \ref{SR}, let $a-c$ and $c+\sum b_j$ be in $\nullker f$, i.e.,\ $f(a)-f(c)\in N_C$ and $f(c)+\sum f(b_j)\in N_C$. We conclude that $f(a)=f(c)$ in $C$ and thus $f(a)+\sum f(b_j)\in N_C$. This shows that $a+\sum b_j\in\nullker f$, as desired.
 
 Let $C$ be a fusion band and let $\sum a_i-c$ and $c+\sum b_j$ be in $\nullker f$. Then $\sum f(a_i)-f(c)$ and $f(c)+\sum f(b_j)$ in $N_C$. By the fusion axiom \ref{F}, which holds in $C$, we have $\sum f(a_i) + \sum f(b_j)\in N_C$. Thus $\sum a_i + \sum b_j\in \nullker f$, which shows that the null kernel of $f$ is a fusion ideal.
\end{proof}

Recall from \cite[Def.\ 2.1]{Lorscheid-Ray23} that a congruence on a pointed monoid $B$ is an equivalence relation $\sim$ on $B$ such that $a\sim b$ implies $ac\sim bc$ for all $a,b,c\in B$. The product $[a]\cdot[b]=[ab]$ of equivalence classes is independent of the choice of representatives, and turns $B/\sim$ into a pointed monoid.

\begin{prop}\label{prop: band quotients}
 Let $B$ be a band and $I$ a null ideal. 
 \begin{enumerate}
  \item\label{quot1} The relation $\sim$ on $B$ with $a\sim b$ if $a-b\in I$ is a congruence on $B$.
  \item\label{quot2} The monoid $\bandquot BI=B/\sim$ together with the null set
        \[\textstyle
         N_{\bandquot BI} \ = \ \big\{ \sum [a_i] \, \big| \, \sum a_i \in I \big\}
        \]
        is a band. It is a fusion band if and only if $I$ is a fusion ideal.
  \item\label{quot3} The quotient map $\pi_I:B\to \bandquot BI$ is a band morphism with null kernel $I$.
 \end{enumerate}
\end{prop}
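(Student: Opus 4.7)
My plan is to prove the three parts in order, using the substitution rule \ref{SR} as the workhorse throughout.

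For part \eqref{quot1}, I verify that $\sim$ is a congruence. Reflexivity is immediate since $a - a \in N_B \subseteq I$. Symmetry follows because $I$ is a $B^+$-ideal, so $-1 \cdot (a - b) = b - a$ lies in $I$ whenever $a - b$ does. Transitivity is the key step: given $a - b \in I$ and $b - c \in I$, I apply \ref{SR} with ``$a$'' being $a$, ``$c$'' being $b$, and the single summand ``$\sum b_j$'' being $-c$; the hypothesis $a - b \in I$ gives the first input, $b + (-c) = b - c \in I$ the second, and the conclusion is $a + (-c) = a - c \in I$. Multiplicative compatibility is automatic because $I$ is an ideal of $B^+$.

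For part \eqref{quot2}, I first check that $N_{\bandquot{B}{I}}$ is well defined: if $[a_i] = [a_i']$ for every $i$ and $\sum a_i \in I$, then $\sum a_i' \in I$. I prove this by replacing one summand at a time, each step being a direct application of \ref{SR} (taking ``$a$'' to be $a_i'$, ``$c$'' to be $a_i$, and ``$\sum b_j$'' to be the remaining summands). That $N_{\bandquot{B}{I}}$ is an ideal of $(\bandquot{B}{I})^+$ then follows routinely from the corresponding properties of $I$. For the unique-inverse axiom: existence comes from $[a] + [-a] = [a-a] \in N_{\bandquot{B}{I}}$, using $a - a \in N_B \subseteq I$; for uniqueness, if $[a] + [b] \in N_{\bandquot{B}{I}}$ then $a + b \in I$ by well-definedness, hence $b - (-a) = b + a \in I$, so $[b] = [-a]$. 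Finally, the fusion assertion is a direct unwinding: the fusion axiom \ref{F} for $\bandquot{B}{I}$ translates, via the definition of $N_{\bandquot{B}{I}}$ and well-definedness, into precisely the fusion rule \ref{FR} for $I$, so the two conditions are equivalent.

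For part \eqref{quot3}, by construction $\pi_I$ is a pointed-monoid homomorphism, and it is a band morphism because $N_B \subseteq I$ gives $\sum a_i \in N_B \Rightarrow \sum [a_i] \in N_{\bandquot{B}{I}}$. The null kernel identity is then tautological: $\nullker \pi_I = \{\sum a_i \in B^+ \mid \sum [a_i] \in N_{\bandquot{B}{I}}\}$, which equals $I$ by the definition of $N_{\bandquot{B}{I}}$ together with the well-definedness established in part \eqref{quot2}.

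The only mildly delicate point is the well-definedness of $N_{\bandquot{B}{I}}$ in part \eqref{quot2}; everything else is either a rule-matching exercise with \ref{SR} and \ref{FR} or a direct unwinding of definitions. No further technology beyond the axioms of a null ideal is needed.
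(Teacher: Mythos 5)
Your proof is correct and follows essentially the same route as the paper's: well-definedness of $N_{\bandquot BI}$ via iterated applications of \ref{SR} replacing one representative at a time, with the remaining claims obtained by unwinding the definitions. If anything you are slightly more thorough than the paper, whose proof of \eqref{quot1} only checks multiplicative compatibility and leaves the equivalence-relation axioms (in particular transitivity via \ref{SR}) implicit, and which only spells out the ``if'' direction of the fusion biconditional in \eqref{quot2}.
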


\begin{proof}
 Let $a\sim b$ and $c\in B$. Then $a-b\in I$, hence $ac-bc\in I$, and thus $ac\sim bc$, which shows that $\sim$ is a congruence on $B$. This establishes \eqref{quot1}.
 
 Let $\sum a_i\in I$ and $a_i\sim b_i$ for all $i$, i.e.,\ $a_i-b_i\in I$. Applying the substitution rule \ref{SR} iteratively to all $a_i-b_i$ shows that $\sum b_i\in I$. This shows that $\sum [a_i]\in N_{\bandquot BI}$ if and only if $\sum a_i\in I$, independent of the choice of representatives $a_i$ for $[a_i]$.
 
 Choosing representatives allows us to conclude that $N_{\bandquot BI}$ is an ideal from the corresponding property of $I$. We have $[a] + [b]\in N_{\bandquot BI}$ if and only if $a + b\in I$, which is equivalent to $[b]=-[a]$ in $\bandquot BI$. This shows that $\bandquot BI$ is a band, which establishes the first claim of \eqref{quot2}. If $I$ is a fusion ideal, then the fusion axiom of $N_{\bandquot BI}$ can be deduced from the fusion axiom for $I$ by choosing representatives, thus the second claim of \eqref{quot2}. 
 
 The map $\pi_I$ is a band morphism by the definition of $\bandquot BI$. Since $\pi_I(a)=[a]$, its null kernel is $\{\sum a_i\mid\sum [a_i]\in N_{\bandquot BI}\}=I$, from which \eqref{quot3} follows.
\end{proof}

For a subset $S$ of $B^+$, we denote by $\bandgenquot BS$ the quotient of $B$ by the null ideal $\gen S$ generated by $S$, and by $\bandgennquot BS$ the quotient of $B$ by the fusion ideal $\genn S$ generated by $S$.

\begin{prop}\label{prop: universal property of the quotient}
 Let $B$ be a band and $S\subset B^+$. Every morphism $f:B\to C$ into a band $C$ with $\sum f(a_i)\in N_C$ for $\sum a_i\in S$ factors uniquely through the quotient map $\pi_{\gen S}:B\to \bandgenquot BS$. If $C$ is a fusion band, then $f$ factors uniquely through the quotient map $\pi_{\genn S}:B\to\bandgennquot BS$.
\end{prop}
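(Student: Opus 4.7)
The strategy is to leverage the null kernel construction from \autoref{lemma:nullker} together with the quotient construction from \autoref{prop: band quotients}. Specifically, I would observe that the hypothesis ``$\sum f(a_i)\in N_C$ for every $\sum a_i\in S$'' is precisely the statement that $S\subset\nullker f$. Since $\nullker f$ is a null ideal of $B$ by \autoref{lemma:nullker}, and $\gen S$ is by definition the smallest null ideal containing $S$, we obtain the inclusion $\gen S\subset\nullker f$. In the fusion case, if $C$ is a fusion band then $\nullker f$ is a fusion ideal by the second part of \autoref{lemma:nullker}, so analogously $\genn S\subset\nullker f$.

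Next I would define the factorization $\bar f:\bandgenquot BS\to C$ on underlying sets by $\bar f([a])=f(a)$. Well-definedness amounts to: if $[a]=[b]$ in $\bandgenquot BS$, then $a-b\in\gen S\subset\nullker f$, so $f(a)-f(b)\in N_C$, which forces $f(a)=f(b)$ by uniqueness of additive inverses in $C$ (as $f(b)$ and $-f(a)$ would both be additive inverses of $f(a)$, using $(-1)^2=1$ and \autoref{lemma: first properties of bands}). Multiplicativity and the conditions $\bar f(0)=0$, $\bar f(1)=1$ are inherited directly from $f$ via the formula, since $\pi_{\gen S}$ is multiplicative and surjective.

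To verify that $\bar f$ is a band morphism, I would check the null set condition: if $\sum [a_i]\in N_{\bandgenquot BS}$, then by the definition of the null set in \autoref{prop: band quotients}\eqref{quot2} we have $\sum a_i\in\gen S\subset\nullker f$, whence $\sum\bar f([a_i])=\sum f(a_i)\in N_C$. Uniqueness of the factorization is immediate: any band morphism $g:\bandgenquot BS\to C$ with $g\circ\pi_{\gen S}=f$ must satisfy $g([a])=g(\pi_{\gen S}(a))=f(a)=\bar f([a])$, and the quotient map is surjective. The fusion case is handled by replacing $\gen S$ with $\genn S$ throughout; no step requires modification since the key input (that $\nullker f$ is a fusion ideal when $C$ is a fusion band) is already available.

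No step seems to present a real obstacle; the only subtlety worth spelling out carefully is the use of uniqueness of additive inverses in $C$ to pass from $f(a)-f(b)\in N_C$ to $f(a)=f(b)$, which is exactly what justifies that the map $\bar f$ is well-defined at the level of pointed monoids before one verifies the null-set compatibility.
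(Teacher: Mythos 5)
Your proof is correct and follows essentially the same route as the paper: both reduce everything to the fact that $\nullker f$ is a null ideal (resp.\ fusion ideal) containing $S$, hence contains $\gen S$ (resp.\ $\genn S$), and then define $\bar f([a])=f(a)$ and check well-definedness, multiplicativity, the null-set condition, and uniqueness via surjectivity of the quotient map. If anything, your treatment of the fusion case is slightly more streamlined, since you cite the fusion-ideal part of \autoref{lemma:nullker} directly where the paper re-verifies the fusion closure of $\nullker f$ inline.
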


\begin{proof}
 Since $\pi_{\gen S}:a\mapsto[a]$ is surjective, the only candidate for a morphism $\bar f:\bandgenquot BS\to C$ with $f=\bar f\circ\pi_{\gen S}$ is given by $\bar f([a])=f(a)$, which establishes the uniqueness claim. We continue with the verification that $\bar f$ is a well-defined band morphism.
 
 Let $[a]=[b]$ in $\bandgenquot BS$, i.e.,\ $a-b\in \gen S$. By assumptions $S\subset (f^+)^{-1}(N_C)$. By \autoref{lemma:nullker}, $(f^+)^{-1}(N_C)=\nullker f$ is a null ideal of $B$ and thus contains $\gen S$. This shows that $f(a)-f(b)\in N_C$ and thus $f(a)=f(b)$. Thus $\bar f([a])=f(a)=f(b)=\bar f([b])$ is independent of the choice of representative for $[a]=[b]$. We have $\bar f([1])=f(1)=1$ and $\bar f([a]\cdot [b])=f(ab)=f(a)f(b)=\bar f([a])\bar f([b])$ for $a,b\in B$, which shows that $\bar f$ is a monoid morphism. If 
 $\sum[a_i]\in N_{\bandgenquot BS}$, i.e.,\ $\sum a_i\in \gen S$, then $\sum\bar f([a_i])=\sum f(a_i)\in N_C$, which completes the proof that $\bar f$ is indeed a band morphism.
 
 Let $C$ be a fusion band. Then $\bar f:\bandgennquot BS\to C$ is a well-defined morphism of pointed monoids for the same reasons as above, but with \autoref{lemma:nullker} applied to the fusion kernel of $f$. In fact, the underlying monoids of $\bandgenquot BS$ and $\bandgennquot BS$ agree. Let $\sum[a_i]-[c],\ [c]+\sum[b_j]\in N_{\bandgennquot BS}$. Then $\sum\bar f([a_i])-\bar f([c])=\sum f(a_i)-f(c)$ and $\bar f([c])+\sum\bar f([b_j])=f(c)+\sum f(b_j)$ are in $N_C$, and therefore $\sum\bar f([a_i])+\sum\bar f([b_j])\in N_C$ by the fusion axiom \ref{F}. This shows that $f$ factors (necessarily uniquely) through $\pi_{\genn S}:B\to\bandgennquot BS$.
\end{proof}

\begin{cor}\label{cor: bijection between null ideals and quotients}
 The association $I\mapsto \bandquot BI$ establishes a bijection
 \[
  \Phi: \ \big\{\text{null ideals of $B$} \big\} \ \longrightarrow \ \{ \text{quotients of $B$}\big\}
 \]
 whose inverse $\Psi$ sends a quotient $[\pi:B\onto C]$ to the null kernel of $\pi$. 
\end{cor}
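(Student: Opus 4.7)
The plan is to verify directly that $\Phi$ and $\Psi$ are mutually inverse; all the substantive work has already been done in \autoref{prop: band quotients} and \autoref{prop: universal property of the quotient}, so this is essentially a matter of chasing definitions.

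First, $\Psi \circ \Phi = \id$ is immediate: given a null ideal $I$, the quotient $\Phi(I) = [\pi_I : B \to \bandquot{B}{I}]$ has null kernel $I$ by \autoref{prop: band quotients}(3), so $\Psi(\Phi(I)) = I$.

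For $\Phi \circ \Psi = \id$, I would start with a surjective band morphism $\pi : B \onto C$, set $I = \nullker \pi$, and use \autoref{prop: universal property of the quotient} (with $S = I$, so that $\gen{S} = I$ automatically) to produce a unique band morphism $\bar\pi : \bandquot{B}{I} \to C$ satisfying $\pi = \bar\pi \circ \pi_I$. The task then reduces to showing that $\bar\pi$ is an isomorphism of bands, since in that case $[\pi]$ and $[\pi_I] = \Phi(I) = \Phi(\Psi([\pi]))$ represent the same quotient. Surjectivity of $\bar\pi$ follows from the surjectivity of $\pi$. For injectivity, if $\bar\pi([a]) = \bar\pi([b])$, i.e.\ $\pi(a) = \pi(b)$, then $\pi(a) + \pi(-b) = \pi(a) + (-\pi(a)) \in N_C$ by the defining property of the additive inverse in $C$, so $a - b \in \nullker \pi = I$ and hence $[a] = [b]$.

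It remains to check that the set-theoretic inverse of $\bar\pi$ is also a band morphism, i.e.\ that null relations descend from $C$ to $\bandquot{B}{I}$. But if $\sum \bar\pi([a_i]) = \sum \pi(a_i) \in N_C$, then by definition $\sum a_i \in \nullker\pi = I$, and hence $\sum [a_i] \in N_{\bandquot{B}{I}}$ by the description of the null set of the quotient given in \autoref{prop: band quotients}(2). This shows $\bar\pi$ is an isomorphism of bands and completes the verification.

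There is no real obstacle: once the universal property of the quotient is available, the argument amounts to unwinding the definitions of null kernel and of the null set on $\bandquot{B}{I}$. The only point that warrants care is recognizing that injectivity of $\bar\pi$ is controlled by $I = \nullker\pi$ exactly because the band axiom forces $\pi(a) = \pi(b)$ to entail $\pi(a - b) \in N_C$.
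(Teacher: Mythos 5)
Your proposal is correct and follows essentially the same route as the paper: $\Psi\circ\Phi=\id$ via \autoref{prop: band quotients}, then for $\Phi\circ\Psi=\id$ the induced map $\bar\pi:\bandquot{B}{I}\to C$ from \autoref{prop: universal property of the quotient} is shown to be surjective, injective via $a-b\in\nullker\pi$, and an isomorphism of bands by checking that null relations in $C$ pull back to $N_{\bandquot{B}{I}}$. The only difference is that you spell out the injectivity step ($\pi(-b)=-\pi(b)$, hence $\pi(a)-\pi(b)\in N_C$) slightly more explicitly than the paper does.
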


\begin{proof}
 By \autoref{prop: band quotients}, the null kernel of $\pi_I:B\to\bandquot BI$ is $I$, which shows that $\Psi\circ\Phi=\id$. Conversely, suppose we are given a surjective band morphism $\pi:B\to C$. Let $I$ be its null kernel and $\pi_I:B\to\bandquot BI$ the quotient map. Then $\sum\pi(a_i)\in N_C$ for $\sum a_i\in I$, which determines a unique band morphism $\bar\pi:\bandquot BI\to C$ with $\pi=\bar\pi\circ\pi_I$ by \autoref{prop: universal property of the quotient}. 
 
 Since $\pi$ is surjective, also $\bar\pi$ is surjective. Given $[a],[b]\in\bandquot BI$ with $\pi(a)=\bar\pi([a])=\bar\pi([b])=\pi(b)$, we find $a-b\in I$ and thus $[a]=[b]$ in $\bandquot BI$. This shows that $\bar\pi:\bandquot BI\to C$ is a bijection. If $\sum[a_i]\in N_{\bandquot BI}$, then $\sum\bar\pi([a_i])\in N_C$ since $\bar\pi$ is a band morphism. Conversely, assume that $\sum\pi(a_i)=\sum \bar\pi([a_i])\in N_C$. Then $\sum a_i\in I$ by the definition of $I=\nullker \pi$ and thus $\sum [a_i]\in N_{\bandquot BI}$ by the definition of $\bandquot BI$. This shows that $\bar\pi$ is an isomorphism of bands and thus $\Phi\circ\Psi=\id$.
\end{proof}

\begin{ex}
 As a consequence of the results in this section, every band $B$ can be written as $\bandquot{\Funpm[x_a\mid a\in B]}{I}$ where $I$ is the null kernel of the canonical map $\Funpm[x_a\mid a\in B]\to B$ that sends $x_a$ to $a$. Thus every band can be written in terms of generators and relations over $\Funpm$. Typically it requires less generators than elements in $B$ to describe the band $B$. Some examples are:
 \begin{align*}
   \F_2 &= \bandgenquot{\Funpm}{1+1},     
   & \quad
   \F_4 &= \bandgenquot{\Funpm[x]}{1+1,\ x^3+1,\ x^2+x+1}, 
   \\
   \F_3 &= \bandgenquot{\Funpm}{1+1+1},   
   &\quad
   \F_5 &= \bandgennquot{\Funpm[x]}{x^2+1,\ x-1-1},  
   \\
   \K   &= \bandgenquot{\Funpm}{1+1,\ 1+1+1},    
   & \quad
   \S   &= \bandgennquot{\Funpm}{1+1-1}.
 \end{align*}
 Note that the relation $1+1$, which appears in the null sets of $\F_2$, $\F_4$ and $\K$, implies that $-1=1$. Similarly, $x^2+1$ implies that $x^2=-1$ in $\F_5$.
 
 All of these examples are fusion bands, since both (partial) fields and hyperfields satisfy the fusion axiom. It can be verified directly that the null sets of $\F_2$, $\F_3$, $\F_4$, and $\K$ are generated by the mentioned elements as a null ideal. This is not the case for $\F_5$ and $\S$. The null set of $\F_5$ is finitely generated as a null ideal, but the mentioned elements generate a subideal that does not contain $1+1+1+1+1$, for instance. The null set of $\S$ is not finitely generated as a null ideal since it contains elements of the form $1+\dotsc+1-1$ with an arbitrary (positive) number of summands ``$+1$.''
\end{ex}

The \emph{Laurent algebra over $B$ in $\{x_i\}$} is the $B$-algebra
\[
 B[x_i^{\pm1}] \ = \ B[x_i^{\pm1}\mid i\in I] \ = \ \bandgenquot{B[x_i, y_i\mid i\in I]}{x_iy_i-1\mid i\in I}.
\]
If $B=P$ is a pasture, then $P[x_i^{\pm1}]$ is also a pasture, and we write $P(x_i)=P[x_i^{\pm1}]$ in analogy to the notation for rational function fields.

\begin{ex}
 This notation allows us to express many pastures as quotients of Laurent algebras, such as:
 \begin{align*}
  {} & \text{the \emph{near-regular partial field}}      & \qquad & \U \ = \ \pastgennquot{\Funpm(x,y)}{x+y-1}, \\
  {} & \text{the \emph{dyadic partial field}}       & \qquad & \D \ = \ \pastgennquot{\Funpm(x)}{x+x-1}, \\
  {} & \text{the \emph{golden ratio partial field}} & \qquad & \G \ = \ \pastgennquot{\Funpm(x)}{x^2+x-1}.
 \end{align*}
\end{ex}

\begin{rem}\label{rem: reflection onto fusion bands}
 Note that if $S$ in~\autoref{prop: universal property of the quotient} contains an element of $B^\times$, then $\bandgenquot BS=\{0\}$ is the trivial band. The quotient map $B\to\bandgenquot BS$ is a bijection if and only if every relation of the form $a+b\in S$ is contained in $N_B$, i.e.,\ $b=-a$.
 
 The fusion quotient yields an endofunctor on $\Bands$ that sends a band $B$ to an associated fusion band $\bandgennquot{B}{N_B}$, together with a quotient map $B\to\bandgennquot{B}{N_B}$.
\end{rem}

\subsection{The base extension to rings}
\label{subsection: universal ring}

A band comes with a universal ring, which we describe in the following.

\begin{df}
 Let $B$ be a band with null set $N_B$. The \emph{universal ring of $B$} is the quotient ring $B^+_\Z=\Z[B]/\gen{N_B}$ of the monoid ring $\Z[B]$ by the ideal $\gen{N_B}$. It comes with a multiplicative map $\rho:B\to B^+_\Z$.
\end{df}

\begin{ex}
 The universal ring of $\Funpm$ is $\Z$. The universal ring of $\K$ is the trivial ring $\{0\}$. If $B$ is a band associated with a ring $R$, then the universal ring of $B$ is $R$ itself. The universal ring of a partial field $P$ (considered as a band) is the universal ring in the sense of Pendavingh and van Zwam; cf. \cite{Pendavingh-vanZwam10a}.
\end{ex}

\begin{prop}\label{prop: universal property of the universal ring}
 Let $B$ be a band and $R$ a ring. Then for every band morphism $f:B\to R$, there is a unique ring homomorphism $f^+_\Z:B^+_\Z\to R$ such that $f=f^+_\Z\circ\rho$, i.e.,\ the diagram
 \[
\begin{tikzcd}[column sep=60, row sep=15]
  B \arrow{r}{f} \arrow[swap]{d}{\rho} & R \\
  B^+_\Z \arrow[swap,dashed]{ur}{f^+_\Z} 
  \end{tikzcd}
\]
commutes. In other words, $\Rings$ is a reflective subcategory of $\Bands$ with reflection $(-)^+_\Z:\Bands\to\Rings$.
\end{prop}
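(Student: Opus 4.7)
The plan is to reduce the claim to the universal property of the monoid ring $\Z[B]$ and then check that the ideal $\gen{N_B}$ is killed by the extension of $f$.

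First, I would view the band morphism $f \colon B \to R$ as a morphism of multiplicative monoids (with $f(1)=1$ and $f(0_B)=0_R$), and invoke the universal property of the monoid ring: there exists a unique ring homomorphism $\tilde f \colon \Z[B] \to R$ with $\tilde f(a) = f(a)$ for every $a \in B$ (on a formal sum $\sum n_i a_i$ it is forced to be $\sum n_i f(a_i)$). The next step is to show that $\tilde f$ factors through the quotient $B^+_\Z = \Z[B]/\langle N_B\rangle$. For this I only need to verify that $\tilde f$ vanishes on the generators of the ideal $\langle N_B\rangle$. Given $\sum a_i \in N_B \subset B^+$, the fact that $f$ is a band morphism implies $\sum f(a_i) \in N_R$; but since $R$ is a ring, $N_R = \{\sum r_i \mid \sum r_i = 0 \text{ in } R\}$, so $\tilde f(\sum a_i) = \sum f(a_i) = 0$ in $R$. (The element $0_B \in N_B$ in particular yields $\tilde f(0_B) = 0$, so the identification of $0_B$ with the ring zero in the quotient is consistent.) Therefore $\tilde f$ descends to a ring homomorphism $f^+_\Z \colon B^+_\Z \to R$, and by construction $f = f^+_\Z \circ \rho$.

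For uniqueness, any ring homomorphism $g \colon B^+_\Z \to R$ satisfying $g \circ \rho = f$ must agree with $f^+_\Z$ on the generators $\rho(a)$ for $a \in B$; since these generate $B^+_\Z$ as a ring (indeed already as an abelian group, by the construction of $\Z[B]$), $g = f^+_\Z$. The reflection statement is then a formal consequence: the inclusion $\Rings \hookrightarrow \Bands$ (regarding a ring as a band via the construction in the Examples subsection) has $(-)^+_\Z$ as a left adjoint, with unit $\rho \colon B \to B^+_\Z$ and counit the identity on $R$ (since $R^+_\Z \cong R$ for a ring $R$, which follows by applying the universal property to $\id_R$).

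I do not expect a genuine obstacle here: the essential content is the compatibility $\sum f(a_i) \in N_R \Leftrightarrow \sum f(a_i) = 0$ in $R$, which is exactly the definition of the band structure on a ring. The only mild subtlety to be careful about is the role of $0_B$: it lies in $N_B$ (as the empty sum, which is the additive identity of $B^+$ and hence in every ideal), so passing to the quotient by $\langle N_B\rangle$ automatically identifies the monoid-zero of $B$ with the ring-zero of $B^+_\Z$, making $\rho$ well-defined as a pointed map.
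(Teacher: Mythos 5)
Your proof is correct and follows essentially the same route as the paper's (which appears as the proof of \autoref{prop: universal property of the base extension to rings}): extend $f$ to $\Z[B]$ via the universal property of the monoid ring, observe that generators of $\gen{N_B}$ are sent to $0$ because $N_R=\{\sum r_i\mid \sum r_i=0\}$, and deduce uniqueness from the fact that $\rho(B)$ generates $B^+_\Z$ as a ring. Your handling of $0_B$ (as an element of $N_B$) matches the paper's explicit factoring through $\Z[B]/\gen{0}$, so there is nothing to add.
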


More generally, we define for a band $B$ and a ring $K$ the \emph{universal $K$-algebra of $B$} as the ring $B^+_K=B^+_\Z\otimes_\Z K$, which comes with a structure map $K\to B^+_K$. It satisfies the universal property that every band morphism $f:B\to R$ to a $K$-algebra $R$ (in the usual sense, i.e.,\ $R$ is a ring) extends uniquely to a $K$-linear ring homomorphism $B^+_K\to R$.

\subsection{Localizations}
\label{subsection: localizations}

\begin{df}
 Let $B$ be a band and $S$ a \emph{multiplicative subset} of $B$, which is a subset $S$ of $B$ that contains $1$ and is closed under multiplication. As a pointed monoid, the localization of $B$ in $S$ is 
 \[
  S^{-1}B \ = \ (S\times B)/\sim
 \]
 for the equivalence relation $\sim$ with $(s,a)\sim (s',a')$ if and only if there is a $t\in S$ such that $tsa'=ts'a$. We write $\frac as$ for the equivalence class of $(s,a)$ in $S^{-1}B$. 
 
 The pointed monoid $S^{-1}B$ is equipped with the null set 
 \[\textstyle
  N_{S^{-1}B} \ = \ \gen{ \sum \frac{a_i}1 \mid \sum a_i\in N_B }_{(S^{-1}B)^+},
 \]
 which turns $S^{-1}B$ into a band. It comes with the \emph{localization map} $\iota_S:B\to S^{-1}B$, which is the band morphism defined by $\iota_S(a)=\frac a1$. 
 
 A \emph{finite localization\footnote{The term ``finite localization'' is derived from the fact that $B[h_1^{-1},\dotsc,h_n^{-1}]=B[(h_1\dotsb h_n)^{-1}]$, where $B[h_1^{-1},\dotsc,h_n^{-1}]$ is, \emph{a priori}, defined as $S^{-1}B$ for $S=\{h_1^{e_1}\dotsb h_n^{e_n}\mid e_1,\dotsc,e_n\in\N\}$.}
 of $B$} is a localization of the form $B[h^{-1}]=S^{-1}B$ for $h\in B$ and $S=\{h^i\}_{i\in\N}$. In this case, we write $\iota_h:B\to B[h^{-1}]$ for the localization map.
\end{df}

\begin{prop}\label{prop: universal property of the localization}
Let $B$ be a band and $S$ a multiplicative set in $B$. Let $f: B\to C$ be a band morphism with $f(S)\subset C^\times$. Then there is a unique band morphism $f_S: S^{-1}B \to C$ with $f=f_S\circ\iota_S$, i.e.,\ the diagram
\[
\begin{tikzcd}[column sep=60, row sep=15]
B \arrow{r}{f} \arrow[swap]{d}{\iota_S} & C \\
S^{-1}B \arrow[swap,dashed]{ur}{f_S} 
  \end{tikzcd}
\]
commutes. 
\end{prop}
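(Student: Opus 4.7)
The plan is to define $f_S$ by the forced formula $f_S\bigl(\tfrac{a}{s}\bigr)=f(a)f(s)^{-1}$ and verify that this gives a well-defined band morphism. Uniqueness comes first and essentially dictates the formula: if $f_S$ exists, then $f_S(\tfrac{a}{1})=f(a)$ from $f=f_S\circ\iota_S$, and the identity $\tfrac{a}{s}\cdot\tfrac{s}{1}=\tfrac{a}{1}$ combined with multiplicativity of $f_S$ forces $f_S(\tfrac{a}{s})\cdot f(s)=f(a)$; since $f(s)\in C^\times$, this fixes $f_S(\tfrac{a}{s})=f(a)f(s)^{-1}$.

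Next I would verify that the formula descends to the equivalence classes defining $S^{-1}B$. If $(s,a)\sim(s',a')$, so $tsa'=ts'a$ for some $t\in S$, then applying $f$ yields $f(t)f(s)f(a')=f(t)f(s')f(a)$ in $C$. Because $f(t),f(s),f(s')\in C^\times$, multiplying by their inverses gives $f(a)f(s)^{-1}=f(a')f(s')^{-1}$, so $f_S$ is well defined as a map of the underlying sets. Multiplicativity, $f_S(0)=0$, and $f_S(1)=1$ are then immediate from the corresponding properties of $f$, so $f_S$ is a morphism of pointed monoids.

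The remaining step is to check that $f_S$ respects the null sets. Here I would invoke \autoref{lemma: morphisms can be tested on generators}: by construction $N_{S^{-1}B}$ is generated (as an ideal of $(S^{-1}B)^+$) by the elements of the form $\sum \tfrac{a_i}{1}$ with $\sum a_i\in N_B$. For such a generator, $\sum f_S(\tfrac{a_i}{1})=\sum f(a_i)\in N_C$ because $f$ is a band morphism. The lemma then promotes this to the full statement that $f_S$ sends every element of $N_{S^{-1}B}$ into $N_C$.

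Finally, the commutativity $f=f_S\circ\iota_S$ holds by $f_S(\tfrac{a}{1})=f(a)f(1)^{-1}=f(a)$. The only genuinely delicate point is the well-definedness on equivalence classes, where the auxiliary factor $t$ in the relation $tsa'=ts'a$ could in principle obstruct cancellation; however, because the hypothesis $f(S)\subset C^\times$ makes $f(t)$ invertible, this obstacle dissolves. The remaining verifications are routine applications of the results already established in this section.
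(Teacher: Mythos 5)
Your proposal is correct and follows essentially the same route as the paper: the formula $f_S(\tfrac{a}{s})=f(s)^{-1}f(a)$ is forced by multiplicativity, and the null-set condition is checked on the generators $\sum\tfrac{a_i}{1}$ of $N_{S^{-1}B}$ via \autoref{lemma: morphisms can be tested on generators}. The only difference is that you spell out the well-definedness on equivalence classes (correctly, using invertibility of $f(t)$), a verification the paper explicitly omits as routine.
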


\begin{proof}
 The relation $f_S(\frac as)=f_S(\iota_S(s))^{-1}f_S(\iota_S(a))=f(s)^{-1}f(a)$ determines $f_S$ uniquely. We need to show that $f_S$ is a well-defined band morphism. 
 
 We omit the straight-forward verification that $f_S$ is well-defined and a morphism of pointed monoids (also cf.\ \cite[Exer.~3.6.3]{Lorscheid18}). By \autoref{lemma: morphisms can be tested on generators}, it suffices to show that $f_S$ preserves a generating set for the null set of $S^{-1}B$, which is given by terms of the form $\sum\frac{a_i}1$ for which $\sum a_i\in N_B$. Since $f$ is a band morphism, $\sum f(\frac{a_i}1)=\sum f_S(\frac11)^{-1}f_S(\frac{a_i}1)=\sum f(a_i)$ is in $N_C$, which shows that $f_S$ is a band morphism.
\end{proof}

\begin{rem}
 In analogy to the case of rings, the localization $S^{-1}B$ is isomorphic to $\bandgenquot{B[s^- \mid s \in S]}{ss^--1}$. 
\end{rem}

\subsection{\texorpdfstring{$m$}{m}-Ideals and \texorpdfstring{$k$}{k}-ideals}
\label{subsection: m-ideals and k-ideals}

\begin{df}
Let $B$ be a band. A subset $I \subset B$ is an \emph{$m$-ideal} of $B$ if 
\begin{enumerate}[label=\rm(I\arabic*)]
\item\label{I1} $0 \in I$. 
    
\item\label{I2} $B \cdot I = I$. 
\end{enumerate}
A \emph{$k$-ideal} is an $m$-ideal $I$ that satisfies:
\begin{enumerate}[label=\rm(I3)]
\item\label{I3} if $a + \sum b_i \in N_B$ and $b_i \in I$, then $a \in I$. 
\end{enumerate}
 Given a subset $S$ of $B$, we denote by $\gen{S}_m$ the smallest $m$-ideal and by $\gen{S}_k$ the smallest $k$-ideal of $B$ that contains $S$. 
\end{df}

In the case of pointed monoids, $m$-ideals are simply called ideals; cf.\ \cite{Chu-Lorscheid-Santhanam12} and \cite{Connes-Consani10a}. We use the term $m$-ideal in order to distinguish them from other types of ideals for bands.

\begin{ex}
 The trivial ideal $\gen\emptyset_m=\gen\emptyset_k=\{0\}$ is a $k$-ideal of $B$. 
\end{ex}

\begin{lemma}\label{lemma: m-ideal and k-ideal generated by a subset}
 Let $B$ be a band and $S$ a nonempty subset. Then $\gen{S}_m=\{as\mid a\in B,\ s\in S\}$. Define recursively $\gen{S}_k^{(0)}=S$ and for $n>0$,
 \[\textstyle
  \gen{S}_k^{(n)} \ = \ \{ a\in B\mid a-\sum b_is_i\in N_B\text{ for some }b_i\in B,\ s_i\in \gen{S}_k^{(n-1)}\}.
 \]
 Then $\gen{S}_k=\bigcup_{n\geq0}\gen{S}_k^{(n)}$. If $B$ is a fusion band, then $\gen{S}_k=\gen{S}_k^{(1)}$.
\end{lemma}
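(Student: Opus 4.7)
The plan is to handle the three claims separately, moving from routine to the most delicate part.

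For the first claim, I would set $T=\{as\mid a\in B,\ s\in S\}$ and check directly that $T$ is an $m$-ideal containing $S$: it contains $0=0\cdot s$ for any fixed $s\in S$ (here I use nonemptiness), it contains $S$ (take $a=1$), and it is closed under multiplication by $B$ since $b\cdot(as)=(ba)s$. Conversely, \ref{I2} forces any $m$-ideal containing $S$ to contain $B\cdot S=T$, giving $T=\gen S_m$.

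For the second claim, write $I=\bigcup_{n\geq0}\gen S_k^{(n)}$. I would verify in turn that $I$ is a $k$-ideal containing $S$ and that $I\subseteq J$ for every $k$-ideal $J\supseteq S$. For the former: $S\subseteq I$ is immediate; $0\in I$ since $0-\emptyset\text{-sum}=0\in N_B$ shows $0\in\gen S_k^{(1)}$; multiplicative closure follows because if $a-\sum b_is_i\in N_B$ then $ca-\sum(cb_i)s_i\in N_B$, as $N_B$ is an ideal of $B^+$; finally, to verify \ref{I3}, given $a+\sum c_j t_j\in N_B$ with each $t_j\in I$, pick $N$ such that all $t_j\in\gen S_k^{(N)}$ and rewrite as $a-\sum(-c_j)t_j\in N_B$, placing $a$ in $\gen S_k^{(N+1)}\subseteq I$. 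For the latter, induct on $n$: the base case $\gen S_k^{(0)}=S\subseteq J$ is clear, and in the inductive step, if $a-\sum b_is_i\in N_B$ with $s_i\in\gen S_k^{(n-1)}\subseteq J$, then $(-b_i)s_i\in J$ by \ref{I2}, so \ref{I3} applied to $a+\sum((-b_i)s_i)\in N_B$ gives $a\in J$.

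For the fusion claim, I would show that $\gen S_k^{(1)}$ is already a $k$-ideal (the other axioms have been proved above in essentially the same way). This is the main obstacle: given $a+\sum_j c_j t_j\in N_B$ with each $t_j\in\gen S_k^{(1)}$, I must produce a single relation $a-\sum b'_{j,i}s_{j,i}\in N_B$ with $s_{j,i}\in S$. The plan is to use the multiplicative stability of $\gen S_k^{(1)}$ to get $c_jt_j-\sum_i c_jb_{ji}s_{ji}\in N_B$ for each $j$, and then to successively apply the fusion axiom \ref{F} to eliminate each $c_jt_j$ term from the relation $a+\sum_j c_jt_j\in N_B$. Concretely, applying \ref{F} to $c_1t_1+(a+\sum_{j\geq 2}c_jt_j)\in N_B$ and $-c_1t_1+\sum_i(-c_1b_{1i})s_{1i}\in N_B$ yields
\[
a+\sum_{j\geq 2}c_jt_j+\sum_i(-c_1b_{1i})s_{1i}\ \in\ N_B,
\]
and iterating this for $j=2,\dots$ produces $a-\sum_{j,i}(c_jb_{ji})s_{ji}\in N_B$, so $a\in\gen S_k^{(1)}$. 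Combined with the second claim, this gives $\gen S_k=\gen S_k^{(1)}$ in the fusion case.
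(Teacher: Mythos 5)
Your proof is correct and follows essentially the same route as the paper's: the same direct verification for $\gen{S}_m$, the same two-sided argument showing $\bigcup_{n\geq0}\gen{S}_k^{(n)}$ is the smallest $k$-ideal containing $S$, and the same iterated application of the fusion axiom \ref{F} to show that $\gen{S}_k^{(1)}$ already satisfies \ref{I3} in the fusion case. (There is a harmless sign slip in the fusion step --- multiplying $t_1-\sum_i b_{1i}s_{1i}\in N_B$ by $-c_1$ yields coefficients $+c_1b_{1i}$ rather than $-c_1b_{1i}$ --- but since membership in $\gen{S}_k^{(1)}$ only requires \emph{some} coefficients in $B$, the conclusion is unaffected.)
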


\begin{proof}
 To begin with, we note that $\{as\mid a\in B,\ s\in S\}$ is an $m$-ideal: it contains $0=0\cdot s$, which shows \ref{I1}, and $b(as)=(ba)s$ for $a,b\in B$ and $s\in S$, which shows \ref{I2}. It also contains every element $s=1\cdot s$ of $S$. Conversely, if an $m$-ideal $I$ contains $S$, then it contains every element of the form $as$ with $a\in B$ and $s\in S$. This establishes the description of $\gen{S}_m$.

 Next we show that $\bigcup_{n \geq 0} \gen{S}_k^{(n)}$ is the smallest $k$-ideal that contains $S$. The axioms~\ref{I1} and~\ref{I2} are straightforward and follow from a similar argument as in the previous paragraph. We notice further that $\gen{S}_k^{(n)} \subseteq \gen{S}_k^{(n+1)}$, since every element $a \in \gen{S}_k^{(n)}$ satisfies $a - a \in N_B$ for $a \in \gen{S}_k^{(n)}$ and hence $a \in \gen{S}_k^{(n+1)}$. To verify~\ref{I3}, pick $a + \sum b_i \in N_B$ and assume without loss of generality that $b_i \in \gen{S}_k^{(n)}$. Then evidently $a \in \gen{S}_k^{(n+1)} \subseteq \bigcup_{n \geq 0} \gen{S}_k^{(n)}$. Conversely, if a $k$-ideal $I$ contains $\gen{S}_k^{(n)}$, with $\gen{S}_k^{(0)}=S$, then it contains every element $a$ such that $a - \sum b_is_i\in N_B$ for some $b_i \in B$ and $s_i \in \gen{S}_k^{(n-1)}$ by axioms~\ref{I2} and~\ref{I3}. This completes the description of $\gen{S}_k$. 

 If $B$ is a fusion band, all we need to do is to verify that $\gen{S}_k^{(1)}$ is a $k$-ideal. Since $0\in N_B$, it contains $0$; thus \ref{I1} holds for $\gen{S}_k^{(1)}$. If $a+\sum b_is_i\in N_B$, then $ca+\sum cb_is_i\in N_B$ and thus $ca\in \gen{S}_k^{(1)}$, which verifies \ref{I2} for $\gen{S}_k^{(1)}$. Let $a-\sum_{i=1}^n b_i\in N_B$ with $b_i\in \gen{S}_k^{(1)}$, i.e.,\ $b_i-\sum_{j=1}^{m_i} b_{ij}s_{ij}\in N_B$ for some $m_i\in\N$,\ $b_{ij}\in B$, and $s_{ij}\in S$. Applying the fusion axiom \ref{F} successively to all $b_i$ yields $a-\sum_{i=1}^n\sum_{j=1}^{m_i} b_{ij}s_{ij}\in N_B$. Thus $a\in \gen{S}_k^{(1)}$, which establishes \ref{I3} for $\gen{S}_k^{(1)}$. Therefore $\gen{S}_k^{(1)}$ is a $k$-ideal.
\end{proof}

The ``$k$'' in ``$k$-ideal'' stands for ``kernel,'' which refers to the fact that the class of $k$-ideals of a band $B$ agrees with the class of kernels of morphisms from $B$ to other bands, a notion that is defined as follows. 

\begin{df}
 Let $f:B\to C$ be a band morphism. The \emph{kernel of $f$} is the subset $\ker f=\{a\in B\mid f(a)=0\}$ of $B$.
\end{df}

\begin{lemma}\label{lemma: null ideal generated by a k-ideal}
Let $I$ be a $k$-ideal of a fusion band $B$. Then the null ideal $\gen{I}$ generated by $I$ is given by 
\[\textstyle
 \gen{I} = \{\sum s_i + \sum a_j \mid s_i \in I,\ \sum a_j +\sum t_k\in N_B\text{ for some }t_k\in I\}. 
\]
In particular, $\gen{I} \cap B = I$. 
\end{lemma}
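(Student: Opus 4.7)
The plan is to first replace the right-hand side $J$ with a cleaner equivalent description, then verify the two inclusions $J\subseteq\gen I$ and $\gen I\subseteq J$, and finally read off the ``in particular'' statement from the $k$-ideal axiom \ref{I3}.

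\medskip

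\emph{Step 1: A cleaner description of $J$.} I would first show that $J$ coincides with
\[
 J' \ = \ \big\{ x\in B^+ \, \big| \, x+\textstyle\sum u_\ell\in N_B\text{ for some }u_\ell\in I \big\}.
\]
If $x=\sum s_i+\sum a_j\in J$ with $\sum a_j+\sum t_k\in N_B$, then $-s_i\in I$ (since $I$ is an $m$-ideal), and $\sum s_i+\sum(-s_i)\in N_B$ (since it is a sum of terms $s_i+(-s_i)\in N_B$ and $N_B$ is closed under addition). Adding this to $\sum a_j+\sum t_k$ shows $x+\sum(-s_i)+\sum t_k\in N_B$, so $x\in J'$. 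Conversely, any $x\in J'$ fits the defining form of $J$ with the ``$s_i$'' part empty.

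\medskip

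\emph{Step 2: $J\subseteq \gen I$.} Given $x\in J$, pick $u_\ell\in I$ with $x+\sum u_\ell\in N_B\subseteq\gen I$. Since also $-u_\ell\in I\subseteq\gen I$, I apply the substitution rule \ref{SR} of $\gen I$ with $a=0$ and $c=u_1$: from $0-u_1=-u_1\in\gen I$ and $u_1+(x+\sum_{\ell\geq 2}u_\ell)\in\gen I$, I deduce $x+\sum_{\ell\geq 2}u_\ell\in\gen I$. Iterating strips off the remaining $u_\ell$ one by one until $x\in\gen I$.

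\medskip

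\emph{Step 3: $J$ is a null ideal containing $I$.} The containments $N_B\subseteq J$ and $I\subseteq J$ are immediate from the description $J'$, the latter because $s+(-s)\in N_B$ for $s\in I$. That $J$ is closed under addition and under multiplication by $B$ follows from the corresponding properties of $N_B$ and from the fact that $I$ is an $m$-ideal. The one nontrivial point is the substitution rule \ref{SR}: given $a-c\in J$ and $c+\sum b_j\in J$, write
\[
 (-c)+(a+\textstyle\sum u_\ell)\in N_B \qquad\text{and}\qquad c+(\sum b_j+\sum v_m)\in N_B
\]
with $u_\ell,v_m\in I$. This is exactly the setup for the fusion axiom \ref{F} of $B$, which yields $a+\sum b_j+\sum u_\ell+\sum v_m\in N_B$, i.e., $a+\sum b_j\in J$. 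This is the only step where the hypothesis that $B$ is a fusion band is used, and it is the main point of the proof. By minimality of $\gen I$, it follows that $\gen I\subseteq J$.

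\medskip

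\emph{Step 4: The intersection with $B$.} One inclusion $I\subseteq\gen I\cap B$ is clear. Conversely, if $a\in B\cap\gen I=B\cap J$, then $a+\sum u_\ell\in N_B$ for some $u_\ell\in I$, and axiom \ref{I3} for the $k$-ideal $I$ gives $a\in I$.
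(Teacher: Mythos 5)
Your proof is correct and follows essentially the same strategy as the paper's: one shows that the right-hand side is a null ideal containing $I$ (invoking the fusion axiom \ref{F} to verify the substitution rule \ref{SR}) and that it is contained in $\gen{I}$ (using \ref{SR} in $\gen{I}$ to strip off the elements of $I$). Your intermediate reformulation $J'$ is a genuine streamlining worth noting: it lets you check \ref{SR} for the candidate set with a single application of the fusion axiom, whereas the paper's proof runs a case analysis according to whether $a$ or $c$ lies in $I$; you also spell out the deduction of $\gen{I}\cap B=I$ from \ref{I3}, which the paper leaves implicit.
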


\begin{proof}
 Let $T$ denote the right-hand side of the equation. Since the null ideal $\gen{I}$ contains both $I$ and $N_B$, and since the substitution rule \ref{SR} allows us to exchange elements $a_i\in I$ that appear in a sum $\sum a_i\in N_B$ with other elements $t_k\in I$, the null ideal $\gen{I}$ contains $T$. Thus the equality follows if we can show that $T$ is a null ideal. 
 
 Evidently $T$ contains $0$ and is closed under sums and and multiplication by elements in $B$. We are left with verifying the substitution rule \ref{SR}. Let $a-c \in T$ and $c + \sum b_i + \sum d_j \in T$, where $b_i \in I$ and $d_j \notin I$. If $a \in I$, then from $a - c \in T$ we see that $a - c + \sum t_k \in N_B$ for some $t_k \in I$. Since $I$ is a $k$-ideal, $c$ must be in $I$. In this case we have $a + \sum b_i + \sum d_j \in T$, and thus \ref{SR} holds. By symmetry, the same argument works if $c\in I$. 
 
 If $a, c \notin I$, then there are $t_k \in I$ with $a - c + \sum t_k \in N_B$. Since $c + \sum b_i + \sum d_j \in T$ with $b_i \in I$ and $d_j \notin I$, there are $r_{\ell} \in I$ with $c + \sum d_j + \sum r_{\ell} \in N_B$. Since $B$ is a fusion band, $a + \sum t_k + \sum d_j + \sum r_{\ell} \in N_B$, and hence $a + \sum b_i + \sum d_j \in T$. This completes the proof that $T$ is a null ideal. 
\end{proof}

\begin{lemma}\label{lemma: inverse images of ideals and k-ideals as kernels}
 Let $f:B\to C$ be a band morphism and $I$ an $m$-ideal of $C$. Then $f^{-1}(I)$ is an $m$-ideal of $B$. If $I$ is a $k$-ideal, then so is $f^{-1}(I)$. In particular, $\ker f$ is a $k$-ideal. More precisely, a subset $S$ of $B$ is a $k$-ideal if and only if it is the kernel of the quotient map $\pi_{\gen S}:B\to\bandgenquot BS$.
\end{lemma}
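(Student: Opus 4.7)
The plan is to handle the four assertions in turn. For $f^{-1}(I)$ being an $m$-ideal, the axioms are immediate: $f(0)=0\in I$ yields $0\in f^{-1}(I)$, and multiplicativity gives $f(ba)=f(b)f(a)\in C\cdot I=I$ whenever $a\in f^{-1}(I)$ and $b\in B$. For the preservation of \ref{I3} when $I$ is a $k$-ideal, suppose $a+\sum b_i\in N_B$ with $b_i\in f^{-1}(I)$; the band-morphism property gives $f(a)+\sum f(b_i)\in N_C$ with each $f(b_i)\in I$, and \ref{I3} for $I$ forces $f(a)\in I$, so $a\in f^{-1}(I)$. Specialising to the trivial $k$-ideal $\{0\}\subset C$ (where \ref{I3} reduces to the observation that $N_C\cap C=\{0\}$) then shows that $\ker f=f^{-1}(\{0\})$ is a $k$-ideal.

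For the ``more precisely'' equivalence, the implication $(\Leftarrow)$ follows from the previous paragraph. For $(\Rightarrow)$, let $S$ be a $k$-ideal. By \autoref{prop: band quotients}, $[a]=[0]$ in $\bandquot{B}{\gen S}$ iff $a=a-0\in\gen S$, so $\ker\pi_{\gen S}=\gen S\cap B$; the containment $S\subseteq\gen S\cap B$ is tautological, and the task is to prove the reverse containment $\gen S\cap B\subseteq S$. My plan is to establish the stronger auxiliary claim $\gen S\subseteq J$, where $J:=\{x\in B^+:x+\sum s_j\in N_B\text{ for some }s_j\in S\}$: granted this, a single element $a\in\gen S\cap B$ satisfies $a+\sum s_j\in N_B$ with $s_j\in S$, and \ref{I3} for the $k$-ideal $S$ forces $a\in S$. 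The base containment $S\cup N_B\subseteq J$ is direct (for $s\in S$ use $s+(-s)\in N_B$ with $-s\in S$), and closure of $J$ under addition and $B$-multiplication uses only that $S$ is an $m$-ideal.

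The main obstacle is closure of $J$ under the substitution rule \ref{SR}. Summing the two witnesses $a-c+\sum s^{(1)}\in N_B$ and $c+\sum b_j+\sum s^{(2)}\in N_B$ produces an element of $N_B$ containing a stray pair $-c+c$ that cannot in general be cancelled inside $N_B$. The rescue is that applying \ref{I3} to $a-c+\sum s^{(1)}\in N_B$ forces the equivalence $c\in S\iff a\in S$; combined with a bookkeeping induction on the derivation tree, this ensures that in any SR step ultimately producing a single element of $\gen S$ the intermediate $c$ lies in $S$, whereupon $-c,c$ can be absorbed into the $S$-contribution and the induction closes, very much in the spirit of the substitution-rule verification in \autoref{lemma: null ideal generated by a k-ideal}.
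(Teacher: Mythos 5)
Your handling of the first three assertions is correct and matches the paper. The gap is in the converse direction, and it lies in the auxiliary claim $\gen S\subseteq J$: this is false for general bands, because $J$ need not satisfy \ref{SR} and $\gen S$ can contain elements outside $J$. Concretely, let $B=\Funpm[a,b,c,s,t]$ with $N_B=\gen{1-1,\ a-c+s,\ c+b+t}_{B^+}$; additive inverses are unique here (the only two-term elements of this ideal are of the form $m-m$), so $B$ is a band, and it is not a fusion band. Let $S=\gen{s,t}_m$, the set of $0$ and $\pm m$ with $m$ divisible by $s$ or $t$. This is a $k$-ideal: each of the three generators of $N_B$ has exactly two terms outside $S$, so every element of $N_B$ has an \emph{even} number of terms outside $S$, which gives \ref{I3}. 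Then $a-c\in J$ and $c+b\in J$ (witnessed by the generators), but $a+b\notin J$: an element of $N_B$ with exactly two terms outside $S$ must have those two terms equal to $\{u,-u\}$, $\{ua,-uc\}$ or $\{uc,ub\}$ for some $u$, and none of these is $\{a,b\}$. On the other hand $a+b\in\gen S$: since $[s]=[t]=0$ in $\bandgenquot BS$, the generators force $[a]=[c]$ and $[c]+[b]\in N_{\bandgenquot BS}$, hence $[a]+[b]\in N_{\bandgenquot BS}$, i.e.\ $a+b\in\gen S$ (equivalently, apply \ref{SR} to $-s\in\gen S$ and $s+a-c\in\gen S$, likewise for $t$, and then once more to $a-c$ and $c+b$). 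So $J$ is not a null ideal and does not contain $\gen S$. Your proposed rescue does not repair this: the equivalence $c\in S\iff a\in S$ is correct but is vacuous in precisely the problematic case $a,c\notin S$, where the pair $-c,c$ genuinely cannot be cancelled in a non-fusion band; and a derivation of a single element of $\gen S\cap B$ may pass through multi-term elements such as $a+b$ that already lie outside $J$, so there is no invariant for the ``bookkeeping induction'' to preserve. This is exactly why \autoref{lemma: null ideal generated by a k-ideal}, whose verification of \ref{SR} your $J$ imitates, is stated only for fusion bands.

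The paper sidesteps any description of $\gen S$: it builds $\bandgenquot BS$ by first collapsing $S$ to $\bar 0$ in the monoid $B$, forming $\overline N_B$, and then taking the multiplicative congruence generated by $\bar a\sim\bar b$ whenever $\bar a-\bar b\in\overline N_B$. The point is that a single generating relation $\bar d\sim\bar 0$ unwinds to a relation $d'+\sum e_i\in N_B$ with $e_i\in S$, to which \ref{I3} applies directly; an induction along the transitive closure then shows the class of $\bar 0$ is exactly $S$. To salvage your route you would have to either restrict to fusion bands (where your $J$ does work, by \autoref{lemma: null ideal generated by a k-ideal}) or find a correct closed form for $\gen S\cap B$ in a general band, which is in effect what the paper's congruence argument accomplishes.
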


\begin{proof}
 Since $f(0) = 0 \in I$, we have $0 \in f^{-1}(I)$. If $a \in f^{-1}(I)$ and $b \in B$, then $f(b a) = f(b)f(a) \in I$. Hence $ba \in I$. This shows $f^{-1}(I)$ is an $m$-ideal of $B$ and establishes the first claim.

 Assume that $I$ is a $k$-ideal of $C$, and let $a + \sum b_i \in N_B$ with $b_i \in f^{-1}(I)$. Then $f(a) + \sum f(b_i) \in N_C$ and $f(b_i) \in I$. This forces $f(a) \in I$, and therefore $f^{-1}(I)$ is a $k$-ideal of $B$, which establishes the second claim. 

 This implies, in particular that the kernel $\ker f=f^{-1}(0)$ of a band morphism $f$ is a $k$-ideal, since $\{0\}$ is a $k$-ideal. Conversely, assume that $I$ is a $k$-ideal of $B$. We will show that $I$ is the kernel of the quotient map $\pi:B\to\bandgenquot BI$ by constructing $\bandgenquot BI$ explicitly. 
 
 As a first step, we define the quotient monoid $\overline B=B/I=\{\bar 0\}\sqcup(B-I)$, where $\bar 0=\bar a$ is the class of all elements $a\in I$ and $\bar a=\{a\}$ for $a\in B-I$. We define the subset $\overline N_B=\{\sum \bar a_i\mid \sum a_i\in N_B\}$ of the semiring $\overline B^+$, which is an ideal, but not a null set in general since a class $\bar a\in \overline B$ might possess multiple additive inverses, i.e., we might have $\bar a-\bar b\in \overline N_B$ for $\bar b\neq-\bar a$. To remedy this, we define the multiplicative relation $\sim$ on $\overline B$ generated by $\bar a\sim \bar b$ if $\bar a-\bar b\in\overline N_B$.
 
 We claim that $\widetilde{B}=\overline B/\sim$, together with $\widetilde N_B=\{\sum [\bar a_i]\in\widetilde B^+\mid \sum\bar a_i\in \overline N_B\}$, is a band. Since $\sim$ is a multiplicative relation, $\widetilde B$ is a pointed monoid. That $\widetilde N_B$ is an ideal of $\widetilde B^+$ can be verified by choosing suitable representatives. By the very definition of $\sim$, additive inverses are unique, i.e.,\ $[\bar a]-[\bar b]\in \widetilde N_B$ if and only if $[\bar b]=-[\bar a]$. This shows that $\widetilde B$ is a band with null set $\widetilde N_B$.
 
 Let $\pi:B\to \widetilde B$ be the quotient map. By construction, it is clear that $I\subset\ker\pi$ and that $\widetilde B$ is the minimal quotient for which $\pi(I)=0$. In conclusion, this shows that $\widetilde B=\bandgenquot BI$. It is also clear by construction that $I$ is the kernel of the monoid morphism $B\to \overline B$. The claim that $I=\ker \pi$ follows if we can show that the quotient map $\bar\pi:\overline B\to\widetilde B=\bandgenquot BI$ has trivial kernel.
 
 Let $\bar d\in\ker\bar\pi$, i.e., $\bar d\sim\bar 0$ in $\overline B$. Since $\sim$ is the transitive closure of relations of the form $\bar c\bar a\sim\bar c\bar b$ with $\bar a-\bar b\in\overline N_B$, we can assume that $\bar d=\bar c\bar a$ and that $\bar c\bar b=\bar 0$. Since $\overline N_B$ is closed under multiplication by $\overline B$, we have $\bar d-\bar 0=\bar c\bar a-\bar c\bar b\in\overline N_B$, which means that $d'+\sum e_i\in N_B$ for some $c_k\in I$ and $d'\in B$ with $[d']=[d]$, i.e., either $d,d'\in I$ or $d'=d$. In the latter case, we conclude that also $d=d'$ is in $I$, since $I$ is a $k$-ideal. In either case, we have shown that $\bar d=\bar 0$, which establishes our claim that $\bar\pi=\{\bar0\}$. This completes the proof.
\end{proof}

\begin{prop}\label{prop: ideals in localizations}
 Let $B$ be a band, $S$ a multiplicative subset and $\iota_S:B\to S^{-1}B$ the localization map. Let $I$ be an $m$-ideal of $B$ and $J$ an $m$-ideal of $S^{-1}B$. Then
 \[\textstyle
  S^{-1}I \ = \ \{ \frac as \mid a\in I,\ s\in S\}
 \]
 is an $m$-ideal of $S^{-1}B$,
 \[
  \iota_S^{-1}(S^{-1}I) \ = \ \{ a\in B\mid sa\in I \text{ for some }s\in S\},
 \]
 and $S^{-1}(\iota_S^{-1}(J))=J$. If $I$ and $J$ are $k$-ideals, then so are $S^{-1}I$ and $\iota_S^{-1}(J)$.
\end{prop}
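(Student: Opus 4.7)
The plan is to prove the four assertions in order, with parts (1), (2), (3), and the $k$-ideal property of $\iota_S^{-1}(J)$ in (4) being direct computations with the equivalence relation on $S^{-1}B$, and the $k$-ideal property of $S^{-1}I$ handled via a slicker argument using the universal property of the localization.

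For (1), I would check that $\tfrac{0}{1} \in S^{-1}I$ and that $\tfrac{b}{t} \cdot \tfrac{a}{s} = \tfrac{ba}{ts}$ lies in $S^{-1}I$ whenever $a \in I$, using that $I$ is an $m$-ideal. For (2), the condition $a \in \iota_S^{-1}(S^{-1}I)$ unwinds to $\tfrac{a}{1} \in S^{-1}I$. If $sa \in I$ for some $s \in S$, then $\tfrac{a}{1} = \tfrac{sa}{s}$ exhibits $\tfrac{a}{1}$ as an element of $S^{-1}I$. Conversely, if $\tfrac{a}{1} = \tfrac{a'}{s'}$ with $a' \in I$ and $s' \in S$, then by the definition of $\sim$ there exists $t \in S$ with $ts'a = ta'$; setting $s = ts' \in S$ gives $sa = ta' \in I$ since $I$ is an $m$-ideal. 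For (3), both inclusions reduce to the fact that, since $J$ is an $m$-ideal, $\tfrac{a}{s} \in J$ if and only if $\tfrac{a}{1} \in J$ (one direction multiplies by $\tfrac{1}{s}$, the other by $\tfrac{s}{1}$), and then one combines this with the description of $S^{-1}(\iota_S^{-1}(J))$ provided by (1)--(2). The $k$-ideal property of $\iota_S^{-1}(J)$ is immediate from \autoref{lemma: inverse images of ideals and k-ideals as kernels} applied to the band morphism $\iota_S$.

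The interesting step is showing that $S^{-1}I$ is a $k$-ideal when $I$ is. I would realize $S^{-1}I$ as a kernel. By \autoref{lemma: inverse images of ideals and k-ideals as kernels}, $I = \ker \pi$ for the quotient $\pi : B \to \bandgenquot{B}{I}$. Let $\bar S = \pi(S)$, a multiplicative subset of $\bandgenquot{B}{I}$. The composition $\iota_{\bar S} \circ \pi : B \to \bar S^{-1}(\bandgenquot{B}{I})$ sends every $s \in S$ to the unit $\tfrac{[s]}{1}$, so by \autoref{prop: universal property of the localization} it factors uniquely through a band morphism $\varphi : S^{-1}B \to \bar S^{-1}(\bandgenquot{B}{I})$ with $\varphi\bigl(\tfrac{a}{s}\bigr) = \tfrac{[a]}{[s]}$. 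Then $\ker \varphi$ is a $k$-ideal of $S^{-1}B$ by \autoref{lemma: inverse images of ideals and k-ideals as kernels} again. Unwinding the equivalence on the target, $\varphi\bigl(\tfrac{a}{s}\bigr) = 0$ holds iff $[ta] = 0$ in $\bandgenquot{B}{I}$ for some $t \in S$, iff $ta \in I$ for some $t \in S$, which by part (2) is exactly the condition $\tfrac{a}{s} \in S^{-1}I$. Hence $S^{-1}I = \ker \varphi$ is a $k$-ideal. The main obstacle is precisely this last implication: a fully direct proof would require unfolding the ideal generators of $N_{S^{-1}B} = \gen{\sum \tfrac{a_i}{1} \mid \sum a_i \in N_B}_{(S^{-1}B)^+}$, which is awkward since sums in $(S^{-1}B)^+$ interact badly with the fraction equivalence; realizing $S^{-1}I$ as a kernel sidesteps this entirely.
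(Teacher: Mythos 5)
Your proposal is correct, and for the one genuinely nontrivial step it takes a different route from the paper. Parts (1)--(3) and the claim about $\iota_S^{-1}(J)$ match the paper's argument essentially verbatim (the paper, too, disposes of $\iota_S^{-1}(J)$ by citing \autoref{lemma: inverse images of ideals and k-ideals as kernels}). Where you diverge is in showing that $S^{-1}I$ is a $k$-ideal: the paper argues directly, taking $\tfrac as+\sum\tfrac{b_i}{t_i}\in N_{S^{-1}B}$ with $\tfrac{b_i}{t_i}\in S^{-1}I$, clearing denominators by multiplying with $s\prod t_i$, and concluding that $a\prod t_i+\sum c_i\in N_B$ for some $c_i\in I$ — a step that does require unpacking how elements of $N_{S^{-1}B}=\gen{\sum\tfrac{a_i}1\mid\sum a_i\in N_B}_{(S^{-1}B)^+}$ decompose, exactly the awkwardness you identify (and which the paper treats rather tersely). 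Your alternative realizes $S^{-1}I$ as $\ker\varphi$ for $\varphi:S^{-1}B\to \bar S^{-1}(\bandgenquot BI)$, obtained from \autoref{lemma: inverse images of ideals and k-ideals as kernels} and \autoref{prop: universal property of the localization}; the only computation left is the purely multiplicative identification $\varphi(\tfrac as)=0\iff ta\in I$ for some $t\in S\iff\tfrac as\in S^{-1}I$, where the last equivalence follows from your part (2) together with the fact that $\tfrac a1=\tfrac s1\cdot\tfrac as$ and $\tfrac s1$ is a unit. This is clean and complete; what it costs is a dependence on the (nontrivial) construction of $\bandgenquot BI$ in \autoref{lemma: inverse images of ideals and k-ideals as kernels}, whereas the paper's computation is self-contained modulo the bookkeeping with $N_{S^{-1}B}$. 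Both are valid; yours arguably localizes the delicate additive reasoning into a single previously proved lemma.
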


\begin{proof}
Since $\frac{0}{1} \in S^{-1}I$, and for every $\frac{b}{t} \in S^{-1}B$ and $\frac{a}{s} \in S^{-1}I$, $\frac{b}{t} \cdot \frac{a}{s} = \frac{ab}{st} \in S^{-1}I$, we see that $S^{-1}I$ is an $m$-ideal of $S^{-1}B$, which establishes the first claim. 

We have $a \in \iota_S^{-1}(S^{-1}I)$ if and only if $\frac{a}{1} = \frac{b}{t} \in S^{-1}B$ for some $b \in I$ and $t \in S$, if and only if $sa \in I$ for some $s \in S$, which establishes the second claim. 

Let $\frac{a}{s} \in S^{-1}(\iota_S^{-1}(J))$, i.e., $\iota_S(a) = \frac{a}{1} \in J$ and $s \in S$. Then $\frac{a}{s} = \frac{a}{1} \cdot \frac{1}{s} \in J$ so $S^{-1}(\iota_S^{-1}(J)) \subseteq J$. Conversely, if $\frac{a}{s}$ is an element in $J$, then $\frac{a}{1} = \frac{a}{s} \cdot s \in J$, and thus $a \in \iota_S^{-1}(J)$. This shows that $\frac{a}{s} \in S^{-1}(\iota_S^{-1}(J))$, and hence $S^{-1}(\iota_S^{-1}(J)) \supseteq J$. This establishes the third claim.

Finally assume that $I$ and $J$ are $k$-ideals. Then $\iota_S^{-1}(J)$ is a $k$-ideal by \autoref{lemma: inverse images of ideals and k-ideals as kernels}. In order to show that $S^{-1}I$ is a $k$-ideal, consider $x = \frac{a}{s} + \sum \frac{b_i}{t_i} \in N_{S^{-1}B}$, and $\frac{b_i}{t_i} \in S^{-1}I$. Multiplying $x$ by $s \prod t_i$, it follows that $a \cdot \prod t_i + \sum c_i \in N_B$ for some $c_i \in I$. Therefore, $a \cdot \prod t_i \in I$, and hence $\frac{a}{s} \in S^{-1}I$. 
\end{proof}

\subsection{Prime ideals}
\label{subsection: prime ideals}

The notion of primality is the same for $m$-ideal and $k$-ideals; we therefore focus on $m$-ideals in this section. The analogous results for the subclass of $k$-ideals can be derived \textit{a posteriori} using the results from \autoref{subsection: m-ideals and k-ideals}.

\begin{df}
 Let $B$ be a band. A \emph{prime $m$-ideal} is an $m$-ideal $\fp$ of $B$ for which $S=B-\fp$ is a multiplicative subset. 
 
 Given a prime $m$-ideal $\fp$ with complement $S$ in $B$, the \emph{localization of $B$ at $\fp$} is $B_\fp=S^{-1}B$. In this case, we write $\iota_\fp:B\to B_\fp$ for the localization map.
\end{df}

\begin{lemma}\label{lemma: inverse images of prime ideals}
 Let $f:B\to C$ be a band morphism and $\fp$ a prime $m$-ideal of $C$. Then $f^{-1}(\fp)$ is a prime $m$-ideal of $C$.
\end{lemma}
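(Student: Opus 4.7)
The plan is to verify the two ingredients that make an $m$-ideal prime: that $f^{-1}(\fp)$ is itself an $m$-ideal, and that its complement in $B$ is a multiplicative subset. Note that (as should be clear from context) the statement should read ``prime $m$-ideal of $B$'' rather than ``of $C$'', since $f^{-1}(\fp)\subseteq B$.

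First, I would invoke \autoref{lemma: inverse images of ideals and k-ideals as kernels} directly: it already asserts that the preimage of an $m$-ideal under a band morphism is an $m$-ideal, so $f^{-1}(\fp)$ satisfies axioms \ref{I1} and \ref{I2}. Thus the only content left is to show that $S_B := B - f^{-1}(\fp)$ is multiplicatively closed, i.e., contains $1$ and is closed under products.

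For the multiplicative closure, set $S_C := C - \fp$, which is a multiplicative subset of $C$ by the definition of a prime $m$-ideal. Since $f$ is a band morphism we have $f(1)=1\in S_C$, so $1\in S_B$. Given $a,b\in S_B$, by definition $f(a),f(b)\in S_C$, and since $S_C$ is closed under multiplication we get $f(ab)=f(a)f(b)\in S_C$, i.e., $ab\in S_B$. This shows $S_B$ is a multiplicative subset of $B$, so $f^{-1}(\fp)$ is a prime $m$-ideal.

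There is essentially no obstacle: the argument is a direct transport of the multiplicative-closure condition along $f$, using only that $f$ preserves $1$ and multiplication, together with the already-established fact that preimages of $m$-ideals are $m$-ideals. The proof should fit in a few lines.
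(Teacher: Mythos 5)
Your proof is correct and follows essentially the same route as the paper: the paper also reduces to checking that $f^{-1}(C-\fp)=B-f^{-1}(\fp)$ is a multiplicative set, with the $m$-ideal axioms covered by \autoref{lemma: inverse images of ideals and k-ideals as kernels}. You are also right that the statement contains a typo and should conclude that $f^{-1}(\fp)$ is a prime $m$-ideal of $B$.
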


\begin{proof}
 Let $S=C-\fp$. Then $f^{-1}(S)$ is a multiplicative set in $B$ and is equal to $B-f^{-1}(\fp)$, which shows that $f^{-1}(\fp)$ is prime.
\end{proof}

\begin{prop}\label{prop: prime ideals in localizations}
 Let $B$ be a band and $S$ a multiplicative set in $B$. Then
 \[
  \begin{tikzcd}[column sep=60]
   \Big\{\begin{array}{c} \text{prime $m$-ideals $\fp$ of $B$}\\ \text{with $\fp\cap S=\emptyset$}\end{array}\Big\} \ar[r,shift left=3pt,"S^{-1}"] & \big\{\ \text{ prime $m$-ideals of $S^{-1}B$ } \ \big\} \ar[l,shift left=3pt,"\iota_S^{-1}"]
  \end{tikzcd}
 \]
 are mutually inverse bijections, and $\iota_S$ induces a canonical isomorphism $B_\fp\to(S^{-1}B)_{S^{-1}\fp}$ for every prime ideal $\fp$ of $B$ with $\fp\cap S=\emptyset$.
\end{prop}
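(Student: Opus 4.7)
The plan is to mimic the standard commutative-algebra proof, using the bookkeeping for $m$-ideals under localization established in \autoref{prop: ideals in localizations} and the behavior of primes under pullback from \autoref{lemma: inverse images of prime ideals}. The statement has two parts: the bijection on primes, and the isomorphism of localizations; I would treat them in that order.

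For the bijection, the first task is to verify that both maps send primes to primes. Given a prime $m$-ideal $\fq$ of $S^{-1}B$, \autoref{lemma: inverse images of prime ideals} gives that $\iota_S^{-1}(\fq)$ is prime in $B$; moreover $\iota_S^{-1}(\fq) \cap S = \emptyset$, since any $s \in S$ in $\iota_S^{-1}(\fq)$ would satisfy $s/1 \in \fq$, and as $s/1$ is a unit in $S^{-1}B$ this would force $1 \in \fq$, contradicting the fact that the complement of a prime $m$-ideal is multiplicative (and in particular contains $1$). In the other direction, given $\fp$ prime in $B$ with $\fp \cap S = \emptyset$, I would show that $S^{-1}\fp$ is a proper $m$-ideal whose complement is multiplicatively closed. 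Properness: if $1/1 = a/s$ with $a \in \fp$ and $s \in S$, then $ts = ta \in \fp$ for some $t \in S$, contradicting $\fp \cap S = \emptyset$. The key identity for the complement's closure under multiplication is $\iota_S^{-1}(S^{-1}\fp) = \fp$, which combines the explicit description $\iota_S^{-1}(S^{-1}\fp) = \{a \in B \mid sa \in \fp \text{ for some } s \in S\}$ from \autoref{prop: ideals in localizations} with the primality of $\fp$ (if $sa \in \fp$ and $s \in S \subseteq B - \fp$, then $a \in \fp$). Given this identity, if $x = a/s$ and $y = b/t$ satisfy $xy \in S^{-1}\fp$, then multiplying by $st/1$ shows $ab/1 \in S^{-1}\fp$, whence $ab \in \fp$, whence $a \in \fp$ or $b \in \fp$, so $x \in S^{-1}\fp$ or $y \in S^{-1}\fp$. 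The two round-trips are then mutually inverse: $\iota_S^{-1}(S^{-1}\fp) = \fp$ was just shown, and $S^{-1}(\iota_S^{-1}(\fq)) = \fq$ is part of \autoref{prop: ideals in localizations}.

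For the isomorphism $B_\fp \to (S^{-1}B)_{S^{-1}\fp}$, set $T = B - \fp$ and $S' = S^{-1}B - S^{-1}\fp$. Then $S \subseteq T$ since $\fp \cap S = \emptyset$, and $\iota_S(T) \subseteq S'$ since $\iota_S^{-1}(S^{-1}\fp) = \fp$. Two applications of \autoref{prop: universal property of the localization} yield mutually inverse morphisms. For the forward map, the composite $B \to S^{-1}B \to (S^{-1}B)_{S^{-1}\fp}$ inverts $T$, so factors uniquely through $B_\fp = T^{-1}B$. For the backward map, $\iota_T : B \to B_\fp$ inverts $S$ (as $S \subseteq T$), so factors uniquely through a morphism $S^{-1}B \to B_\fp$; the latter sends $S'$ into $B_\fp^\times$, since any $a/s \notin S^{-1}\fp$ forces $a \notin \fp$, i.e.\ $a \in T$, making $a/s$ a unit of $B_\fp$. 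A second invocation of the universal property then produces the desired morphism $(S^{-1}B)_{S^{-1}\fp} \to B_\fp$, and uniqueness in the universal properties forces the two morphisms to be mutually inverse.

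There is no substantial obstacle: the entire argument closely parallels the classical proof for commutative rings. The main bookkeeping subtlety is the careful use of \emph{properness} of prime $m$-ideals (guaranteed because their complement is a multiplicative subset, and hence contains $1$) to rule out pathologies when chasing elements between $\fp$, $\fq$, and their pullbacks and localizations.
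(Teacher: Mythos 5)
Your proposal is correct and follows essentially the same route as the paper: both directions of the bijection are reduced to \autoref{lemma: inverse images of prime ideals} and the pullback/pushforward identities of \autoref{prop: ideals in localizations}, and the isomorphism $B_\fp\to(S^{-1}B)_{S^{-1}\fp}$ is obtained from two applications of \autoref{prop: universal property of the localization} with uniqueness forcing the maps to be mutually inverse. The only cosmetic difference is that the paper checks primality of $S^{-1}\fp$ by exhibiting its complement directly as the multiplicative set $\{\frac ts\mid t\in B-\fp,\ s\in S\}$, whereas you derive it from the identity $\iota_S^{-1}(S^{-1}\fp)=\fp$; both are fine.
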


\begin{proof}
 As a first step, we verify that the images of both maps are well-defined. Given a prime $m$-ideal $\fp$ of $B$ with $S\subset B-\fp$, the complement of $S^{-1}\fp$ in $S^{-1}B$ is the multiplicative set $\{\frac ts\mid t\in B-\fp,\ s\in S\}$, which shows that $S^{-1}\fp$ is prime and that $S^{-1}$ is well-defined. 
 
 The inverse image $\iota_S^{-1}(\fq)$ of a prime $m$-ideal $\fq$ of $S^{-1}B$ is prime by \autoref{lemma: inverse images of prime ideals}. Since $\iota_S(S)\subset(S^{-1}B)^\times$ and $\fq$ does not contain a unit, $\iota_S^{-1}(\fq)\cap S=\emptyset$. Thus $\iota_S^{-1}$ is well-defined.
 
 By \autoref{prop: ideals in localizations}, $S^{-1}(\iota_S^{-1})(\fq)=\fq$ for every prime $m$-ideal $\fq$ of $S^{-1}B$. Conversely, let $\fp$ be a prime $m$-ideal of $B$ with $\fp\cap S=\emptyset$. By \autoref{prop: ideals in localizations}, $\iota_S^{-1}(S^{-1}\fp)=\{a\in B\mid sa\in\fp\text{ for some }s\in S\}$. Since $\fp$ is prime and $\fp\cap S=\emptyset$, we have $a\in\fp$ if and only if $as\in\fp$. Thus $\iota_S^{-1}(S^{-1}\fp)=\fp$.
 
 We turn to the last claim. Let $\fp$ be a prime $m$-ideal of $B$ with $\fp\cap S=\emptyset$. Let $\fq=S^{-1}\fp$ and $T=B-\fp$. Then the identity map $\id:B\to B$ composed with $\iota_S:B\to S^{-1}B$ and $\iota_\fq:S^{-1}B\to (S^{-1}B)_\fq$ sends $T$ to $(S^{-1}B)_\fq^\times$, which induces a morphism $\iota:B_\fp\to (S^{-1}B)_\fq$ by \autoref{prop: universal property of the localization}. Conversely, the identity map $\id:B\to B$ composed with $\iota_\fp:B\to B_\fp$ sends $T=\iota_S^{-1}(S^{-1}T)$ to $B_\fp^\times$, which induces a morphism $(S^{-1}B)_\fq\to B_\fp$. Since both morphisms are induced by $\id:B\to B$ and since localizations are epimorphisms, these two morphisms are mutually inverse isomorphisms.
\end{proof}

\subsection{Maximal ideals}
\label{subsection: maximal ideals}

Since the notion of prime ideals does not make reference to axiom \ref{I3}, a $k$-ideal is prime if and only if it is prime as an $m$-ideal. However, the situation is different for maximal ideals. Every band $B$ has a unique maximal proper $m$-ideal, namely $\fm=B-B^\times$, which is a prime $m$-ideal since $B^\times$ is a multiplicative set. The $m$-ideal $\fm$ is in general not a $k$-ideal, however.

\begin{df}
 Let $B$ be a band. A \emph{maximal $k$-ideal of $B$} is a maximal proper $k$-ideal $\fm$ of $B$ with respect to inclusion.
\end{df}

\begin{prop}\label{proposition: maximal k-ideals are prime}
Let $B$ be a band, $S$ a multiplicative subset, and $I$ a $k$-ideal of $B$ such that $I\cap S=\emptyset$. Consider the collection $\cS$ of all $k$-ideals $J$ that contain $I$ and have empty intersection with $S$, ordered by inclusion. Then $\cS$ contains a maximal ideal and every maximal ideal in $\cS$ is prime. In particular, every maximal $k$-ideal of $B$ is prime.
\end{prop}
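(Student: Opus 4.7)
The plan is to apply Zorn's lemma to produce a maximal element of $\cS$, and then show that any such element is prime by a variation on the classical commutative algebra argument carried through the iterative description of $k$-ideals from \autoref{lemma: m-ideal and k-ideal generated by a subset}. For the Zorn step, $\cS$ is nonempty because $I\in\cS$, and for any chain $\{J_\alpha\}$ in $\cS$ the union $\bigcup_\alpha J_\alpha$ is itself a $k$-ideal: axioms \ref{I1} and \ref{I2} pass to unions trivially, while \ref{I3} uses that a relation $a+\sum b_i\in N_B$ has only finitely many summands, so all the $b_i$ lie in a common $J_\alpha$. Containment of $I$ and disjointness from $S$ are immediate, so the union is in $\cS$.

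Now let $\fm$ be a maximal element of $\cS$. Since $1\in S$ and $\fm\cap S=\emptyset$, we have $1\notin\fm$, so it remains to show $B-\fm$ is multiplicatively closed. Suppose $a,b\in B-\fm$; the $k$-ideals $\gen{\fm\cup\{a\}}_k$ and $\gen{\fm\cup\{b\}}_k$ strictly contain $\fm$ (since $a,b\notin\fm$), so by the maximality of $\fm$ neither can lie in $\cS$; as both contain $I$, they must meet $S$. Pick $s\in S\cap\gen{\fm\cup\{a\}}_k$ and $t\in S\cap\gen{\fm\cup\{b\}}_k$; then $st\in S$. The proof will be complete once we establish the containment
\[
 \gen{\fm\cup\{a\}}_k\cdot\gen{\fm\cup\{b\}}_k \ \subseteq \ \gen{\fm\cup\{ab\}}_k,
\]
since the assumption $ab\in\fm$ would then give $\gen{\fm\cup\{ab\}}_k=\fm$ and hence $st\in\fm\cap S$, a contradiction; so one concludes $ab\notin\fm$.

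I would prove this key containment using the filtration $\gen{-}_k=\bigcup_n\gen{-}_k^{(n)}$ from \autoref{lemma: m-ideal and k-ideal generated by a subset}, by induction on the level $n$ of $s$, with an inner induction handling the base case. The essential content is the single observation that $a\cdot b=ab\in\gen{\fm\cup\{ab\}}_k$, together with the following inductive mechanism: if $s-\sum b_is_i\in N_B$ with the $s_i$ at strictly lower level than $s$, then multiplying by $t$ (allowed since $N_B$ is an ideal of $B^+$) and using commutativity yields $st-\sum b_i(s_it)\in N_B$; by the inductive hypothesis each $s_it$ lies in $\gen{\fm\cup\{ab\}}_k$, and then axioms \ref{I2} and \ref{I3} force $st$ into it as well. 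The base case $s\in\fm\cup\{a\}$ splits into the trivial subcase $s\in\fm$ (where $st\in\fm$ by \ref{I2}) and the subcase $s=a$, which requires an analogous induction on the level of $t$, driven by the same mechanism.

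The ``in particular'' statement then follows by taking $S=\{1\}$ and $I=\{0\}$: $\cS$ becomes the collection of all proper $k$-ideals of $B$, and its maximal elements are precisely the maximal $k$-ideals. I expect the main obstacle to be the careful bookkeeping of the nested induction, and in particular checking that the commutative rewriting $t(s-\sum b_is_i)=st-\sum b_i(s_it)$ is what allows the outer induction on the $a$-side and the inner induction on the $b$-side to interact cleanly; once this mechanism is in place both inductions proceed in parallel and primality follows immediately.
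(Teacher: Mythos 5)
Your proof is correct, but the primality step takes a genuinely different route from the paper's. The Zorn's lemma step is identical (finite support of relations in $N_B$ makes \ref{I3} pass to unions of chains). For primality, the paper first reduces to the case of a genuine maximal $k$-ideal by localizing at $S$: it takes a maximal $k$-ideal $\fn$ of $S^{-1}B$ containing $S^{-1}\fm$, identifies $\fm=\iota_S^{-1}(\fn)$, and invokes \autoref{lemma: inverse images of prime ideals}; in that reduced setting it runs a single induction on the filtration of \autoref{lemma: m-ideal and k-ideal generated by a subset} to show that $s\in\gen{\fm\cup\{a\}}_k^{(n)}$ implies $sb\in\fm$, and concludes from $1\in\gen{\fm\cup\{a\}}_k=B$. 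You instead work directly with a maximal element of $\cS$, use witnesses $s,t\in S$ lying in the enlarged ideals, and establish the multiplicativity statement $\gen{\fm\cup\{a\}}_k\cdot\gen{\fm\cup\{b\}}_k\subseteq\gen{\fm\cup\{ab\}}_k$ by a nested induction on levels; the mechanism (multiply a defining relation by the other factor, then apply \ref{I2} and \ref{I3}) is sound, and the base subcase $s=a$ is correctly handled by the symmetric inner induction. What your route buys is independence from the localization machinery (\autoref{prop: ideals in localizations} and \autoref{lemma: inverse images of prime ideals}), making the argument self-contained; what the paper's route buys is a single, shorter induction once the reduction is in place, plus the reusable fact that maximality in $\cS$ can always be transported to genuine maximality after localization. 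Your derivation of the ``in particular'' clause via $S=\{1\}$, $I=\{0\}$ is also correct.
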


\begin{proof}
 Consider a chain $\{J_i\}$ of ideals in $\cS$. Then $\bigcup J_i$ is a $k$-ideal (which can be verified by fixing a suitably large $i$) that contains $I$ (which is contained in each $J_i$) with empty intersection with $S$ (since each $J_i$ has empty intersection with $S$). Thus $\bigcup J_i$ is an upper bound for $\{J_i\}$. This allows us to apply Zorn's lemma to deduce the existence of a maximal $k$-ideal in $\cS$, which establishes the first claim.
 
 In order to establish the second claim, consider a maximal $k$-ideal $\fm$ in $\cS$ and the localization $\iota_S:B\to S^{-1}B$. Let $\fn$ be a maximal $k$-ideal of $S^{-1}B$ that contains $S^{-1}\fm$. As a proper ideal, $\fn$ does not contain elements of the form $\frac s1$ with $s\in S$. Thus $\iota_S^{-1}(\fn)$ has empty intersection with $S$ and thus belongs to $\cS$. Since $\fm\subset\iota_S^{-1}(S^{-1}\fm)\subset\iota_S^{-1}(\fn)$ and $\fm$ is maximal in $\cS$, we conclude that $\fm=\iota_S^{-1}(\fn)$. If we can show that $\fn$ is prime, then it follows from \autoref{lemma: inverse images of prime ideals} that $\fm$ is prime.
 
 This reduces the proof to the case where $S\subset B^\times$ and $\fm$ is a maximal $k$-ideal. Consider $ab\in\fm$ with $a\notin\fm$ and let $T=\fm\cup\{a\}$. We prove by induction on $n>0$ that if $s\in\gen{T}_k^{(n)}$, then $sb\in\fm$. 
 
 For $n=0$, we have $s\in T=\fm\cup\{a\}$. Since $ab\in\fm$, we conclude that $bs\in\fm$. If $n>0$, then $N_B$ contains a term of the form $s-\sum d_jt_j$ with $t_j\in \gen{T}_k^{(n-1)}$. By the inductive hypothesis, we have $bt_j\in\fm$. Applying \ref{I3} to $sb-\sum d_jt_jb\in N_B$ shows that $sb\in\fm$.
 
 Since $\fm$ is maximal, $\gen{T}_k=B$ and therefore $1\in\gen{T}_k^{(n)}$ for some $n$. This implies that $b=1\cdot b\in\fm$, as desired.
\end{proof}

\subsection{Radical ideals}
\label{subsection: radical ideals}

\begin{df}
 Let $B$ be a band and $I$ an $m$-ideal of $B$. The \emph{radical of $I$} is 
 \[
  \sqrt{I} \ = \ \{a \in B \mid a^n \in I\text{ for some $n \geq1$}\}. 
 \]
 A \emph{radical $m$-ideal of $B$} is an $m$-ideal $I$ with $\sqrt{I}=I$.
\end{df}

Note that $I\subset\sqrt{I}$ by the definition of the radical. 
In the statement of the next result, we define the empty intersection of prime ideals to be the unit ideal.

\begin{prop}\label{prop: radical ideal}
 Let $B$ be a band and $I$ an $m$-ideal of $B$. Then $\sqrt{I}$ is a radical $m$-ideal equal to 
 \[
  \sqrt{I} \ = \ \bigcap_{\substack{\text{prime $m$-ideals $\fp$}\\ \text{that contain $I$}}} \fp.
 \]
 If $I$ is a $k$-ideal, then $\sqrt{I}$ is a $k$-ideal equal to 
 \[
  \sqrt{I} \ = \ \bigcap_{\substack{\text{prime $k$-ideals $\fp$}\\ \text{that contain $I$}}} \fp.
 \]
\end{prop}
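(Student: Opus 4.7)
The plan is to proceed in four steps, establishing the $m$-ideal case first and then deducing the $k$-ideal case from \autoref{proposition: maximal k-ideals are prime}.

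\textbf{Step 1: $\sqrt I$ is a radical $m$-ideal.} Since $0^1=0\in I$ we have $0\in\sqrt I$, and if $a^n\in I$ and $b\in B$, then $(ba)^n=b^na^n\in I$ because $I$ is an $m$-ideal. Thus $\sqrt I$ is an $m$-ideal. It is radical since $(a^n)^m\in I$ gives $a^{nm}\in I$.

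\textbf{Step 2: $\sqrt I\subseteq\bigcap_\fp\fp$ for both ideal classes.} If $a\in\sqrt I$ with $a^n\in I\subseteq\fp$ for some prime $m$-ideal (resp.\ $k$-ideal) $\fp$, then since $B-\fp$ is multiplicatively closed, $a\notin\fp$ would force $a^n\notin\fp$, a contradiction. Hence $a\in\fp$. This inclusion works verbatim in both the $m$-ideal and $k$-ideal settings.

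\textbf{Step 3: The reverse inclusion for $m$-ideals via Zorn.} Suppose $a\notin\sqrt I$, so that $S=\{a^n\mid n\geq0\}$ is a multiplicative subset disjoint from $I$. Let $\cS$ be the collection of $m$-ideals $J\supseteq I$ with $J\cap S=\emptyset$, ordered by inclusion. Then $I\in\cS$, and the union of any chain in $\cS$ is again in $\cS$, so Zorn's lemma yields a maximal element $\fp$. To show $\fp$ is prime, suppose $bc\in\fp$ with $b,c\notin\fp$. By \autoref{lemma: m-ideal and k-ideal generated by a subset}, $\gen{\fp\cup\{b\}}_m=\fp\cup Bb$ and $\gen{\fp\cup\{c\}}_m=\fp\cup Bc$ strictly contain $\fp$, so by maximality both meet $S$: write $a^m=db\in Bb$ and $a^n=ec\in Bc$ (the alternative $a^k\in\fp$ is excluded). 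Then $a^{m+n}=de\cdot bc\in\fp$ since $bc\in\fp$ and $\fp$ is an $m$-ideal, contradicting $\fp\cap S=\emptyset$. Hence $\fp$ is a prime $m$-ideal containing $I$ with $a\notin\fp$, completing the $m$-ideal case.

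\textbf{Step 4: The $k$-ideal case.} Suppose $I$ is a $k$-ideal and $a\notin\sqrt I$. Then $S=\{a^n\}$ is multiplicative with $I\cap S=\emptyset$, so \autoref{proposition: maximal k-ideals are prime} produces a prime $k$-ideal $\fp\supseteq I$ with $\fp\cap S=\emptyset$, in particular $a\notin\fp$. Combined with Step 2, this gives the desired equality
\[
 \sqrt I \ = \ \bigcap_{\substack{\text{prime $k$-ideals $\fp$}\\ \text{that contain $I$}}}\fp.
\]
An arbitrary intersection of $k$-ideals is a $k$-ideal (if $a+\sum b_i\in N_B$ and each $b_i\in\bigcap_\alpha J_\alpha$, then $b_i\in J_\alpha$ for every $\alpha$, hence $a\in J_\alpha$ for every $\alpha$, so $a\in\bigcap_\alpha J_\alpha$), so $\sqrt I$ is automatically a $k$-ideal.

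The main obstacle is the Zorn's lemma argument in Step 3 for the $m$-ideal case; the payoff of invoking \autoref{lemma: m-ideal and k-ideal generated by a subset} is that enlarging $\fp$ by a single element $b$ simply adjoins $Bb$, which makes the standard ``catch a power of $a$'' primality trick transport cleanly from commutative algebra. Once Step 3 is in hand, the $k$-ideal statements follow formally from \autoref{proposition: maximal k-ideals are prime} and the trivial fact that intersections of $k$-ideals remain $k$-ideals.
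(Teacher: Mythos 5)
Your proof is correct, and it diverges from the paper's in one substantive place: the construction of a prime $m$-ideal containing $I$ but avoiding $a$ in Step 3. The paper does this without Zorn's lemma: it forms the localization $S^{-1}B$ for $S=\{a^i\}$, notes that $\iota_S^{-1}(S^{-1}I)$ still misses $S$, and pulls back the maximal $m$-ideal $\fm=S^{-1}B-(S^{-1}B)^\times$ (which exists canonically in any band, no choice required) to get the desired prime. You instead run the classical Zorn argument on the poset of $m$-ideals containing $I$ and disjoint from $S$, and then verify primality of a maximal element directly; the key simplification you exploit --- correctly --- is that by \autoref{lemma: m-ideal and k-ideal generated by a subset} the $m$-ideal generated by $\fp\cup\{b\}$ is just $\fp\cup Bb$ (no additive closure to worry about, unlike in ring theory), so the ``catch a power of $a$'' trick goes through cleanly: $a^{m+n}=de\cdot bc\in B\cdot\fp=\fp$. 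Your approach is more self-contained (it needs neither \autoref{prop: ideals in localizations} nor the localization formalism) at the cost of an application of Zorn that the paper's argument avoids; the paper's route also foreshadows the localization-based description of $\Spec$ used later. A further minor difference: you check directly that $\sqrt I$ is a radical $m$-ideal (Step 1), whereas the paper deduces both facts from the identity $\sqrt I=\bigcap\fp$ once it is established. Steps 2 and 4 coincide with the paper's treatment, including the reduction of the $k$-ideal case to \autoref{proposition: maximal k-ideals are prime} and the observation that an intersection of $k$-ideals is a $k$-ideal.
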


\begin{proof}
 We begin by establishing that $\sqrt{I}=\bigcap\fp$. Given $a\in\sqrt{I}$ and a prime $m$-ideal $\fp$ that contains $I$, we have that $a^n\in I\subset \fp$ for some $n\geq1$, and thus $f\in\fp$ since $\fp$ is prime. Therefore $\sqrt{I}\subset\bigcap\fp$. Conversely, let $a\in B-\sqrt{I}$ and define $S=\{a^i\mid i\in\N\}$. Then $I\cap S=\emptyset$. Let $\iota_S:B\to S^{-1}B$ be the localization map. Since $a^ib\in I$ implies that $b\notin S$, $\iota_S^{-1}(S^{-1}I)=\{b\in B\mid a^ib\in I\text{ for some }i\geq0\}$ (cf.\ \autoref{prop: ideals in localizations}) also has empty intersection with $S$. Thus $I\subset\iota_S^{-1}(S^{-1}I)\subset\fp$ for the inverse image $\fp=\iota_S^{-1}(\fm)$ of the maximal $m$-ideal $\fm$ of $S^{-1}B$, which is a prime $m$-ideal of $B$ that contains $I$, but not $a$. Thus $\bigcap\fp\subset\sqrt{I}$, which establishes our first claim.
 
 As an intersection of $m$-ideals, $\sqrt{I}$ is thus an $m$-ideal. Let $\fp$ be a prime $m$-ideal that contains $I$ and let $a\in\sqrt{I}$. Then $a^i\in I\subset\fp$ for some $i\geq1$ and thus $a\in\fp$ since $\fp$ is prime. This shows that $\sqrt{I}\subset\fp$. 
We conclude that the radical of $\sqrt{I}$ is equal to $\sqrt{I}=\bigcap \fp$, which shows that $\sqrt{I}$ is a radical $m$-ideal.
 
 Assume that $I$ is a $k$-ideal. Then $\sqrt{I}$ is contained in the intersection $J=\bigcap\fp$ of all prime $k$-ideals $\fp$ that contain $I$. Consider $a\in B-\sqrt{I}$ and $S=\{a^i\mid i\in \N\}$. Then $I\cap S=\emptyset$. By \autoref{proposition: maximal k-ideals are prime}, $I$ is contained in a prime $k$-ideal $\fp$ that intersects $S$ trivially. Thus $a\notin J$, which shows that $\sqrt{I}=J$, which is a $k$-ideal since it is an intersection of $k$-ideals.
\end{proof}

\begin{cor}\label{cor: prime ideals are radical}
 Every prime $m$-ideal is radical.
\end{cor}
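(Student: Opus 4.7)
The plan is to deduce this immediately from \autoref{prop: radical ideal}. Let $\fp$ be a prime $m$-ideal of $B$. The containment $\fp \subseteq \sqrt{\fp}$ is automatic from the definition (take $n=1$). For the reverse containment, I would use the characterization
\[
\sqrt{\fp} \ = \ \bigcap_{\substack{\text{prime $m$-ideals $\fq$}\\ \text{that contain $\fp$}}} \fq
\]
given by \autoref{prop: radical ideal}. Since $\fp$ itself occurs in the indexing set on the right-hand side, the intersection is contained in $\fp$, so $\sqrt{\fp} \subseteq \fp$.

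Alternatively, one can argue directly without invoking the intersection formula: if $a \in \sqrt{\fp}$, then $a^n \in \fp$ for some $n \geq 1$. Since $\fp$ is prime, its complement $S = B - \fp$ is multiplicatively closed. If $a \notin \fp$, then $a \in S$, so $a^n \in S$, contradicting $a^n \in \fp$. Hence $a \in \fp$, giving $\sqrt{\fp} \subseteq \fp$.

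There is no real obstacle here; the corollary is essentially a one-line consequence of the preceding proposition. The only point that merits a brief comment is the convention that the empty intersection of prime ideals equals the unit ideal, which ensures the formula in \autoref{prop: radical ideal} makes sense in all cases; however, for the corollary this is moot since $\fp$ is a witness that the indexing collection is nonempty.
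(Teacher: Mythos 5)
Your first argument is exactly the paper's proof: both apply the intersection formula of \autoref{prop: radical ideal} and observe that $\fp$ itself appears in the indexing set, forcing $\sqrt{\fp}=\bigcap\fq=\fp$. Your alternative direct argument via the multiplicative closure of $S=B-\fp$ is also correct and even more elementary, but the main route matches the paper, so there is nothing to fix.
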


\begin{proof}
 Let $\fp$ be a prime $m$-ideal of a band $B$. Then $\fp$ is tautologically the unique minimal prime $m$-ideal $\fq$ of $B$ that contains itself. Thus $\sqrt{\fp}=\bigcap\fq=\fp$.
\end{proof}

\subsection{Limits and colimits}
The category of bands is complete and cocomplete, similar to other algebraic categories, such as (semi)rings, (pointed) monoids and (ordered) blueprints. In fact, limits and colimits commute with the forgetful functor $\Bands\to\Mon_0$, which allows us to compute a limit (or colimit) first as a pointed monoid and then endow this pointed monoid with the weakest (or strongest) possible null set. In this section, we describe the construction of products, equalizers and tensor products. 

\subsubsection{Initial and terminal objects}
\label{sec:initial-terminal} 

The \emph{trivial band} is the pointed monoid $\0=\{0\}$ with $1=0$ together with the null set $N_{\0}=\0^+=\{0\}$.

\begin{lemma}\label{lemma: initial and terminal object}
 The regular partial field $\Funpm$ is an initial object in $\Bands$ and $\0$ is a terminal object in $\Bands$.
\end{lemma}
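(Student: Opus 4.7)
The plan is to verify the two universal properties separately, both by examining what a band morphism is forced to do on the (very few) elements involved.

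First I would show that $\Funpm$ is initial. Let $B$ be any band. Any band morphism $f \colon \Funpm \to B$ must send $0 \mapsto 0$ and $1 \mapsto 1$ by the definition of a band morphism. The key observation is that the value of $f$ at $-1$ is also forced: since $1 + (-1) \in N_{\Funpm}$, the morphism condition gives $f(1) + f(-1) = 1 + f(-1) \in N_B$, so $f(-1)$ is an additive inverse of $1$ in $B$, and uniqueness of additive inverses forces $f(-1) = -1$ (the distinguished element of $B$). This establishes uniqueness. For existence, define $f$ by these three rules; multiplicativity is the only nontrivial check, and it reduces to $(-1)\cdot(-1) = 1$ in both bands, which holds in $B$ by \autoref{lemma: first properties of bands}(2). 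To verify the null-set condition, I would invoke \autoref{lemma: morphisms can be tested on generators}: the null set $N_{\Funpm}$ is generated by $1-1$ (indeed, every element of $N_{\Funpm}$ is $n \cdot 1 + n \cdot (-1)$ for some $n \geq 0$, which is an $\Funpm[{}]^+$-multiple of $1-1$), and $f(1) + f(-1) = 1 + (-1) \in N_B$ by definition.

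Next I would show that $\0$ is terminal. For any band $B$, the map $g \colon B \to \0$ sending every element to $0$ is the only candidate: it must send $1$ to $1_{\0} = 0$ and $0$ to $0$, which agree here. Multiplicativity is immediate since the target has a single element. The null-set condition is trivial because $N_{\0} = \{0\} = \0^+$, so every formal sum in $\0^+$ lies in $N_{\0}$. Uniqueness holds because $\0$ has only one element, so any set-theoretic map to $\0$ is this one.

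There is no real obstacle here; the only point that requires care is to notice that in the $\Funpm$ case the morphism is not merely a map of pointed monoids but must respect $N_{\Funpm}$, and that $N_{\Funpm}$ is generated as an $\Funpm[{}]^+$-ideal by the single relation $1-1$, which is why \autoref{lemma: morphisms can be tested on generators} suffices. I would present the argument as two short paragraphs matching the two claims.
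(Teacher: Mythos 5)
Your proof is correct and follows essentially the same route as the paper: uniqueness from the forced values on $0$, $1$, $-1$, existence via \autoref{lemma: morphisms can be tested on generators} applied to the generator $1-1$ of $N_{\Funpm}$, and triviality of the terminal case since $N_{\0}=\0^+$. Your explicit justification that $f(-1)=-1$ is forced by uniqueness of additive inverses is a detail the paper states without argument, but it does not change the approach.
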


\begin{proof}
 Let $B$ be a band. The requirement that a band morphism preserves $0$, $1$ and $-1$ singles out a unique candidate for a band morphism $f:\Funpm\to B$, which is evidently a morphism of pointed monoids. It sends the generating relation $1-1$ of $N_\Funpm$ to the corresponding element $1-1\in N_B$, thus it is indeed a band morphism by \autoref{lemma: morphisms can be tested on generators}. This shows that $\Funpm$ is initial in $\Bands$. 
 
 Similarly, there is a unique map $B\to\0$, which is evidently a morphism of pointed monoids. Since $N_{\0}=\0^+$, it is a band morphism. This shows that $\0$ is terminal in $\Bands$.
\end{proof}

\subsubsection{Products}
Consider a family $\{B_i\}_{i\in I}$ of bands. The \emph{product of $\{B_i\}$} is defined as the Cartesian product $\prod B_i$ of the underlying monoids, which is a pointed monoid with respect to the coordinatewise multiplication. Its null set is 
\[\textstyle
 N_{\prod B_i} \ = \ \{ \sum_j (a_{ji})_{i\in I} \in (\prod B_i)^+ \mid \sum a_{ji}\in N_{B_i}\text{ for every }i\in I\}.
\]
The coordinate projection $\pr_j:\prod B_i\to B_j$ is a band morphism for every $j\in I$. Note that the product of the empty family is $\0$.

\begin{prop}\label{prop: universal property of the product}
 The band $\prod B_i$ together with the projections $\pr_j$ is the product of the $B_i$ in $\Bands$.
\end{prop}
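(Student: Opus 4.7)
The plan is to verify the universal property directly by constructing the unique candidate map and checking that it has the required properties. Given a band $C$ together with a family of band morphisms $\{f_j : C \to B_j\}_{j \in I}$, the condition $\pr_j \circ f = f_j$ for all $j$ forces $f(c) = (f_i(c))_{i \in I}$, which establishes uniqueness. So the content is to check that this $f$ is indeed a well-defined band morphism.

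First I would observe that $f$ is a morphism of pointed monoids: since each $f_i$ sends $0$ to $0$ and $1$ to $1$, we have $f(0) = (0)_{i \in I} = 0$ and $f(1) = (1)_{i \in I} = 1$ in $\prod B_i$; multiplicativity is immediate from coordinatewise multiplication on $\prod B_i$ and multiplicativity of each $f_i$.

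Next, to verify that $f$ is a band morphism, I would take $\sum_k c_k \in N_C$ and show that $\sum_k f(c_k) \in N_{\prod B_i}$. Unwinding the definition, $\sum_k f(c_k) = \sum_k (f_i(c_k))_{i \in I}$, and by the description of $N_{\prod B_i}$, membership in this null set amounts to requiring $\sum_k f_i(c_k) \in N_{B_i}$ for every $i \in I$. But this is exactly what the assumption that each $f_i : C \to B_i$ is a band morphism gives us.

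There is no serious obstacle here; the only thing worth being a little careful about is the edge case $I = \emptyset$, where $\prod B_i = \0$ is terminal by \autoref{lemma: initial and terminal object}, so the universal property holds vacuously. I would conclude by remarking that together with the already-verified fact that each $\pr_j$ is a band morphism, this shows $(\prod B_i, \{\pr_j\})$ is the categorical product in $\Bands$.
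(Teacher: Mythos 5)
Your proof is correct and follows essentially the same route as the paper's: uniqueness is forced by $\pr_j\circ f=f_j$, multiplicativity follows from the coordinatewise product, and preservation of the null set is immediate from the definition of $N_{\prod B_i}$. The remark about the empty product is a harmless (and reasonable) addition that the paper handles by noting beforehand that the empty product is $\0$.
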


\begin{proof}
 Given a family of band morphisms $f_i:C\to B_i$, the map $f:C\to\prod B_i$ with $f(a)=(f_i(a))_{i\in I}$ is the unique map such that $f_j=\pr_j\circ f$. Since the multiplication in $\prod B_i$ is defined componentwise, $f:C\to \prod B_i$ is a monoid morphism. Let 
 $\sum a_j\in N_C$ and define $a_{ji}=f_i(a_j)$. Then $\sum a_{ji}\in N_{B_i}$ for every $i$ and thus $\sum f(a_j)=\sum (a_{ji})_{i\in I}\in N_{\prod B_i}$. This shows that $f:C\to\prod B_i$ is a band morphism and satisfies the universal property of the product.
\end{proof}

\subsubsection{Equalizers}
Let $f:B\to C$ and $g:B\to C$ be band morphisms . The \emph{equalizer of $f$ and $g$} is the pointed submonoid $\eq(f,g)=\{a\in B\mid f(a)=g(a)\}$ of $B$ together with the null set
\[\textstyle
 N_{\eq(f,g)} \ = \ \{ \sum a_i\in\eq(f,g)^+\mid \sum a_i\in N_B\}.
\]
The inclusion $\iota:\eq(f,g)\to B$ is a band morphism.

\begin{prop}\label{prop: universal property of the equalizer}
 The band $\eq(f,g)$ together with $\iota:\eq(f,g)\to B$ is the equalizer of $f$ and $g$ in $\Bands$.
\end{prop}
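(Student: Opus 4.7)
The plan is to verify the three ingredients: that $\eq(f,g)$ is really a band, that $\iota$ is a band morphism, and then that the universal property holds.

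First I would check that $E := \eq(f,g)$ is a pointed submonoid of $B$. Indeed, $f(0)=0=g(0)$ and $f(1)=1=g(1)$, and if $a,b\in E$ then $f(ab)=f(a)f(b)=g(a)g(b)=g(ab)$, so $ab\in E$. Next I would verify that $E$, equipped with $N_E=\{\sum a_i\in E^+\mid \sum a_i\in N_B\}$, is a band. That $N_E$ is an ideal of $E^+$ follows from it being the preimage of the ideal $N_B$ under the map $E^+\to B^+$ induced by inclusion. For the uniqueness of additive inverses, observe that for $a\in E$ the element $-a\in B$ lies in $E$, because $f(-a)=-f(a)=-g(a)=g(-a)$; it satisfies $a+(-a)\in N_B$, hence $a+(-a)\in N_E$. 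Uniqueness of the additive inverse in $E$ is inherited from uniqueness of the additive inverse in $B$, since any $b\in E$ with $a+b\in N_E$ satisfies $a+b\in N_B$ and so $b=-a$. That $\iota:E\to B$ is a band morphism is immediate: it is an inclusion of pointed monoids, and by the very definition of $N_E$ it sends $N_E$ into $N_B$.

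For the universal property, suppose $h:D\to B$ is a band morphism with $f\circ h=g\circ h$. Then for every $a\in D$, $f(h(a))=g(h(a))$, so $h(a)\in E$. Hence $h$ factors set-theoretically through a unique map $\tilde h:D\to E$ with $\iota\circ\tilde h=h$, and $\tilde h$ is automatically a morphism of pointed monoids because $\iota$ is an injective monoid morphism. It remains to check that $\tilde h$ respects null sets: if $\sum a_i\in N_D$, then $\sum h(a_i)=\sum\iota(\tilde h(a_i))\in N_B$ since $h$ is a band morphism. But each $\tilde h(a_i)$ lies in $E$, so this sum belongs to $E^+$ and satisfies the defining condition of $N_E$; hence $\sum\tilde h(a_i)\in N_E$. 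This gives the factorization in $\Bands$, and uniqueness is forced at the level of underlying maps.

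The only mildly delicate point is the verification that $N_E$ furnishes $E$ with the unique-additive-inverse property, i.e., that $-a\in B$ actually lies in $E$ when $a\in E$; everything else is a routine unwinding of definitions. Once that is in place, the rest of the proof follows the standard pattern for equalizers in categories of algebraic structures, with the universal factorization being the set-theoretic corestriction of $h$.
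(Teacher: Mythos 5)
Your proof is correct and the universal-property argument is essentially the same as the paper's: the image of $h$ lands in $\eq(f,g)$, the corestriction is the unique factorization, and it preserves null sets because $\eq(f,g)$ inherits its null set from $B$. The extra care you take in checking that $\eq(f,g)$ is actually a band --- in particular that $-a\in\eq(f,g)$ for $a\in\eq(f,g)$, via $f(-1)=-1=g(-1)$ --- addresses a point the paper asserts without proof in the definition, and is the right thing to verify.
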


\begin{proof}
Let $h:D\to B$ be a morphism such that $f\circ h=g\circ h$. By definition, the band $\eq(f,g)$ contains the set-theoretic image of $h$, i.e.,\ $h$ factors uniquely through a map $h':D\to \eq(g,h)$. Since $\eq(f,g)$ inherits the band structure from $B$, the map $h'$ is evidently a band morphism. This establishes the universal property of the equalizer of $f$ and $g$.
\end{proof}

\subsubsection{Tensor products}
Consider a band $k$ and a non-empty family of $k$-algebras $\{B_i\}_{i\in I}$ with structure maps $\alpha_{B_i}:k\to B_i$. The \emph{tensor product of $\{B_i\}_{i \in I}$ over $k$} is the pointed monoid
\[\textstyle
 \bigotimes_k B_i \ = \ \{ a\in\prod B_i\mid a_i=1\text{ for all but finitely many }i\in I\} \ / \ \sim \; ,
\]
where $\sim$ is the equivalence generated by the relations of the type $a\sim b$ for which there are $i,j\in I$ and a $c\in k$ such that 
\[
 a_i \ = \ \alpha_{B_i}(c)\cdot b_i \qquad \text{and} \qquad \alpha_{B_j}(c)\cdot a_j \ = \ b_j
\]
and $a_k=b_k$ for all $k\neq i,j$. We write $\otimes a_i$ for the class of $a=(a_i)$ in $\bigotimes_kB_i$. 

If $I = \emptyset$, we define the (empty) tensor product to be $k$ itself.

The tensor product comes equipped with canonical maps $\iota_j:B_j\to\bigotimes_kB_i$ with $\iota_j(a)=\otimes b_i$, where $b_j=a$ and $b_k=1$ for $k\neq j$. The pointed monoid $\bigotimes_kB_i$ comes equipped with the null set
\[\textstyle
 N_{\bigotimes_kB_i} \ = \ \gen{\sum \iota_j(a_k) \mid  j\in I\text{ and }\sum a_k\in N_{B_j}}_{\bigotimes_k B_i^+},
\]
which turns $\bigotimes_kB_i$ into a band and $\iota_j:B_j\to\bigotimes_kB_i$ into a band morphism. 

\begin{prop}\label{prop: universal property of the tensor product}
 The tensor product $\bigotimes_kB_i$, together with the canonical maps $\iota_j:B_j\to\bigotimes_kB_i$, is the coproduct of $\{B_i\}$ in $\Alg_k$.
\end{prop}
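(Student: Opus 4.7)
The plan is to mimic the standard proof of the universal property of tensor products of rings, adapted to the band setting, using the machinery developed earlier (in particular \autoref{lemma: morphisms can be tested on generators}, \autoref{prop: universal property of the product}, and the description of the null set of $\bigotimes_k B_i$ in terms of a generating set).

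Given a $k$-algebra $C$ with structure map $\alpha_C:k\to C$ and a family of $k$-linear band morphisms $f_j:B_j\to C$, I would first construct a candidate map $f:\bigotimes_k B_i\to C$ at the level of pointed monoids by setting $f(\otimes a_i)=\prod_{i\in I} f_i(a_i)$. This is a well-defined finite product in $C$ because $a_i=1$ for all but finitely many indices $i$, and $f_i(1)=1$. To see that $f$ descends to the quotient by $\sim$, I would check it on the generating relations: if $a_i=\alpha_{B_i}(c)\cdot b_i$ and $\alpha_{B_j}(c)\cdot a_j=b_j$ with $a_\ell=b_\ell$ otherwise, then $f_i(a_i)f_j(a_j)=\alpha_C(c)\,f_i(b_i)f_j(a_j)=f_i(b_i)\,\alpha_C(c)f_j(a_j)=f_i(b_i)f_j(b_j)$, using that each $f_i$ is $k$-linear and that $C$ is commutative. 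Multiplicativity of $f$ and $f(1)=1$ are immediate from the componentwise product structure of $\bigotimes_k B_i$, so $f$ is a morphism of pointed monoids.

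Next I would verify that $f$ is a band morphism. By \autoref{lemma: morphisms can be tested on generators}, it suffices to check that $f$ sends a generating set of $N_{\bigotimes_k B_i}$ into $N_C$. The generators have the form $\sum_\ell \iota_j(a_\ell)$ where $j\in I$ and $\sum_\ell a_\ell\in N_{B_j}$. Since $f\circ\iota_j=f_j$ by construction, we have $\sum_\ell f(\iota_j(a_\ell))=\sum_\ell f_j(a_\ell)$, which lies in $N_C$ because $f_j$ is a band morphism. The $k$-linearity of $f$ follows because the structure map of $\bigotimes_k B_i$ is $\iota_j\circ\alpha_{B_j}$ for any $j$ (and all these agree modulo $\sim$), so $f\circ\alpha_{\bigotimes_k B_i}=f_j\circ\alpha_{B_j}=\alpha_C$.

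For uniqueness, any $k$-linear band morphism $g:\bigotimes_k B_i\to C$ with $g\circ\iota_j=f_j$ must satisfy $g(\otimes a_i)=g\bigl(\prod_i \iota_i(a_i)\bigr)=\prod_i g(\iota_i(a_i))=\prod_i f_i(a_i)=f(\otimes a_i)$, so $g=f$. The one subtle step I would expect to need the most care is verifying well-definedness on $\sim$-classes, since the relation is generated (and then symmetrized and transitively closed) by a seemingly asymmetric prescription involving two distinguished indices; but commutativity of $C$ and $k$-linearity of the $f_i$ collapse the whole thing to a clean equality, and the rest of the argument is a direct transcription of the classical proof.
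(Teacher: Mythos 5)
Your proposal is correct and follows essentially the same route as the paper: define $f(\otimes a_i)=\prod f_i(a_i)$, check well-definedness on the generating relations of $\sim$ using $k$-linearity and commutativity, verify the band-morphism condition on the generators of $N_{\bigotimes_k B_i}$ via \autoref{lemma: morphisms can be tested on generators}, and conclude uniqueness from $f_j=f\circ\iota_j$. Your uniqueness step (writing $\otimes a_i=\prod_i\iota_i(a_i)$) and the explicit check of $k$-linearity are slightly more spelled out than in the paper, but the argument is the same.
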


\begin{proof}
 Given a family of $k$-linear band morphisms $f_i:B_i\to C$, the requirements $f_j=f\circ\iota_j$ single out a unique candidate for $f:\bigotimes_k B_i\to C$, defined by $f(\otimes a_i)=\prod f_i(a_i)$. The (possibly infinite) product $\prod f_i(a_i)$ has to be interpreted as a finite product over the finitely many nonzero coefficients $a_i$ of the tensor $\otimes a_i$. In order to verify that $f$ is well-defined on equivalence classes, consider $a_i=\alpha_{B_i}(c)b_i$ and $\alpha_{B_j}(c)a_j=b_j$. Then we have 
 \[
  f_i(a_i)\cdot f_j(a_j) \ = \ \alpha_C(c)f_i(b_i)f_j(a_j) \ = \ f_i(b_i)f_j(b_j),
 \]
 where we use $f_i\circ\alpha_{B_i}=\alpha_C=f_j\circ\alpha_{B_j}$, which shows that $f$ is well-defined as a map. Since the multiplication in $C$ is commutative and the $f_i$ are multiplicative, we have $f(\otimes a_i)\cdot f(\otimes b_i)=\big(\prod f_i(a_i)\big)\cdot\big(\prod f_i(b_i)\big)=\prod f_i(a_ib_i)=f(\otimes a_i\cdot\otimes b_i)$, 
 which shows that the map $f$ is a morphism of monoids. Consider a generator $\sum\iota_j(a_k)$ of $N_{\bigotimes_kB_i}$ with $\sum a_k\in N_{B_j}$. Then $\sum f(\iota_j(a_k))=\sum f_j(a_k)\in N_C$ since $f_j:B_j\to C$ is a band morphism. By \autoref{lemma: morphisms can be tested on generators}, $f:\bigotimes_kB_i\to C$ is a band morphism, which establishes the universal property of the tensor product.
\end{proof}

As instances of particular interest, we find the following cases: 
\begin{enumerate}
 \item Given two band morphisms $B\to C$ and $B\to D$ (i.e.,\ $C$ and $D$ are $B$-algebras), the tensor product $C\otimes_BD$ is the \emph{pushout} of $C\leftarrow B\to D$ in $\Bands$.
 \item Given two morphisms $f:B\to C$ and $g:B\to C$, the tensor product $\coeq(f,g)=C\otimes_BC$ is the \emph{coequalizer} of $f$ and $g$ in $\Bands$.
 \item Given a family of bands $\{B_i\}$, which are $\Funpm$-algebras with respect to the unique morphism $\Funpm\to B_i$, the tensor product $\bigotimes B_i=\bigotimes_{\Funpm}B_i$ is the \emph{coproduct} of the $B_i$ in $\Bands$.
\end{enumerate}

\begin{cor}\label{cor: Bands is complete and cocomplete}
 The category $\Bands$ is complete and cocomplete.
\end{cor}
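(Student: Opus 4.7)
The plan is to invoke the standard categorical criterion: a category is complete if and only if it admits all small products together with equalizers of all parallel pairs, and dually it is cocomplete if and only if it admits all small coproducts together with coequalizers. Since every small limit can be constructed as an equalizer of a suitable pair of morphisms between two products (indexed by the objects and morphisms of the diagram), and dually for colimits, this reduces the task to checking four types of constructions.

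First, I would note that arbitrary small products exist in $\Bands$ by \autoref{prop: universal property of the product}; the case of the empty product (i.e.\ the terminal object $\0$) is covered by \autoref{lemma: initial and terminal object}. Next, equalizers of parallel pairs of band morphisms exist by \autoref{prop: universal property of the equalizer}. Together with the product construction, this gives all small limits via the usual formula $\lim_i B_i = \eq\bigl(\prod_i B_i \rightrightarrows \prod_{\phi\colon i\to j} B_j\bigr)$; so $\Bands$ is complete.

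For cocompleteness, I would invoke the coproduct construction (3) following \autoref{prop: universal property of the tensor product}, which realizes the coproduct of any family $\{B_i\}$ as the tensor product $\bigotimes_{\Funpm} B_i$ over the initial object $\Funpm$; the case of the empty coproduct is $\Funpm$ itself by \autoref{lemma: initial and terminal object}. Coequalizers of parallel pairs $f,g\colon B\to C$ exist by instance (2) following \autoref{prop: universal property of the tensor product}, where the coequalizer is realized as $C\otimes_B C$ (with $C$ viewed as a $B$-algebra via $f$ and via $g$ respectively). Combining coproducts and coequalizers via the dual formula $\colim_i B_i=\coeq\bigl(\coprod_{\phi\colon i\to j} B_i \rightrightarrows \coprod_i B_i\bigr)$ yields all small colimits, so $\Bands$ is cocomplete.

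There is no real obstacle here: the genuine work has already been done in the preceding propositions, and what remains is a routine appeal to the general categorical theorem recalled above. If one wished to avoid citing that theorem, the only mild subtlety would be verifying that the usual limit-as-equalizer-of-products formula produces the correct universal cone in $\Bands$, but this is purely formal once products and equalizers are known to exist and behave as described.
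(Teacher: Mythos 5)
Your proof is correct and takes essentially the same route as the paper: the paper's own proof is the one-line observation that $\Bands$ has all products and equalizers (hence is complete) and all coproducts and coequalizers (hence is cocomplete), relying on exactly the propositions you cite. You have merely spelled out the standard reduction of limits to products-plus-equalizers in more detail, which is fine.
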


\begin{proof}
 Since $\Bands$ has all products and equalizers, it is complete. Since it has all coproducts and coequalizers, it is cocomplete.
\end{proof}


\section{Band schemes}
\label{section: Band schemes}

In this section, we introduce band schemes and discuss some of their first properties. We provide an ample class of examples and compare band schemes to usual schemes, monoid schemes, and ordered blue schemes.

\subsection{Covering families}
\label{subsection: covering families}

We begin with a discussion of the right notion of spectrum, which dictates all further steps in this theory. Since bands come with various natural choices of ideals (such as $m$-ideals, $k$-ideals, null ideals) it is not so clear {\em a priori} which type of prime ideal leads to the most useful notion of the spectrum. This can be answered through topos-theoretic considerations, which we have attempted to present in as elementary way as possible below; it turns out that prime $m$-ideals provide the correct notion.

The reader who is interested in seeing an explicit definition of band schemes at once is invited to jump ahead to \autoref{subsection: the prime spectrum}.

There are different approaches to generalized scheme theory in the literature, and we can follow each of these different roads to develop a theory of band schemes, making additional choices in each approach. 
For example, we can consider a band scheme as:
\begin{itemize}
 \item a topological space with a structure sheaf in $\Bands$ that is locally isomorphic to the spectrum of a band (which relies on a suitable notion of spectrum);
 \item a sheaf on the site of affine band schemes (which relies on a suitable Grothendieck pretopology);
 \item a relative scheme in the sense of To\"en and Vaqui\'e (which relies on a suitable theory of band modules).
\end{itemize}
In the end, all three approaches result in the same theory in terms of natural equivalences of categories. We forgo a discussion of the third, more technical, viewpoint, and concentrate instead on the interplay between the first two perspectives. For more details on the interplay between these different approaches to a generalized scheme theory (in the case of semiring schemes and blue schemes), see \cite{Lorscheid17}.

As a starting point, we postulate an anti-equivalence $\Spec:\Bands\to\BAff$ of the category of bands with the category of affine band schemes, which determines $\BAff$ as the opposite category of $\Bands$ up to canonical equivalence. 

If we attempt to realize affine band schemes as topological spaces together with a structure sheaf in $\Bands$, then the topology of an affine band scheme $X=\Spec B$ is completely determined by the requirement that it is generated by the \emph{principal open subschemes} $U_h=\Spec B[h^{-1}]$ (with respect to the morphism $\iota_h^\ast:U_h\to X$ induced by the localization map $\iota_h:B\to B [h^{-1}]$), together with some requirements on the families $\{\varphi_i:U_i\to X\}_{i\in I}$ of open subschemes that cover $X$ topologically, which are natural from the perspective of topos theory. In detail, these are:
\begin{enumerate}[label={\rm(A\arabic*)}]
 \item \label{A1} The topology of $X$ is \emph{generated by principal opens}: for every $h\in B$, the map $\iota_h^\ast:U_h\to X$ is an open topological embedding, and every open affine subscheme $V$ of $X$ admits a covering family $\{\varphi_i:U_{h_i}\to V\}_{i\in I}$ by principal open subschemes $U_{h_i}=\Spec B[h_i^{-1}]$ of $X$, i.e.,\ $\iota_{h_i}^\ast$ is equal to the composition $U_{h_i}\to V\hookrightarrow X$.
 \item \label{A2} The topology is \emph{functorial in $X$}: morphisms of affine band schemes are continuous and given a morphism $\psi: X'\to X$ of affine band schemes and a covering family $\{U_i\to X\}$, the family $\{U_i\times_XX'\to X'\}$ is a covering family of $X'$. 
 \item \label{A3} Every affine band scheme $Y$ \emph{defines a sheaf} $\Hom(-,Y)$ on $\BAff$: given a covering family $\{\varphi_i:U_i\to X\}_{i\in I}$, the canonical map
 \[
  \begin{tikzcd}[column sep=25pt]
   \Hom(X,Y) \ar[r] 
   & \displaystyle \lim\bigg( \prod_{i\in I} \ \Hom(U_i,\ Y) \ar[r,shift left=4pt,"\res_1"] \ar[r,shift right=4pt,"\res_2"'] 
   & \displaystyle \prod_{k,l\in I} \ \Hom(U_k\times_X U_l,\ Y) \bigg) 
  \end{tikzcd}
 \]
 is a bijection. 
 \item \label{A4} Every affine band scheme $X$ is \emph{sober}: every irreducible closed subset $Z$ of $X$ is the topological closure of a unique point $z\in Z$.
\end{enumerate}
Note that property \ref{A2} guarantees that the collection of all covering families form a Grothendieck pretopology $\cT$ on $\BAff$. Property \ref{A3} means that $\cT$ is subcanonical, i.e.,\ the Yoneda functor $\BAff\to\Sh(\BAff,\cT)$ is fully faithful. Property \ref{A4} ensures that the underlying topological space of an affine band scheme is determined by the collection of open subsets.

Properties \ref{A1}--\ref{A3} determine $\cT$ uniquely as follows. We call any morphism $U\to X$ that appears in a covering family an \emph{open immersion}.

\begin{prop}\label{prop: covering families of affine band schemes}
 Assume that the collection $\cT$ of all topological covering families of affine band schemes satisfies \ref{A1}--\ref{A3}. Then every affine open subscheme is principal open and a family $\{\varphi_i:U_i\to X\}_{i\in I}$ of open immersions is in $\cT$ if and only if $\varphi_i$ is an isomorphism for some $i\in I$.
\end{prop}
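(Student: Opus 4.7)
My plan rests on one crucial structural feature of bands: every band $B$ has a unique maximal $m$-ideal $\fm = B \setminus B^\times$, which singles out a distinguished point $x_0 \in X = \Spec B$. The key initial observation is that for a principal open $U_h$, we have $x_0 \in \iota_h^\ast(U_h)$ if and only if $h \in B^\times$, and this is equivalent to $\iota_h : B \to B[h^{-1}]$ being an isomorphism of bands (and hence to $\iota_h^\ast$ being an isomorphism in $\BAff$). Since \ref{A1} asserts that the topology of $X$ is generated by principal opens---and these are closed under finite intersection via $U_h \cap U_{h'} = U_{hh'}$---every open subset of $X$ is a union of principal opens. It follows that the only open subset of $X$ containing $x_0$ is $X$ itself.

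To characterize covering families, I would let $\{\varphi_i : U_i \to X\}_{i \in I}$ be a covering family of open immersions. Since covering families topologically cover $X$ and each $\varphi_i$ is an open topological embedding (a consequence of \ref{A1} applied after refinement by principal opens), the open subsets $\varphi_i(U_i)$ of $X$ have union $X$. Hence some image $\varphi_{i_0}(U_{i_0})$ contains $x_0$, and by the first paragraph it equals $X$. The singleton $\{\varphi_{i_0}\}$ is therefore itself a topological covering family and lies in $\cT$. Applying \ref{A3} to this singleton with an arbitrary target $Y$ identifies $\Hom(X, Y)$ with the equalizer of $\Hom(U_{i_0}, Y) \rightrightarrows \Hom(U_{i_0} \times_X U_{i_0}, Y)$ via composition with $\varphi_{i_0}$. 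The step I expect to be the main technical obstacle is establishing that open immersions are monomorphisms in $\BAff$---equivalently, that their underlying band morphisms are epimorphisms in $\Bands$. For principal open inclusions $\iota_h : B \to B[h^{-1}]$ this is immediate from the universal property of localization, and I plan to extend it to general open immersions by using \ref{A1} to refine an arbitrary open immersion by principal opens. Once mono is secured, the diagonal $U_{i_0} \to U_{i_0} \times_X U_{i_0}$ is an isomorphism, the equalizer collapses to $\Hom(U_{i_0}, Y)$, and the Yoneda lemma forces $\varphi_{i_0}$ to be an isomorphism.

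For the claim that every affine open subscheme $V \subseteq X$ is principal, I would invoke \ref{A1} to produce a covering family $\{\psi_j : U_{h_j} \to V\}_{j \in J}$ of $V$ by principal open subschemes of $X$, and then apply the covering-family characterization just established to the affine band scheme $V$. This yields some index $j_0$ for which $\psi_{j_0} : U_{h_{j_0}} \to V$ is an isomorphism, exhibiting $V$ as the principal open $U_{h_{j_0}}$ of $X$. The converse direction of the covering-family characterization is immediate: any family of morphisms to $X$ containing an isomorphism is automatically a topological cover.
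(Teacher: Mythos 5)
There is a genuine gap: your argument is circular. The proposition is proved under the abstract hypotheses \ref{A1}--\ref{A3} alone, at a stage where the underlying topological space of $\Spec B$ has \emph{not} yet been identified with the set of prime $m$-ideals; that identification is the content of \autoref{thm: points of affine band schemes}, which is proved \emph{from} the present proposition (and requires \ref{A4} in addition). Your very first step --- producing a distinguished point $x_0\in X$ attached to the maximal $m$-ideal $\fm=B-B^\times$ with the property that $x_0\in\iota_h^\ast(U_h)$ iff $h\in B^\times$, so that the only open subset containing $x_0$ is $X$ itself --- presupposes exactly the description of the point set that this proposition is designed to help establish. Under \ref{A1}--\ref{A3} one only knows that the topology of $X$ is generated by the images of the $U_h$; nothing yet guarantees that $X$ has a point avoided by every proper principal open, i.e., that $X$ cannot be covered topologically by proper opens. (The paper deduces that statement, and hence the existence of a unique closed point, as a \emph{consequence} of this proposition inside the proof of \autoref{thm: points of affine band schemes}.)

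The paper's proof avoids the point set entirely. Given a covering family $\{U_{h_i}\to X\}$, it forms the band $C=\bandgenquot{B[a_i\mid i\in I]}{a_ih_j-a_jh_i\mid i,j\in I}$, pulls the covering back to $X'=\Spec C$ using \ref{A2}, and observes that the fractions $\frac{a_i}{h_i}$ define a compatible family of sections of the sheaf $\Hom(-,\Spec\Funpm[T])$ on the cover; property \ref{A3} then forces this family to glue to an element of $C$, which happens only if some $h_i$ is a unit, i.e., only if some $U_{h_i}\to X$ is an isomorphism. Your remaining steps (the ``if'' direction, the Yoneda argument, and the deduction that affine opens are principal) match the paper or are sound in outline, but they cannot repair the missing first step; note also that your proposed proof that a general open immersion is a monomorphism --- refine by principal opens and pick out an isomorphism --- would itself invoke the proposition being proved.
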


\begin{proof}
 We begin with the proof of the second claim. By \ref{A2}, $\cT$ is a Grothendieck pretopology, so any isomorphism $U_i\to X$ covers $X$. Since covering families are defined in terms of topological covering conditions, adding open subsets does not change the covering property and thus $\{U_i\to X\}$ is in $\cT$ if $U_i\to X$ is an isomorphism for some $i\in I$. 
 
 Conversely, let $\{\varphi_i:U_i\to X\}_{i\in I}$ be a family in $\cT$. By \ref{A1}, we can assume that $U_i=\Spec B[h_i^{-1}]$ for all $i\in I$ where $h_i\in B$. Consider
 \[
  C \ = \ \bandgenquot{B[a_i\mid i\in I]}{a_ih_j-a_jh_i\mid i,j\in I}
 \]
 and let $\varphi:X'=\Spec C\to \Spec B=X$ be the morphism induced by the inclusion $f:B\to C$. By \ref{A2}, the family $\{U_i'\to X'\}$ is in $\cT$, where $U_i'=U_i\times_XX'=\Spec C[h_i^{-1}]$.
 
 Note that for any affine band scheme $\Spec D$, we have 
 \[
  \Hom(\Spec D,\ \Spec\Funpm[T]) \ = \ \Hom(\Funpm[T],\ D) \ = \ D
 \]
 as sets. Since $a_kh_l=a_lh_k$ in $C$, we have $\frac{a_k}{h_k}=\frac{a_l}{h_l}$ in $U_k'\times_{X'}U_l'=\Spec C[(h_kh_l)^{-1}]$. Thus the tuple $c=\big(\frac{a_i}{h_i}\big)_{i\in I}$ is an element of 
 \[
  \begin{tikzcd}[column sep=25pt]
   \displaystyle \lim\bigg( \prod_{i\in I} \ \Hom(U_i',\ \Spec\Funpm[T]) \ar[r,shift left=4pt,"\res_1"] \ar[r,shift right=4pt,"\res_2"'] 
   & \displaystyle \prod_{k,l\in I} \ \Hom(U'_k\times_{X'} U'_l,\ \Spec\Funpm[T]) \bigg),
  \end{tikzcd}
 \]
 which is naturally a subset of $\prod C[h_i^{-1}]$. By \ref{A3}, the element $c$ is in the image of $C\to \prod C[h_i^{-1}]$. This is the case if and only if $h_i\in C^\times=B^\times$ for some $i\in I$. Thus $U_i\to X$ is an isomorphism, which establishes the second claim of the proposition.
 
 We continue with the first claim. Let $V$ be an open affine subscheme of $X=\Spec B$. By \ref{A1}, there exists a family of the form $\{U_{h_i}\to V\}$ in $\cT$ with $U_{h_i}=\Spec B[h_i^{-1}]$. By what we have proven, $U_{h_i}\to V$ is an isomorphism for some $i$ and thus $V=\Spec B[h_i^{-1}]$ as subschemes of $X$, which concludes the proof.
\end{proof}

If we assume \ref{A4} in addition, then we can deduce a complete description of the underlying topological space of an affine band scheme as follows. By \ref{A1}, principal opens $U_h=\Spec B[h^{-1}]$ are open subsets of $X=\Spec B$, i.e.,\ the morphism $\iota_h^\ast:U_h\to X$ induced by the localization map $\iota_h:B\to B[h^{-1}]$ is an open topological embedding. This allows us to define a map
 \[
  \Phi_X: \ X \ \longrightarrow \ \big\{\text{prime $m$-ideals of $B$}\big\}
 \]
 as follows: given a point $x\in X$, we define $B_x=S_x^{-1}B$ for $S_x=\{h\in B\mid x\in\Spec B[h^{-1}]\}$ and denote the localization map by $\iota_x:B\to B_x$. Let $\fm_x=B_x-B_x^\times$ be the maximal $m$-ideal of $B_x$ and define $\Phi_X(x)=\iota_x^{-1}(\fm_x)$, which is a prime $m$-ideal of $B$.
 
\begin{thm}\label{thm: points of affine band schemes}
 If \ref{A1}--\ref{A4} hold, then the map $\Phi_X$ is a bijection. For $h\in B$, the set $\Phi_X(U_h)$ contains all prime $m$-ideals $\fp$ of $B$ with $h\notin \fp$.
\end{thm}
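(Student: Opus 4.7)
The plan is to set up a pointwise dictionary between membership in principal opens and non-containment in the prime $m$-ideal $\Phi_X(x)$, deduce injectivity from sobriety, and then deduce surjectivity from the restrictive characterization of covering families in \autoref{prop: covering families of affine band schemes}. Before anything else, I would verify $\Phi_X$ is well-defined: the set $S_x = \{h \in B \mid x \in U_h\}$ is multiplicative, since $1 \in S_x$ (as $U_1 = X$) and $U_{h_1} \cap U_{h_2} = U_{h_1 h_2}$ — the latter identity following from the fact that the localization $B \to B[(h_1 h_2)^{-1}]$ factors through each $B[h_i^{-1}]$, together with the description of affine opens in \autoref{prop: covering families of affine band schemes}. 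Then $\Phi_X(x) = \iota_x^{-1}(\fm_x)$ is a prime $m$-ideal by \autoref{lemma: inverse images of prime ideals}. The key equivalence is $x \in U_h \iff h \notin \Phi_X(x)$: the forward direction is immediate since $h \in S_x$ forces $\iota_x(h) \in B_x^\times$; the converse uses that $\iota_x(h) \in B_x^\times$ means there exist $a \in B$ and $s, t \in S_x$ with $t h a = t s$, so $h \cdot (ta) = ts \in S_x$, whence $x \in U_{h (ta)} = U_h \cap U_{ta} \subseteq U_h$. Injectivity then follows from (A4): equal values of $\Phi_X$ give identical memberships in the basic opens $U_h$, hence identical open neighborhoods by (A1), so $\overline{\{x_1\}} = \overline{\{x_2\}}$, and sobriety forces $x_1 = x_2$.

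The main obstacle is surjectivity, which I handle by first treating the unique maximal $m$-ideal $\fm = B - B^\times$ and then reducing the general case by localization. Suppose for contradiction $\fm \notin \Phi_X(X)$; since every $\Phi_X(x)$ is a proper $m$-ideal it must be contained in $\fm$, and by assumption strictly, so $\fm \setminus \Phi_X(x) \neq \emptyset$ for every $x$. Choosing $h_x \in \fm \setminus \Phi_X(x)$ and using the key equivalence places $x \in U_{h_x}$, so $\{U_h\}_{h \in \fm}$ is a topological covering family of $X$ by principal opens. By \autoref{prop: covering families of affine band schemes}, some $U_h \to X$ must then be an isomorphism, i.e., $\iota_h \colon B \to B[h^{-1}]$ is an isomorphism, forcing $h \in B^\times$ and contradicting $h \in \fm$. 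Hence $\fm \in \Phi_X(X)$. For an arbitrary prime $m$-ideal $\fp$, the band $B_\fp$ is nonzero (as $0 \in \fp$ while $t \notin \fp$ for $t \in B - \fp$), so the previous argument applied to $Y = \Spec B_\fp$ produces $y \in Y$ with $\Phi_Y(y) = \fm_\fp$. Let $\psi_\fp \colon Y \to X$ be the morphism induced by $\iota_\fp \colon B \to B_\fp$; the identity $\psi_\fp^{-1}(U_h) = U_{\iota_\fp(h)}$, which comes from the universal property of localization together with (A2), combined with the key equivalence applied to both $X$ and $Y$ gives
\[
\Phi_X(\psi_\fp(y)) \ = \ \iota_\fp^{-1}(\Phi_Y(y)) \ = \ \iota_\fp^{-1}(\fm_\fp) \ = \ \fp,
\]
so $x = \psi_\fp(y)$ is the desired point.

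Finally, the second claim follows at once: given a prime $m$-ideal $\fp$ with $h \notin \fp$, surjectivity produces $x \in X$ with $\Phi_X(x) = \fp$, and the key equivalence then places $x \in U_h$, so $\fp \in \Phi_X(U_h)$.
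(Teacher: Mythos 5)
Your proof is correct, and it reaches the theorem by a genuinely different route than the paper's. The paper constructs an explicit inverse $\Psi_X$: it first shows, via Zorn's lemma together with \autoref{prop: covering families of affine band schemes} and sobriety, that every nonempty affine band scheme has a unique closed point, then sets $\Psi_X(\fp)=\iota_\fp^\ast(x_\fp)$ for the closed point $x_\fp$ of $\Spec B_\fp$ and checks both composites are the identity, the harder composite $\Psi_X\circ\Phi_X=\id$ being handled by a completely-prime-neighbourhood-filter argument and a second appeal to sobriety. You instead split the statement into injectivity and surjectivity: injectivity falls out of the dictionary $x\in U_h\Leftrightarrow h\notin\Phi_X(x)$ plus \ref{A4}, and surjectivity is obtained by first hitting the maximal $m$-ideal $\fm=B-B^\times$ --- your observation that missing $\fm$ would produce a covering of $X$ by the $U_h$ with $h$ a non-unit, hence a covering family containing no isomorphism, is a clean direct substitute for the paper's Zorn's lemma step --- and then reducing a general $\fp$ to $\fm_{B_\fp}$ by localization. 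Both arguments rest on the same two pillars (sobriety and \autoref{prop: covering families of affine band schemes}); yours is more modular and avoids transfinite arguments, while the paper's buys, as a reusable byproduct, the existence of a unique closed point in every nonempty affine band scheme. The one place where you assert more than you prove is the compatibility $\psi_\fp^{-1}(U_h)=U_{\iota_\fp(h)}$ (and likewise $U_{h_1}\cap U_{h_2}=U_{h_1h_2}$, which you need for $S_x$ to be multiplicative): the universal property of localization only yields the easy containment $U_{\iota_\fp(h)}\subseteq\psi_\fp^{-1}(U_h)$, and the reverse one requires in addition the locality of morphisms on stalks together with the saturation $U_{ab}\subseteq U_a$. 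Since the paper's own proof asserts the analogous statement (``$x\in U_h$ if and only if $\iota_\fp$ factors through $B[h^{-1}]$'') with the same amount of detail, this is a shared, easily repairable elision rather than a gap specific to your argument.
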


\begin{proof}
 In order to construct an inverse bijection to $\Phi_X$, we first prove that every non-empty affine band scheme $X=\Spec B$ has a unique closed point. 
 The points of $X$ are partially ordered by $x\leq y$ whenever $y\in\overline{\{x\}}$. Consider a chain $\{x_i\}_{i\in I}$ of points $x_i$ of $X$. We claim that $\bigcap_{i\in I}\overline{\{x_i\}}$ is non-empty. For if not, then the family $\{X-\overline{\{x_i\}}\}$ is an open covering of $X$ by proper open subsets, which violates \autoref{prop: covering families of affine band schemes}. We conclude that $\bigcap_{i\in I}\overline{\{x_i\}}$ contains a point $x$, which is an upper bound for the chain $\{x_i\}$. 
 
 This allows us to apply Zorn's lemma, which provides us with a maximal point $x\in X$, which means that $x$ is closed. Since $X$ is sober, $x$ is the unique closed point of $X$.
 
 We turn to the construction of the inverse of $\Phi_X$. Consider a prime $m$-ideal $\fp$ of $B$ and let $\iota_\fp:B\to B_\fp$ be the localization map. Then $U_\fp=\Spec B_\fp$ has a unique closed point $x_\fp$. We define $\Psi_X(\fp)$ as the image $\iota_\fp^\ast(x_\fp)$ of $x_\fp$ in $X$.

 Next we show that $\Psi_X$ is right inverse to $\Phi_X$. Let $\fp$ be a prime $m$-ideal of $B$ and let $x=\Psi_X(\fp)$. Then $x=\iota_\fp^\ast(x_\fp)$ for the unique closed point $x_\fp$ of $U_\fp=\Spec B_\fp$. Moreover, $x\in U_h$ for $h\in B$ if and only if $\iota_\fp:B\to B_\fp$ factors through $B[h^{-1}]$, which is the case if and only if $\iota_\fp(h)\in B_\fp^\times$. Thus
 \[ 
  S_x \ = \ \{h\in B\mid x\in U_h\} \ = \ \iota_\fp^{-1}(B_\fp^\times) \ = \ B-\fp,
 \] 
 which shows that $B_\fp=S_x^{-1}B$ and $\iota_x=\iota_\fp$. We conclude that $\Phi_X(x)=\iota_x^{-1}(\fm_x)=\fp$, as desired.
 
 We continue with the proof that $\Psi_X$ is left inverse to $\Phi_X$. Consider $x\in X$ and $S_x=\{h\in B\mid x\in U_h\}$. Let $\iota_x:B\to B_x=S_x^{-1}B$ be the localization map, $\fm_x=B_x-B_x^\times$ the maximal $m$-ideal of $B_x$, and $\fp=\Phi_X(x)=\iota_x^{-1}(\fm_x)$. Let $x_\fp$ be the unique closed point of $U_\fp=\Spec B_\fp$.
 
 Since $B-\fp=\iota_x^{-1}(B_x^\times)=S_x$, we have $B_\fp=B_x$ and $\iota_\fp=\iota_x$. Thus $\Psi_X(\fp)=\iota_\fp^\ast(x_\fp)=\iota_x^\ast(x_\fp)$ and $U_\fp=\Spec S_x^{-1}B$. Since
 \[
  \Spec S_x^{-1}B = \Spec \big( \underset{h\in S_x}\colim\{B\to B[h^{-1}]\} \big) = \underset{h\in S_x}\lim \big\{ \Spec B[h^{-1}] \to X\big\} = \bigcap_{h\in S_x} U_h
 \]
 as an intersection inside $X$, the morphism $\iota_x^\ast:U_S=\Spec S^{-1}B\to\Spec B=X$ is an inclusion whose image is $\bigcap_{h\in S_x} U_h$. Thus $\Psi_X(\fp)=\iota_x^\ast(x_\fp)$ is a closed point of $\bigcap_{h\in S_x} U_h$.
 
 The family $\{U_h\mid h\in S_x\}$ is a neighbourhood filter of $x$ by the definition of $S_x$, and it is completely prime thanks to \autoref{prop: covering families of affine band schemes} (for background on filter, cf.\ \cite{Johnstone82}; in particular, see section II.1.3). Since $X$ is sober, this means that $x$ is the unique point for which $\{U_h\mid h\in S_x\}$ is a neighbourhood filter and thus $x$ is the only closed point in $\bigcap_{h\in S_x} U_h$. Therefore we have $\Psi_X(\fp)=x$, as desired.
 
 In order to prove the last claim of the theorem, consider $h\in B$, $x\in X$, and $\fp=\Phi_X(x)$. Then $x\in U_h$ if and only if $h\in S_x=B-\fp$, which means that $h\notin\fp$. This shows that $\Phi_X(U_h)$ is the set of all prime $m$-ideals $\fp$ of $B$ that do not contain $h$, as claimed.
\end{proof}

\subsection{The prime spectrum and band schemes}
\label{subsection: the prime spectrum}

The considerations from \autoref{subsection: covering families} lead us to the following definition of the prime spectrum of a band. 

\subsubsection{The spectrum}

\begin{df}
 Let $B$ be a band. Its \emph{prime spectrum $\Spec B$} is the set of all prime $m$-ideals $\fp$ of $B$ together with the topology generated by the \emph{principal open subsets} 
 \[
  U_h \ = \ \{ \fp\in\Spec B \mid h\notin\fp\}
 \]
 for $h\in B$ and with the structure sheaf $\cO_X$ on $X=\Spec B$ in $\Bands$ that is characterized by $\cO_X(U_h)=B[h^{-1}]$ for $h\in B$. 
\end{df}

\begin{rem}
 Note that the rule $\cO_X(U_h)=B[h^{-1}]$ does indeed describe a unique sheaf on $X$. The sheaf exists since every covering family $\{U_i\to V\}$ of an affine open subset $V$ of $X=\Spec B$ contains an isomorphism by \autoref{prop: covering families of affine band schemes}, so the sheaf axiom is vacuous. And it is uniquely determined by its values on $U_h$, since the family $\{U_h\mid h\in B\}$ is a basis for the topology of $X$.
 
 A more systematic explanation can be derived from property \ref{A3} in \autoref{subsection: covering families}: the structure sheaf $\cO_X$ is nothing other than the restriction of the sheaf $\Hom(-,\ \Spec\Funpm[T])$ to the subsite of $\BAff$ that consists of all open subschemes of $X$.
\end{rem}

\begin{ex}\label{ex: affine band schemes}
 An idyll $F$ has a unique prime $m$-ideal, which is $\gen 0=\{0\}$. Thus $\Spec F=\{\gen 0\}$ is the one-point space. 
 
 The affine $n$-space over an idyll $F$ is $\A^n_F=\Spec F[T_1,\dotsc,T_n]$. The prime $m$-ideals of $F[T_1,\dotsc,T_n]$ are of the form $\fp_I=\gen{T_i\mid i\in I}_m$ for subsets $I$ of $\{1,\dotsc,n\}$. The closed point of $\A^n_F$ is $\gen{T_1,\dotsc,T_n}_m$ and its generic point is $\gen0_m$.
 
 The $n$-dimensional torus over an idyll $F$ is $\G_{m,F}^n=\Spec F[T_1^{\pm1},\dotsc,T_n^{\pm1}]$. Since $F[T_1^{\pm1},\dotsc,T_n^{\pm1}]$ is an idyll, the torus consists of a single point, which is the prime $m$-ideal $\gen0_m$. Note that the more involved structure of $\G_{m,F}^n$, in contrast to $\Spec F$, is visible in the larger variety of closed subschemes, or quotients of $F[T_1^{\pm1},\dotsc,T_n^{\pm1}]$.
\end{ex}

\subsubsection{Band spaces and band schemes}

\begin{df}
 A \emph{band space} is a topological space $X$ together with a sheaf $\cO_X$ in $\Bands$. The \emph{stalk at $x\in X$} is the band
 \[
  \cO_{X,x} \ = \ \underset{x\in U\subset X\text{ open}}{\colim} \cO_X(U).
 \]
 A \emph{morphism of band spaces} is a continuous map $\varphi:X\to Y$ between band spaces together with a sheaf morphism $\varphi^\#:\cO_Y\to\varphi_\ast\cO_X$ that is \emph{local} in the sense that for every $x\in X$ and $y=\varphi(x)$, the induced morphism of stalks $\varphi_x^\#:\cO_{Y,y}\to \cO_{X,x}$ sends non-units to non-units. 
\end{df}

Often we say that $X$ is a band space and that $\varphi:X\to Y$ is a morphism of band spaces where we suppress an explicit mentioning of the structure $\cO_X$ and the sheaf morphism $\varphi^\#:\cO_Y\to\varphi_\ast\cO_X$.

\begin{rem}
 Every band space $X$ is \emph{local} in the sense that $\cO_{X,x}$ has a unique maximal $m$-ideal $\fm_x=\cO_{X,x}-\cO_{X,x}^\times$ for every $x\in X$. The condition that morphisms $\varphi:X\to Y$ of band spaces are local amounts to the condition that $\varphi_x^{\#}(\fm_y)=\fm_x$ for every $x\in X$ and $y=\varphi(x)$.
\end{rem}

The primary example of a band space is the prime spectrum $\Spec B$ of a band $B$. An open subset $U$ of a band space $X$ is naturally a band space with respect to the restriction of the structure sheaf $\cO_X$ to $U$.

\begin{df}
 An \emph{affine band scheme} is a band space that is isomorphic to the spectrum of a band. A \emph{band scheme} is a band space in which every point has an open neighborhood isomorphic to an affine band scheme. A \emph{morphism of band schemes} is a morphism of band spaces. This defines the categories $\BAff$ of affine band schemes and $\BSch$ of band schemes.
\end{df}

\subsubsection{On the local nature of band schemes}
\label{subsubsection: local nature of band schemes}
 
 The fact that every band $B$ has a unique maximal $m$-ideal, namely $\fm_B=B-B^\times$, has strong implications and leads to certain simplifications when compared with usual scheme theory.
 
 To start with, $\fm_B$ is the unique closed point of $\Spec B$. Thus every affine open of a band scheme $X$ has a unique closed point, which means that a closed point $x$ of a band scheme $X$ is contained in a unique affine open $U_x$ of $X$. If the closure of every point of $X$ contains a closed point (which is the case for a band scheme of finite type over an idyll since it has only finitely many points), then $X$ has a \emph{minimal} affine open covering, namely $\cU_{\min}=\{U_x\mid x\in X \text{ closed}\}$. 
 
 If $X$ is affine, then every open covering $\{U_i\}$ of $X$ must contain $U_i=X$ (for some $i$). In consequence, every affine open $U$ of $X$ is a principal open since it is covered by principal opens and thus equal to one of them.
 
 Another consequence is that the image $\varphi(U)$ of an affine open $U$ of $X$ under a morphism $\varphi:X\to Y$ is contained in an affine open $V$ of $Y$. Even better: for any two chosen affine open coverings $X=\bigcup_{i\in I} U_i$ and $Y=\bigcup_{j\in J} V_j$, there is a map $\varphi_0:I\to J$ such that $\varphi(U_i)\subset V_{\varphi_0(i)}$. This spares us the annoyance of having to pass to suitable refinements in many situations.

\subsubsection{Functoriality of \texorpdfstring{$\Spec$}{Spec}} 

The construction of the spectrum is functorial: a band morphism $f:B\to C$ defines a map $\varphi=f^\ast:\Spec C\to\Spec B$ that sends a prime $m$-ideal $\fq$ of $C$ to $\varphi(\fq)=f^{-1}(\fq)$. This map is continuous since 
\[
 \varphi^{-1}(U_h) \ = \ \{\fq\in\Spec C\mid h\notin f^{-1}(\fq)\} \ = \ \{\fq\in\Spec C\mid f(h)\notin \fq\} \ = \ U_{f(h)}.
\]
The morphism $\varphi^\#:\cO_{\Spec B}\to\varphi_\ast\cO_{\Spec C}$ is defined on principal opens $U_h$ of $\Spec B$ as the unique band morphism $\varphi^\#(U_h)$ that makes the diagram
\[
 \begin{tikzcd}[column sep=-5]
  B \ar[d,"\iota_h"'] \ar[rrrrrr,"f"] &&&&&& C \ar[d,"\iota_{f(h)}"] \\
  B[h^{-1}] & = & \cO_{\Spec B}(U_h) \ar[rr,dashed,"\varphi^\#(U_h)"] & \hspace{50pt} & \cO_{\Spec C}(U_{f(h)}) & = & C[f(h)^{-1}]
 \end{tikzcd}
\]
commute, which exists by \autoref{prop: universal property of the localization} since $f(h)$ is invertible in $C[f(h)^{-1}]$. The morphism of sheaves $\varphi^\#$ is local since for every $\fq\in\Spec C$ and $\fp=f^{-1}(\fq)$, the induced morphism of stalks $\varphi_\fq:B_\fp\to C_\fq$ sends the maximal ideal of $B_\fp$ to the maximal ideal of $C_\fq$. Thus $\varphi=f^\ast:\Spec C\to\Spec B$ is a morphism of band spaces. This defines a contravariant functor $\Spec:\Bands\to\BSch$ whose image is contained in $\BAff$.

We write $\Gamma X=\cO_X(X)$ for the band of global sections of $X$. Note that pulling back global sections along morphisms of band schemes turns $\Gamma$ into a functor. 

\begin{thm}\label{thm: adjunction between Spec and Gamma}
 The functor $\Spec:\Bands\to\BSch$ is fully faithful and $\Gamma: \BSch\to\Bands$ is its right adjoint, i.e.,\ pulling back global sections yields a bijection
 \[
  \Phi: \ \Hom(X,\ \Spec B) \ \longrightarrow \ \Hom(B,\ \Gamma X)
 \]
 for every band scheme $X$ and every band $B$ that is functorial in $X$ and $B$. In other words, $\Spec:\Bands\to\BAff$ is an anti-equivalence of categories with inverse $\Gamma$ and $\BAff$ is a reflective subcategory of $\BSch$ with reflection $\Spec\circ\Gamma:\BSch\to\BAff$. 
\end{thm}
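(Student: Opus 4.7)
The plan is to mimic the classical scheme-theoretic proof of the $\Spec/\Gamma$ adjunction, exploiting the simplifications noted in \autoref{subsubsection: local nature of band schemes}, in particular that every affine open of a band scheme is a principal open.

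First, I would define $\Phi$ directly: a morphism $\varphi:X\to\Spec B$ comes with a sheaf map $\varphi^{\#}:\cO_{\Spec B}\to\varphi_\ast\cO_X$, and I take $\Phi(\varphi)$ to be its global component $B=\cO_{\Spec B}(\Spec B)\to\cO_X(X)=\Gamma X$; naturality in $X$ and $B$ is immediate. To construct an inverse $\Psi$, I would start in the affine case $X=\Spec C$ by sending $f:B\to C=\Gamma X$ to the morphism $f^\ast:\Spec C\to\Spec B$ obtained from the functoriality of $\Spec$. For a general band scheme $X$, I would choose an affine open covering $\{U_i=\Spec C_i\}$, apply the affine construction to the composition $B\xrightarrow{f}\Gamma X\xrightarrow{\res}C_i$ to obtain morphisms $\psi_i:U_i\to\Spec B$, and glue them.

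The gluing step is where the bookkeeping occurs. On an overlap $U_i\cap U_j$, the restrictions of $\psi_i$ and $\psi_j$ should both correspond to the band morphism $B\to\cO_X(U_i\cap U_j)$ obtained from $f$ and restriction from $\Gamma X$. By \autoref{subsubsection: local nature of band schemes}, $U_i\cap U_j$ is covered by principal opens of $U_i$ (and of $U_j$), so uniqueness of the induced morphisms on such principal opens, already supplied by the affine case, forces the two restrictions to agree. The locality of the glued morphism on stalks follows from the locality of each $\psi_i$.

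To check $\Phi\circ\Psi=\id$, the identity $\Gamma(\Spec B)=B$ and the construction give $\Phi(f^\ast)=f$ immediately. The main obstacle is the converse $\Psi\circ\Phi=\id$ in the affine case: given $\varphi:\Spec C\to\Spec B$ with $f=\Phi(\varphi)$, I must show $\varphi=f^\ast$ both as a continuous map and as a morphism of sheaves. On underlying sets I would invoke \autoref{thm: points of affine band schemes}, which identifies points of $\Spec C$ with prime $m$-ideals; the locality of $\varphi^{\#}$ then forces the stalk morphism $B_{\varphi(\fq)}\to C_\fq$ to send the maximal $m$-ideal of $B_{\varphi(\fq)}$ into that of $C_\fq$, whence $\varphi(\fq)=f^{-1}(\fq)=f^\ast(\fq)$. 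On sheaves, the universal property \autoref{prop: universal property of the localization} of the localization $B\to B[h^{-1}]$ pins down $\varphi^{\#}$ on each principal open $U_h$ from its value on $\Spec B$, and this value is $f$ by construction, so $\varphi^{\#}=(f^\ast)^{\#}$. Once the affine case is established, the full statement, together with the full faithfulness of $\Spec$ and the reflectivity of $\BAff$ in $\BSch$, follows formally from the adjunction.
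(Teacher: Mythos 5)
Your proposal is correct and follows essentially the same route as the paper: both establish the affine anti-equivalence first (with the crux being that a morphism $\varphi:\Spec C\to\Spec B$ is recovered from $f=\Gamma\varphi$ by using locality of $\varphi^{\#}$ to pin down the point map and the epimorphism/universal property of $B\to B[h^{-1}]$ to pin down the sheaf map), and then handle a general $X$ by restricting to an affine cover and gluing. The only cosmetic difference is that you invoke \autoref{thm: points of affine band schemes} where the paper simply uses the definition of $\Spec B$ as the set of prime $m$-ideals.
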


\begin{proof}
 As a first step, we show that $\Spec:\Bands\to\BAff$ is an anti-equivalence of categories with inverse $\Gamma$. For $X=\Spec B$, the canonical map $B\to \cO_X(X)=\Gamma X$ is an isomorphism. Given a band morphism $f:B\to C$ with induced morphism $\varphi=f^\ast:\Spec C\to\Spec B$, we have $\Gamma\varphi=f$ by the definition of $\Gamma\varphi=\varphi^\#(\Spec B)$. This shows that $\Spec $ is faithful with retraction $\Gamma$. By definition $\BAff$ is the essential image of $\Spec$. 
 
 We are left with showing that $\Spec$ is full. Let $\varphi:Y\to X$ be a morphism of affine band schemes $Y=\Spec C$ and $X=\Spec B$ and let $f=\Gamma\varphi:B\to C$ be the induced band morphism. Consider $\fq\in Y$ and its image points $\fp=\varphi(\fq)$ and $\tilde\fp=f^\ast(\fp)=f^{-1}(\fp)$ in $X$. Then we get induced maps $\varphi_\fq:B_\fp\to C_\fq$ and $f^\ast_\fq:B_{\tilde\fp}\to C_\fq$ between stalks. Since $\varphi$ is local, $\varphi_\fq^{-1}(\fm_\fq)=\fm_\fp$ and thus $B_{\fp}=S^{-1}_\fq B$ for 
 \[\textstyle
  S_\fq \ = \ \{a\in B\mid \frac{f(a)}1\in C_\fq^\times\} \ = \ \{a\in B\mid f(a)\notin\fq\}. 
 \]
Thus $B_\fp=B_{\tilde\fp}$ and $\fp=\tilde\fp$, which shows that $f^\ast=\varphi$ as maps. Since $B\to B[h^{-1}]$ is an epimorphism of bands, the morphism $\varphi^\#:\cO_X\to \varphi_\ast\cO_Y$ of sheaves is determined by $\Gamma\varphi=f=\Gamma f^\ast$. This shows that $\Spec:\Bands\to\BAff$ is an anti-equivalence of categories with inverse $\Gamma$.

 Let $\{U_i\}$ be an open covering of $X$ by affine band schemes $U_i=\Spec C_i$, which come with restriction maps $\res_{U_i}:\Gamma X\to\Gamma U_i=C_i$. Composing these with a given band morphism $f:B\to \Gamma X$ yields band morphisms $f_i:B\to C_i$, which define morphisms $\varphi_i=f_i^\ast:U_i\to \Spec B$ of band schemes. Since the composition of restriction maps are again restriction maps, the morphisms $\varphi_i$ coincide on the intersections of the $U_i$ and glue to a morphism $\varphi:X\to \Spec B$, whose associated map $\Gamma\varphi:B\to\Gamma X$ is equal to $f$ by construction. This shows that $\Phi$ is surjective. Conversely, a morphism $\varphi:X\to \Spec B$ is determined by its restrictions $\varphi_i:U_i\to\Spec B$, which implies that $\Phi$ is injective.
 
 In the case of an affine scheme $X=\Spec C$, this shows that $\Phi:\Hom(\Spec C,\ \Spec B)\to\Hom(B, \ C)$ is a bijection, which shows that $\Spec:\Bands\to\BAff$ and $\Gamma:\BAff\to\Bands$ are mutually inverse equivalences of categories. Since $\Bands\simeq\BAff$ is a full subcategory, the adjunction $\Phi$ establishes the reflection $\Spec\circ\Gamma:\BSch\to\Aff$. 
\end{proof}

There is yet another way to express the adjunction from \autoref{thm: adjunction between Spec and Gamma}, which we find worthwhile to mention.

\begin{cor}\label{cor: canonical morphism from X to Spec Gamma X}
 Every band scheme $X$ comes with a canonical morphism $\eta_X:X\to\Spec\Gamma X$ which satisfies the property that every morphism from $X$ into an affine band scheme factors uniquely through $\eta_X$.
\end{cor}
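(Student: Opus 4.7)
The plan is to construct $\eta_X$ as the unit of the adjunction established in \autoref{thm: adjunction between Spec and Gamma}, and then derive the universal property from the adjunction together with the anti-equivalence $\Spec:\Bands\to\BAff$.

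First I would apply the bijection
\[
 \Phi: \ \Hom(X,\ \Spec B) \ \longrightarrow \ \Hom(B,\ \Gamma X)
\]
in the special case $B = \Gamma X$, and define
\[
 \eta_X \ := \ \Phi^{-1}(\id_{\Gamma X}) \ \in \ \Hom(X,\ \Spec\Gamma X).
\]
This yields the desired canonical morphism. Its functoriality in $X$ (which is not demanded by the statement, but is implicit in the word ``canonical'') follows from the naturality of $\Phi$ in $X$.

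Next I would establish the universal property. Let $\varphi : X \to Y$ be a morphism into an affine band scheme $Y$. By \autoref{thm: adjunction between Spec and Gamma}, $Y \cong \Spec B$ with $B = \Gamma Y$, and we may assume $Y = \Spec B$ without loss of generality. Let $f = \Phi(\varphi) : B \to \Gamma X$ be the band morphism corresponding to $\varphi$ under the adjunction, and set $\psi := \Spec f : \Spec \Gamma X \to \Spec B$. I would then use naturality of $\Phi$ in $B$: the morphism $f : B \to \Gamma X$ factors as $\id_{\Gamma X} \circ f$, whose preimage under $\Phi$ (applied naturally in the first argument) is precisely $\psi \circ \eta_X$. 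Since $\Phi$ is a bijection and $\Phi(\varphi) = f = \Phi(\psi \circ \eta_X)$, we conclude $\varphi = \psi \circ \eta_X$, which proves existence of the factorization.

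For uniqueness, suppose $\psi' : \Spec\Gamma X \to \Spec B$ is another morphism with $\psi' \circ \eta_X = \varphi$. Since $\Spec:\Bands\to\BAff$ is an anti-equivalence of categories by \autoref{thm: adjunction between Spec and Gamma}, we may write $\psi' = \Spec g$ for a unique band morphism $g : B \to \Gamma X$. Applying $\Phi$ to both sides of $\psi' \circ \eta_X = \varphi$ and invoking naturality once more yields $g = f$, hence $\psi' = \psi$. The main ``obstacle'' here is just the careful bookkeeping of the naturality of $\Phi$ in its first argument; no new ideas beyond the adjunction itself are needed, which is why this statement is a corollary rather than a theorem.
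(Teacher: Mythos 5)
Your proposal is correct and follows the same route as the paper: both define $\eta_X$ as $\Phi^{-1}(\id_{\Gamma X})$, the unit of the adjunction from \autoref{thm: adjunction between Spec and Gamma}, and deduce the universal property from that adjunction together with the anti-equivalence $\Spec:\Bands\to\BAff$. The only difference is that you spell out the standard naturality bookkeeping which the paper compresses into the remark that the unit of the adjunction automatically has this property.
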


\begin{proof}
 The canonical morphism $\eta_X$ is the morphism that corresponds to the identity map $\id:\Gamma X\to\Gamma X$ under the adjunction $\Phi$ from \autoref{thm: adjunction between Spec and Gamma}. As the unit of the adjunction $\Phi$, it satisfies the asserted property of the corollary.
\end{proof}

\subsubsection{The radical ideal--closed subset correspondence}

Let $X=\Spec B$ be an affine band scheme. 
For a subset $S$ of $B$, we define the \emph{vanishing set of $S$} as
\[
 \cV(S) \ = \ \{\fp\in X\mid S\subset \fp\},
\]
which is closed since it is the complement of the open set $\bigcup_{h\in S}U_h$. Given a subset $Z$ of $X$, we define the \emph{vanishing ideal of $Z$} as 
\[
 \cI(Z) \ = \ \bigcap_{\fp\in Z} \ \fp,
\]
which is a radical $m$-ideal of $B$ by \autoref{prop: radical ideal}. Just as in usual scheme theory, we have:

\begin{thm}\label{thm: Nullstellensatz}
 The maps $\cV$ and $\cI$ define mutually inverse inclusion-reversing bijections 
 \[
  \begin{tikzcd}[column sep=60]
   \big\{\text{radical $m$-ideals of $B$}\big\} \ar[r,shift left=3pt,"\cV"] & \big\{\text{closed subsets of $X$}\big\}, \ar[l,shift left=3pt,"\cI"]
  \end{tikzcd}
 \]
 which restrict to bijections between prime $m$-ideals and irreducible closed subsets.
\end{thm}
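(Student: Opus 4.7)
The plan is to verify the two inverse relations $\cI(\cV(I))=I$ for $I$ a radical $m$-ideal and $\cV(\cI(Z))=Z$ for $Z$ closed; the inclusion-reversing property is then immediate from the definitions of $\cV$ and $\cI$. For the first, I would appeal directly to \autoref{prop: radical ideal}, which identifies $\sqrt{I}$ with the intersection of all prime $m$-ideals containing $I$; since $\cI(\cV(I)) = \bigcap_{\fp \supset I \text{ prime}} \fp = \sqrt I$, and $I$ is radical by hypothesis, this gives $\cI(\cV(I)) = I$.

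For the second, the key observation is that every closed subset of $X$ is cut out by an $m$-ideal. Indeed, a closed set $Z$ is the complement of a union of basic opens $\bigcup_{h\in S}U_h$ for some $S\subset B$, which equals $\cV(S)$, and expanding $S$ to the $m$-ideal $J = \gen{S}_m$ it generates does not change the vanishing locus. Writing $Z = \cV(J)$, we get $\cI(Z) = \sqrt J$ by the previous paragraph, and one checks $\cV(\sqrt J) = \cV(J)$: the inclusion $\subseteq$ is from $J \subset \sqrt J$, and $\supseteq$ uses that prime $m$-ideals are radical (\autoref{cor: prime ideals are radical}), so any prime $\fp \supset J$ already contains $\sqrt J$.

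For the restriction to primes and irreducibles, both directions are by contradiction. If $\fp$ is prime and $\cV(\fp) = \cV(I_1)\cup\cV(I_2)$ with both $I_i$ radical $m$-ideals strictly containing $\fp$, then applying $\cI$ gives $\fp = I_1\cap I_2$; picking $a\in I_1\setminus \fp$ and $b\in I_2\setminus \fp$, the $m$-ideal axiom \ref{I2} puts $ab$ in $I_1\cap I_2 = \fp$, contradicting the fact that $B\setminus\fp$ is multiplicative. Conversely, if $Z$ is irreducible closed with $I = \cI(Z)$ radical but not prime, there exist $a,b \notin I$ with $ab \in I$. The $m$-ideals $\gen{I \cup\{a\}}_m$ and $\gen{I\cup\{b\}}_m$ cut out proper closed subsets $Z \cap \cV(a)$ and $Z\cap\cV(b)$ of $Z$ (proper because $a,b\notin I = \cI(Z)$), and for any $\fp\in Z$ the relation $ab\in I\subset \fp$ forces $a\in \fp$ or $b\in \fp$ by primality of $\fp$, writing $Z$ as the union of these two proper closed subsets — a contradiction.

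The main obstacle is the irreducible$\Rightarrow$prime direction, since it requires both the correct description of the $m$-ideal generated by $I \cup \{a\}$ and careful checking that the two vanishing sets are proper subsets of $Z$; once one recognizes that $m$-ideals in bands behave exactly like multiplicative ideals in commutative monoids, the argument mirrors the classical proof for affine schemes and goes through without subtlety.
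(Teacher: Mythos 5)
Your proof is correct and follows essentially the same route as the paper's: both halves of the bijection rest on \autoref{prop: radical ideal}, and the irreducible-closed/prime-ideal correspondence is argued the same way. The only cosmetic differences are that you establish $\cV(\cI(Z))=Z$ by writing $Z=\cV(J)$ for an $m$-ideal $J$ and checking $\cV(\sqrt{J})=\cV(J)$ (the paper instead verifies $\cV(\cI(Z))\subset\cV(h)$ for each basic closed set $\cV(h)$ containing $Z$), and you prove prime $\Rightarrow$ irreducible by contradiction via the already-established bijection, where the paper notes directly that $\cV(\fp)$ is the closure of the point $\fp$.
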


\begin{proof}
 By \autoref{prop: radical ideal}, we have for every radical $m$-ideal $I$ that
 \[
  \cI(\cV(I)) \ = \ \bigcap_{\substack{\text{prime $m$-ideals $\fp$}\\ \text{ that contain $I$}}} \fp \ = \ I,
 \]
 so we are left with showing that $\cV(\cI(Z))=Z$ for every closed subset $Z$ of $X$. We have $Z\subset\cV(\cI(Z))$ tautologically. Since the vanishing sets of the form $\cV(h)$ generate the topology of closed subsets of $X$, it suffices to show that $Z\subset\cV(h)$ implies $\cV(\cI(Z))\subset\cV(h)$ in order to establish $\cV(\cI(Z))\subset Z$. Assume $Z\subset\cV(h)$, i.e.,\ $h\in\fp$ for all $\fp\in Z$. Then $h\in\cap_{\fp\in Z}\fp=\cI(Z)\subset\fq$ for every $\fq\in\cV(\cI(Z))$, which shows that $\cV(\cI(Z))\subset\cV(h)$, as desired. Thus $\cV(\cI(Z))=Z$, which shows that $\cV$ and $\cI$ are mutually inverse bijections. It is evident that $\cV$ and $\cI$ reverse inclusions, which establishes the first claim of the theorem.
 
 We turn to the second claim. A prime $m$-ideal $\fp$ is contained in every other prime $m$-ideal $\fq\in\cV(\fp)$. Thus whenever $\fp\in V$ for a closed subset $V$ of $\cV(\fp)$, we have $V=\cV(\fp)$, which shows that $\cV(\fp)$ is irreducible. Conversely, let $Z$ be an irreducible closed subset of $X$ and let $I=\cI(Z)=\bigcap_{\fp\in Z}\fp$. Consider $g,h\in B$ such that $g\cdot h\in I$. This means that $V(I)\subset V(g)\cup V(h)$. Since $Z=V(I)$ is irreducible, $V(I)\subset V(g)$ or $V(I)\subset V(h)$, which means that $g\in I$ or $h\in I$. This shows that $I$ is a prime $m$-ideal and establishes the second claim  of the theorem.
\end{proof}

\subsubsection{Properties \ref{A1}--\ref{A4}}

To conclude this section, we check that the theory of band schemes does indeed satisfy properties \ref{A1}--\ref{A4} from \autoref{subsection: covering families}. Note that \autoref{thm: adjunction between Spec and Gamma} assures us that the functor $\Spec:\Bands\to\BAff$ is an anti-equivalence of categories, which is the basic requirement for the formulation of \ref{A1}--\ref{A4}.

Properties \ref{A1} and \ref{A2} follow immediately from the definitions of $\Spec B$ and a morphism of band schemes, respectively. Property \ref{A3} follows directly from the fact that every affine open covering of an affine band scheme $X=\Spec B$ must contain $X$ itself, which contains every other open subset in the covering. Property \ref{A4} follows from \autoref{thm: Nullstellensatz}: every irreducible subset of $\Spec B$ is the closure of a uniquely determined prime $m$-ideal of $B$.

\subsection{Properties of band schemes and their morphisms}
\label{subsection: properties}

We introduce some important concepts for band schemes in this section, such as residue fields, $k$-schemes (of locally finite type), open and closed immersions, separated morphisms, and fibre products.

\subsubsection{Residue fields}

\begin{df}
 Let $X$ be a band scheme and $x\in X$. Let $\cO_{X,x}$ be the stalk at $x$ and $\fm_x=\cO_{X,x}-\cO_{X,x}^\times$ its maximal $m$-ideal. The \emph{residue field at $x$} is the quotient $k(x)=\bandquot{\cO_{X,x}}{\gen{\fm_x}}$. It comes with a canonical morphism $\kappa_x:\Spec k(x)\to X$.
\end{df}

The term ``residue field'' is borrowed from usual algebraic geometry. The residue field of a point in a band scheme is in general not a field, but rather an idyll if not $\0$; cf.\ \autoref{subsection: the kernel space} for a detailed discussion.

\begin{prop}\label{prop: universal property of the residue field}
 Let $X$ be a band scheme, $x\in X$, and $\kappa_x:\Spec k(x)\to X$ the canonical morphism. Then for every idyll $F$, every morphism $\Spec F\to X$ with image $\{x\}$ factors uniquely through $\kappa_x$.
\end{prop}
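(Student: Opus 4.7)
The plan is to reduce to the affine case and then chain together the universal properties of localization and quotient established earlier. Since $F$ is an idyll, its unique prime $m$-ideal is $\{0\}$, so $\Spec F$ consists of a single point. A morphism $\varphi\colon\Spec F\to X$ with image $\{x\}$ must therefore factor through any open neighborhood of $x$; in particular, choosing an affine open $U=\Spec B$ of $X$ containing $x$, and letting $\fp_x\in\Spec B$ denote the prime $m$-ideal corresponding to $x$, we get a unique factorization $\Spec F\to U\hookrightarrow X$. By construction, $\kappa_x$ factors through $U$ in the same way, so it suffices to prove the stated universal property with $X$ replaced by $U$.

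By \autoref{thm: adjunction between Spec and Gamma}, morphisms $\Spec F\to\Spec B$ correspond bijectively to band morphisms $f\colon B\to F$, and the condition that the image point is $x$ translates to $f^{-1}(0)=\fp_x$. In particular $f(B-\fp_x)\subset F-\{0\}=F^\times$, so by \autoref{prop: universal property of the localization} there is a unique factorization $f=f'\circ\iota_{\fp_x}$ with $f'\colon B_{\fp_x}=\cO_{X,x}\to F$. The morphism $f'$ of stalks is local (being the stalk of the local morphism $\varphi$), hence it sends the maximal $m$-ideal $\fm_x=B_{\fp_x}-B_{\fp_x}^\times$ into the set of non-units of $F$; since $F$ is an idyll, this set equals $\{0\}$. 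Equivalently, $f'(a)\in N_F$ for every $a\in\fm_x$ viewed as a one-term sum in $B_{\fp_x}^+$, so by \autoref{prop: universal property of the quotient} the map $f'$ factors uniquely through the quotient map $B_{\fp_x}\to\bandquot{B_{\fp_x}}{\gen{\fm_x}}=k(x)$, yielding the desired $\bar f\colon k(x)\to F$. Applying $\Spec$ to the composite $B\to B_{\fp_x}\to k(x)\to F$ gives a factorization of $\varphi$ through $\kappa_x$, and uniqueness at each of the three stages combines to give uniqueness of the factorization.

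The only conceptual subtlety is the middle step: it is precisely the hypothesis that $F$ is an \emph{idyll} (so that the maximal $m$-ideal of $F$ coincides with $\{0\}\subset N_F$) that upgrades the locality condition $f'(\fm_x)\subseteq\fm_F$ into the sharper statement $f'(\fm_x)\subseteq N_F$ needed to invoke the quotient universal property for $\gen{\fm_x}$. For a general band target the same argument would only factor through the monoid quotient $B_{\fp_x}/\fm_x$ rather than through the residue field $k(x)$, which is why the hypothesis on $F$ appears in the statement.
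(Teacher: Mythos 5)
Your proof is correct and follows essentially the same route as the paper's: reduce to an affine open $U=\Spec B$, factor the corresponding band morphism $f\colon B\to F$ through the localization $B_{\fp_x}$ via \autoref{prop: universal property of the localization}, and then through the quotient by $\gen{\fm_x}$ via \autoref{prop: universal property of the quotient}. The only cosmetic difference is that you derive $f'(\fm_x)\subseteq\{0\}$ from locality of the stalk map, whereas the paper gets it directly from $\ker f=\fp_x$; both are valid.
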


\begin{proof}
 Since the image of $\Spec F\to X$ is contained in an affine open of $X$, we can assume that $X=\Spec B$ is affine. Thus $\Spec F\to X$ corresponds to a band morphism $f:B\to F$ with kernel $\fp$, where $\fp$ is the prime $m$-ideal of $B$ that corresponds to $x$. Let $S=B-\fp$. Since $F$ is an idyll, $f(S)\subset F^\times$, and thus $f$ factors uniquely through $f_\fp:\cO_{X,x}=B_\fp\to F$ by \autoref{prop: universal property of the localization}. Since $f(\fp)\subset\{0\}$, this map factors uniquely through $\bar f_\fp:k(x)=\bandgenquot{B_\fp}{S^{-1}\fp}\to F$ by \autoref{prop: universal property of the quotient}, as claimed.
\end{proof}

\subsubsection{\texorpdfstring{$k$}{k}-Schemes}

\begin{df}
 Let $k$ be a band. A \emph{(band) $k$-scheme} is a band scheme $X$ together with a morphism $\omega_X:X\to\Spec k$. A ($k$-linear) \emph{morphism of $k$-schemes} is a morphism $\varphi:X\to Y$ of $k$-schemes such that $\omega_X=\omega_Y\circ\varphi$. We denote the category of $k$-schemes by $\BSch_k$.
\end{df}

Note that in the case of a ring $k$, the category $\BSch_k$ of band $k$-schemes differs from the category $\Sch_k$ of (usual) $k$-schemes since not every prime $m$-ideal of a ring is a prime ideal in the usual sense. See \autoref{subsection: Functors into other scheme theories} for the description of a functor $\BSch_k\to\Sch_k$.

\begin{df}
 A $k$-scheme $X$ is \emph{locally of finite type} if for every affine open $U$ of $X$, the band morphism $\omega_U^\#:k\to \Gamma U$ factors through a $k$-linear surjection $k[T_1,\dotsc,T_n]\to \Gamma U$ for some $n$. It is \emph{of finite type} if, in addition, $X$ is quasi-compact.
\end{df}

\subsubsection{Fibre product and disjoint unions}

Let $\psi_X:X\to Z$ and $\psi_Y:Y\to Z$ be morphisms of band schemes. The \emph{fibre product of $X$ and $Y$ over $Z$} is defined as the topological fibre product $X\times_ZY$ together with the structure sheaf that sends an affine open of the form $U\times_WV$ (with $U$, $V$ and $W$ affine) to $\Gamma U\otimes_{\Gamma W}\Gamma V$, together with the obvious restriction maps. The disjoint union $\coprod X_i$ of band schemes $X_i$ comes with the structure sheaf $\cO_{\coprod X_i}(U)=\cO_{X_j}(U)$ for a basic open subset $U\subset X_j$.

The fact that the formation of fibre products commutes with the functor $\BSch\to\Top$ which sends a band scheme $X$ to its underlying topological space is analogous to the corresponding fact for monoid schemes; cf.\ \cite[Prop.~3.1]{CHWW15}. This relationship is clarified in terms of the functor $\BSch\to\MSch$ from band schemes to monoid schemes, which is the identity on underlying topological spaces; cf.\ \autoref{subsubsection: the underlying monoid schemes}.

\begin{thm}\label{thm: limits and colimits for band schemes}
 The category $\BSch$ is closed under finite limits and arbitrary coproducts. In particular, $\Spec\Funpm$ is terminal and $\Spec \0 =\emptyset$ is initial, $X\times_ZY$ is the fibre product of $X$ and $Y$ over $Z$, and the disjoint union $\coprod X_i$ is the coproduct of a family of band schemes $X_i$.
\end{thm}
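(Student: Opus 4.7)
The plan is to establish each of the four named special cases; together they yield closure under finite limits (via terminal objects, binary products, and equalizers) and arbitrary coproducts.

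First I dispose of the easy claims. That $\Spec\Funpm$ is terminal in $\BSch$ follows from the adjunction of \autoref{thm: adjunction between Spec and Gamma}: for any band scheme $X$,
\[
  \Hom_{\BSch}(X,\Spec\Funpm) \ \cong \ \Hom_{\Bands}(\Funpm,\Gamma X),
\]
which is a singleton because $\Funpm$ is initial in $\Bands$ by \autoref{lemma: initial and terminal object}. For $\Spec\0$, the trivial band has only the element $0=1$, and any prime $m$-ideal $\fp$ would require $B-\fp$ to contain $1=0$, which is impossible; hence $\Spec\0$ is empty as a topological space, and the empty band space is clearly initial. The coproduct $\coprod X_i$ is the disjoint union of topological spaces with the obvious structure sheaf, and its universal property is immediate from gluing morphisms defined on disjoint components.

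For the fibre product, I proceed in two steps. In the affine case I will show $\Spec A\times_{\Spec R}\Spec B\cong\Spec(A\otimes_R B)$ via the chain of natural bijections, for any band scheme $W$,
\[
  \Hom(W,\Spec(A\otimes_R B)) \ \cong \ \Hom(A\otimes_R B,\Gamma W) \ \cong \ \Hom(A,\Gamma W)\times_{\Hom(R,\Gamma W)}\Hom(B,\Gamma W),
\]
using \autoref{prop: universal property of the tensor product} for the second step and \autoref{thm: adjunction between Spec and Gamma} for the first and then again to convert the right-hand factors back to morphisms into $\Spec A$, $\Spec B$, $\Spec R$. For the general case I glue: choose an affine open cover $\{W_k\}$ of $Z$, and for each $k$ affine open covers $\{U_{k,i}\}$ of $\psi_X^{-1}(W_k)$ and $\{V_{k,j}\}$ of $\psi_Y^{-1}(W_k)$ such that each piece maps into $W_k$. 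This is possible by the local simplifications of \autoref{subsubsection: local nature of band schemes}, which guarantee that images of affine opens lie in affine opens of the target. I then glue the affine fibre products $U_{k,i}\times_{W_k}V_{k,j}$ by the usual recipe, verifying cocycle conditions on double overlaps using the uniqueness clause in the affine universal property. The underlying topological space is the ordinary topological fibre product: this can be seen by applying the identity-on-points forgetful functor $\BSch\to\MSch$ of \autoref{subsubsection: the underlying monoid schemes} and invoking the analogous statement for monoid schemes. Once fibre products are in hand, equalizers of $\varphi,\psi: X\to Y$ are constructed by the standard formula $\eq(\varphi,\psi)=X\times_{Y\times Y}Y$ via the diagonal $\Delta_Y: Y\to Y\times Y$, with no further work. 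Combined with the terminal object and binary products, $\BSch$ is closed under all finite limits.

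The main obstacle is the gluing step in the general fibre product: one must confirm that the assembled band space really is a band scheme whose structure sheaf is given locally by $\Gamma U\otimes_{\Gamma W}\Gamma V$ as stated, and that the resulting object represents the fibre-product functor on \emph{all} band schemes, not merely affine ones. The latter reduces to the affine case by restricting a test morphism $W\to X\times_Z Y$ to an affine open cover of $W$ fine enough that each piece maps into some $U_{k,i}\times_{W_k}V_{k,j}$ (again possible by \autoref{subsubsection: local nature of band schemes}); the affine universal property then produces unique factorizations that reassemble consistently thanks to the sheaf property and the uniqueness built into the $\otimes_{\Gamma W}$ construction. The remainder is routine bookkeeping analogous to the corresponding verification for ordinary or monoid schemes.
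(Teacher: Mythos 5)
Your proposal is correct and follows essentially the same route as the paper: the affine case of the fibre product is handled via the anti-equivalence $\Spec\colon\Bands\to\BAff$ together with the universal property of the tensor product, and the general case is reduced to the affine one using the local nature of band schemes (images of affine opens land in affine opens), with the terminal object and coproducts treated as you do. The only cosmetic difference is that the paper takes the topological fibre product with its structure sheaf as the already-constructed object and verifies its universal property by shrinking the test object, whereas you assemble it by gluing and make the equalizer construction $X\times_{Y\times Y}Y$ explicit.
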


\begin{proof}
 Since $\Funpm$ is initial in $\Bands$, its spectrum is terminal in $\BAff$. A morphism $\varphi:X\to\Spec\Funpm$ is uniquely determined as a map since $\Spec\Funpm$ contains only one point, and $\varphi^\#$ is uniquely determined in terms of the unique band morphism $\Funpm\to\Gamma X$. Evidently $\Spec\0=\emptyset$ is initial.

 Given a family of morphisms $\varphi_i:X_i\to Z$ of band schemes, there is a unique continuous map $\varphi:\coprod X_i\to Z$ so that the restriction to $X_j$ is $\varphi_j$. The morphism of sheaves is given as the morphism $\varphi_U^\#:\cO_Z(U)\to\prod\cO_{X_i}(\varphi_i^{-1}(U))$ into the product whose $j$-th coordinate projection is $\varphi_{j,U}^\#:\cO_Z(U)\to\cO_{X_j}(\varphi_j^{-1}(U))$. This defines (in a unique way) a morphism $\varphi:\coprod X_i\to Z$ whose restriction to $X_j$ is $\varphi_j$.
  
 Consider arbitrary $X$, $Y$ and $Z$ and a commutative diagram
 \[
  \begin{tikzcd}[column sep=40]
                                                                                 &&                                            & X \ar[dr,"\psi_X"] \\
   T \ar[urrr,bend left=10pt,"\varphi_X"] \ar[drrr,bend right=10pt,"\varphi_Y"'] && X\times_ZY \ar[ur,"\pi_X"'] \ar[dr,"\pi_Y"] &                    & Z \\
                                                                                 &&                                            & Y \ar[ur,"\psi_Y"'] 
  \end{tikzcd}
 \]
 of band schemes. Since a morphism $\varphi:T\to X\times_ZY$ is uniquely determined by its restriction to the affine opens of $T$ and since every compatible choice of morphisms from the affine opens of $T$ into $X\times_ZY$ glues to a such a morphism, we can assume that $T$ is affine. Since the image of an affine band scheme is contained in an affine open (cf.\ \autoref{subsubsection: local nature of band schemes}), we can assume that $X$, $Y$ and $Z$ are affine. Then $X\times_ZY=\Spec(\Gamma X\otimes_{\Gamma Z}\Gamma_Y)$. Since $\Spec:\Bands\to\BAff$ is an anti-equivalence by \autoref{thm: adjunction between Spec and Gamma}, $X\times_ZY$ is the fibre product of $X$ and $Y$ over $Z$ in $\BAff$, which establishes the unique existence of a morphism $\varphi:T\to X\times_ZY$ that makes the above diagram commute.
\end{proof}

\subsubsection{Open and closed immersions}
\label{subsection: open and closed immersions}

\begin{df}
 Let $X$ be a band scheme. An \emph{open subscheme of $X$} is an open subset $U$ of $X$ together with the restriction $\cO_U=\cO_X\vert_U$ of the structure sheaf of $X$ to $U$. An \emph{open immersion} is an morphism $\iota: Y\to X$ that restricts to an isomorphism of $Y$ with an open subscheme of $X$.
\end{df}

We call an (affine) open subscheme of $X$ also an \emph{(affine) open} for short.

\begin{df}
  A morphism $\varphi:Y\to X$ of band schemes if \emph{affine} if for every affine open subscheme $U$ of $X$, the inverse image $\varphi^{-1}(U)=U\times_XY$ is an affine open subscheme of $Y$.
  A \emph{closed immersion} is an affine morphism $\iota:Y\to X$ such that $\iota^\#(U):\cO_X(U)\to\cO_Y(V)$ is surjective for all affine opens $U$ of $X$ and $V=\iota^{-1}(U)$. A \emph{closed subscheme of $X$} is an isomorphism class of closed immersions into $X$.
\end{df}
 
\begin{rem}
 Similar to the case of monoid schemes (cf.\ \cite{Lorscheid-Ray23}), a closed immersion $\iota:Y\to X$ of band schemes is a topological embedding that is in general not closed; e.g.\ the image of the diagonal embedding $\A^1\to\A^2$ is not closed. In the case of monoid schemes, which behave similarly to band schemes in many respects, the failure of their underlying topological spaces to reflect topological properties has been rectified using {\em congruence spaces} in \cite{Lorscheid-Ray23}. We refer to \autoref{subsection: the null space} for a potential generalization to band schemes in terms of null spaces.
\end{rem}

\subsubsection{Separated morphisms}
\label{subsection: separated morphisms}

The existence of fibre products allows us to define separated morphisms in terms of the \emph{diagonal} $\Delta: X\to X\times_YX$ of a morphism $\varphi:X\to Y$, which is the morphism induced by the identity maps $\id:X\to X$ to each copy of $X$ in $X\times_YX$. 

\begin{df}
 A morphism $\varphi:X\to Y$ of band schemes is \emph{separated} if its {diagonal} $\Delta:X\to X\times_YX$ is a closed immersion. A $k$-scheme $X$ is \emph{separated} if the structure map $X\to\Spec k$ is separated.
\end{df}

\begin{lemma}\label{lemma: affine morphisms are separated}
 Every affine morphism of band schemes is separated.
\end{lemma}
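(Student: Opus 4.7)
The plan is to first handle the affine case directly, then reduce the general situation to it using the simplified local-affine structure of band schemes described in \autoref{subsubsection: local nature of band schemes}, in particular the facts that every affine open subscheme is principal and that the image of any affine open is contained in an affine open.

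In the affine case, suppose $\varphi:X=\Spec C\to Y=\Spec B$. By \autoref{thm: limits and colimits for band schemes}, $X\times_YX=\Spec(C\otimes_BC)$, and the diagonal $\Delta$ corresponds under $\Spec:\Bands\to\BAff$ to the multiplication map $\mu:C\otimes_BC\to C$, $a\otimes b\mapsto ab$. Since $c=\mu(c\otimes 1)$ for every $c\in C$, the map $\mu$ is surjective, so $\Delta$ is the spectrum of a surjection and is therefore a closed immersion by definition.

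For a general affine morphism $\varphi:X\to Y$, I would argue as follows. Let $W$ be an arbitrary affine open of $X\times_YX$. By the image-of-affine-open property, the composition $W\hookrightarrow X\times_YX\to Y$ factors through an affine open $V\subset Y$. Both projections of $W$ thus land in $U:=\varphi^{-1}(V)$, which is affine by the definition of an affine morphism, so $W$ is an affine open subscheme of the affine band scheme $U\times_VU=\Spec(\Gamma U\otimes_{\Gamma V}\Gamma U)$. Hence $W$ is principal, say $W=(U\times_VU)_h$ for some $h\in\Gamma U\otimes_{\Gamma V}\Gamma U$. Setting $\mu:\Gamma U\otimes_{\Gamma V}\Gamma U\to\Gamma U$ to be the multiplication of the affine case applied to $U\to V$, a point-set check shows $\Delta^{-1}(U\times_VU)=U$, and therefore
\[
  \Delta^{-1}(W) \ = \ \Delta_U^{-1}\bigl((U\times_VU)_h\bigr) \ = \ U_{\mu(h)},
\]
which is an affine (principal) open of $X$. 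On structure sheaves, $\Delta^\#(W)$ is the localization of $\mu$, namely $(\Gamma U\otimes_{\Gamma V}\Gamma U)[h^{-1}]\to(\Gamma U)[\mu(h)^{-1}]$; since $\mu$ is surjective, so is its localization. This verifies both conditions of the definition of a closed immersion for $\Delta$, proving that $\varphi$ is separated.

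The only mildly subtle point is the reduction step: one must ensure that every affine open $W$ of $X\times_YX$ is contained in some product open $U\times_VU$ with $U=\varphi^{-1}(V)$ affine. This is where the simplifications peculiar to band schemes (each affine open has a unique closed point, every affine open is principal, images of affine opens sit inside affine opens) do the work that would otherwise require passing to refinements, making the proof considerably cleaner than its counterpart in classical scheme theory.
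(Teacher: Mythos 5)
Your proof is correct and follows essentially the same route as the paper's: cover $X\times_YX$ by opens of the form $U\times_VU$ with $U=\varphi^{-1}(V)$ affine, and observe that the diagonal corresponds to the surjective multiplication map $\Gamma U\otimes_{\Gamma V}\Gamma U\to\Gamma U$. Your extra step---checking the closed-immersion condition on an \emph{arbitrary} affine open $W$ by noting that $W$ is a principal open of some $U\times_VU$ and that surjectivity passes to the localization---fills in a detail the paper leaves implicit, and is a welcome addition rather than a deviation.
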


\begin{proof}
Let $\varphi:X\to Y$ be an affine morphism with diagonal $\Delta:X\to X\times_YX$. By definition of the fibre product, $X\times_YX$ is covered by affine opens of the form $U_1\times_VU_2$ where $U_1$ and $U_2$ are affine opens of $X$ and $V$ is an affine open of $Y$ that contains $\varphi(U_1)$ and $\varphi(U_2)$. Since $\varphi$ is affine, $U=\varphi^{-1}(V)$ is affine. Since $U$ contains $U_1$ and $U_2$, the fibre product $X\times_YX$ is, in fact, covered by affine opens of the form $U\times_VU$ with $U=\varphi^{-1}(V)$. The inverse image $\Delta^{-1}(U\times_VU)=U$ is affine and the induced band morphism $\Gamma U\otimes_{\Gamma V}\Gamma U\to\Gamma U$ is evidently surjective. Thus $\Delta$ is a closed immersion and $\varphi$ is separated, as claimed.
\end{proof}

\begin{lemma}\label{lemma: intersections affines in a separated band scheme are affine}
 Let $k$ be a band and $X$ a separated $k$-scheme. Then the intersection of affine opens of $X$ is affine.
\end{lemma}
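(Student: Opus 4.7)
The plan is to imitate the classical proof from usual scheme theory. For two affine opens $U,V \subseteq X$, I would show that $U \cap V$ is naturally isomorphic to the fibre product $X \times_{X \times_k X} (U \times_k V)$, where $X \to X \times_k X$ is the diagonal $\Delta$ and $U \times_k V \hookrightarrow X \times_k X$ is the open immersion. This is a direct verification of the universal property of the fibre product: a morphism from a test scheme $T$ into the fibre product amounts to a pair of morphisms $T \to U$ and $T \to V$ agreeing as morphisms $T \to X$, which is precisely a morphism $T \to U \cap V$.

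First I would observe that $U \times_k V$ is itself affine: by \autoref{thm: limits and colimits for band schemes} applied to the fibre product of affines, $U \times_k V = \Spec(\Gamma U \otimes_k \Gamma V)$. Next, since $X$ is separated over $\Spec k$, the diagonal $\Delta$ is a closed immersion, and the projection $U \cap V \to U \times_k V$ identified above is obtained from $\Delta$ by base change along an open immersion. Granting that closed immersions are stable under base change in $\BSch$, this projection is again a closed immersion, and hence affine by definition; applied to the affine scheme $U \times_k V$, this forces $U \cap V$ to be affine. The lemma for arbitrary finite intersections then follows by a trivial induction, since the intersection of finitely many affines can be reduced two at a time.

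The main step needing verification in the band setting is the stability of closed immersions under base change, as this is not invoked explicitly earlier in the paper. Reducing to affine charts and using \autoref{cor: bijection between null ideals and quotients}, the assertion becomes: given a null ideal $I$ of a band $B$ and any band morphism $f:B\to C$, the canonical map $C \to C \otimes_B (\bandquot{B}{I})$ is surjective. Using \autoref{prop: universal property of the tensor product}, every element of the target has the form $c \otimes [b]$, and the defining tensor relations identify $c \otimes [b] = c \otimes \bigl(f_{\bandquot{B}{I}}(b)\cdot [1]\bigr) = \bigl(c\cdot f_C(b)\bigr) \otimes [1]$, which is the image of $c \cdot f_C(b) \in C$. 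This is precisely where the simple nature of band tensors (elements are single tensors, not formal sums) makes the surjectivity immediate.

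The anticipated main obstacle is this verification of base-change stability; once that is in hand, the rest of the argument is a formal transcription of the classical diagonal argument. No additional finiteness or Noetherian hypotheses are needed because the pointed-monoid definition of $\BSch$ gives affineness of $U \times_k V$ for free.
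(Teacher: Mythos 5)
Your proposal is correct and follows the same diagonal argument as the paper: $U\cap V=\Delta^{-1}(U\times_kV)$ with $U\times_kV$ affine open in $X\times_kX$ and $\Delta$ a closed immersion by separatedness. The one difference is that you identify base-change stability of closed immersions as ``the main obstacle,'' but no such step is needed: in this paper a closed immersion is \emph{by definition} an affine morphism, i.e.\ the preimage of every affine open is affine, so $\Delta^{-1}(U\times_kV)$ is affine immediately and the paper's proof is three lines. Your detour is not wrong --- the tensor computation $c\otimes[b]=(c\cdot f(b))\otimes[1]$ showing surjectivity of $C\to C\otimes_B(\bandquot{B}{I})$ is a valid (and mildly useful) observation, though to conclude that the base change is genuinely a closed immersion you would also have to check surjectivity over every affine open of the target, not just the top-level one --- but for the lemma at hand it buys nothing beyond what the definition already gives.
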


\begin{proof}
 Let $U_1$ and $U_2$ be affine open in $X$. Then $U_1\times_k U_2$ is affine open in $X\times_kX$. Since $X\to\Spec k$ is separated, $\Delta:X\to X\times_kX$ is a closed immersion, and therefore $U_1\cap U_2=\Delta^{-1}(U_1\times_k U_2)$ is affine, as claimed.
\end{proof}

\begin{ex}
 An example of a non-separated $k$-scheme is the affine line with double point, which is the band scheme $X$ that is covered by two copies $U_1$ and $U_2$ of $\A^1_k=\Spec k[T]$ that intersect in $U_0=U_1\cap U_2\simeq \G_{m,k}=\Spec k[T^{\pm1}]$, together with the restriction maps $\res_{U_i,U_0}: k[T]\to k[T^{\pm1}]$ that send $T$ to $T$ for both $i=1$ and $i=2$. 
 
 In order to see that $X\to\Spec k$ is not separated, consider the diagonal $\Delta:X\to X\times_kX$ and the affine open $U_1\times_k U_2$ of $X\times_k X$, whose coordinate band is $k[T_1]\otimes_kk[T_2]=k[T_1,T_2]$. Then $\Delta^{-1}(U_1\times_kU_2)=U_1\cap U_2=U_0$, and the induced band morphism $k[T_1,T_2]\to k[T^{\pm1}]$ maps both $T_1\otimes 1$ and $1\otimes T_2$ to $T$. Thus $T^{-1}$ is not in its image, which shows that $\Delta$ is not a closed immersion and that $X$ is not separated.
\end{ex}

\subsection{Examples}
\label{subsection: examples}

\subsubsection{Projective space}\label{subsubsection: projective space}
 In the following, we define projective $n$ space over an idyll $F$. The points of projective $n$-space
 over $F$ correspond bijectively to ordered $n+1$-tuples $(x_0,\dotsc,x_n)$ with $x_i\in\{0,1\}$ not all equal to $0$. 
Indeed, such tuples correspond bijectively to homogeneous prime $m$-ideals of $F[T_0,\dotsc,T_n]$ that do not contain all of $T_0,\dotsc,T_n$. The generic point of $\P_F^n$ is $(1,\dotsc,1)$, and $\P^n_F$ has $n+1$ closed points $(1,0,\dotsc,0),\dotsc,(0,\dotsc,0,1)$. 
  
The topology of $\P_F^n$ is generated by open subsets of the form
\[
 U_J \ = \ \big\{ (x_0,\dotsc,x_n) \,\big| \, x_j=1\text{ for }j\in J \big\}
\]
for subsets $J$ of $\{0,\dotsc,n\}$. The value of the structure sheaf $\cO_{\P^n_F}$ in $U_J$ is the degree $0$ part of the $F$-algebra $F[T_i,T_j^{\pm1}\mid j\in J, i\notin J]$. Note that the underlying topological space of $\P^n_F$ is independent of the idyll $F$, in contrast to usual scheme theory.

\subsubsection{Toric band schemes}\label{ex: toric band schemes}
 Projective spaces are specific cases of \emph{toric band schemes}, which are defined as follows. Let $k$ be a band and $\Delta$ a (rational polyhedral) fan in $\R^n$. For a cone $\sigma$ in $\Delta$, we denote by $A_\sigma=\sigma^\vee\cap\Z^n$ the lattice points of the dual cone $\sigma$, considered as a multiplicative monoid. Let $k[A_\sigma]$ be the monoid band of $A_\sigma$ over $k$ (cf.\ \autoref{subsubsection: monoid algebras}), and define $U_\sigma=\Spec k[A_\sigma]$. The \emph{toric $k$-scheme with fan $\Delta$} is the union $X(\Delta)=\colim U_\sigma$ of all the spaces $U_\sigma$ with $\sigma\in\Delta$, and its structure sheaf $\cO_{X(\Delta)}$ takes the value $k[A_\sigma]$ on $U_\sigma$.
 
 If $k$ is an idyll, then the points of $X(\Delta)$ correspond bijectively to the cones $\sigma\in\Delta$, with points of the open affine subset $U_\sigma$ of $X(\Delta)$ corresponding to the faces of $\sigma$. 
 
 We mention without proof that $X(\Delta)$ is a separated $k$-scheme and that its base extension $X(\Delta)^+_K$ to a field $K$ (as defined in \autoref{prop: base extension of k-schemes to usual schemes}) is the toric $K$-variety of $\Delta$ in the usual sense.

\subsubsection{Projective band schemes}
 A \emph{projective $B$-scheme} is a $B$-scheme that is isomorphic to a closed subscheme of a projective space $\P^n_B$. Closed subschemes of projective space can be characterized in terms of homogeneous null ideals of $B[T_0,\dotsc,T_n]^+$, which are null ideals of $B[T_0,\dotsc,T_n]^+$ that are generated by homogeneous elements (i.e., $B$-linear combinations of monomials in $T_0,\dotsc, T_n$ of the same degree).
  
 An affine morphism $\varphi:Y\to \P^n_B$ is a closed immersion if there is a homogeneous null ideal $I$ of $B[T_0,\dotsc,T_n]$ such that on the affine open subset $U_J$ of $\P^n$ (as defined in \autoref{subsubsection: projective space}), the map between coordinate bands $\cO_{\P^n_B}(U_J)\to \cO_Y\big(\varphi^{-1}(U_J)\big)$ corresponds to the degree $0$ part of the quotient map 
 \[
  S_J^{-1}B[T_0,\dotsc,T_n] \ \onto \ \overline{S}_J^{-1}\bandquot{ B[T_0,\dotsc,T_n]}{I},
 \]
 where $S_J$ is the multiplicative subset of $B[T_0,\dotsc,T_n]$ generated by $\{T_i\mid i\in J\}$ and $\overline{S}_J$ is the image of $S_J$ in $\bandquot{ B[T_0,\dotsc,T_n]}{I}$. We also write $Y=\Proj \big(\bandquot{B[T_0,\dotsc,T_n]}{I}\big)$ for the closed subscheme $Y$ of $\P^n$, in analogy to usual projective schemes.

\begin{ex}\label{ex: Grassmannians}
 As an example of particular interest, we describe the Grassmannian $\Gr(r,n)$: it is the closed subscheme 
 \[\textstyle
  \Gr(r,n) \ = \ \Proj \big(\bandquot{B[T_I\mid I\in\binom Er]}{J}\big)
 \]
 of $\P^{\binom nr-1}=\Proj \big(B[T_I\mid I\in\binom Er])$, where $E=\{1,\dotsc,n\}$ and $\binom Er$ is the collection of $r$-subsets of $E$, whose homogeneous ideal $J$ is generated by the Pl\"ucker relations
 \[
  \sum_{k=0}^r (-1)^{\epsilon(k,I,J)} T_{I-i_k} T_{Ji_k}
 \]
 for all subsets $I=\{i_0,\dotsc,i_r\}$ and $J=\{j_2,\dotsc, j_r\}$ of $E$. In this formula, we use the shorthand notations $I-i_k=I_k-\{i_k\}$ and $Ji_k=J\cup\{i_k\}$, the convention that $T_{I'}=0$ if $\# I'<r$, and the function $\epsilon(k,I,J) \ = \ \#\{e\in I\cup J\mid e<i_k\}$.
 
If $F$ is an idyll, the set $\Gr(r,n)(F) := \Hom(\Spec(F), \Gr(r,n))$ coincides with the set of strong $F$-matroids of rank $r$ on $\{1,\ldots,n \}$ in the sense of \cite{Baker-Bowler19}, cf.~\cite{Baker-Lorscheid21b}.
In particular, when $F=\K$ is the Krasner hyperfield, $\Gr(r,n)(\K)$ coincides with the set of matroids (in the usual sense) of rank $r$ on $\{1,\ldots,n \}$. 

This characterization of $F$-matroids generalizes to flag $F$-matroids: they correspond to the $F$-rational points of a suitably defined flag variety $\Fl(\br,n)$ over $\Funpm$; cf.\ \cite{Jarra-Lorscheid24} for details.
\end{ex}
 
 \subsubsection{Subvarieties of toric varieties}
 Let $K$ be a field, and let $X(\Delta)$ be the toric $K$-variety associated to the fan $\Delta$, as above.
To every $K$-subscheme $Y$ of $X(\Delta)$, we can associate in a natural way a band scheme $Y(\Delta)$ over $K$ as follows.

First, consider the case where $X(\Delta) = U_\sigma =\Spec K[A_\sigma]$ is the affine open subset associated to a cone $\sigma$ of $\Delta$.
Write $Y \cap U_\sigma = \Spec R$, and let $\pi : K[A_\sigma] \to R$ be the corresponding morphism of $K$-algebras.
We define $B_\sigma$ to be the band whose underlying monoid is $\pi(A_\sigma)$ and whose null set is $N_R \cap \pi(A_\sigma)$.

For the general case, we glue together the various $\Spec B_\sigma$ to get a band scheme $Y(\Delta)$ over $K$ with the property that $Y(\Delta) \cap U_\sigma = \Spec B_\sigma$ for every cone $\sigma$ in $\Delta$.

In  \autoref{ex: tropicalization}, we explain how to see that if $v : K \to \T$ is a valuation on $K$, the set of $\T$-points of the band scheme $Y(\Delta)_\T := Y(\Delta) \times_{\Spec K} \Spec \T$ coincides with the Kajiwara--Payne tropicalization of $Y$.
In this way, band schemes provide a scheme-theoretic enhancement of the usual notion of tropicalization.

\subsection{Functors into other scheme theories}
\label{subsection: Functors into other scheme theories}

Band schemes map to several other types of schemes in a functorial way. We begin with a description of the general principle that enables us to establish such functors; for more details, cf.\ \cite[Section 1]{Lorscheid17}.

We start with a functor $\cG:\Bands\to\cC$ from bands into some other category $\cC$ of algebraic objects, e.g.\ rings, monoids, or ordered blueprints, as considered below. By passing to the opposite category, this yields a functor $\cG:\BAff\to\Aff_\cC$ from the category $\BAff$ of affine band schemes to the category of affine schemes for $\cC$. Applying $\cG$ to every object and every morphism of a diagram $\cU$ of affine band schemes yields a diagram $\cG(\cU)$ in $\Aff_\cC$.

\begin{df}
 Let $X$ be a band scheme. An \emph{affine presentation of $X$} is an isomorphism $\colim\cU\simeq X$, where $\cU$ is a commutative diagram of affine band schemes and open immersions such that for every $U\in\cU$, the canonical inclusion $\iota_U:U\to\colim\cU\simeq X$ is an open immersion. We also say for short that $X=\colim\cU$ is an affine presentation.
\end{df}

The notion of $\cC$-schemes relies on the choice of a Grothendieck pretopology for $\Aff_\cC$, which we assume as given. The following result \cite[Lemma 1.3]{Lorscheid17} extends $\cG$ to a functor $\cG:\BSch\to\Sch_\cC$ from band schemes to $\cC$-schemes. 

\begin{lemma}\label{lemma: extension of a functor to schemes} 
 Assume that $\cG:\BAff\to\Aff_\cC$ commutes with fibre products and preserves covering families. Then $\cG$ extends uniquely to a functor $\cG:\BSch\to \Sch_{\cC}$ with $\cG(X)=\colim\cG(\cU)$ for every affine presentation $X=\colim\cU$.
\end{lemma}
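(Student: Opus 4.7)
The plan is a standard Grothendieck-style gluing argument in four steps: construction of $\cG$ on objects, independence of the affine presentation, extension to morphisms, and uniqueness.

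First, for a band scheme $X$ with an affine presentation $X=\colim_{U\in\cU}U$, I would verify that $\cG(\cU)$ is a coherent gluing datum in $\Sch_\cC$. The diagram $\cU$ consists of affine band schemes connected by open immersions, and the hypothesis that $\cG$ preserves covering families implies (together with the pretopology axioms for $\Aff_\cC$) that each transition morphism of $\cG(\cU)$ is again an open immersion in $\Aff_\cC$. The hypothesis that $\cG$ commutes with fibre products guarantees that intersections $U\times_{X}U'$ occurring in $\cU$ are carried to the correct intersections in $\cG(\cU)$, so that the cocycle compatibilities on triple overlaps also hold. Set $\cG(X):=\colim\cG(\cU)$ in $\Sch_\cC$.

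Second, to show this is well-defined, I would use a common-refinement argument. Given two presentations $\cU$ and $\cU'$, form $\cU''$ by adjoining all pairwise intersections $U\cap U'=U\times_X U'$ for $U\in\cU$ and $U'\in\cU'$, together with affine open coverings thereof; this is again an affine presentation of $X$ by the discussion in \autoref{subsubsection: local nature of band schemes}. The fibre-product hypothesis ensures that the canonical inclusions $\cU\hookrightarrow\cU''\hookleftarrow\cU'$ induce isomorphisms $\colim\cG(\cU)\xrightarrow{\sim}\colim\cG(\cU'')\xleftarrow{\sim}\colim\cG(\cU')$ of $\cC$-schemes, yielding a canonical identification.

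Third, for a morphism $\varphi:X\to Y$ of band schemes, I would choose affine presentations $X=\colim\cU$ and $Y=\colim\cV$ such that each $U\in\cU$ maps into some $V\in\cV$; this is possible by \autoref{subsubsection: local nature of band schemes}, since the image of any affine open under a morphism lies in an affine open. Applying $\cG$ termwise produces a morphism of diagrams $\cG(\cU)\to\cG(\cV)$, hence a morphism $\cG(\varphi):\cG(X)\to\cG(Y)$ by the universal property of the colimit. Independence from the choice of presentation, together with the functoriality relations $\cG(\id)=\id$ and $\cG(\psi\circ\varphi)=\cG(\psi)\circ\cG(\varphi)$, then follow from the functoriality of $\cG$ on $\BAff$ combined with the universal property. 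Uniqueness is automatic: any functor $\cG'$ with $\cG'(X)=\colim\cG'(\cU)$ must agree with $\cG$ on $\BAff$ by the trivial one-object presentation $\cU=\{X\}$, and hence on all of $\BSch$ by the colimit formula itself.

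The main obstacle is the second step. The delicate point is that the refined diagram $\cU''$ is genuinely an affine presentation of $X$, which requires knowing that intersections of affine opens admit covers by affines (again invoking \autoref{subsubsection: local nature of band schemes}), and that after applying $\cG$, the fibre-product hypothesis upgrades the termwise $\cG(U\cap U')\cong\cG(U)\times_{\cG(X)}\cG(U')$ comparison into a global isomorphism of colimits. All the other steps are formal manipulations with universal properties once this is in place.
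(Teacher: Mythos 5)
The paper does not actually prove this lemma: it is quoted verbatim from \cite[Lemma 1.3]{Lorscheid17}, and the surrounding text (notably \autoref{rem: reduced hypotheses for extensions of functors}) only explains how the hypotheses simplify in the band setting. So there is no in-paper argument to compare against; your four-step gluing outline is essentially the standard argument that the cited reference carries out, and it is sound in structure. Two points deserve sharpening. First, for a non-affine $X$ the intersection $U\times_X U'$ of two affine opens is not a fibre product taken in $\BAff$, so the hypothesis that $\cG$ commutes with fibre products does not apply to it directly; the refinement $\cU''$ should instead adjoin affine opens $W$ that are open subschemes of both $U$ and $U'$ (these exist and are principal opens of each by \autoref{subsubsection: local nature of band schemes}), and the comparison of colimits is then carried out using only the preservation of open immersions, with the fibre-product hypothesis reserved for genuine fibre products $U\times_V U'$ over an affine $V$. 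Second, the band-specific feature that makes the descent step almost vacuous on the source side is that every covering family of an affine band scheme contains an isomorphism (\autoref{prop: covering families of affine band schemes}); this is what the paper's remark exploits to reduce the hypotheses to preservation of tensor products and finite localizations, and it would streamline your step two considerably. The cocycle conditions you mention are automatic from the commutativity of the diagram $\cU$, so nothing extra is needed there.
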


\begin{rem}\label{rem: reduced hypotheses for extensions of functors}
 Since every open covering $X=\bigcup U_i$ of an affine band scheme $X$ is trivial, in the sense that one of the $U_i$ equals $X$, and since the Grothendieck pretopologies in all of our examples are defined in terms of finite localizations, it suffices to verify that $\cG:\Bands\to\cC$ preserves tensor products $C\otimes_BD$ and finite localizations $B\to B[h^{-1}]$ in order for \autoref{lemma: extension of a functor to schemes} to hold.

 Another example of particular interest is $\Aff_\cC=\Sch_\cC=\Top$, which comes equipped with the Grothendieck pretopology of open topological coverings. In this case, the hypotheses of \autoref{lemma: extension of a functor to schemes} turn into the assumptions that $\cG:\BSch\to\Top$ commutes with fibre products and turns open immersions into open topological embeddings.
\end{rem}

\subsubsection{Base extension to usual schemes}

A band $B$ comes with the universal ring
\[
 B^+_\Z \ = \ \Z[B]/\gen{N_B}
\]
(cf.\ \autoref{subsection: universal ring}). We show in the following that this construction is functorial and extends to a \emph{base extension functor}
\[
 (-)^+_\Z: \ \BSch \ \longrightarrow \ \Sch
\]
from the category $\BSch$ of band schemes into the category $\Sch$ of usual schemes. To begin with, the functoriality of $(-)^+_\Z$ follows from its universal property: 

\begin{lemma}\label{prop: universal property of the base extension to rings}
 The map $\rho_B:B\to B^+_\Z$ with $\rho(a)=[a]$ is a band morphism and it is initial for all band morphisms from $B$ into a ring, i.e.,\ the canonical map
 \[
  \rho_B^\ast: \ \Hom_{\Rings}(B^+_\Z,\ R) \ \longrightarrow \ \Hom_{\Bands}(B, \ R)
 \]
 is a bijection for every ring $R$. 
\end{lemma}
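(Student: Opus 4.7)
The plan is to verify first that $\rho_B$ is a well-defined band morphism, then to establish the bijection via the universal property of the underlying monoid ring $\Z[B]$ and the fact that $N_B$ is precisely the set of formal sums that vanish modulo the defining relations.

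First I would check that $\rho_B$ is a band morphism. The composition $B \hookrightarrow \Z[B] \twoheadrightarrow B^+_\Z$ is multiplicative and sends $1 \in B$ to $1$. It sends $0 \in B$ to $0 \in B^+_\Z$ because $0 \in N_B$, so the basis element $[0] \in \Z[B]$ lies in $\gen{N_B}$ and is killed in the quotient. To verify the null-set condition via \autoref{lemma: morphisms can be tested on generators}, I would take a generator $\sum a_i \in N_B$; then the corresponding element $\sum [a_i] \in \Z[B]$ lies in $\gen{N_B}$ by definition, so $\sum \rho_B(a_i) = 0$ in the ring $B^+_\Z$, which means $\sum \rho_B(a_i) \in N_{B^+_\Z}$.

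Next I would establish the bijection. Given a band morphism $f: B \to R$ into a ring, the underlying multiplicative map $B \to R$ (sending $0$ to $0$ and $1$ to $1$) extends uniquely, by the universal property of the monoid ring $\Z[B]$, to a ring homomorphism $\widetilde{f}: \Z[B] \to R$. I claim that $\widetilde{f}$ vanishes on $\gen{N_B}$: indeed, for any generator $\sum a_i \in N_B$, we have $\widetilde{f}(\sum [a_i]) = \sum f(a_i)$, which lies in $N_R$ since $f$ is a band morphism, and $N_R = \{0\}$ in a ring. Hence $\widetilde{f}$ factors uniquely through the quotient, yielding a ring homomorphism $f^+_\Z: B^+_\Z \to R$ satisfying $f = f^+_\Z \circ \rho_B$, which shows $\rho_B^\ast$ is surjective.

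For injectivity of $\rho_B^\ast$, suppose $g, g': B^+_\Z \to R$ are ring homomorphisms with $g \circ \rho_B = g' \circ \rho_B$. Their pullbacks along $\Z[B] \twoheadrightarrow B^+_\Z$ are then two ring homomorphisms $\Z[B] \to R$ that agree on the monoid $B$, hence agree by the universal property of the monoid ring; since $\Z[B] \to B^+_\Z$ is surjective, we conclude $g = g'$. The only subtlety here is bookkeeping around the identification of $0 \in B$ with $0 \in B^+_\Z$, which I would handle by observing once and for all (as in the first paragraph) that $0 \in N_B$ forces $[0] \in \gen{N_B}$.
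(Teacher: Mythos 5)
Your proposal is correct and follows essentially the same route as the paper: a direct check that $\rho_B$ is a band morphism, surjectivity of $\rho_B^\ast$ via the universal property of the monoid ring $\Z[B]$ and factoring through $\gen{N_B}$ (with the same bookkeeping for the basis element $[0]$ that the paper handles via the intermediate quotient $\Z[B]/\gen{0}$), and injectivity from the fact that $\rho_B(B)$ generates $B^+_\Z$ as a ring. The appeal to \autoref{lemma: morphisms can be tested on generators} is harmless but unnecessary, since the null-set condition is verified directly for every element of $N_B$.
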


\begin{proof}
 The map $\rho_B$ is multiplicative by the definition of $B^+_\Z$. Given $\sum a_i\in N_B$, we have $\sum \rho_B(a_i)=0$ in $B^+_\Z$ and thus $\sum \rho_B(a_i)\in N_{B^+_\Z}$ by the definition of $B^+_\Z$ as a band. Thus $\rho_B$ is a band morphism.
 
 The map $\rho_B^\ast$ is injective since $\rho_B(B)$ generates $B^+_\Z$ as a ring, so any ring homomorphism $f:B^+_\Z\to R$ is determined uniquely by its restriction to $B$. In order to show surjectivity, let $f:B\to R$ be a band morphism. It extends to $\hat f:\Z[B]\to R$ by the universal property of the monoid algebra $\Z[B]$. Since $f(0)=0$, $\hat f$ factors through $\bar f:\Z[B]/\gen 0\to R$. Consider an element $\sum\rho_B(a_i)$ of $\gen{N_B}$ with $\sum a_i\in N_B$. Then $\sum f(a_i)\in N_R$, i.e.,\ $\bar f\big(\sum \rho_B(a_i)\big)=\sum f(a_i)=0$ in $R$. Thus $\bar f$ factors through $f^+_\Z:B^+_\Z\to R$, as claimed.
\end{proof}

\begin{prop}\label{prop: base extension to schemes}
 The base extension $(-)^+_\Z:\Bands\to\Rings$ extends uniquely to a functor $(-)^+_\Z:\BSch\to\Sch$ such that $X^+_\Z=\colim\cU^+_\Z$ for every affine presentation $X=\colim\cU$ of a band scheme $X$.
\end{prop}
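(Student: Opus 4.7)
The plan is to invoke \autoref{lemma: extension of a functor to schemes} together with \autoref{rem: reduced hypotheses for extensions of functors}. Since the Grothendieck pretopology on $\Sch$ is generated by the standard Zariski covers, which come from finite localizations, it suffices to verify that the functor $(-)^+_\Z : \Bands \to \Rings$ takes tensor products of bands to tensor products of rings and takes finite localizations of bands to finite localizations of rings. Uniqueness is then automatic, since any extension must agree with $(-)^+_\Z$ on affine pieces and with colimits on affine presentations.

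Both preservation statements will follow cleanly from the universal property established in \autoref{prop: universal property of the base extension to rings} together with the universal properties of tensor products and localizations, via a Yoneda-style argument. Concretely, given $B$-algebras $C$ and $D$, I want to show $(C\otimes_B D)^+_\Z \cong C^+_\Z \otimes_{B^+_\Z} D^+_\Z$. For any ring $R$, natural bijections give
\[
\Hom_\Rings\bigl((C\otimes_B D)^+_\Z,\, R\bigr) \;\cong\; \Hom_\Bands(C\otimes_B D,\, R),
\]
and by \autoref{prop: universal property of the tensor product} the latter is the set of pairs of band morphisms $C\to R$ and $D\to R$ agreeing on $B$. Applying \autoref{prop: universal property of the base extension to rings} to each factor identifies this in turn with pairs of ring homomorphisms $C^+_\Z \to R$ and $D^+_\Z \to R$ agreeing on $B^+_\Z$, which is $\Hom_\Rings(C^+_\Z \otimes_{B^+_\Z} D^+_\Z,\, R)$. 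Yoneda then gives the desired isomorphism, and naturality in $C$ and $D$ is immediate.

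For finite localizations the argument is analogous: given $h \in B$, for any ring $R$ the set $\Hom_\Rings\bigl((B[h^{-1}])^+_\Z,\, R\bigr)$ identifies with band morphisms $B[h^{-1}] \to R$, which by \autoref{prop: universal property of the localization} are the same as band morphisms $f : B \to R$ with $f(h) \in R^\times$, equivalently ring homomorphisms $g : B^+_\Z \to R$ with $g(\rho_B(h)) \in R^\times$, i.e.\ $\Hom_\Rings(B^+_\Z[\rho_B(h)^{-1}],\, R)$. By Yoneda, $(B[h^{-1}])^+_\Z \cong B^+_\Z[\rho_B(h)^{-1}]$.

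I expect no serious obstacles: the entire proof is a bookkeeping exercise in composing adjunctions/universal properties. The only minor subtlety is confirming that the Grothendieck pretopologies match up—that $\cG$ preserving tensor products with localizations really does ensure it preserves covering families in the classical Zariski sense on $\Aff = \Aff_\Rings$—but this is exactly the content of \autoref{rem: reduced hypotheses for extensions of functors}, so it is not a genuine difficulty. After these two verifications, \autoref{lemma: extension of a functor to schemes} produces the desired extension $(-)^+_\Z : \BSch \to \Sch$ satisfying $X^+_\Z = \colim \cU^+_\Z$ for every affine presentation, completing the proof.
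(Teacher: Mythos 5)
Your proposal is correct and follows essentially the same route as the paper: verify, via \autoref{rem: reduced hypotheses for extensions of functors}, that $(-)^+_\Z$ preserves tensor products and finite localizations, the former because $(-)^+_\Z$ is the reflection (adjoint) of \autoref{prop: universal property of the base extension to rings} and the latter by the identification $(B[h^{-1}])^+_\Z\cong B^+_\Z[\rho_B(h)^{-1}]$. Your Yoneda-style unfolding merely makes explicit the adjunction argument and the localization claim that the paper states without detail.
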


\begin{proof}
 By \autoref{prop: universal property of the base extension to rings}, $(-)^+_\Z:\Bands\to\Rings$ is right adjoint to the embedding $\Rings\to\Bands$ and therefore preserves cofibre products. A localization $B\to B[h^{-1}]$ of bands is sent to the localization $B^+_\Z\to B^+_\Z[h^{-1}]$ of rings. Thus the result follows from \autoref{rem: reduced hypotheses for extensions of functors}.
\end{proof}

These results have a relative version, which we mention without proof. 

\begin{prop}\label{prop: base extension of k-schemes to usual schemes}
 Let $k$ be a band with universal ring $K=k^+_\Z$ and $B$ a $k$-algebra with $B^+_K=B^+_\Z\otimes_\Z K$. Then the canonical map
 \[
  \Hom_K(B^+_K, \ R) \ \longrightarrow \ \Hom_k(B,\ R)
 \]
 is a bijection for every ring $R$ with an $K$-algebra structure. The functor $(-)^+_K:\Alg_k\to\Alg_K^+$ extends uniquely to a functor $(-)^+_K:\BSch_k\to\Sch_K$ such that $X^+_K=\colim\cU^+_K$ for every $k$-linear affine presentation $X=\colim\cU$ of a $k$-scheme $X$.
\end{prop}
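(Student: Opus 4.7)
The plan is to deduce this relative version from the absolute version (\autoref{prop: universal property of the base extension to rings} and \autoref{prop: base extension to schemes}) by a routine base-change argument, and then apply \autoref{lemma: extension of a functor to schemes} together with \autoref{rem: reduced hypotheses for extensions of functors}.

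First I would establish the bijection on Hom sets. Given a $K$-algebra $R$, the universal property of the tensor product of rings gives a natural bijection $\Hom_K(B^+_\Z\otimes_\Z K,R)\cong\Hom_\Z(B^+_\Z,R)$, and the absolute universal property from \autoref{prop: universal property of the base extension to rings} identifies the latter with $\Hom_{\Bands}(B,R)$ via precomposition with $\rho_B:B\to B^+_\Z$. Chasing the structure maps, a homomorphism $B^+_K\to R$ is $K$-linear if and only if the corresponding band morphism $B\to R$ makes the square with the structure maps $k\to B$, $k\to K\to R$ commute, i.e., is $k$-linear; this is immediate from the fact that the map $k\to K$ is $\rho_k$ and that the whole construction is functorial in the band. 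This yields the desired bijection $\Hom_K(B^+_K,R)\cong\Hom_k(B,R)$, and in particular exhibits $(-)^+_K:\Alg_k\to\Alg_K^+$ as left adjoint to the inclusion $\Alg_K^+\hookrightarrow\Alg_k$.

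Next I would extend to schemes by invoking \autoref{lemma: extension of a functor to schemes}. By \autoref{rem: reduced hypotheses for extensions of functors}, it suffices to check that the affine functor $\Spec B\mapsto \Spec B^+_K$ commutes with fibre products and preserves finite localizations. Commutation with fibre products amounts to saying that $(-)^+_K:\Alg_k\to\Alg_K^+$ preserves tensor products (pushouts), which follows from the adjunction established above since left adjoints preserve colimits; concretely, for $k$-algebras $B,C$ one has natural isomorphisms $(B\otimes_k C)^+_K\cong B^+_K\otimes_K C^+_K$. Preservation of finite localizations amounts to the identity $B[h^{-1}]^+_K\cong B^+_K[\rho_B(h)^{-1}]$, which again follows by universal properties: both sides classify $k$-linear band morphisms from $B$ to $K$-algebras $R$ which invert the image of $h$. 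Therefore the hypotheses of \autoref{lemma: extension of a functor to schemes} are satisfied, yielding a unique extension $(-)^+_K:\BSch_k\to\Sch_K$ with the asserted colimit property on affine presentations.

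I do not expect any essential obstacle: the first half is a formal adjunction/base-change argument, and the second half is a direct application of the general machinery already set up in \autoref{lemma: extension of a functor to schemes}. The only mildly subtle point is keeping track of the $k$-linear versus $\Z$-linear condition in the Hom bijection, which is handled by observing that the structure map $k\to K$ factors the structure map on both sides uniformly.
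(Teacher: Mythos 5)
The paper states this proposition explicitly ``without proof,'' so there is no argument of the authors to compare against and your proposal has to stand on its own. Your second half --- extending to schemes via \autoref{lemma: extension of a functor to schemes} and \autoref{rem: reduced hypotheses for extensions of functors} by checking preservation of tensor products (left adjoints preserve pushouts) and of finite localizations (matching universal properties) --- is the right reduction and is fine, granted the first half.

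The first half, however, has a genuine gap. Your two bijections
\[
\Hom_K(B^+_\Z\otimes_\Z K,\ R)\ \cong\ \Hom_{\Rings}(B^+_\Z,\ R)\ \cong\ \Hom_{\Bands}(B,\ R)
\]
are each individually correct (extension of scalars, then \autoref{prop: universal property of the base extension to rings}), but their composite identifies the $K$-linear homomorphisms out of $B^+_\Z\otimes_\Z K$ with \emph{all} band morphisms $B\to R$, not with the $k$-linear ones: the left-hand side does not see the $k$-algebra structure of $B$ at all, whereas $\Hom_k(B,R)$ does. So your subsequent assertion that ``$K$-linearity corresponds to $k$-linearity'' contradicts your own first step --- it would force $\Hom_{\Bands}(B,R)=\Hom_k(B,R)$, which fails already for $k=B=K=R=\C$, where complex conjugation is a band endomorphism of $\C$ that is not $\C$-linear. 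The repair is to use the $K$-algebra structure on the universal ring induced by $(\alpha_B)^+_\Z\colon k^+_\Z\to B^+_\Z$ --- equivalently, to read $B^+_K$ as $B^+_\Z\otimes_{k^+_\Z}K\cong B^+_\Z$ rather than as $B^+_\Z\otimes_\Z K$ with the second-factor structure --- and then to prove the equivalence honestly: for a ring homomorphism $g\colon B^+_\Z\to R$ one has $g\circ(\alpha_B)^+_\Z=\alpha_R$ if and only if $g\circ(\alpha_B)^+_\Z\circ\rho_k=\alpha_R\circ\rho_k$ (because $\rho_k(k)$ generates $K=k^+_\Z$ as a ring), and by naturality of $\rho$ the left side equals $g\circ\rho_B\circ\alpha_B$, so this holds if and only if $g\circ\rho_B$ is $k$-linear. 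Your phrase ``this is immediate from functoriality'' skips exactly this step, which is where the $k$-linearity constraint must be extracted; with the literal $\otimes_\Z$ reading no such argument can succeed, since the extension-of-scalars adjunction has already discharged the $K$-linearity condition before $k$ enters the picture.
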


\subsubsection{The underlying monoid scheme}
\label{subsubsection: the underlying monoid schemes}

The following result extends the forgetful functor $\cF:\Bands\to\Mon_0$ from bands to pointed monoids to a functor $\cF:\BSch\to\MSch$ from band schemes to monoid schemes.

\begin{prop}\label{prop: functor to the underlying monoid scheme}
 The functor $\cF:\Bands\to\Mon_0$ extends uniquely to a functor $\cF:\BSch\to\MSch$ such that $\cF(X)=\colim\cF(\cU)$ for every affine presentation $X=\colim\cU$ of a band scheme $X$.
\end{prop}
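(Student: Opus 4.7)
The plan is to invoke the general extension principle of \autoref{lemma: extension of a functor to schemes}, using the simplified hypotheses described in \autoref{rem: reduced hypotheses for extensions of functors}. To do so, I would take $\cC=\Mon_0$ with the Grothendieck pretopology on $\Aff_{\Mon_0}$ generated by finite localizations, verify that $\cF:\Bands\to\Mon_0$ lifts to a well-defined contravariant functor $\cF:\BAff\to\Aff_{\Mon_0}$, and then check that this functor preserves tensor products (i.e., fibre products on the geometric side) and finite localizations.

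The verification that $\cF$ is well-defined on $\BAff$ amounts to checking that $\cF$ preserves the locality condition built into morphisms of band spaces. This is immediate: the unit group of a band $B$ is, by definition, detected purely from the multiplicative structure, so $B^\times=\cF(B)^\times$ as subsets, and a band morphism $f:B\to C$ whose induced stalk maps send non-units to non-units does the same after applying $\cF$. For tensor products, the explicit description of $\bigotimes_k B_i$ given before \autoref{prop: universal property of the tensor product} shows that its underlying pointed monoid is literally the quotient $\{a\in\prod B_i\mid a_i=1\text{ for all but finitely many }i\}/\sim$ with the multiplicative equivalence relation, and the null set of the tensor product is only imposed on top of this monoid; this is exactly the coproduct construction in $\Mon_0$, so $\cF(\bigotimes_k B_i)\cong\bigotimes_{\cF(k)}\cF(B_i)$ canonically. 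Similarly, for a finite localization $B\to B[h^{-1}]$, the underlying pointed monoid of $B[h^{-1}]$ is the quotient $(\{h^i\}\times B)/\sim$ for the usual localization equivalence, which coincides with the localization $\cF(B)[h^{-1}]$ in $\Mon_0$, and the localization map $\iota_h$ transports to the corresponding localization map of pointed monoids.

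With these compatibilities in hand, \autoref{lemma: extension of a functor to schemes} directly provides a unique extension $\cF:\BSch\to\MSch$ with the property that $\cF(X)=\colim\cF(\cU)$ for every affine presentation $X=\colim\cU$. I do not expect a substantial obstacle here; the argument is essentially bookkeeping, and the only point requiring mild care is to verify that the underlying pointed monoids of the band-theoretic constructions (tensor product, localization) agree on the nose with their monoid-theoretic analogues, rather than only up to a natural isomorphism that might disturb the gluing data in a presentation.
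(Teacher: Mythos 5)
Your proposal is correct and follows essentially the same route as the paper: both reduce the claim to \autoref{rem: reduced hypotheses for extensions of functors} by observing that the underlying pointed monoids of tensor products and finite localizations of bands coincide with the corresponding constructions in $\Mon_0$. The extra details you supply (the locality check and the explicit identification of the monoid-level constructions) are the same observations the paper leaves implicit.
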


\begin{proof}
 The underling monoid of the tensor product $C\otimes_BD$ of bands is the tensor product of the respective underlying monoids by construction of the tensor product. The same holds for finite localizations. Thus the claim follows from \autoref{rem: reduced hypotheses for extensions of functors}.
\end{proof}

\subsubsection{Band schemes as ordered blue schemes}

The embedding $(-)^\oblpr:\Bands\to\OBlpr$ from \autoref{subsubsection: bands to oblpr} extends to an embedding $(-)^\oblpr:\BSch\to\OBSch$ from band schemes to ordered blue schemes due to the following result (cf.\ \cite{Lorscheid23} for a definition of ordered blue schemes):

\begin{prop}\label{prop: extension of Bands to OBlpr to schemes}
 The functor $(-)^\oblpr:\Bands\to\OBlpr$ extends uniquely to an fully faithful embedding of categories $(-)^\oblpr:\BSch\to\OBSch$ such that $\iota(X)=\colim\iota(\cU)$ for every affine presentation $X=\colim\cU$ of a band scheme $X$.
\end{prop}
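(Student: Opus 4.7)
The strategy is to invoke \autoref{lemma: extension of a functor to schemes} (with the criterion from \autoref{rem: reduced hypotheses for extensions of functors}) to obtain the extension, and then separately verify that the resulting functor on schemes is fully faithful by gluing local data.

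First I would reduce to checking that $(-)^\oblpr:\Bands\to\OBlpr$ preserves tensor products $C\otimes_B D$ and finite localizations $B\to B[h^{-1}]$; by \autoref{rem: reduced hypotheses for extensions of functors}, this is exactly what is needed for \autoref{lemma: extension of a functor to schemes} to apply (with the Grothendieck pretopology on affine ordered blue schemes generated by finite localizations). For this preservation, I would appeal directly to the adjunction $(-)^\oblpr\dashv(-)^\band$ recorded in \autoref{subsubsection: bands to oblpr}, which exhibits $(-)^\oblpr:\Bands\to\OBlpr_\Funpm$ as a left adjoint and hence as colimit-preserving; in particular it takes pushouts to pushouts and (filtered colimits realizing) localizations to localizations. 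The fact that the image lands in the full subcategory $\OBlpr_\Funpm\subset\OBlpr$ is harmless. \autoref{lemma: extension of a functor to schemes} then produces a unique functor $(-)^\oblpr:\BSch\to\OBSch$ with $X^\oblpr=\colim\cU^\oblpr$ for every affine presentation $X=\colim\cU$.

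It remains to show that this extension is fully faithful. Since $(-)^\oblpr:\Bands\to\OBlpr$ is fully faithful and $\Spec$ gives an anti-equivalence $\Bands\simeq\BAff$ (by \autoref{thm: adjunction between Spec and Gamma}) with an analogous statement on the ordered blueprint side, the functor is fully faithful on affine objects. Moreover, the underlying topological space of $\Spec B^\oblpr$ agrees canonically with that of $\Spec B$, since both are the space of prime $m$-ideals of the common underlying monoid. For faithfulness, two morphisms $f,g:X\to Y$ in $\BSch$ with $f^\oblpr=g^\oblpr$ induce the same continuous map on underlying spaces, and on each affine open $U\subset X$ mapping into an affine open $V\subset Y$ the restrictions induce band morphisms $\Gamma V\to\Gamma U$ whose images under $(-)^\oblpr$ coincide; faithfulness of $(-)^\oblpr$ on $\Bands$ then forces $f=g$. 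For fullness, given $\varphi:X^\oblpr\to Y^\oblpr$, choose affine open covers $\{U_i\}$ of $X$ and $\{V_j\}$ of $Y$; by the local structure recorded in \autoref{subsubsection: local nature of band schemes} (which applies verbatim to ordered blue schemes, thanks to the unique maximal $m$-ideal), each image $\varphi(U_i^\oblpr)$ lies in some $V_{j(i)}^\oblpr$, and the induced $\OBlpr$-morphism $\Gamma V_{j(i)}\to\Gamma U_i$ lifts uniquely to a band morphism by fullness of $(-)^\oblpr$ on $\Bands$. These local band morphisms agree on overlaps (again by faithfulness on affines, applied to intersections $U_i\cap U_{i'}$, which are affine after passing to principal opens) and therefore glue to a morphism $f:X\to Y$ with $f^\oblpr=\varphi$.

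The main obstacle I expect is the bookkeeping in the fullness step: one has to check that the local lifts are compatible on overlaps and that the glued map is actually local (i.e.\ a morphism of band spaces, not merely of sheafed spaces). Both reduce cleanly to faithfulness of $(-)^\oblpr$ on bands and to the preservation of finite localizations established in the first step, but they require one to match up unambiguously the notion of ``restriction of a morphism to an open subscheme'' on the two sides of the embedding. Since both $\BSch$ and $\OBSch$ are glued from their affine parts along exactly the same class of open immersions (finite localizations, by \autoref{prop: covering families of affine band schemes}), this compatibility is formal once the affine case is in hand.
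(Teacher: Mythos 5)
Your proposal is correct and follows essentially the same route as the paper: preservation of tensor products via the coreflection (left-adjointness of $(-)^\oblpr$) and of finite localizations, then \autoref{rem: reduced hypotheses for extensions of functors} to extend, and finally full faithfulness deduced from the affine case because morphisms on both sides are determined by their restrictions to affine opens. The paper compresses the full-faithfulness step into one sentence where you spell out the gluing, but the underlying argument is the same.
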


\begin{proof}
 Since $\Bands$ is coreflective in $\OBlpr$ by \autoref{subsubsection: bands to oblpr}, the embedding $(-)^\oblpr:\Bands\to\OBlpr$ preserves cofibre products. Finite localizations are preserved essentially by definition. Thus $(-)^\oblpr$ extends to a functor $(-)^\oblpr:\BSch\to\OBSch$ by \autoref{rem: reduced hypotheses for extensions of functors}. Since morphisms of both band schemes and ordered blue schemes are determined by their restrictions to affine opens, the full faithfulness of $(-)^\oblpr:\Bands\to\OBlpr$ implies that also $(-)^\oblpr:\BSch\to\OBSch$ is fully faithful.
\end{proof}

\section{Visualizations}
\label{section: visualizations}

Besides its underlying topological space, a band scheme comes with several further subordinate topological spaces, which we call \emph{visualizations}\footnote{The term \textit{visualization} is borrowed from the introduction of \cite{Fujiwara-Kato18}.} because they exhibit certain properties of band schemes. More precisely, we describe functors $\cV:\BSch\to\Top$ from the category $\BSch$ of band schemes into the category $\Top$ of topological spaces together with a natural transformation $\cV\to\utop$ to the forgetful functor $\utop:\BSch\to\Top$ that sends a band scheme to its underlying topological space.

Examples of such visualizations of a band scheme $X$ are rational point sets $X(B)$, which come equipped with both a weak and a strong Zariski topology, which we denote by $X(B)^\weak$ and $X(B)^\str$, respectively. Furthermore, a topology on $B$ induces the fine topology on $X(B)$, which we indicate by $X(B)^\fine$. Other examples of visualizations are the kernel space $X^\kernel$ and the null space $X^\nul$ of $X$.

Under suitable conditions ($B=F$ is a topological idyll and $N_F$ is a prime ideal), these visualizations are connected by a commutative diagram of continuous maps
\[
 \begin{tikzcd}[column sep=40]
  X(F)^\fine \ar[r] & X(F)^\str \ar[r] \ar[d] & X^\nul \ar[->>,d] \\ 
                          & X(F)^\weak \ar[r] & X^\kernel \ar[right hook->,r] & \underline{X}
 \end{tikzcd}
\]
where $\underline X$ is the underlying topological space of $X$; cf.\ \autoref{thm: comparison of visualizations}.

\subsection{The Zariski topology}
\label{section: the Zariski topology}

In classical algebraic geometry, rational point sets $X(B)$ come equipped with a Zariski topology. Since there are more subsets supporting closed subschemes than complements of open subsets in the generality of band schemes, rational point sets $X(B)$ come in fact with two Zariski topologies, a weak and a strong version.

Since it is often required in applications, we fix a base band $k$ and a $k$-algebra $B$. As usual in algebraic geometry, we suppress $k$ from the notation and denote by $X(B)=\Hom_k(\Spec B,X)$ the set of \emph{$k$-linear} $B$-points of a $k$-scheme $X$.
 Corrected.
Since localizations and quotient maps of bands are epimorphisms of bands, open and closed immersions of band schemes are monomorphisms, in perfect analogy to usual scheme theory. As a consequence, every open (or closed) immersion $\iota:Y\to X$ induces an inclusion $\iota_B:Y(B)\to X(B)$ and thus the $B$-points $Y(B)$ of an open (or closed) subscheme $Y$ of $X$ form a subset of $X(B)$.

The \emph{weak Zariski topology for $X(B)$} is the topology of open subsets of the form $Y(B)$ where $Y$ is an open subscheme of $X$. The \emph{strong Zariski topology for $X(B)$} is the topology of closed subsets of the form $Y(B)$ where $Y$ is a closed subscheme of $X$. 

\begin{lemma}\label{lemma: Zariksi topology}
 Both the weak and the strong Zariski topology are indeed topologies.
\end{lemma}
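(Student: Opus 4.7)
My plan is to verify the topology axioms directly for each case. For the weak Zariski topology the prescribed collection of open sets satisfies the axioms on the nose, with the crucial step being closure under arbitrary unions. For the strong Zariski topology the collection $\{Y(B)\mid Y\text{ a closed subscheme of }X\}$ is in general \emph{not} closed under finite unions for a general band $B$, so the strong Zariski topology should be interpreted as the topology generated by these sets as closed sets; the content of the lemma is then that the generating collection contains $\emptyset$ and $X(B)$ and is closed under arbitrary intersections, which ensures the generated topology is well-defined.

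For the weak case, $\emptyset=\emptyset(B)$ and $X(B)$ are open (via $Y=\emptyset$ and $Y=X$), and the equality $Y_1(B)\cap Y_2(B)=(Y_1\cap Y_2)(B)$ follows from the universal property of open immersions. The crucial step is arbitrary unions: given a family of open subschemes $\{Y_i\}_{i\in I}$ with topological union $Y=\bigcup_i Y_i$, I claim $\bigcup_i Y_i(B)=Y(B)$. For the nontrivial inclusion, let $\phi\colon\Spec B\to Y\hookrightarrow X$, and let $\fm_B=B-B^\times$ be the unique closed point of $\Spec B$ (cf.~\autoref{subsubsection: local nature of band schemes}). Then $\phi(\fm_B)\in Y_j$ for some $j$; for any $\fp\in\Spec B$ the inclusion $\fp\subseteq\fm_B$ gives $\fm_B\in\overline{\{\fp\}}$, whence $\phi(\fm_B)\in\overline{\{\phi(\fp)\}}$ by continuity. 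If $\phi(\fp)\notin Y_j$, then $\phi(\fp)\in X\setminus Y_j$ is closed, forcing $\overline{\{\phi(\fp)\}}\subseteq X\setminus Y_j$ and contradicting $\phi(\fm_B)\in Y_j$. Therefore $\phi(\Spec B)\subseteq Y_j$ and $\phi\in Y_j(B)$.

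For the strong case, $\emptyset=\emptyset(B)$ and $X(B)$ are closed (via $Y=\emptyset$ and $Y=X$). For arbitrary intersections of closed subschemes $\{Y_i\}$, the scheme-theoretic intersection $\bigcap_i Y_i$ is a closed subscheme of $X$: locally on an affine open $U\subseteq X$, where each $Y_i\cap U$ corresponds to a null ideal $I_i$ of $\Gamma U$, the intersection corresponds to the null ideal $\sum_i I_i$ (which is again a null ideal since it is closed under addition, multiplication by $\Gamma U$, and the substitution rule by simply adding representatives), and these local pieces glue globally (cf.~\autoref{thm: limits and colimits for band schemes} in the finite case). Since $f^+(\sum_i I_i)\subseteq N_B$ is equivalent to $f^+(I_i)\subseteq N_B$ for every $i$, the universal property yields $\bigcap_i Y_i(B)=(\bigcap_i Y_i)(B)$. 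Finite unions of closed sets are then closed in the generated topology by construction.

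The main obstacle is the correct interpretation of the strong topology, in view of the genuine failure of finite unions to preserve the form $Y(B)$. For example, take $X=\A^1_{\Funpm}=\Spec\Funpm[T]$, let $Y_1,Y_2$ be the closed subschemes cut out by the null ideals $\gen{T}$ and $\gen{T-1}$, and put $B=\bandgenquot{\Funpm[T]}{T^2-T}$; in $B$, the class $\bar T$ satisfies $\bar T^2=\bar T$ but neither $\bar T=0$ nor $\bar T=1$. The quotient morphism $\Funpm[T]\to B$ lies in $Z(B)$ for $Z$ the closed subscheme cut out by $\gen{T(T-1)}$, yet does not lie in $Y_1(B)\cup Y_2(B)$. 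Hence $Y_1(B)\cup Y_2(B)$ is not of the form $Z'(B)$ for any closed subscheme $Z'$, which forces the strong Zariski topology to be understood as the topology generated by the sets $Y(B)$ rather than equal to them.
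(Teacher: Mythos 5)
Your handling of the weak Zariski topology is correct and is, at bottom, the paper's own argument with the key detail filled in: the paper reduces to the affine case and asserts $\bigcup U_{h_i}(B)=\big(\bigcup U_{h_i}\big)(B)$, and the content of that assertion is precisely your closed-point argument (a morphism $\Spec B\to X$ whose image lies in a union of opens must land entirely in one of them, because $\Spec B$ has the unique closed point $\fm_B$; cf.\ \autoref{subsubsection: local nature of band schemes}). For arbitrary intersections of strong-closed sets your conclusion is right, but your justification that $\sum_i I_i$ is a null ideal is not: the substitution rule \ref{SR} is not obviously stable under sums of null ideals. Use instead the null ideal $\gen{\bigcup_i I_i}$ generated by the union, as the paper does; since a null ideal is contained in $\nullker f$ if and only if its generators are, one still gets $\bigcap_i Y_i(B)=\big(\Spec(\bandquot{C}{\gen{\cup_i I_i}})\big)(B)$.

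The real divergence is on finite unions, and here your counterexample is correct and exposes a genuine gap in the paper's proof. The paper claims $\big(\Spec(\bandquot CI)\big)(B)\cup\big(\Spec(\bandquot CJ)\big)(B)=\big(\Spec(\bandquot C{(I\cdot J)})\big)(B)$; the inclusion ``$\supseteq$'' amounts to the implication that $I\cdot J\subseteq\nullker f$ forces $I\subseteq\nullker f$ or $J\subseteq\nullker f$, which holds when $\nullker f$ is a prime null ideal --- guaranteed if $N_B$ is a prime ideal of $B^+$, as for fields, $\K$, $\S$ and $\T$ --- but not for a general band $B$. In your example one checks directly that every product $ab$ with $a\in\gen T$ and $b\in\gen{T-1}$ lies in $\gen{T^2-T}=\nullker q$ for the quotient map $q$ (split $a$ into a $T$-divisible part plus an element of $N_{\Funpm[T]}$, and use $T^n-T^m\in\gen{T^2-T}$ for $n,m\geq1$), while $\bar T\neq\bar 0$ and $\bar T\neq\bar 1$ because $B$ maps to $\Z[T]/(T^2-T)$ sending $T$ to the idempotent $(0,1)$; hence $q$ lies in the $B$-points of the subscheme cut out by $I\cdot J$ but not in $Y_1(B)\cup Y_2(B)$. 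Your final assertion that $Y_1(B)\cup Y_2(B)$ equals $Z'(B)$ for \emph{no} closed subscheme $Z'$ needs one more step than you supply --- one must verify that every null ideal contained in $\nullker f_0\cap\nullker f_1$ (where $f_0,f_1$ are the points $T\mapsto 0$ and $T\mapsto 1$) is contained in $\nullker q$, which does hold here --- but the upshot stands: with the definition taken literally, the strong Zariski closed sets are not stable under finite unions, so the lemma requires either the additional hypothesis that $N_B$ is prime in $B^+$ or your reinterpretation of the strong Zariski topology as the topology \emph{generated} by the sets $Y(B)$ as closed sets, which is in any case consistent with how that topology is used in \autoref{prop: Zariski topology}.
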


\begin{proof}
 By the local nature of affine band schemes (cf.\ \autoref{subsubsection: local nature of band schemes}), $X(B)=\bigcup U_i(B)$ for any chosen open covering $X=\bigcup U_i$ of $X$. Since also open and closed subsets are local in nature, we can assume that $X=\Spec C$ is affine.
 
 We begin to verify the topology axioms for the weak Zariski topology, which is generated by open subsets of the form $U_h(B)$ for principal open subschemes $U_h=\{\fp\in\Spec C\mid h\notin\fp\}$ of $X=\Spec C$ with $h\in C$. Thus $\emptyset=U_0(B)$ and $X(B)=U_1(B)$ are weak Zariski opens of $X$. By definition, arbitrary unions $\bigcup U_{h_i}(B)=\big(\bigcup U_{h_i} \big)(B)$ of principal opens are open, as is the intersection $U_g(B)\cap U_h(B)=U_{gh}(B)$. This shows that the weak Zariski topology is indeed a topology.
 
 We continue with the strong Zariski topology. A closed subscheme $Z$ of $X=\Spec C$ is of the form $Z=\Spec (\bandquot CI)$ for some null ideal $I$ of $C$. In particular, we have that $X(B)=\big(\Spec (\bandquot C{N_C})\big)(B)$ and $\emptyset=\big(\Spec (\bandquot C{C^+})\big)(B)$ are strong Zariski closed. The union of $\big(\Spec (\bandquot CI)\big)(B)$ and $\big(\Spec (\bandquot CJ)\big)(B)$ is the strong Zariski closed subset $\big(\Spec (\bandquot C{(I\cdot J)})\big)(B)$, where $I\cdot J=\gen{a\cdot b\mid a\in I,\ b\in J}$. The intersection of a family of strong Zariski closed subsets $\big\{\big(\Spec (\bandquot C{J_i})\big)(B))\big\}$ is the strong Zariski closed subset $\big(\Spec (\bandquot CI))\big)(B)$ for the ideal $I=\gen{J_i}$ generated by all $J_i$. This shows that the strong Zariski topology is indeed a topology. 
\end{proof}

For an open subscheme $U$ of $X$ and $h\in \Gamma U^+$, we define 
\[\textstyle
 U(B)_h \ = \ \big\{ \alpha:\Spec B\to U \, \big| \, \Gamma\alpha^+(h)\notin N_B \big\},
\]
where $\Gamma\alpha^+(h)=\sum\Gamma\alpha(a_i)$ if $h=\sum a_i$. 

\begin{prop}\label{prop: Zariski topology}
 Assume that $B$ is an idyll. Then the weak Zariski topology is generated by subsets of the form $U(B)_h$ where $U$ is an affine open subscheme $U$ of $X$ and $h\in \Gamma U$, and the strong Zariski topology is generated by (open) subsets of the form $U(B)_h$ where $U$ is an affine open subscheme $U$ of $X$ and $h\in \Gamma U^+$. In particular, every weak Zariski open is strong Zariski open.
\end{prop}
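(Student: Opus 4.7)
For an idyll $B$, the definition $B^\times = B\setminus\{0\}$ and \autoref{lemma: first properties of bands} (which gives $N_B\cap B=\{0\}$) imply that for $h\in\Gamma U$ and $\alpha\in U(B)$ with associated band morphism $f_\alpha:\Gamma U\to B$, the three conditions $f_\alpha(h)\notin N_B$, $f_\alpha(h)\neq 0$, and $f_\alpha(h)\in B^\times$ coincide. By \autoref{prop: universal property of the localization}, the last is equivalent to $\alpha$ factoring through the principal open $U_h=\Spec\Gamma U[h^{-1}]\subset U$, so $U(B)_h=U_h(B)$. Since the $U_h$, as $h$ ranges over $\Gamma U$ and $U$ over affine opens of $X$, form a basis of the topology of $X$, the sets $U(B)_h$ generate the weak Zariski topology.

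\textbf{Part 2 (strong Zariski).} Two things are needed: \emph{(a)} each $U(B)_h$ with $h\in\Gamma U^+$ is strong Zariski open, and \emph{(b)} every strong Zariski open is a union of $U(B)_h$'s. For (b), fix a closed subscheme $Y\subset X$ and an affine open cover $\{U_\lambda=\Spec C_\lambda\}$ with $Y\cap U_\lambda=\Spec(C_\lambda/I_\lambda)$ for null ideals $I_\lambda$. Since $\Spec B$ has a single point for $B$ an idyll, every $\alpha:\Spec B\to X$ factors through some $U_\lambda$, and $\alpha\in Y(B)$ iff $f_\alpha^+(I_\lambda)\subseteq N_B$; hence $X(B)\setminus Y(B)=\bigcup_\lambda\bigcup_{h\in I_\lambda}U_\lambda(B)_h$.

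For (a), the plan is to construct, by gluing, a closed subscheme $\tilde Z\subset X$ with $X(B)\setminus\tilde Z(B)=U(B)_h$. On each affine open $U_\lambda=\Spec C_\lambda$ of a cover of $X$, the affine open $U\cap U_\lambda$ is a principal open $(U_\lambda)_{g_\lambda}$ for some $g_\lambda\in C_\lambda$ (by the local nature of band schemes, cf.\ \autoref{subsubsection: local nature of band schemes}), and the image $h'\in C_\lambda[g_\lambda^{-1}]^+$ of $h$ admits a common denominator of the form $g_\lambda^M$. Choosing $M$ large enough that every summand of $\tilde h_\lambda:=g_\lambda^M\cdot h'\in C_\lambda^+$ contains a strictly positive power of $g_\lambda$, a direct computation using $B^\times=B\setminus\{0\}$ and the stability of $N_B$ under multiplication by units of $B^+$ yields, for $\alpha\in U_\lambda(B)$, that $f_\alpha^+(\tilde h_\lambda)\in N_B$ iff $f_\alpha(g_\lambda)=0$ (i.e., $\alpha\notin U(B)$) or $f_\alpha^+(h)\in N_B$ (i.e., $\alpha\in U(B)\setminus U(B)_h$), equivalently $\alpha\notin U(B)_h$. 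Hence $\tilde Z_\lambda:=\Spec(C_\lambda/\langle\tilde h_\lambda\rangle)$ is a closed subscheme of $U_\lambda$ with $\tilde Z_\lambda(B)=U_\lambda(B)\setminus U(B)_h$, and the $\tilde Z_\lambda$ glue along overlaps because the localizations of $\langle\tilde h_\lambda\rangle$ and $\langle\tilde h_\mu\rangle$ to $\Gamma(U_\lambda\cap U_\mu)^+$ each reduce to the null ideal generated by the image of $h$ (the factors $g_\lambda^M$ and $g_\mu^{M'}$ becoming units on $U_\lambda\cap U_\mu\cap U$).

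\textbf{Part 3 (comparison).} Since $\Gamma U\subseteq\Gamma U^+$, the sub-basis for the weak Zariski topology is a subcollection of the sub-basis for the strong Zariski topology. Hence the weak Zariski topology is coarser than the strong Zariski topology, and in particular every weak Zariski open is strong Zariski open.

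The main obstacle will be Part 2(a): the clearing-of-denominators computation relies essentially on the idyll hypothesis (so that $f_\alpha(g_\lambda)$ is either zero or a unit), and checking that the locally defined $\tilde Z_\lambda$'s glue requires carefully comparing null ideals obtained via two different localizations on pairwise intersections --- a routine but bookkeeping-heavy exercise in the commutative algebra of bands developed earlier in the paper.
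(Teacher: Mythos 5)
Your Part 1, Part 2(b), and Part 3 are correct and essentially coincide with the paper's argument. The genuine gap is in Part 2(a), exactly where you flagged ``the main obstacle'': the gluing of the locally defined closed subschemes $\tilde Z_\lambda=\Spec(\bandgenquot{C_\lambda}{\tilde h_\lambda})$ is not merely bookkeeping-heavy --- as set up, it fails. For the $\tilde Z_\lambda$ to glue you need $\gen{\tilde h_\lambda}$ and $\gen{\tilde h_\mu}$ to induce the same null ideal on $\Gamma(U_\lambda\cap U_\mu)^+$, but your justification only shows agreement after the further localization to $U_\lambda\cap U_\mu\cap U$, where $g_\lambda$ and $g_\mu$ become units. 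On $U_\lambda\cap U_\mu$ itself the two elements differ by a positive power of $g_\lambda$ (times a unit), and one cannot ``divide'' a null ideal by such a factor: for instance, in $\Funpm[x]^+$ one has $\gen{x^2+x^3}\subsetneq\gen{x+x^2}$, because the band morphism $\Funpm[x]\to\Z[t]/(t^2+t^3)$ sending $x\mapsto\bar t$ has $x^2+x^3$ in its null kernel but not $x+x^2$. Since the functions $g_\lambda$ and the exponents $M$ are non-canonical choices, the subschemes $\tilde Z_\lambda$ need not agree on overlaps even though their $B$-points do, so your recipe does not in general produce a closed subscheme $\tilde Z$ of $X$ with $\tilde Z(B)=X(B)\setminus U(B)_h$.

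The paper circumvents this with a different decomposition: it writes the complement of $U(B)_h$ as $(X(B)\setminus U(B))\cup(U(B)\setminus U(B)_h)$. For the first piece it proves that every weak Zariski open $W(B)$ is strong Zariski open by cutting out, on each affine chart $U'$, the closed subscheme defined by the \emph{intrinsic} null ideal generated by $\{g\in\Gamma U'\mid U_g\subset W\}$; because this ideal is canonically defined, the local pieces glue without any choices. For the second piece it observes that $U(B)\setminus U(B)_h=Z_h(B)$ for the closed subscheme $Z_h=\Spec(\bandgenquot{\Gamma U}{h})$ of the single affine scheme $U$, where no gluing is needed. Your affine-local computation (clearing denominators and using $B=B^\times\sqcup\{0\}$) is correct and is precisely the paper's computation on one chart; to repair the proof you should replace the attempted gluing of the $\tilde Z_\lambda$ by this two-step decomposition, first establishing the ``In particular'' clause for weak opens and then working inside a single affine chart.
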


\begin{proof}
 We begin with the claim regarding the weak Zariski topology. Since the weak Zariski topology is defined in terms of open subschemes, we can assume thanks to \autoref{subsubsection: local nature of band schemes} that $X=\Spec C$ is affine. Consider a subset $U(B)_h$ with $U$ affine open in $X$, which is a principal open by \autoref{subsubsection: local nature of band schemes}, i.e.,\ of the form $U=U_g$ for some $g\in C$. For $h\in\Gamma U=C[g^{-1}]$, the condition $\Gamma\alpha(h)\notin N_B$ means that $\Gamma\alpha(h)\neq0$. Since $B$ is an idyll, this is equivalent to $\Gamma\alpha(h)\in B^\times$, i.e.,\ the image of $\alpha$ is contained in $U_h$. Thus
 \[
  U_g(B)_h \ = \ \{\alpha:\Spec B\to U_g\mid \im\alpha\subset U_h\} \ = \ \{\alpha:\Spec B\to U_{gh}\} \ = \ U_{gh}(B)
 \]
 is an open subset of $X(B)$. For $g=1$, this yields $U_h(B)=U(B)_h$. Since the principal opens $U_h$ of $X$ generate the topology of $X$, this shows that the collection of opens $U(B)_h$ (for $h\in C$) generates the weak Zariski topology of $X(B)$.
 
 We continue with the claim for the strong Zariski topology. As a first step, we note that since $B$ is the disjoint union of its unit group $B^\times$ and $0$, a morphism $\Spec B\to\Spec C$ factors through either $U_h$ or $Z_h=\Spec(\bandgenquot Ch)$, and not through both, for every fixed $h\in C$. By definition of the strong Zariski topology, $Z_h(B)$ is strong Zariski closed in $(\Spec C)(B)$, and thus its complement $U(B)_h=U_h(B)$ is strong Zariski open.
 
 Let $X$ be an arbitrary band scheme and $W$ an open subscheme. Then for every affine open $U$ of $X$ with $C=\Gamma U$, the intersection $W\cap U$ is covered by the principal opens $U_{h}$ of $U$, where $h$ ranges over the null ideal $I=\{h\in C\mid U_h\subset W\}$. By \autoref{subsubsection: local nature of band schemes}, $W\cap U(B)=\bigcup_{h\in I}U_h(B)$, and thus
 \[
  (W\cap U)(B) \ = \ \bigcup_{h\in I} U_{h}(B) \ = \ \bigcup_{h\in I} \big(U(B)-Z_h(B)\big) \ = \ U(B) - \bigcap_{h\in I} Z_h(B)
 \]
 is the complement of the $B$-points of the closed subscheme $Z_I=\Spec(\bandquot BI)$ of $U$. Thanks to their intrinsic definition, the closed subschemes $Z_I$ of the various affine opens $U$ of $X$ glue together to a closed subscheme $Z$ of $X$. Our local verification that $W\cap U(B)$ is the complement of $Z_I(B)$ in $U(B)$ shows that $W(B)$ is the complement of $Z(B)$ in $X(B)$, and as such is open in the strong Zariski topology.
 
Consequently, the claim for the strong Zariski topology follows if we can establish it for affine open subschemes $U$ of $X$. Let $C=\Gamma U$ and $h\in C^+$. Then  
 \[
  U(B)-U(B)_h \ = \ \{\alpha:\Spec B\to X\mid \Gamma\alpha^+(h)\in N_B\} \ = \ \{f:C\to B\mid f^+(h)\in N_B\}
 \]
 consists of all band morphisms $f:C\to B$ that factor through the quotient map $C\to\bandquot{C}{\gen h}$. Thus $U(B)-U(B)_h=Z_h(B)$
 for the closed subscheme $Z_h=\Spec(\bandquot{C}{\gen h})$ of $U$, which shows that $U(B)_h$ is strong Zariski open as the complement of the strong Zariski closed subset $Z_h(B)$. Conversely, let $Z_I=\Spec(\bandquot CI)$ be a closed subscheme of $U$ for some null ideal $I$ of $C$. Then $Z_I(B)=\bigcap_{h\in I}Z_h(B)$, which shows that the open complements $U(B)_h$ of the closed subsets $Z_h(B)$ (for varying $h\in C$) generate the strong Zariski topology of $U(B)$, as claimed.
\end{proof}

\begin{cor}\label{cor: weak=strong Zariski topology for fields}
 Let $B$ be a field. Then $X^+_\Z(B)=X(B)$ and the strong Zariski topology on $X(B)$ agrees with the usual Zariski topology on $X^+_\Z(B)$. If in addition $X$ is covered by the spectra of rings, then the weak Zariski topology agrees with the other two topologies.
\end{cor}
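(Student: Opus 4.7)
The strategy is to reduce each of the three assertions to the affine case and then combine the universal property of the base extension (\autoref{prop: universal property of the base extension to rings}) with the generating families described in \autoref{prop: Zariski topology}. Throughout, the key observation is that for a field $B$ we have $B^+_\Z = B$ and $N_B$ consists precisely of formal sums $\sum b_i$ with $\sum b_i = 0$ in $B$.

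First I would establish the identification $X(B) = X^+_\Z(B)$. In the affine case $X = \Spec C$, we have $X(B) = \Hom_{\Bands}(C,B)$, and \autoref{prop: universal property of the base extension to rings} gives a natural bijection with $\Hom_{\Rings}(C^+_\Z,B) = X^+_\Z(B)$ since $B$ is a ring. For general $X$, any morphism $\Spec B \to X$ has image a single point (because $\Spec B$ is a one-point space for $B$ an idyll) and thus factors through an affine open; compatibility on overlaps follows from the naturality of the affine bijection. Applying this simultaneously to $X$ and $X^+_\Z$ yields the set-theoretic equality $X(B) = X^+_\Z(B)$, which I would then freely use in what follows.

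Next, for the comparison of the strong Zariski topology with the usual Zariski topology on $X^+_\Z(B)$, I would apply \autoref{prop: Zariski topology}: the strong Zariski topology is generated by sets $U(B)_h = \{\alpha \mid \Gamma\alpha^+(h)\notin N_B\}$ for affine open $U \subset X$ and $h \in \Gamma U^+$. Under the identification of the previous step, if $\bar h \in \Gamma U^+_\Z$ is the image of $h$ under the canonical semiring map $\Gamma U^+ \to \Gamma U^+_\Z$, the condition $\Gamma\alpha^+(h) \notin N_B$ translates to $\bar h \neq 0$ in $B$ under the corresponding ring homomorphism, which is exactly membership in the principal Zariski open $D(\bar h)(B)$. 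Since the map $\Gamma U^+ \to \Gamma U^+_\Z$ is surjective (any element of $\Gamma U^+_\Z$ is a sum of elements of $\Gamma U$, signs being absorbed via the element $-1\in \Gamma U$), the two families of generating opens coincide, so the topologies agree.

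For the third claim, it is enough, by localness of both topologies, to treat the affine case $X = \Spec R$ for a ring $R$. Given a strong Zariski basic open $U(B)_h$ with $h = \sum a_i \in R^+$, for any ring homomorphism $f\colon R \to B$ one has $\Gamma\alpha^+(h) = \sum f(a_i) = f(g)$ where $g := \sum a_i \in R$ is the actual sum. Hence $U(B)_h = U_g(B)$ is already a basic weak Zariski open. Since the weak Zariski opens are a subcollection of the strong ones by \autoref{prop: Zariski topology}, the two topologies agree on $X(B)$, and the assertion propagates to any band scheme covered by spectra of rings. The main subtlety I anticipate is the bookkeeping distinguishing the ambient semiring $\Gamma U^+$ (formal sums) from $\Gamma U^+_\Z$ (honest sums): checking that the evaluation map between them is surjective and that it intertwines the definitions of $U(B)_h$ and $D(\bar h)(B)$ is where the argument must be handled with care, but no deeper obstacle is expected.
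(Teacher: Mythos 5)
Your proposal is correct, and the first and third claims are handled essentially as in the paper (the paper quotes the relative version, \autoref{prop: base extension of k-schemes to usual schemes}, to get the $k$-linear identification $X(B)=X^+_\Z(B)$ in one step, and for the third claim it makes the same observation that for $h=\sum a_i\in C^+$ with $C$ a ring, $U(B)_h$ equals $U(B)_{\bar h}$ for the honest sum $\bar h=\sum a_i\in C$). Where you genuinely diverge is the middle claim. The paper argues at the level of closed subsets: a strong Zariski closed subset is cut out by a null ideal $I$ of $C$, the paper passes to the ideal $I^+_\Z=\ker\big(C^+_\Z\to(\bandquot CI)^+_\Z\big)$ and uses the adjunction to identify $\big(\Spec(\bandquot CI)\big)(B)$ with $V(I^+_\Z)(B)$, and conversely recovers every usual Zariski closed subset from $J=\rho_C^{-1}(J)^+_\Z$. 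You instead compare the two topologies through their generating open families, matching $U(B)_h$ (for $h\in\Gamma U^+$, via \autoref{prop: Zariski topology}, which applies since a field is an idyll) with $D(\bar h)(B)$, and using surjectivity of $\Gamma U^+\to\Gamma U^+_\Z$ (which holds because $-1\in\Gamma U$ absorbs signs) to see that every principal usual-Zariski open arises this way. Both routes are sound; the paper's ideal-theoretic argument makes the correspondence of closed \emph{subschemes} explicit (which is in the spirit of how the strong topology is defined), while yours is shorter and leans on the already-proved description of the generating opens, at the cost of the small but genuine bookkeeping you flag — that the evaluation $\Gamma U^+\to\Gamma U^+_\Z$ is surjective and intertwines the two membership conditions.
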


\begin{proof}
 The equality $X^+_\Z(B)=X(B)$ follows from \autoref{prop: base extension of k-schemes to usual schemes}. For the comparison of the different Zariski topologies, we can assume that $X=\Spec C$ is affine. Given a null ideal $I$ of $C$, we define $I^+_\Z$ as the kernel of the quotient map $C^+_\Z\to(\bandquot CI)^+_\Z$. We conclude from \autoref{prop: base extension of k-schemes to usual schemes} that every band morphism $\bandquot CI$ factors uniquely through $C^+_\Z/I^+_\Z$. Thus every strong Zariski closed subset (defined by $I$) is Zariski closed in the usual sense (defined by $I^+_\Z$). Let $\rho_C:C\to C^+_\Z$ be the canonical map. Since $J=\rho^{-1}(J)^+_\Z$ for every ideal $J$ of $C^+_\Z$, we conclude that every usual Zariski closed subset is strong Zariski closed.
 
 For the second assertion, assume that $X$ is covered by the spectra of rings. Since every weak Zariski open is strong Zariski open by \autoref{prop: Zariski topology}, all we need to show is that for an affine open $U$ of $X$ and $h\in\Gamma U^+$, the set $U(B)_h$ is a weak Zariski open.
 
 Let $C=\Gamma U$ and $h=\sum a_i\in C^+$. Then $U(B)_h$ consists of all $k$-morphisms $\alpha:B\to U$ such that $\sum\Gamma\alpha(a_i)\notin N_B$, which means that the sum $\tilde h=\sum\Gamma\alpha(a_i)$ in $B$ is not $0$. If $\bar h=\sum a_i$ is the corresponding sum in $C$ (which is a ring by assumption), then $U(B)_h=U(B)_{\bar h}$, which is a weak Zariski open, as desired.
\end{proof}

\begin{rem}\label{rem: Zariski topology for affine line}
 The weak and the strong Zariski topologies are in general not comparable, as the following example shows. It also suggests that the strong Zariski topology is more useful than the weak Zariski topology.
 
 Consider $\A_k^1=\Spec k[T]$. Then $\A^1_k(B)=\Hom_k(k[T],B)$ is in bijective correspondence with $B$, by identifying a morphism $f:k[T]\to B$ with $f(T)\in B$. For $a\in B$, we write $f_a:k[T]\to B$ for the unique morphism with $f_a(T)=a$.
 
 The points of $\A_k^1$ are of the form $\fp_0=\gen{\fp}$ and $\fp_T=\gen{\fp,T}$, where $\fp$ is a prime $m$-ideal of $k$. The principal opens of $\A^1_k$ are of the form
 \[
  U_h \ = \ \{\fp_0,\ \fp_T \mid h\notin\fp\} \qquad \text{and} \qquad U_{hT} \ = \ \{\fp_0\mid h\notin\fp\}
 \]
 for $h\in k$. Let $\bar h$ be the image of $h$ in the $k$-algebra $B$. Note that $f_a:k[T]\to B$ factors through $k[T,c^{-1}]$ for some $c\in k[T]$ if and only if $f_a(c)\in B^\times$. Thus 
 \[
  U_h(B) \ = \ \{f_a:k[T]\to B\mid f_a(h)\in B^\times\} \simeq \{a\in B\mid \bar h\in B^\times\}
 \]
 is either $\emptyset$ (if $\bar h\notin B^\times$) or $B$ (if $\bar h\in B^\times$). Similarly,
 \[
  U_{hT}(B) \ = \ \{f_a:k[T]\to B\mid f_a(hT)\in B^\times\} \simeq \{a\in B\mid \bar h\cdot a\in B^\times\}
 \]
 is either $\emptyset$ (if $\bar h\notin B^\times$) or $B^\times$ (if $\bar h\in B^\times$). Consequently, $\A_k^1(B)=B$ has three weak Zariski open sets, namely $\emptyset$, $B^\times$, and $B$.

 The strong Zariski topology for $\A_k^1(B)$ is in general more interesting. In particular, we find the following strong Zariski closed subsets. Let us denote by $\bar c$ the image of $c\in k$ in $B$. Since $a-b\in N_B$ if and only if $a=b$, the singletons
 \[
  Z_{T-c} \ = \ \{f_a:k[T]\to B\mid f_a(T)-f_a(c)\in N_B\} \ = \ \{a\in B\mid a=\bar c\} \ = \ \{\bar c\}
 \]
 are closed in $\A^1_k(B)=B$ for every $c\in k$. Depending on the null set of $B$, there might be more complicated strong Zariski closed subsets (which are infinite and proper), but in the case of an idyll $B=k$ there are not, i.e.,\ $\A^1_k(k)=k$ carries the cofinite topology, in perfect analogy to the Zariski topology in usual algebraic geometry.
\end{rem}

\subsection{The fine topology}
\label{subsection: the fine topology}

Let $k$ be a band and $B$ a $k$-algebra. The choice of a topology for a band $B$ endows the set $X(B)=\Hom_k(\Spec B,X)$ of $B$-rational points of a $k$-scheme $X$ with a topology, as described in the following. For more details, cf.\ \cite{Lorscheid23} and \cite{Lorscheid-Salgado16}.

In the case of an affine $k$-scheme $X=\Spec C$, we define the \emph{affine topology} for $X(B)=\Hom_k(C,B)$ as the compact open topology with respect to the discrete topology on $C$ and the given topology on $B$. For an arbitrary band scheme $X$, the \emph{fine topology} for $X(B)=\Hom_k(\Spec B,X)$ is the finest topology such that for every morphism $\alpha:Y\to X$ from an affine band scheme $Y$ to $X$, the induced map $\alpha_B:Y(B)\to X(B)$ is continuous with respect to the affine topology for $Y(B)$. 

A \emph{topological band} is a band $B$ together with a topology such that the multiplication $m:B\times B\to B$ is continuous. A topological band $B$ is \emph{with open unit group} if $B^\times$ is an open subset of $B$ and if the multiplicative inversion $B\to B$ is a continuous map. A topological ring is an example of a topological band, and a topological field is a topological band with open unit group. Note that a topological band that is a ring is not necessarily a topological ring, since its addition might fail to be continuous.

The following is a specialization of \cite[Lemma 6.1, Thm.\ 6.4]{Lorscheid23} from ordered blueprints to bands. We omit the proof, which is analogous.

\begin{thm}\label{thm: fine topology}
 Let $k$ be a band and $B$ a $k$-algebra with topology. Then the following holds:
 \begin{enumerate}
  \item\label{fine1} For affine $X=\Spec C$, the affine and the fine topology for $X(B)$ agree.
  \item\label{fine2} The fine topology for $X(B)$ is functorial in both $X$ and $B$, i.e.,\ a morphism $\varphi:Y\to X$ of band schemes induces a continuous map $\varphi_B:Y(B)\to X(B)$ and a continuous $k$-linear morphism $f:B\to C$ (where $C$ is a $k$-algebra with topology) induces a continuous map $f_X: X(B)\to X(C)$.
  \item\label{fine3} A closed immersion $\iota:Y\to X$ of band schemes yields a topological embedding $\iota_B:Y(B)\to X(B)$.
  \item\label{fine4} If $X=\bigcup U_i$ for open subschemes $U_i$ of $X$, then $X(B)=\bigcup U_i(B)$ as sets.
 \end{enumerate}
 If $B$ is a topological $k$-algebra, then the following holds:
 \begin{enumerate}\setcounter{enumi}{4}
  \item\label{fine5} The functor $\Hom(\Spec B,-):\BSch\to\Top$ commutes with finite limits; in particular, the canonical map $\left( X\times_kY\right)(B)\to X(B)\times_{B}Y(B)$ is a homeomorphism.
  \item\label{fine6} The canonical bijection $\A^1(B)\to B$ is a homeomorphism.
 \end{enumerate}
 If $B$ is a topological $k$-algebra with open unit group, then the following holds:
 \begin{enumerate}\setcounter{enumi}{6}
  \item\label{fine7} An open immersion $\iota:Y\to X$ of band schemes yields an open topological embedding $\iota_B:Y(B)\to X(B)$.
  \item\label{fine8} If $X=\colim\cU$ is an affine presentation, then the canonical map $\colim\cU(B)\to X(B)$ is a homeomorphism.
 \end{enumerate}
 If $k$ is a ring and $B$ is a topological ring, then the following holds:
 \begin{enumerate}\setcounter{enumi}{8}
  \item\label{fine9} The canonical bijection $X^+(B)\to X(B)$ is a homeomorphism.
  \end{enumerate}
 If, in addition, the topological ring $B$ is Hausdorff, then the following strengthening of \eqref{fine3} holds:
 \begin{enumerate}\setcounter{enumi}{9}
  \item\label{fine10} A closed immersion $\iota:Y\to X$ of band schemes yields a closed topological embedding $\iota_B:Y(B)\to X(B)$.
  \end{enumerate}
\end{thm}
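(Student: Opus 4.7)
My plan is to prove the ten parts in roughly the order they appear, bootstrapping from the affine case (fine1) and then globalizing using the ``local nature'' observations of \autoref{subsubsection: local nature of band schemes}. The overall strategy parallels the argument in \cite[Lemma 6.1, Thm.~6.4]{Lorscheid23} for ordered blueprints, adapted to the language of bands; the technical simplifications afforded by the band formalism (especially that every affine open is principal and every morphism from an affine lands in an affine open) will make several steps essentially tautological.

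For \eqref{fine1}, one inclusion is immediate: by definition, the fine topology makes the identity $X(B) \to X(B)$ continuous from the affine topology, hence is at least as fine. For the reverse, every $\alpha : Y \to X = \Spec C$ from affine $Y = \Spec D$ corresponds to a band morphism $f : C \to D$, and $\alpha_B$ is precomposition with $f$, which is continuous in the compact-open topology on $\Hom_k(C,B)$ and $\Hom_k(D,B)$ with $C, D$ discrete. Parts \eqref{fine2}, \eqref{fine4}, and the set-theoretic content of \eqref{fine5}, \eqref{fine6}, \eqref{fine8} are then near-formal: functoriality in $X$ is the universal property of the fine topology; functoriality in $B$ reduces to the compact-open case by \eqref{fine1}; \eqref{fine4} follows because any morphism from an affine $\Spec B$ to $X = \bigcup U_i$ factors through some affine open $U_i$ by \autoref{subsubsection: local nature of band schemes}; and the bijection in \eqref{fine6} is just the identification $\Hom_k(k[T],B) \cong B$ sending $\alpha$ to $\Gamma\alpha(T)$.

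For the topological content of \eqref{fine3}, \eqref{fine5}, \eqref{fine7}, the key observations are: a closed immersion locally corresponds to a surjection $\Gamma X \onto \Gamma Y$, so $\iota_B$ is a subspace inclusion via precomposition and the compact-open topology restricts correctly; products reduce via \eqref{fine1} to verifying that $\Hom(C \otimes_k D, B) \to \Hom(C,B) \times \Hom(D,B)$ is a homeomorphism, which uses continuity of multiplication in $B$ to identify compact-open topologies on tensor products; and for \eqref{fine7}, the principal open $U_h(B) = \{f : \Gamma U \to B \mid f(h) \in B^\times\}$ is open precisely because $B^\times \subset B$ is open and evaluation at $h$ is continuous. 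Part \eqref{fine8} then follows by combining \eqref{fine4}, \eqref{fine7}, and standard arguments for colimits of spaces indexed by a directed system of open inclusions. Part \eqref{fine9} is a comparison of compact-open topologies under the base-extension adjunction $\Hom_k(B^+_K, R) \cong \Hom_k(B, R)$ of \autoref{prop: base extension of k-schemes to usual schemes}, and is routine.

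The hardest point, and the part I expect requires the most care, is \eqref{fine10}: strengthening a closed immersion to a \emph{closed} topological embedding. The subtlety is that a closed immersion of band schemes is determined by a null ideal $I \subset (\Gamma U)^+$, and $Y(B) \subset X(B)$ consists of band morphisms $f : \Gamma U \to B$ such that $f^+(h) \in N_B$ for every $h \in I$. To show this is closed, one must show each condition $f^+(h) \in N_B$ cuts out a closed subset of $X(B)$. This is where the Hausdorff topological ring hypothesis is essential: since $B$ is then a topological ring, addition is jointly continuous, so the map $f \mapsto f^+(h) = \sum f(a_i)$ is continuous into $B$; since $B$ is Hausdorff and $N_B \cap B = \{0\}$ is closed in $B$, the preimage is closed. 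The main obstacle is really this contrast between the purely multiplicative definition of the fine topology and the additive nature of null ideals --- the argument only works when the topology is compatible with the additive structure, which is precisely the topological ring hypothesis. For the earlier parts one can get away with less because the relevant conditions involve only multiplicative data.
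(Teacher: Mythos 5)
The paper does not actually prove this theorem: it states that the result is ``a specialization of [Lorscheid23, Lemma 6.1, Thm.~6.4] from ordered blueprints to bands'' and explicitly omits the proof. So any genuine argument you give is, by construction, a different route from the paper's, and your sketch supplies essentially the argument the authors are deferring to. Your architecture is the right one: establish \eqref{fine1} by comparing the final topology with the compact--open topology (precomposition with $\Gamma\alpha$ is continuous for compact--open topologies with discrete source, which gives the nontrivial inclusion), derive the formal statements from the universal property of the final topology and the ``local nature'' facts (every morphism from an affine lands in an affine open, and every affine open of $X$ is contained in some member of any given open cover, via the unique closed point), and isolate the places where continuity of multiplication, openness of $B^\times$ with continuous inversion, continuity of addition, and Hausdorffness enter. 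Your diagnosis of \eqref{fine10} is exactly right: for a ring $B$ the condition $f^+(h)\in N_B$ means that the honest sum $\sum f(a_i)$ vanishes in $B$, so joint continuity of addition plus closedness of $\{0\}$ make each such condition closed.

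Two small points deserve more care than your sketch gives them. First, in \eqref{fine7} it is not enough to observe that $U_h(B)=\{f\mid f(h)\in B^\times\}$ is open in $U(B)$; you must also check that the subspace topology it inherits agrees with the intrinsic affine topology of $\Hom(\Gamma U[h^{-1}],B)$, whose subbasic opens involve conditions on $f(c)f(h)^{-n}$ --- this is precisely where continuity of inversion on $B^\times$ is used, and it is the half of the hypothesis your argument never invokes. Second, \eqref{fine6} is not purely set-theoretic: the subbasic opens of $\Hom_k(k[T],B)$ are of the form $\{b\mid \bar a\,b^n\in W\}$, so the homeomorphism claim already uses continuity of multiplication, which is why it sits under the ``topological $k$-algebra'' hypothesis rather than with \eqref{fine1}--\eqref{fine4}. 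Neither issue threatens the proof; both are routine to repair, and with them included your write-up would be a complete substitute for the omitted argument.
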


\begin{ex}
 Let $B=k$ be a topological field. Then $X(k)$ satisfies all of \eqref{fine1}--\eqref{fine10}, thus in particular $X(k)$ is homeomorphic to $X^+(k)$, which is the set of $k$-rational points of the $k$-scheme $X^+$, equipped with the topology induced by the topological field $k$.
\end{ex}

\begin{ex}
 The tropical hyperfield $\T=\R_{\geq0}$, endowed with the natural order topology, is a topological band with open unit group. Thus \eqref{fine1}--\eqref{fine8} hold for $B=k=\T$. It is shown in \cite[Thm.\ 2.7]{Lorscheid22} that \eqref{fine10} also holds in this case, even though $\T$ is not a ring. Thus in particular a closed subscheme $X$ of $\A^n_\T$ defines a closed subspace $X(\T)$ of $\A^n_\T(\T)=\T^n$.
\end{ex}

\begin{ex}\label{ex: tropicalization}
 Combining the previous two examples, we consider a field $k$ together with a band morphism (or, equivalently, non-archimedean absolute value) $v:k\to\T$ and let $B=\T$. Let $X=\Spec R$ be an affine $k$-scheme of finite type, which we can consider as a band scheme by considering the $k$-algebra $R$ as a band. Then $X(\T)=\Hom_k(R,\T)$ is canonically homeomorphic to the Berkovich analytification of $X$ (cf.\ \cite[Thm.~3.5]{Lorscheid22}).
 
 Choosing generators $a_1,\dotsc,a_n$ for $R$ as a $k$-algebra yields a representation $R=k[a_1,\dotsc,a_n]/I$ for some ideal $I$. Interpreting $N_C=I$ as the null set for the pointed submonoid $A=\{ca_1^{e_1}\dotsb a_n^{e_n}\mid c\in k,e_1,\dotsc,e_n\in\N\}$ of $R$ generated by $k$ and $a_1,\dotsc,a_n$ defines the subband $C=\bandquot{A}{N_C}$ of $R$ and the band $k$-scheme $Y=\Spec C$. The embedding $C\hookrightarrow R$ yields a morphism $\pi:X\to Y$.
 
 Then $Y(\T)$ is canonically homeomorphic to the tropicalization $X^\trop$ of $X$ with respect to the embedding into $\A_k^n$ given by $a_1,\dotsc,a_n$, and the diagram
 \[
  \begin{tikzcd}[column sep=60]
   X^\an \ar[r,"\cong"] \ar[d,"\substack{\text{Payne}\\ \text{tropicalization}}"'] & X(\T) \ar[d,"\pi_\T"] \\
   X^\trop \ar[r,"\cong"] & Y(\T)
  \end{tikzcd}
 \]
 commutes; cf.\ \cite[Thm.~3.5]{Lorscheid22} for details.
\end{ex}

\begin{ex}\label{ex: K-points}
 The \emph{natural topology for $\K$} consists of the open subsets $\emptyset$, $\{1\}$ and $\K$, and it is natural in the sense that it is the quotient topology on $\K=F/F^\times$ where $F$ is a non-discrete topological field. This endows $X(\K)$ with a topology. In the case of the Grassmannian $X=\Gr(r,E)$, this topology characterizes weak maps of matroids: given two rank $r$ matroids $M$ and $N$ on $E$ with corresponding rational points $x_M$ and $x_N$ in $\Gr(r,E)(\K)$ (cf.\ \cite{Baker-Lorscheid21b}), there is a weak map $M\to N$ if and only if the $x_N$ is contained in the topological closure of $x_M$.
\end{ex}

\begin{ex}\label{ex: S-points}
 The \emph{natural topology for $\S$} is the quotient topology with respect to $\S=\R/\R_{\geq0}$, which consists of the open subsets $\emptyset$, $\{1\}$, $\{-1\}$, $\{\pm1\}$ and $\S$. This topology endows rational point sets $X(\S)$ of band schemes $X$ with a fine topology. If $X$ is an $\R$-scheme, the continuous map $\sign:\R\to\S$ induces a continuous map $X(\R)\to X(\S)$.
 
 As an application to matroid theory, we note that the topological space $\Gr(r,E)(\S)$ is homeomorphic to the \emph{MacPhersonian} $\MacPh(r,n)$ with its poset topology, cf.~\cite[Section 6]{Anderson-Davis19}. 
\end{ex}

We conclude this section with a comparison of the fine topology with the strong Zariski topology that extends the usual comparison of these two topologies for varieties over topological fields. A \emph{topological idyll} is a topological band $F$ with open unit group that is an idyll, and such that
\[
 N_F^{(n)} \ = \ \big\{ (a_1,\dotsc,a_n)\in (F^\times)^n \, \big| \, a_1+\dotsb+a_n\in N_F \big\}
\]
is a closed subset of $(F^\times)^n$ for all $n\geq1$. Note that $\{0\}=F-F^\times$ is closed in a topological idyll $F$.

Examples of topological idylls are topological fields $F$ (since $N_F^{(n)}$ is the inverse image of $\{0\}$ under the $n$-fold addition map), $\K$ and $\S$ (since both $\K^\times$ and $\S^\times$ are discrete), and $\T$ (since $N_\T^{(n)}$ is a semi-algebraic subset of $(\T^\times)^n\cong\R^n$) with respect to the natural topologies for $\K$, $\S$ and $\T$. Every idyll is trivially a topological idyll with regard to the discrete topology.

\begin{prop}\label{prop: the fine topology is finer than the Zariski topology}
 Let $F$ be a topological idyll with a $k$-algebra structure $k\to F$ and $X$ a band $k$-scheme. Then the fine topology is finer than the strong Zariski topology for $X(F)$.
\end{prop}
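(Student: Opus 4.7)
The plan is to show that every set in a sub-basis of the strong Zariski topology on $X(F)$ is open in the fine topology. By \autoref{prop: Zariski topology}, the strong Zariski topology is generated by subsets of the form $U(F)_h = \{\alpha \in U(F) \mid \Gamma\alpha^+(h) \notin N_F\}$, where $U$ is an affine open subscheme of $X$ and $h \in \Gamma U^+$. Since $F$ is a topological idyll, it has open unit group, so by part~(7) of \autoref{thm: fine topology} the inclusion $U(F) \hookrightarrow X(F)$ is an open topological embedding in the fine topology. Hence it suffices to show that $U(F)_h$ is open in the fine topology on $U(F)$, and we may reduce to the case $X = U = \Spec C$ is affine.

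For affine $X$, part~(1) of \autoref{thm: fine topology} identifies the fine topology on $X(F) = \Hom_k(C,F)$ with the compact-open topology, with $C$ carrying the discrete topology. Writing $h = \sum_{i=1}^n a_i$ with $a_i \in C$, the evaluation map $\mathrm{ev}_h : X(F) \to F^n$, $\alpha \mapsto (\alpha(a_1),\dots,\alpha(a_n))$, is continuous, because each coordinate is a point-evaluation (equivalently, $\mathrm{ev}_h$ is the map $X(F) \to \A^n(F) \cong F^n$ induced by $k[T_1,\dots,T_n] \to C$, $T_i \mapsto a_i$, combined with the canonical homeomorphism of part~(6)). Setting
\[
 W_n \ = \ \big\{(b_1,\dots,b_n) \in F^n \, \big| \, \textstyle\sum_{i:\,b_i \neq 0} b_i \notin N_F \big\},
\]
we have $U(F)_h = \mathrm{ev}_h^{-1}(W_n)$, so it suffices to show $W_n$ is open in $F^n$.

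To show $W_n$ is open, the approach is a stratification by support: for each subset $T \subseteq \{1,\dots,n\}$, let $L_T^\circ := (F^\times)^T \times \{0\}^{T^c}$, and observe that the complement $\tilde N_F^{(n)} := F^n \setminus W_n$ decomposes as $\tilde N_F^{(n)} = \bigsqcup_T (\tilde N_F^{(n)} \cap L_T^\circ)$, where $\tilde N_F^{(n)} \cap L_T^\circ$ corresponds under the identification $L_T^\circ \cong (F^\times)^T$ to the set $N_F^{(|T|)}$, which is closed in $(F^\times)^T$ by the very definition of a topological idyll. For a point $b \in W_n$ with support $T$, the noncontainment $(b_i)_{i \in T} \notin N_F^{(|T|)}$ yields an open neighborhood $V \subset (F^\times)^T$ of $(b_i)_{i \in T}$ disjoint from $N_F^{(|T|)}$, and one combines this with suitable neighborhoods of the zero coordinates $b_j = 0$ ($j \notin T$) to produce an open neighborhood of $b$ contained in $W_n$.

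The main obstacle is the final assembly step: when the coordinates $b_j = 0$ for $j \notin T$ are perturbed to small nonzero values, the support of the perturbed point enlarges to some $T' \supsetneq T$, and one must ensure that the corresponding restriction to $(F^\times)^{T'}$ does not land in $N_F^{(|T'|)}$. Handling this boundary behavior is the delicate technical core of the argument, intertwining the closedness of $\{0\} = F \setminus F^\times$ in $F$, the continuity of the multiplication, and the closedness of $N_F^{(m)}$ in $(F^\times)^m$ for the various relevant values of $m$. Once this boundary control is in place, it follows that $W_n$ is open in $F^n$, and consequently that $U(F)_h$ is open in the fine topology, completing the proof.
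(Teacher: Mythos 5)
Your setup --- reducing to the affine case via \autoref{prop: Zariski topology} and \autoref{thm: fine topology}, and rewriting $U(F)_h$ as the preimage of $W_n$ under the continuous evaluation map $X(F)\to F^n$ --- is correct and consistent with how the paper begins. But the proof is not complete: the step you yourself label ``the delicate technical core,'' namely that a point $b\in W_n$ whose support $T$ is a proper subset of $\{1,\dotsc,n\}$ admits a neighborhood contained in $W_n$, is never carried out, and it does not follow from the properties you cite. The definition of a topological idyll only makes $N_F^{(m)}$ closed \emph{in $(F^\times)^m$}; it imposes no condition near tuples with zero coordinates. For $T'\supsetneq T$, the point obtained from $b$ by restricting to the coordinates in $T'$ lies outside $(F^\times)^{T'}$, so closedness of $N_F^{(|T'|)}$ in $(F^\times)^{T'}$ produces no open set separating that point from $N_F^{(|T'|)}$; what you would actually need is that it does not lie in the closure of $N_F^{(|T'|)}$ taken in all of $F^{T'}$, and nothing in the stated hypotheses guarantees this. (It does hold in the standard examples --- fields, $\K$, $\S$, $\T$ --- but each time for a reason specific to the example, e.g.\ continuity of addition for fields.) As written, your argument establishes openness of $W_n$ only along the open stratum $(F^\times)^n$.

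For comparison, the paper's proof does not stratify by support: it writes the closed set $N_F^{(n)}\subset(F^\times)^n$ as an intersection of finite unions of products $A_{i,j,1}\times\dotsb\times A_{i,j,n}$ of closed subsets of $F^\times$, and then expresses $Z_h(F)$ as the corresponding intersection of unions of sets of the form $\{f\mid f(a_k)\in A_{i,j,k}\}$, each pulled back along a coordinate evaluation. This avoids your case analysis, but note that it too only accounts for morphisms with all $f(a_k)$ nonzero (the $A_{i,j,k}$ are closed only in $F^\times$, and the displayed identity for $Z_h(F)$ ignores vanishing terms). So the boundary analysis you postponed is genuinely missing content rather than a routine verification; completing your argument requires either supplying it or strengthening the closedness hypothesis on the sets $N_F^{(n)}$.
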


\begin{proof}
 The fine topology is local in nature, and by \autoref{prop: Zariski topology} so is the strong Zariski topology for idylls, which allows us to assume that $X=\Spec C$ is affine. The strong Zariski topology is generated by the closed subsets of the form $Z_h(F)$, where $Z_h=\Spec(\bandgenquot Ch)$ for $h=a_1+\dotsb+a_n\in C$, so all we need to show is that $Z_h(F)$ is closed in the fine topology for $X(F)$.
 
 Since $N_F^{(n)}$ is closed in $(F^\times)^n$, it is of the form
 \[
  N_F^{(n)} \quad = \quad \bigcap_{i\in I} \ \ \Big( \ \ \bigcup_{j=1}^{r_i} \ \ A_{i,j,1}\times\dotsb\times A_{i,j,n} \ \ \bigg)
 \]
 for closed subsets $A_{i,j,k}$ of $F^\times$, where $I$ is some index set and the $r_i$ are suitable integers. Using \autoref{prop: Zariski topology} and this presentation of $N_F^{(n)}$, we find that 
 \begin{align*}
  Z_h(F) \quad &= \quad \{ f:C\to F \mid f(a_1)+\dotsb+f(a_n)\in N_F\} \\ 
               &= \quad \bigcap_{i\in I} \ \ \bigcup_{j=1}^{r_i} \ \ \bigcap_{k=1}^n \{f:C\to F\mid f(a_k)\in A_{i,j,k}\} 
 \end{align*}
 is closed in the fine topology for $X(F)$, as claimed.
\end{proof}

\subsection{The kernel space}
\label{subsection: the kernel space}

Let $X$ be an affine band scheme. The \emph{kernel space of $X$} is the subspace $X^\kernel$ of $X$ that consists of all prime $k$-ideals of $\Gamma X$. Note that \autoref{prop: ideals in localizations} guarantees that $U^\kernel$ is an open subspace of $X^\kernel$ for every affine open subscheme $U$ of $X$. This allows us to extend the \emph{kernel space of $X$} to arbitrary band schemes $X$ as its subspace $X^\kernel=\bigcup U^\kernel$ where $U$ varies over all affine open subschemes $U$ of $X$. 

Sending a $\K$-point $\alpha:\Spec\K\to X$ to its unique image point $\alpha(\gen 0)$ defines the \emph{characteristic map} $\chi:X(\K)\to X$. Recall from \autoref{ex: K-points} that the natural topology for $\K$ consists of the open subsets $\emptyset$, $\{1\}$, and $\K$. 

\begin{lemma}\label{lemma: k-space as K-rational points}
 The characteristic map $\chi:X(\K)\to X$ restricts to a homeomorphism $\chi^\kernel:X(\K)\to X^\kernel$ with respect to the natural topology for $\K$.
\end{lemma}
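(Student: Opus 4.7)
My first step would be to reduce to the affine case. For any affine open cover $X=\bigcup U_i$, the inclusions $U_i(\K)\hookrightarrow X(\K)$ are open topological embeddings by \autoref{thm: fine topology}\eqref{fine7}, since $\K$ is a topological idyll with open unit group $\K^\times=\{1\}$. On the other side, $U_i^\kernel=U_i\cap X^\kernel$ by definition, and these form an open cover of $X^\kernel$. Moreover, the characteristic map is compatible with these coverings: a morphism $\alpha\colon\Spec\K\to X$ has image a single point, hence factors through some $U_i$. So it suffices to produce a homeomorphism $\chi^\kernel\colon U(\K)\to U^\kernel$ for each affine $U$, then glue.

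For the affine case $X=\Spec B$, I would explicitly construct the inverse bijection. A $\K$-point corresponds to a band morphism $f\colon B\to\K$, and $\chi(f)=\ker f=f^{-1}(0)$. By \autoref{lemma: inverse images of ideals and k-ideals as kernels}, $\ker f$ is a $k$-ideal of $B$, and it is prime since its complement $f^{-1}(\K^\times)$ is a multiplicative subset of $B$. Conversely, for any prime $k$-ideal $\fp$, define $\alpha_\fp\colon B\to\K$ by $\alpha_\fp(a)=0$ if $a\in\fp$ and $\alpha_\fp(a)=1$ otherwise. Multiplicativity follows because $B-\fp$ is multiplicatively closed and $\fp$ is an $m$-ideal; sending $0$ to $0$ and $1$ to $1$ is automatic. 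The two assignments are mutually inverse by inspection.

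The key verification is that $\alpha_\fp$ preserves null sets. Given $\sum a_i\in N_B$, let $n=|\{i:a_i\notin\fp\}|$; then $\sum\alpha_\fp(a_i)$ is the sum of $n$ copies of $1$ in $\K^+$, which lies in $N_\K=\{0,1{+}1,1{+}1{+}1,\dotsc\}$ iff $n\neq 1$. If $n=1$ with $a_j\notin\fp$, then $a_j+\sum_{i\neq j}a_i\in N_B$ with all other $a_i\in\fp$, so axiom \ref{I3} for the $k$-ideal $\fp$ forces $a_j\in\fp$, contradicting $n=1$. This is the one place the $k$-ideal hypothesis is essential, and it explains precisely why the image of $\chi$ is $X^\kernel$ rather than all of $X$.

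For the topology, by \autoref{thm: fine topology}\eqref{fine1} the fine topology on $X(\K)=\Hom(B,\K)$ coincides with the affine (compact--open) topology with $B$ discrete. Since the only nontrivial proper open of $\K$ is $\{1\}$, a subbasis of the fine topology is given by the sets $\{f:f(a)=1\}$ for $a\in B$, which under the bijection $f\mapsto\ker f$ correspond exactly to $\{\fp\in X^\kernel:a\notin\fp\}=U_a\cap X^\kernel$; these form the subbasis of the subspace topology on $X^\kernel$. This establishes the homeomorphism in the affine case, and the reduction of the first paragraph completes the proof. The main obstacle, already addressed above, is the null-set verification for $\alpha_\fp$; the rest is bookkeeping.
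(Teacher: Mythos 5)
Your proposal is correct and follows essentially the same route as the paper: reduce to the affine case, identify $\K$-points with prime $k$-ideals via $f\mapsto\ker f$ and $\fp\mapsto\alpha_\fp$, and match the subbasic opens $\{f: f(a)=1\}$ with $U_a\cap X^\kernel$. The only difference is that you spell out the verification that $\alpha_\fp$ preserves the null set (using axiom \ref{I3} to rule out exactly one summand outside $\fp$), a step the paper's proof asserts without detail.
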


\begin{proof}
 Since both topologies are defined in terms of affine opens, we can assume that $X=\Spec B$ is affine. The image point of a morphism $\alpha:\Spec\K\to X$ is a $k$-ideal $\fp$ of $B$. Since $\fp=\Gamma\alpha^{-1}(\gen0)$ and $\K$ has a unique nonzero element, $\fp$ determines $\Gamma\alpha:B\to\K$ as the band morphism with $\Gamma\alpha(a)=0$ if $a\in\fp$ and $\Gamma\alpha(a)=1$ otherwise. This shows that $\chi$ restricts to an injection $\chi^\kernel:X(\K)\to X^\kernel$.
 
 For a given prime $k$-ideal $\fp$ of $B$, the map $\pi_\fp:B\to \K$ with $\pi_\fp(a)=0$ for $a\in\fp$ and $\pi_\fp(a)=1$ for $a\notin\fp$ is a morphism of bands. Since $\chi^\kernel(\pi_\fp^\ast)=\fp$, this shows that $\chi^\kernel:X(\K)\to X$ is surjective, and thus a bijection.
 
Let $U_h=\{\fp\in X\mid h\notin\fp\}$ be a principal open affine of $X$ with $h\in B$. Then
 \[
  \chi^{-1}(U_h) \ = \ \{\alpha:\Spec\K\to X\mid \Gamma\alpha(h)\in\{1\}\}
 \]
 is an open subset of $X(\K)$, since $\{1\}$ is open in $\K$. This shows that $\chi^\kernel:X(\K)\to X^\kernel$ is continuous. Since the topology of $X(\K)$ is generated by open subsets of this form, we conclude that $\chi^\kernel$ is a homeomorphism, as claimed.
\end{proof}

The restriction of the structure sheaf $\cO_X$ from $X$ to $X^\kernel$ fails to be a sheaf, since the kernel space of affine band schemes has non-trivial coverings in general (\autoref{ex: kernel space with nontrivial coverings}), which violates \autoref{prop: covering families of affine band schemes}. But the notions of stalks and residue fields carry over, and provide yet another characterization of the kernel space. 

\begin{prop}\label{prop: characterization of the kernel space in terms of residue fields}
 Let $X$ be a band scheme with kernel space $X^\kernel$ and $x\in X$. Then the following are equivalent:
 \begin{enumerate}
  \item\label{kernel1} $x\in X^\kernel$;
  \item\label{kernel2} $\{x\}$ is the image of a morphism $Y\to X$ for some band scheme $Y$;
    \item\label{kernel3} $k(x)$ is an idyll;
  \item\label{kernel4} $k(x)\neq\0$.
 \end{enumerate}
\end{prop}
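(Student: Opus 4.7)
The plan is to reduce to the affine case $X=\Spec B$ with $x$ corresponding to a prime $m$-ideal $\fp$ of $B$: condition (1) is defined locally, the residue field appearing in (3) and (4) depends only on an affine open neighborhood of $x$, and for (2), since $\{x\}$ is contained in every affine open through $x$, any morphism $Y\to X$ with image $\{x\}$ factors through any such neighborhood. I will then close the cycle $(1)\Rightarrow(3)\Rightarrow(4)\Rightarrow(2)\Rightarrow(1)$.

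The implication $(3)\Rightarrow(4)$ is immediate from the definition of an idyll. For $(4)\Rightarrow(2)$, I will exhibit the canonical morphism $\kappa_x:\Spec k(x)\to X$, whose domain is nonempty (since $k(x)\neq\0$ has a maximal $m$-ideal) and whose image is $\{x\}$: for any prime $m$-ideal $\fq$ of $k(x)$, its preimage in $B$ contains $\fp$ (because $\fp\subseteq\fm_\fp$ maps to $0$) and is disjoint from $S=B\setminus\fp$ (because any $a\in S$ becomes a unit in $B_\fp$, and, since $k(x)\neq\0$, its image $[a/1]$ is a unit in $k(x)$, hence not in $\fq$).

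For $(1)\Rightarrow(3)$, I will use that if $\fp$ is a prime $k$-ideal, then $\fm_\fp=S^{-1}\fp$ is a $k$-ideal of $B_\fp$ by \autoref{prop: ideals in localizations}. Since every proper $m$-ideal of $B_\fp$ lies inside $\fm_\fp$, so does every proper $k$-ideal, and thus $\fm_\fp$ is the unique maximal proper $k$-ideal. The map $\pi:B_\fp\to\K$ sending $\fm_\fp$ to $0$ and its complement to $1$ is then a band morphism -- multiplicativity uses primality of $\fm_\fp$, and null-compatibility is exactly the $k$-ideal axiom -- and it factors through $k(x)$, witnessing $k(x)\neq\0$. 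The kernel of the quotient $B_\fp\to k(x)$ is a $k$-ideal containing $\fm_\fp$, so it equals either $\fm_\fp$ or $B_\fp$; the latter is excluded by $k(x)\neq\0$. Hence $[a]=0$ iff $a\in\fm_\fp$, so every nonzero $[a]$ arises from $a\in B_\fp^\times$ and admits an inverse $[a^{-1}]$, making $k(x)$ an idyll.

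For $(2)\Rightarrow(1)$, given $\varphi:Y\to X$ with image $\{x\}$, I choose a point of $Y$ and an affine open $\Spec C$ containing it (so $C\neq\0$), yielding an induced band morphism $f:B\to C$. By \autoref{proposition: maximal k-ideals are prime}, $C$ has a maximal $k$-ideal $\fn$, which is prime. Then $f^{-1}(\fn)$ is a $k$-ideal of $B$ by \autoref{lemma: inverse images of ideals and k-ideals as kernels}, and it equals $\fp$ because $\fn\in\Spec C\subset Y$ and $\varphi$ has image $\{x\}$. The main subtlety lies in $(1)\Rightarrow(3)$: one might naively hope $\gen{\fm_\fp}\cap B_\fp$ equals $\fm_\fp$ directly, but in the absence of a fusion hypothesis \autoref{lemma: null ideal generated by a k-ideal} does not apply, and the detour through the $\K$-valued map is needed to first rule out triviality of $k(x)$.
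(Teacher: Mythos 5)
Your proof is correct and follows essentially the same route as the paper: reduce to $X=\Spec B$ localized at $\fp$ and close a cycle of implications, using \autoref{proposition: maximal k-ideals are prime} together with \autoref{lemma: inverse images of ideals and k-ideals as kernels} to pass from a morphism with image $\{x\}$ to the statement that $\fp$ is a $k$-ideal. The only differences are cosmetic: you reorder the cycle and, in $(1)\Rightarrow(3)$, establish $k(x)\neq\0$ via an explicit band morphism $B_\fp\to\K$ (the same construction as in \autoref{lemma: k-space as K-rational points}), where the paper instead cites the fact that a $k$-ideal equals the kernel of its own quotient map.
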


\begin{proof}
 Since $x$ is contained in an affine open of $X$, we can assume throughout that $X=\Spec B$ is affine and that $x$ corresponds to a prime $m$-ideal of $B$. Replacing $B$ by $B_\fp$ lets us assume that $\fp$ is the maximal $m$-ideal of $B$. Thus $k(x)=\bandgenquot B\fp$. We establish a circle of implications \eqref{kernel1}$\Rightarrow$\eqref{kernel2}$\Rightarrow$\eqref{kernel3}$\Rightarrow$\eqref{kernel4}$\Rightarrow$\eqref{kernel1}. 
  
 Assume \eqref{kernel1}. Then $\fp$ is the kernel of $B\to \bandgenquot B\fp=k(x)$ and $\{x\}$ is the image of $\kappa_x:\Spec k(x)\to X$, which implies \eqref{kernel2}.
 
 Assume \eqref{kernel2}. If $\{x\}$ is the image of $Y\to X$, then it is the image of an affine open of $Y$. Thus we can assume that $Y=\Spec C$ is affine and that $Y\to X$ corresponds to a band morphism $f:B\to C$ into a nonzero band $C$ with $f^{-1}(\fq)=\fp$ for all prime $m$-ideals $\fq$ of $C$. By \autoref{proposition: maximal k-ideals are prime}, $C$ has a maximal $k$-ideal $\fm$, which shows that $\fp=f^{-1}(\fm)$ is a $k$-ideal by \autoref{lemma: inverse images of ideals and k-ideals as kernels}. Thus $\fp$ is the kernel of $B\to \bandgenquot B\fp=k(x)$, which shows that $k(x)$ is nonzero. We conclude that $k(x)$ is an idyll, which establishes \eqref{kernel3}.

 The implication \eqref{kernel3}$\Rightarrow$\eqref{kernel4} is evident. Assume \eqref{kernel4}. If $\bandgenquot B\fp=k(x)\neq0$, then $\fp=\ker(B\to k(x))$ is a $k$-ideal and belongs to $X^\kernel$. This establishes \eqref{kernel1}.
\end{proof}

\begin{ex}\label{ex: kernel space with nontrivial coverings}
 We provide two examples of kernel spaces $X^\kernel$ with non-trivial coverings which illustrate the fact that the restriction of the structure sheaf $\cO_X$ can fail to be a sheaf.
 
 The first example is $B=\bandgennquot{\Funpm[a,b,s,t]}{ta-sb,\ s+t-1}$ and $X=\Spec B$. Since $s+t-1\in N_B$, there is no $k$-ideal that contains both $s$ and $t$. Therefore $X^\kernel=U_s^\kernel\cup U_t^\kernel$ is an open covering. We have local sections $\frac as\in\cO_X(U_s)$ and $\frac bt\in\cO_X(U_t)$ that agree on the intersection $U_{st}=U_s\cap U_t$, since $ta=sb$ in $B$, but there is no global section in $\cO_X(X)=B$ that restricts to the pair of local sections $\frac as$ and $\frac bt$. This shows that the restriction of $\cO_X$ to $X^\kernel$ is not a sheaf.

 The second example, adapted from \cite[Example 4.24]{Jun18}, is the hyperring quotient $R=(\Q \times \Q)/\{\pm1\}$. Its multiplication is defined coordinate-wise, and its null set is
 \[\textstyle
  N_R \ = \ \{ \sum a_i \in R^+ \mid 0 = \sum \epsilon_ia_i \text{ for some }\epsilon_i\in\{\pm1\}\}. 
 \]
 The band $R$ has two prime $k$-ideals: $\fp_1=\langle \overline{(1, 0)} \rangle_k$ and $\fp_2 = \langle \overline{(0, 1)} \rangle_k$. Thus the kernel space of $X=\Spec R$ consists of the two closed points $\fp_1$ and $\fp_2$ with stalks $R_{\fp_1} \simeq R_{\fp_2} \simeq \Q/\{\pm1\}$. Since $(\Q/\{\pm1\}) \oplus (\Q/\{\pm1\})$ is not isomorphic to $R$, this shows that the restriction of the structure sheaf of $X$ to $X^\kernel$ is not a sheaf.
\end{ex}

\subsection{The Tits space}
\label{subsection: the Tits space}

The \emph{Tits space $X^\Tits$} of a band scheme $X$ is defined as the set of closed points of $X^\kernel=X(\K)$. In many examples, it realizes hoped-for analogies in $\Fun$-geometry. 

For example, points of the Tits space of the Grassmannian $\Gr(r,n)$ correspond to $r$-element subsets of an $n$-element set, the Tits space of a building of type $A$ is its Coxeter complex, and the Tits space of $\SL_n$ is its Weyl group. In this section, we expand on material from \cite{Lorscheid-Thas23}.

\subsubsection{Vector spaces over \texorpdfstring{$\Fun$}{F1} and their subspaces}
The naive approach to linear algebra over $\Fun$ is as follows: an $\Fun$-vector space is a set; its dimension is its cardinality; an $\Fun$-linear subspace is a subset. In particular, the $\Fun$-rational points of the Grassmannian $\Gr(r,n)$ of $r$-dimensional subspaces of $n$-space should correspond to the $r$-subsets of $E=\{1,\dotsc,n\}$.

From the point of view of band schemes, this naive approach becomes correct if one replaces $\Fun$-rational points by (closed) $\K$-rational points. Indeed, as explained in \autoref{ex: Grassmannians}, the $\K$-points of the band scheme $\Gr(r,n)$ correspond to rank $r$ matroids $M$ on $E$. By \autoref{ex: K-points}, the closure of a $\K$-point $x_M$ contains another $\K$-point $x_N$ if and only if there is a weak map $M\to N$. Therefore the closed points of $\Gr(r,n)(\K)$ are precisely the rank $r$ matroids on $E$ with a unique basis, and such matroids correspond bijectively to $r$-subsets of $E$. This establishes the postulated bijection between $\Gr(r,n)^\Tits$ and the family of all $r$-subsets of $E$.

Note that this also agrees with the general expectation in $\Fun$-geometry that the number of $\Fun$-points of a ``nice'' space $X$ should equal the Euler characteristic of the associated complex variety $X_{\C}$:
\[
 \chi\big(\Gr(r,n)(\C)\big) \ = \ \binom rn \ = \ \#\,\big\{\text{$r$-subsets of $E$}\big\} \ = \ \#\, \Gr(r,n)^\Tits.
\]

\subsubsection{Flags and buildings}
More generally,  in $\Fun$-geometry a flag of $\Fun$-linear subspaces of $E$ is thought of as a flag $F_1\subset\dotsc\subset F_s$ of subsets of $E$, and its type is $\br=(r_1,\dotsc,r_s)$, where $r_i=\# F_i$ for $i=1,\dotsc,s$. Such flags correspond bijectively to  the closed points of $\Fl(\br,n)(\K)$, which are flag matroids $(M_1,\dotsc,M_s)$ of type $\br$ on $E$ for which each $M_i$ has a unique basis $F_i$.

There are canonical projections $\pi_{\br,I}:\Fl(\br,n)\to\Fl(\br_I,n)$ for every subset $I$ of $\{1,\dotsc,s\}$ and type $\br_I=(r_i)_{i\in I}$, sending a flag $(F_i)_{i=1,\dotsc,s}$ to $(F_i)_{i\in I}$. The projection $\pi_{\br,I}$ corresponds to a face map when we consider a flag $F_1\subset\dotsc\subset F_s$ as an $(s-1)$-dimensional simplex. Thus the system of flag varieties $\Fl(\br,n)$ (of various types $\br$), together with the canonical projections $\pi_{\br,I}$ (for varying $\br$ and $I$), define a functor $\cB_n$ from $\Bands$ to simplicial complexes.

For a field $K$, the simplicial complex $\cB_n(K)$ is the spherical building of type $A_n$ whose simplices correspond to flags of subspaces of $K^n$. The simplicial complex $\cB_n(\K)$ is the combinatorial flag variety $\Omega_n$ of type $A_n$, as introduced by Borovik, Gelfand, and White in \cite{Borovic-Gelfand-White00}. The projections $\pi_{\br,I}$ induce maps between the Tits spaces of $\Fl(\br,n)$ and $\Fl(\br_I,n)$, which corresponds to omitting entries of a flag $F_1\subset\dotsc\subset F_s$ of subsets of $E$. The set of closed points of $\cB_n(\K)$ can be naturally identified with the Coxeter complex $\cC_n$ of $S_n$, and the embedding of $\Fl(\br,n)^\Tits$ into $\Fl(\br,n)^\kernel=\Fl(\br,n)(\K)$ (for various $\br$) corresponds to the embedding of $\cC_n$ into $\Omega_n$.

The case $n=3$ is illustrated in \autoref{fig: Coxeter complex and combinatorial flag variety for S3}, where we label the vertices of $\Omega_3$ by a binary matrix that represents the corresponding matroid. The Coxeter complex $\cC_3$ and its image in $\Omega_3$ are drawn in orange. 

\begin{figure}[htb]
 \[
  \newcounter{tikz-counter}
  \begin{tikzpicture}
   \node (origin) {};
   \foreach \a in {1,...,14}{\draw (\a*360/7+0/7: 2cm) node [draw,circle,inner sep=2pt] (l\a) {};}
   \foreach \a in {1,...,7}{\draw (\a*360/7+180/7: 2cm) node [draw,circle,inner sep=1.8pt,fill] (p\a) {};
                              \draw [-] (p\a) -- (l\a);
                              \setcounter{tikz-counter}{\a};
                              \addtocounter{tikz-counter}{1};
                              \draw [-] (p\a) -- (l\arabic{tikz-counter});
                              \addtocounter{tikz-counter}{2};
                              \draw [-] (p\a) -- (l\arabic{tikz-counter});
                             }
   \draw [-] (p4) -- (l6);                          
   \node at      (0:2.8cm) {$\tinymatrix 110001$};                          
   \node at (1440/7:2.8cm) {$\tinymatrix 100011$};                          
   \node at (1800/7:2.8cm) {$\tinymatrix 101010$};                          
   \node at (2160/7:2.8cm) {$\tinymatrix 110011$};                          
   \node at (1260/7:2.8cm) {$\tinyvector 011$};                          
   \node at (1620/7:2.8cm) {$\tinyvector 111$};                          
   \node at (1980/7:2.8cm) {$\tinyvector 101$};                          
   \node at (2340/7:2.8cm) {$\tinyvector 110$};   
   \node[color=orange!90!black] at  (360/7:2.8cm) {$\tinymatrix 100001$};                          
   \node[color=orange!90!black] at  (720/7:2.8cm) {$\tinymatrix 100010$};                          
   \node[color=orange!90!black] at (1080/7:2.8cm) {$\tinymatrix 001010$};                          
   \node[color=orange!90!black] at  (180/7:2.8cm) {$\tinyvector 001$};                          
   \node[color=orange!90!black] at  (540/7:2.8cm) {$\tinyvector 100$};                          
   \node[color=orange!90!black] at  (900/7:2.8cm) {$\tinyvector 010$};                          
   \foreach \a in {1,...,3}{\draw (\a*360/7+0/7: 2cm) node [draw,thick,color=orange!90!black,circle,inner sep=2pt] (l\a) {};}
   \foreach \a in {0,...,2}{\draw (\a*360/7+180/7: 2cm) node [draw,thick,color=orange!90!black,circle,inner sep=1.8pt,fill=orange!90!black] (p\a) {};}
   \draw [-,color=orange!90!black,thick] (p0) -- (l1) -- (p1) -- (l2) -- (p2) -- (l3) -- (p0);
   \foreach \a in {1,...,6}{\draw [xshift=-6cm,yshift=1cm] (\a*360/3+30: 1.2cm) node [draw,thick,color=orange!90!black,circle,inner sep=2pt] (l\a) {};}
   \foreach \a in {1,...,3}{\draw [xshift=-6cm,yshift=1cm] (\a*360/3+90: 1.2cm) node [draw,thick,color=orange!90!black,circle,inner sep=1.8pt,fill=orange!90!black] (p\a) {};
                              \draw [-,color=orange!90!black,thick] (p\a) -- (l\a);
                              \setcounter{tikz-counter}{\a};
                              \addtocounter{tikz-counter}{1};
                              \draw [-,color=orange!90!black,thick] (p\a) -- (l\arabic{tikz-counter});
                              \addtocounter{tikz-counter}{2};
                              \draw [-,color=orange!90!black,thick] (p\a) -- (l\arabic{tikz-counter});
                             }
   \draw[->,color=orange!90!black,thick] (-4.2,1) -- (-3.5,1);                          
  \end{tikzpicture}
 \] 
 \caption{Embedding of the Coxeter complex $\cC_3$ into $\Omega_{S_3}$.}
 \label{fig: Coxeter complex and combinatorial flag variety for S3}
\end{figure}
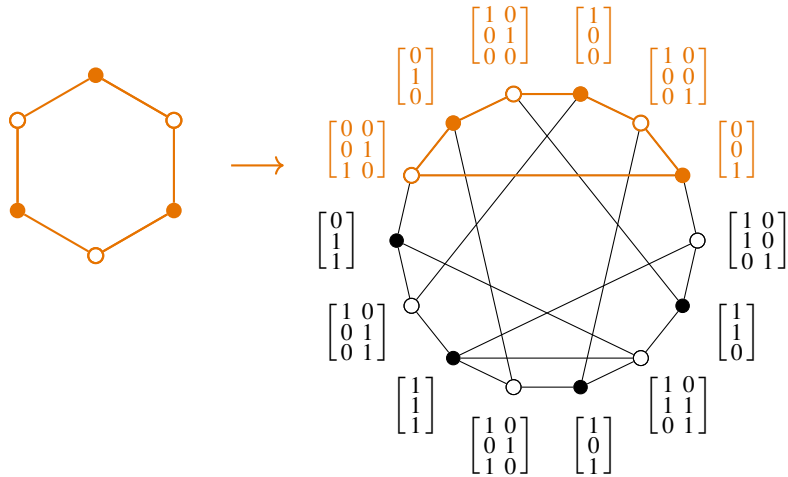

\subsubsection{Chevalley groups and their Weyl groups}
One of the best known expectations in $\Fun$-geometry is that the $\Fun$-points of (a suitable model of) a Chevalley group $G$ should correspond to its Weyl group $W$ in a canonical way. When $\Fun$-points are interpreted as morphisms $\Spec\Fun\to G$, then the base extension from $\Fun$ to $\Z$ would yield a map 
\[
 W \ \simeq \ G(\Fun) \ = \ \Hom(\Spec\Fun,G) \ \longrightarrow \ \Hom(\Spec\Z,G) \ = \ G(\Z).
\]
A natural expectation on this map  (cf.\ \cite[Prologue]{Lorscheid16} for details) would be that it factors through a section $\sigma:W\to N(\Z)$ to the canonical short exact sequence
\[
 \begin{tikzcd}[column sep=50pt]
  0 \ar[r] & T(\Z) \ar[r] & N(\Z) \ar[r] & W \ar[r] \ar[l,dotted,bend right=20pt,"\sigma"'] & 0,
 \end{tikzcd}
\]
where $T$ is the diagonal torus and $N$ is its normalizer in $G$. For $G=\SL_2$, this sequence is isomorphic to
\[
 \begin{tikzcd}[column sep=50pt]
  0 \ar[r] & \Z/2\Z \ar[r] & \Z/4\Z \ar[r] & \Z/2\Z \ar[r] & 0,
 \end{tikzcd}
\]
which does not split. The Tits space avoids this pitfall while nevertheless achieving a satisfying outcome.

Indeed, the Weyl group appears as the Tits space of the obvious model of $\SL_n$ as a band scheme, which is
\[
 \SL_n \ = \ \Spec \ \bandgenquot{\Funpm[T_{i,j}\mid i,j=1,\dotsc,n]}{\det(T_{i,j})-1},
\]
where
\[
 \det(T_{i,j}) \ = \ \sum_{\sigma\in S_n} \, \sign(\sigma) \ \cdot \ \prod_{i=1}^n \ T_{i,\sigma(i)}.
\]
The closed points of $\SL_n^\kernel$ are the prime $k$-ideals of the form $\fp_\sigma=\gen{T_{i,j}\mid j\neq\sigma(i)}_k$ with $\sigma\in S_n$. The fibre of $\fp_\sigma$ under the canonical map $\SL_n(\C)\to\SL_n(\K)=\SL_n^\kernel$ is the coset $\sigma T(\C)$ of the diagonal torus, which establishes a bijection between $\SL_n^\Tits$ and the Weyl group $N(\C)/T(\C)\simeq S_n$ of $\SL_n$ (where $N$ is the normalizer of $T$ in $\SL_n$). 

As a final remark, we mention that $\SL_n$, as a band scheme, carries a natural structure of a ``crowd'', cf.\ \cite{Lorscheid-Thas23}. Roughly speaking, crowds generalize groups in the same way that bands generalize rings; the group law, which is a binary operation, is replaced by a ternary relation consisting of triples of elements whose product is $1$. This endows $\SL_n^\Tits$ with a crowd structure, which coincides with that of the Weyl group $S_n$ under the above identification. Moreover, there is a natural extension of the group action of $\SL_n$ on buildings to a crowd action, which specializes to the usual action of $S_n=\SL_n^\Tits$ on the Coxeter complex $\cC_n=\cB_n^\Tits$.

\subsection{The null space}
\label{subsection: the null space}

A null ideal $I$ of a band $B$ is \emph{prime} if $S=B^+-I$ is a multiplicative set in $B^+$. The \emph{null space of $B$} is the set $\Null(B)$ of prime null ideals of $B$ together with the topology generated by \emph{basic opens} of the form
\[
 U_h \ = \ \big\{\fp\in\Null(B) \, \big| \, h\notin\fp \big\}
\]
for $h\in B^+$. 

Since the intersection $I\cap B$ of a null ideal $I$ with $B$ is an $m$-ideal of $B$ and since the intersection $S\cap B$ of a multiplicative set $S$ in $B^+$ with $B$ is a multiplicative set in $B$, the association $\fp\mapsto\fp\cap B$ defines a map $\bar\pi_B:\Null(B)\to\Spec B$. The inverse image of a basic open $U_h$ of $\Spec B$ (with $h\in B$) is the basic open $U_h$ of $\Null(B)$, which shows that $\bar\pi_B$ is continuous.

A band morphism $f:B\to C$ induces a continuous map $f^\ast:\Null(C)\to\Null(B)$ that pulls back a prime null ideal $\fp$ of $C$ to the prime null ideal $f^\ast(\fp)=(f^+)^{-1}(\fp)$ of $B$. The pullback is compatible with $\bar\pi$ in the sense that $f^\ast\circ\bar\pi_B=\bar\pi_C\circ f^\ast$. This upgrades $\Null$ to a functor from $\Bands$ to $\Top$, and $\bar\pi$ to a natural transformation from $\Null$ to $\utop\circ\Spec$, where $\utop:\BSch\to\Top$ sends a band scheme to its underlying topological space.

\begin{thm}\label{thm: null space}
Up to unique isomorphism, there is a unique functor $(-)^\nul:\BSch\to\Top$, together with a natural transformation $\pi:(-)^\nul\to\utop$ and a natural isomorphism $\eta:\Null\to(-)^\nul\circ\Spec$, with the following properties: 
 \begin{enumerate}
  \item \label{null1} $\bar\pi=\pi\circ\eta$, i.e.,\ 
  \[
   \begin{tikzcd}[column sep=30]
    \Null(B) \ar[rr,"\simeq","\eta_B"'] \ar[dr,"\bar\pi_B"'] && (\Spec B)^\nul \ar[dl,"\pi_{\Spec B}"] \\
    & \Spec(B) 
   \end{tikzcd}
  \]
  commutes for every band $B$.
  \item \label{null2} An affine presentation $\colim\cU\to X$ induces a homeomorphism $\colim\cU^\nul\to X^\nul$.
  \item \label{null3} An open (resp.\ closed) immersion $\iota:Y\to X$ induces an open (resp.\ closed) topological embedding $\iota_\ast:Y^\nul\to X^\nul$.
 \end{enumerate}
\end{thm}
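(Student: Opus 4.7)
The plan is to build $(-)^\nul$ by first defining it on the full subcategory $\BAff$ of affine band schemes via transport of structure from $\Null$, and then extending to $\BSch$ using \autoref{lemma: extension of a functor to schemes}. On affines, set $(\Spec B)^\nul := \Null(B)$ and, for a band morphism $f:B\to C$, let the induced map on null spaces be $f^\ast : \Null(C) \to \Null(B)$. Define $\eta_B : \Null(B) \to (\Spec B)^\nul$ as the identity and $\pi_{\Spec B} := \bar\pi_B$. Then condition (1) holds by construction.

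The central technical step is to show that $\Null$ sends finite localizations to open topological embeddings: for $h \in B$, the pullback $\iota_h^\ast : \Null(B[h^{-1}]) \to \Null(B)$ identifies $\Null(B[h^{-1}])$ homeomorphically with the basic open $U_h = \{\fp \mid h \notin \fp\} \subset \Null(B)$. The bijection on sets is analogous to the one in \autoref{prop: prime ideals in localizations}: the pullback is injective with image contained in $U_h$, and its inverse sends a prime null ideal $\fp \subset B^+$ not containing $h$ to the null ideal of $(B[h^{-1}])^+$ generated by $\iota_h^+(\fp)$, which one verifies is prime via the multiplicative closure of $B^+ - \fp$. Since $\fp$ is prime and $h \notin \fp$, one checks that $h'/h^n$ lies in this extended ideal if and only if $h' \in \fp$, so basic opens of $\Null(B[h^{-1}])$ correspond under the bijection to $U_{h'} \cap U_h$, confirming that the topology matches. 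With this in hand, the hypotheses of \autoref{lemma: extension of a functor to schemes} in the topological setting of \autoref{rem: reduced hypotheses for extensions of functors} are satisfied on the fibre products that actually arise in affine presentations: intersections of principal opens within a common affine are again principal, so the required compatibilities reduce to the localization case. The extension produces $(-)^\nul : \BSch \to \Top$ together with $\pi$ and $\eta$ built by globalizing the affine data, and condition (2) is built into the construction. The open-immersion half of condition (3) is then immediate: an open immersion $\iota : Y \hookrightarrow X$ is locally a union of principal opens, each of which embeds openly by the localization step, and condition (2) assembles these into a global open embedding $\iota^\nul$.

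The closed-immersion half of condition (3) requires a separate direct argument. Affine-locally, a closed immersion corresponds to a surjection $f : B \onto B/I$ for a null ideal $I$ (using \autoref{cor: bijection between null ideals and quotients}); the induced map $f^\ast : \Null(B/I) \to \Null(B)$ is injective because $f^+$ is surjective, and its image is exactly $\{\fp \in \Null(B) \mid I \subset \fp\} = \bigcap_{g \in I}(\Null(B) \setminus U_g)$, a closed subset as an intersection of complements of basic opens. The correspondence identifies each basic open $U_{\bar g} \subset \Null(B/I)$ with $U_g$ intersected with the image, so $f^\ast$ is a homeomorphism onto its closed image. Gluing via an affine cover of $X$ and invoking condition (2) gives the global statement. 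Finally, uniqueness of $((-)^\nul, \pi, \eta)$ is automatic: the natural isomorphism $\eta$ pins down the functor on $\BAff$ as $\Null$, and condition (2) then determines it on all band schemes as a colimit, with morphisms determined by restriction to affine opens. The main obstacle is the localization step: analyzing prime null ideals under $B \to B[h^{-1}]$ is more delicate than the analogous statements for $m$-ideals or $k$-ideals, because null ideals live in the semiring $B^+$ and the interaction between the substitution rule and the extension-of-ideals operation requires care.
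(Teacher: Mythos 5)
Your proposal follows the same overall strategy as the paper's proof: define $(\Spec B)^\nul=\Null(B)$ with $\eta$ the identity and $\pi=\bar\pi$, extend to all of $\BSch$ via affine presentations using \autoref{lemma: extension of a functor to schemes}, and handle the two substantive verifications --- that $\Null$ sends finite localizations $B\to B[h^{-1}]$ to open topological embeddings, and that quotient maps $B\to\bandquot BI$ induce closed topological embeddings --- by direct computation with prime null ideals. The details you give for those two steps (the explicit description $\gen{\iota_h^+(\fp)}=\{b/h^i\mid b\in\fp\}$, the matching of basic opens, the identification of the image of $\Null(\bandquot BI)$ with $\{\fp\mid I\subset\fp\}$) coincide with the paper's computations, and your uniqueness argument is the same.

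The one genuine deviation is that you discard the other half of the hypotheses of the extension machinery. \autoref{rem: reduced hypotheses for extensions of functors} requires verifying that the functor preserves \emph{tensor products} $C\otimes_BD$ as well as finite localizations, and the paper's proof devotes a substantial block to constructing mutually inverse maps $\Null(C\otimes_BD)\leftrightarrow\Null(C)\times_{\Null(B)}\Null(D)$. You assert instead that ``the fibre products that actually arise in affine presentations'' are intersections of principal opens inside a common affine, hence already covered by the localization case. That claim is plausible --- the gluing data and the comparison of affine presentations do only involve principal opens, since any affine open of an affine band scheme is principal --- but as written you are invoking \autoref{lemma: extension of a functor to schemes} while verifying only one of its two hypotheses. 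To make your route rigorous you would either have to prove the general cofibre-product compatibility anyway (as the paper does), or state and prove a weakened variant of the extension lemma that only assumes compatibility with principal localizations. Everything else in your argument is sound.
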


The proof of this theorem is analogous to that of \cite[Thm.~A, Prop.~4.2]{Lorscheid-Ray23}, and we refer to the latter text for more details.

\begin{proof}
 Properties \eqref{null1} and \eqref{null2} lead to the following definition for $(-)^\nul$, $\pi$, and $\eta$: for an affine band scheme $X$, we define 
 \[
  X^\nul \ = \ \Null(\Gamma X), \qquad \pi_X \ = \ \bar\pi_{\Gamma X}, \qquad \text{and} \qquad \eta_{\Gamma X} \ = \ \id_{\Null(\Gamma X)}. 
 \]
 For a morphism $\varphi:X\to Y$ of affine band schemes, we define $\varphi_\ast=(\Gamma\varphi)^\ast:X^\nul\to Y^\nul$. For an arbitrary band scheme $X$ with affine presentation $X=\colim\cU$, we define $X^\nul=\colim\cU^\nul$. The map $\pi_X:X^\nul\to X$ is determined by its restrictions to affine open subschemes of $X$. Since every morphism of band schemes restricts to a morphism between affine presentations, by \autoref{subsubsection: local nature of band schemes}, the functoriality of $(-)^\nul$ dictates its extension to morphisms between arbitrary band schemes.
 
 The uniqueness of $(-)^\nul$, $\pi$, and $\eta$ follows from their construction. In the following, we verify that this definition is independent of our choices and that \eqref{null3} holds. By \autoref{rem: reduced hypotheses for extensions of functors}, we need to verify that $\Null:\Bands\to\Top$ preserves cofibre products and sends finite localizations to open topological embeddings.
 
 \subsubsection*{\texorpdfstring{$\Null$}{Null} preserves cofibre products}
 Consider band morphisms $C\leftarrow B\to D$ and the canonical inclusions $\iota_C:C\to C\otimes_BD$ and $\iota_D:D\to C\otimes_BD$. The corresponding continuous maps $\Null(C)\to\Null(B)\leftarrow\Null(D)$ induce a canonical map $\Phi:\Null(C\otimes_BD)\to\Null(C)\times_{\Null(B)}\Null(D)$, which sends a prime null ideal $\fp$ of $C\otimes_ B D$ to the element $(\iota_C^\ast(\fp),\iota^\ast(\fp)\big)$ of $\Null(C)\times_{\Null(B)}\Null(D)$. 
 
 We construct an inverse to $\Phi$ as follows. Given $(\fq_C,\fq_D)\in \Null(C)\times_{\Null(B)}\Null(D)$, we define $\Psi(\fq_C,\fq_D)=\gen{\iota_C^+(\fp),\iota_D^+(\fq)}$, which is a prime null ideal of $C\otimes_BD$ since whenever $(x\otimes y)\cdot(x'\otimes y')=xx'\otimes yy'\in\Psi(\fq_C,\fq_D)$, we have $xx'\in\fq_C$ or $yy'\in\fq_D$ and thus one of
 \[
  x\in\fq_C, \quad x'\in\fq_C, \quad  y\in\fq_D, \quad \text{or} \quad  y'\in\fq_D
 \]
 holds, since $\fq_C$ and $\fq_D$ are prime. Consequently, either $x\otimes y$ or $x'\otimes y'$ is in $\Psi(\fq_C,\fq_D)$, which shows that $\Psi(\fq_C,\fq_D)$ is indeed a prime null ideal.
 
 In order to show that $\Phi$ and $\Psi$ are mutually inverse bijections, we need to show that the tautological inclusions $\Psi(\Phi(\fp))\subset \fp$ and $(\fq_C,\fq_D)\subset\Phi(\Psi(\fq_C,\fq_D))$ are equalities. Consider a null ideal $\fp$ of $C\otimes_BD$ and $x\otimes y\in\fp$. Then either $x\otimes 1\in\fp$ or $1\otimes y\in\fp$, since $\fp$ is prime, and thus $\fp$ is generated by $\iota_C^\ast(\fp)$ and $\iota_D^\ast(\fp)$.  This shows that $\Psi(\Phi(\fp))=\fp$. Next, consider $(\fq_C,\fq_D)\in\Null(C)\times_{\Null(B)}\Null(D)$ and elements $x\in\iota_C^\ast(\fp)$ and $y\in\iota_D^\ast(\fp)$, where $\fp=\Psi(\fq_C,\fq_D)$. Then $x\otimes1\in\fp$ and $1\otimes y\in\fp$, which implies $x\in\fp_C$ and $y\in\fp_D$. This shows that $(\fq_C,\fq_D)=\Phi(\Psi(\fq_C,\fq_D))$. 
 
 We have therefore shown that the map $\Phi:\Null(C\otimes_BD)\to\Null(C)\times_{\Null(B)}\Null(D)$ is a bijection. It is a homeomorphism since basic opens agree:
 \[
  \Phi\big(U_{g\otimes h}\big) \ = \ \Phi\big(U_{g\otimes1} \cap U_{1\otimes h}\big) \ = \ \big(U_g\times\Null(D)\big) \cap \big(\Null(C)\times U_h\big) \ = \ U_g\times U_h
 \]
 for $g\in C^+$ and $h\in D^+$. This verifies that $\Null$ preserves cofibre products.
 
 \subsubsection*{\texorpdfstring{$\Null$}{Null} sends finite localizations to open topological embeddings}
 Consider a finite localization $\iota_a: B\to B[a^{-1}]$ for $a\in B$ and the induced continuous map $\iota_a^\ast:\Null(B[a^{-1}])\to \Null(B)$ between the respective null spaces. 
 
 As a first step, we show that $\iota_a^\ast$ injects onto the subset of prime null ideals $\fp$ of $B$ that do not contain $a$. Let $\fp$ be such a null ideal of $B$, and let $\fq=\gen{\iota_a^+(\fp)}$. If $\frac b{a^i}\in\fq$, then $\frac b{a^i}=\frac c{a^j}\cdot\frac {d}1$ for some $d\in\fp$ by the definition of $\fq$. Since $a^{j+k}b=a^{i+k}cd$ for some $k$, and since $\fp$ is prime, $b\in\fp$. This shows that
 \[\textstyle
  \fq \ = \ \gen{\iota_a^+(\fp)} \ = \ \big\{ \frac b{a^i}\in B[a^{-1}] \, \big| \, b\in\fp \big\}.
 \]
 We conclude that $\iota_a^\ast(\fq)=\fp$. Moreover, $\frac 11\notin\fq$ since $1\notin\fp$. If $\frac{bc}{a^{i+j}}=\frac{b}{a^i}\cdot\frac{c}{a^j}\in\fq$, then $bc\in\fp$ and thus (since $\fp$ is prime) either $b\in\fp$ or $c\in\fp$. Therefore either $\frac{b}{a^i}\in\fq$ or $\frac{c}{a^j}\in\fq$. This shows that $\fq$ is prime, and that the image of $\iota_a^\ast$ consists of all null ideals of $B$ that do not contain $a$.
 
 Next we consider a prime $m$-ideal $\fq$ of $B[a^{-1}]$ and $\frac b{a^i}\in\fq$. Then $\frac b1=\frac {a^i}1\cdot\frac b{a^i}\in\fq$ and thus $b\in\fp$ for $\fp=\iota_a^\ast(\fq)$. Therefore $\frac b{a^i}\in\gen{\iota_a^+(\fp)}$, which shows that $\iota_a$ is injective and completes the first step.
 
 As our next step, we show that the inclusion $\iota_a^\ast:\Null(B[a^{-1}])\to \Null(B)$ is an open topological embedding. The inverse image of $U_h=\{\fp\in\Null(B)\mid h\notin\fp\}$ under $\iota_a^\ast$ is $U_{\frac h1}$. Since $\frac{a^i}1$ is invertible in $B[a^{-1}]$, we have 
 \[\textstyle
  U_{\frac h{a^i}} \ = \ \{ \fq\in\Null(B[a^{-1}] \mid \frac h{a^i}\notin\fq\} \ = \ \{ \fq\in\Null(B[a^{-1}] \mid \frac h{1}\notin\fq\} \ = \ U_{\frac h{1}}
 \]
 for all $h\in B^+$. This shows that $\iota_a^\ast$ is a topological embedding. Since its image is $U_{\frac a1}$, it is open.
 
 In conclusion, $\Null$ preserves cofibre products and sends open immersions to open topological embeddings. This concludes the proof of the existence of $(-)^\nul$, $\pi$, and $\eta$. In particular, this establishes \eqref{null1} and \eqref{null2}. Since open immersions are local in nature, this also shows that an open immersion $\iota:Y\to X$ induces an open topological embedding $Y^\nul\to X^\nul$, as claimed in \eqref{null3}.

\subsubsection*{Closed immersions induce closed topological embeddings}
 We are left with the second claim of \eqref{null3}. Since closed immersions are local in nature, it suffices to consider maps of the form $\pi^\ast:\Null(\bandquot BI)\to\Null(B)$, where $\pi$ is the quotient map $\pi:B\to \bandquot BI$. Since $\pi$ is surjective, $\pi^\ast$ is injective. The image of a basic closed subset $Z_{x}=\{\fq\in\Null(\bandquot BI) \mid x\in\fq\}$ of $\Null(\bandquot BI)$, with $x\in (\bandquot BI)^+$, is the closed subset 
 \[
  \pi^\ast(Z_x) \ = \ \{\fp\in\Null(B)\mid (\pi^+)^{-1}(x)\subset\fp\} \ = \ \bigcap_{h\in (\pi^+)^{-1}(x)} \{\fp\in\Null(B)\mid h\in\fp\}
 \]
 of $\Null(B)$. This shows that $\pi^\ast$ is a closed topological embedding, and completes the proof.
\end{proof}
 
\begin{rem}
 As shown in \cite{Lorscheid-Ray23}, there is a useful notion of closedness for morphisms of monoid schemes, defined in terms of their so-called congruence spaces. It seems to us that the right device to replace the congruence space of a monoid scheme in the case of band schemes should be the null space. This point of view is of importance for topological characterizations of separated and proper morphisms. 
\end{rem}

\subsubsection{Residue fields}
Just as with the kernel space, the null space of an affine band scheme $X=\Spec B$ does not carry a natural sheaf whose global section are equal to $B$. But the notion of stalks and residue fields extends to null spaces as follows.

\begin{df}
 Let $X$ be a band scheme with null space $X^\nul$ and $x\in X^\nul$. Let $\pi_X:X^\nul\to X$ be the canonical map and $\bar x=\pi_X(x)$. The \emph{stalk at $x$} is the stalk $\cO_{X,\bar x}$ of $X$ at $\bar x$.
 
 Let $\iota_x:\Spec \cO_{X,\bar x}\to X$ be the canonical inclusion, and let $P_x$ be the null ideal of $\cO_{X,x}$ that corresponds to $x$ as a point of the subspace $\Null(\cO_{X,x})$ of $X$. The \emph{residue field at $x$} is the band $k_x=\bandquot{\cO_{X,x}}{P_x}$. It comes with a canonical morphism $\kappa_x:\Spec k_x\to X$.
\end{df}

The residue field $k_x=\bandquot{\cO_{X,x}}{P_x}$ of a point $x\in X^\nul$ is an idyll since $\bar x=\pi_X(x)$ is a $k$-ideal and the canonical map $k(\bar x)\to k_x$ is an isomorphism of pointed monoids.

\begin{prop}\label{prop: universal property of the residue fields of the null space}
 Let $X$ be a band scheme with null space $X^\nul$ and $x\in X^\nul$. Let $\pi_X:X^\nul\to X$ be the canonical map and $\bar x=\pi_X(x)$. Let $F$ be an idyll such that $N_F$ is a prime ideal of $F^+$ and $\varphi:\Spec F\to X$ a morphism with image $\{\bar x\}$ such that the image of $\varphi^\nul$ is contained in the topological closure of $x$ in $X^\nul$. Then $\varphi$ factors uniquely through $\kappa_x:\Spec k_x\to X$.
\end{prop}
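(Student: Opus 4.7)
First I would reduce to the affine case. Since the image of $\varphi$ is the single point $\{\bar x\}$, it is contained in any affine open neighborhood $U$ of $\bar x$, and $\varphi^\nul$ factors through $U^\nul\subset X^\nul$; the closure of $\{x\}$ in $U^\nul$ is its closure in $X^\nul$ intersected with $U^\nul$, since $U^\nul$ is open in $X^\nul$ by \autoref{thm: null space}. So I may assume $X=\Spec B$, in which case $\bar x$ corresponds to a prime $m$-ideal $\fp\subset B$, $x$ corresponds to a prime null ideal $P\subset B^+$ with $P\cap B=\fp$, and $P_x$ is the unique prime null ideal of $B_\fp^+$ whose preimage under $\iota_\fp^+:B^+\to B_\fp^+$ is $P$.

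Next, $\varphi$ corresponds to a band morphism $f:B\to F$ with $\ker f=\fp$. Since $F$ is an idyll, $f(B-\fp)\subset F^\times$, so by \autoref{prop: universal property of the localization} $f$ factors uniquely as $f=g\circ\iota_\fp$ for a band morphism $g:B_\fp\to F$. At the level of null spaces, this realizes $\varphi^\nul$ as the composition of $g^\ast:\Null(F)\to\Null(B_\fp)$ with the open embedding $\Null(B_\fp)\hookrightarrow\Null(B)$ from \autoref{thm: null space}, under which $P_x$ maps to $x$.

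Now I would exploit the hypothesis. The topological closure of $\{x\}$ in $\Null(B)$ is $\{Q\in\Null(B)\mid P\subset Q\}$, because its complement is the union of the basic opens $U_h$ with $h\in P$. Since $N_F$ is a prime ideal of $F^+$ by hypothesis, it is a point of $\Null(F)$, and applying the hypothesis to $N_F$ gives $P\subset\varphi^\nul(N_F)=(f^+)^{-1}(N_F)=\nullker(f)$. Pushing through the factorization, this yields $\iota_\fp^+(P)\subset\nullker(g)$ in $B_\fp^+$. The key technical claim is that $P_x=\gen{\iota_\fp^+(P)}$ as a null ideal of $B_\fp^+$; this is the analogue, for an arbitrary multiplicative set, of the extension-of-primes computation carried out in the proof of \autoref{thm: null space} for finite localizations, and can be obtained either by running the same argument directly or by writing $B_\fp$ as the filtered colimit of the $B[a^{-1}]$ for $a\in B-\fp$ and checking that null-ideal extension commutes with this colimit. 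Granted this claim, since $\nullker(g)$ is a null ideal by \autoref{lemma:nullker}, we conclude $P_x\subset\nullker(g)$.

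By \autoref{prop: universal property of the quotient}, $g$ therefore factors uniquely as $g=\bar g\circ\pi_{P_x}$ for a band morphism $\bar g:k_x=\bandquot{B_\fp}{P_x}\to F$. Taking spectra yields the factorization $\varphi=\kappa_x\circ\bar g^\ast$, and uniqueness propagates from the unique factorizations through $\iota_\fp$ and $\pi_{P_x}$. The main obstacle is precisely the identification $P_x=\gen{\iota_\fp^+(P)}$: unlike the purely multiplicative case, one has to verify that the null-ideal generated by the image of $P$ remains prime and pulls back to $P$, which is the content one must extract from \autoref{thm: null space} or reprove for general localizations.
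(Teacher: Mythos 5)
Your proposal is correct and follows essentially the same route as the paper: reduce to the affine/local situation, use the closure hypothesis at the single point $N_F\in\Null(F)$ to get $P\subset\nullker f$, and conclude with \autoref{prop: universal property of the quotient}. The only packaging difference is that the paper first invokes \autoref{prop: universal property of the residue field} to factor $\varphi$ through $\Spec k(\bar x)$ in one step, whereas you factor through the stalk $B_\fp$ by hand via \autoref{prop: universal property of the localization}. Concerning the ``key technical claim'' you flag: you do not actually need the full identification $P_x=\gen{\iota_\fp^+(P)}$, only the inclusion $P_x\subset\nullker(g)$, and this follows directly from the defining property $(\iota_\fp^+)^{-1}(P_x)=P$ by clearing denominators. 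Indeed, given $\sum\frac{a_i}{s_i}\in P_x$, multiplying by the unit $\prod s_j$ shows that $\iota_\fp^+\bigl(\sum a_i\prod_{j\neq i}s_j\bigr)\in P_x$, hence $\sum a_i\prod_{j\neq i}s_j\in P\subset\nullker f$, and since $\nullker(g)$ is an ideal containing $\iota_\fp^+(\nullker f)$ and closed under multiplication by units, $\sum\frac{a_i}{s_i}\in\nullker(g)$. So no separate extension-of-primes argument for general localizations (or colimit argument) is required; note that the paper itself silently relies on the same compatibility when it writes $P_x\subset f^{-1}(Q)$, so your explicit attention to this point is a genuine improvement in care rather than a gap.
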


\begin{proof}
 By \autoref{prop: universal property of the residue field}, the morphism $\varphi:\Spec F\to X$ factorizes uniquely through $\kappa_{\bar x}:\Spec k(\bar x)\to X$. Thus we can assume that $X=k(\bar x)$ and that $\varphi$ corresponds to a band morphism $k(\bar x)\to F$ such that $P_x\subset f^{-1}(Q)$ for every prime null ideal $Q$ of $F$. In particular $P_x\subset f^{-1}(N_F)$, thus by \autoref{prop: universal property of the quotient}, the band morphism $k(\bar x)\to F$ factors uniquely through $k(\bar x)\to \bandquot{k(\bar x)}{P_x}=k_x$, as claimed.
\end{proof}

\begin{rem}
 Since every prime null ideal of $B$ is a prime ideal of the semiring $B^+$, the null space of a band $B$ embeds naturally as a subspace of the semiring spectrum $\Spec B^+$ of $B^+$. This subspace consists precisely of the semiring ideals in the closed subscheme of $\Spec B^+$ defined by $N_B$ that satisfy the substitution rule \ref{SR}. Since every semiring ideal of $B^+$ can be completed to a null ideal, the open coverings of $\Null(B)$ are the restrictions of open coverings of $\Spec B^+$, which shows that $\Null(B)$ is naturally equipped with a sheaf in semirings.
\end{rem}

\subsubsection{The kernel space as the image of the null space}
We conclude this section with the characterization of the kernel space as the image of the null space in $X$.

\begin{prop}\label{prop: k-space as image of the null space}
 Let $X$ be a band scheme with associated null space $X^\nul$. Then its kernel space $X^\kernel$ is the image of $\pi_X:X^\nul\to X$.
\end{prop}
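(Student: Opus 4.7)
The plan is to reduce to the affine situation and handle the two inclusions separately. Both $X^\nul$ and $X^\kernel$ are local in nature: they are obtained by gluing over affine opens, and on an affine open $U=\Spec B$ the map $\pi_X$ restricts to the contraction map $\bar\pi_B:\Null(B)\to\Spec B$ sending $\fp\mapsto\fp\cap B$ (the compatibility with open immersions is built into \autoref{thm: null space}). It therefore suffices to show that the image of $\bar\pi_B$ is precisely the set of prime $k$-ideals of $B$.

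For the inclusion ``image contained in prime $k$-ideals'', I take $\fp\in\Null(B)$ and set $\fq:=\fp\cap B$. I will identify $\fq$ with the kernel of the quotient band morphism $\pi:B\to B/\fp$ from \autoref{prop: band quotients}: since $[a]=0$ in $B/\fp$ iff $a-0=a$ lies in $\fp$, we have $\ker\pi=\fp\cap B=\fq$. Hence $\fq$ is a $k$-ideal by the kernel characterization in \autoref{lemma: inverse images of ideals and k-ideals as kernels}. Primality of $\fq$ as an $m$-ideal is immediate from primality of $\fp$ in $B^+$: if $a,b\in B$ and $ab\in\fq\subseteq\fp$, then $a\in\fp$ or $b\in\fp$, so $a\in\fq$ or $b\in\fq$.

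For the opposite inclusion, I will use the Krasner hyperfield $\K$ as a universal target. Given a prime $k$-ideal $\fq$ of $B$, I define $\pi_\fq:B\to\K$ by $\pi_\fq(a)=0$ for $a\in\fq$ and $\pi_\fq(a)=1$ otherwise; this is the map already used in the proof of \autoref{lemma: k-space as K-rational points}. Multiplicativity of $\pi_\fq$ is equivalent to primality of $\fq$, and preservation of null sets is equivalent to the $k$-ideal axiom \ref{I3}: a relation $\sum a_i\in N_B$ with exactly one summand outside $\fq$ would force that summand into $\fq$ by \ref{I3}, so the image $\sum\pi_\fq(a_i)\in\K^+$ is either $0$ or a sum of $\geq 2$ copies of $1$, and in either case lies in $N_\K$. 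Thus $\pi_\fq$ is a band morphism, and $\fp:=\nullker\pi_\fq$ is a null ideal by \autoref{lemma:nullker}. To see that $\fp$ is prime, observe that $N_\K=\{0,1+1,1+1+1,\dotsc\}$ has complement $\{1\}$ in $\K^+$, which is tautologically multiplicative; since the preimage of a multiplicative set under the semiring homomorphism $\pi_\fq^+$ is again multiplicative, $B^+-\fp$ is multiplicative. Finally, $\fp\cap B=\pi_\fq^{-1}(N_\K\cap\K)=\pi_\fq^{-1}(\{0\})=\fq$, which exhibits $\fq$ in the image of $\bar\pi_B$.

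The one genuinely subtle check is that $\pi_\fq$ preserves the null set, which really does require $\fq$ to be a $k$-ideal rather than merely a prime $m$-ideal; this dovetails with the characterization of $X^\kernel$ in \autoref{prop: characterization of the kernel space in terms of residue fields}. A Zorn's-lemma approach -- taking a maximal null ideal of $B$ disjoint from $B-\fq$ -- is in principle available, but showing primality of the resulting maximal element is awkward, since elements of a null ideal $\gen{\fp,x}$ do not admit the clean ``$p+bx$'' form familiar from ring theory once the substitution rule enters the picture; the $\K$-valued morphism sidesteps this obstacle entirely.
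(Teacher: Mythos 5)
Your proof is correct and follows essentially the same route as the paper: for one inclusion you identify $\fp\cap B$ with the kernel of the quotient map $B\to\bandquot B\fp$ and invoke the kernel characterization of $k$-ideals, and for the other you realize a prime $k$-ideal $\fq$ as $\fp\cap B$ for $\fp$ the null kernel of the characteristic morphism $B\to\K$, with primality of $\fp$ coming from primality of $N_\K$ in $\K^+$. The only (welcome) additions are your explicit checks of primality of $\fp\cap B$ as an $m$-ideal and of the null-set preservation of $\pi_\fq$ via axiom \ref{I3}, both of which the paper leaves implicit.
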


\begin{proof}
 Since the assertion is local in $X$, we can assume that $X=\Spec B$ is affine. We first show that the image of $\pi_X:X^\nul\to X$ lies in $X^\kernel$. Let $\fp$ bea prime null ideal of $B$. By \autoref{prop: band quotients}, $\fp$ is the null kernel of the quotient map $\pi_\fp:B\to \bandgenquot B\fp$, and its kernel is $\fp\cap B=\pi_X(\fp)$. This shows that $\pi(\fp)$ is a $k$-ideal and thus $\im(\pi_X)\subset X^\kernel$.
 
 Conversely, consider a prime $k$-ideal $\fq$ of $B$ and the unique morphism $f:B\to\K$ with kernel $\fq$. The null kernel $\fp$ of $f$ is a null ideal by \autoref{prop: band quotients}. Since $N_\K=\N-\{1\}$ is a prime ideal of $\K^+=\N$, also $\fp$ is prime as the inverse image of $N_\K$ under the semiring morphism $B^+\to\K^+$. By definition, we have $\fq=\fp\cap B=\pi_X(\fp)$, which establishes the surjectivity of $\pi$.
\end{proof}

\subsection{Comparison of visualizations}
\label{subsection: comparison of visualizations}

In this final section, we collate the comparison results between the different visualizations and put them into a big picture. To get there, we introduce some notations and additional maps,

Let $k$ be a band, $X$ a $k$-scheme, and $F$ a topological idyll with a $k$-algebra structure. We denote the topology that we consider for $X(F)=\Hom_k(\Spec F,X)$ by a superscript: $X(F)^\weak$ for the weak Zariski topology, $X(F)^\str$ for the strong Zariski topology, and $X(F)^\fine$ for the fine topology. By \autoref{prop: Zariski topology} and \autoref{prop: the fine topology is finer than the Zariski topology}, the identity maps
\[
 \begin{tikzcd}[column sep=40]
  X(F)^\fine \ar[r,"\id"] & X(F)^\str \ar[r,"\id"] & X(F)^\weak
 \end{tikzcd}
\]
are continuous. 

Sending an $F$-point $\alpha:\Spec F\to X$ to its unique image point $\alpha(\gen 0)$ defines a map $\chi_{X,F}^\kernel:X(F)\to X^\kernel$. This map lifts to a map into the null space if we assume that the null set of $F$ is a prime ideal of $F^+$. This assumption is satisfied by fields, $\K$, $\S$ and $\T$. It is satisfied by a partial field if and only if its universal ring is an integral domain (cf.\ Section 1.1 in \cite{Baker-Lorscheid21} for a definition and Remark 3.12 for examples). In order to define the map into the null space, consider an $F$-point $\alpha:\Spec F\to X$ and an affine open $U$ of $X$ that contains the image of $\alpha$. Since $N_F$ is prime, the null kernel $(\Gamma\alpha^+)^{-1}(N_F)$ defines a point $\chi_{X,F}^\nul(\alpha)$ in $U^\nul\subset X^\nul$. Since inverse images commute with compositions, which applies, in particular, to finite localizations, the point $\chi_{X,F}^\nul(\alpha)$ of $X^\nul$ is independent of the choice of affine open $U$, which yields an intrinsic map $\chi_{X,F}^\nul:X(F)\to X^\nul$.

The inclusion $X^\Tits\hookrightarrow X^\kernel$ lifts to $X^\nul$ by sending a point $x\in X^\Tits$ to the unique closed point $\hat x$ in the fibre $\pi_X^{-1}(x)$. This allows us to consider $X^\Tits$ as a subspace of $X^\nul$.

\begin{thm}\label{thm: comparison of visualizations}
 Let $k$ be a band, $X$ a $k$-scheme, and $F$ a topological idyll with a $k$-algebra structure whose null set is a prime ideal of $F^+$. Then all maps in the diagram
\[
 \begin{tikzcd}[column sep=40]
  X(F)^\fine \ar[r,"\id"] & X(F)^\str \ar[r,"\chi_{X,F}^\nul"] \ar[d,"\id"'] & X^\nul \ar[->>,d,"\pi_X"]     & X^\Tits \ar[>->,l]  \\ 
                          & X(F)^\weak \ar[r,"\chi_{X,F}^\kernel"]           & X^\kernel \ar[>->,r] & \underline{X} 
 \end{tikzcd}
\]
are continuous and functorial in $X$, and the diagram commutes. 
\end{thm}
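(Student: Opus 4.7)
The plan is to reduce everything to the affine case and then verify each map separately, leveraging the explicit descriptions of basic opens already established for each topology.

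First I would reduce to the affine case $X=\Spec B$. All six visualizations in the diagram are defined locally on $X$, and open immersions induce open topological embeddings on each of them: for $X(F)^\fine$ this is \autoref{thm: fine topology}\eqref{fine7}, for $X(F)^\weak$ and $X(F)^\str$ this is immediate from their definitions via open/closed subschemes, for $X^\nul$ this is \autoref{thm: null space}\eqref{null3}, and for $X^\kernel\hookrightarrow\underline X$ it is the definition. Functoriality and continuity therefore reduce to the affine situation. In this setting the identities $X(F)^\fine\to X(F)^\str\to X(F)^\weak$ are continuous by \autoref{prop: the fine topology is finer than the Zariski topology} and \autoref{prop: Zariski topology}, the surjection $\pi_X:X^\nul\to X^\kernel\subset\underline X$ is continuous by \autoref{thm: null space}\eqref{null1} and \autoref{prop: k-space as image of the null space}, the inclusion $X^\kernel\hookrightarrow\underline X$ is continuous by definition, and $X^\Tits\hookrightarrow X^\nul$ is a subspace inclusion.

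Next I would verify the two newly-introduced arrows. For $\chi_{X,F}^\nul:X(F)^\str\to X^\nul$, well-definedness uses the hypothesis that $N_F$ is a prime ideal of $F^+$: given $\alpha:\Spec F\to\Spec B$, the null kernel $\nullker\Gamma\alpha=(\Gamma\alpha^+)^{-1}(N_F)$ is a null ideal by \autoref{lemma:nullker}, and it is prime because the inverse image of a prime ideal under a semiring homomorphism is prime. For continuity, observe that a basic open of $X^\nul$ has the form $U_h=\{\fp\in\Null(B)\mid h\notin\fp\}$ for some $h\in B^+$, and by construction
\[
 (\chi_{X,F}^\nul)^{-1}(U_h) \ = \ \big\{\alpha\in X(F) \, \big| \, \Gamma\alpha^+(h)\notin N_F \big\} \ = \ X(F)_h,
\]
which is strong Zariski open by \autoref{prop: Zariski topology}. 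The argument for $\chi_{X,F}^\kernel:X(F)^\weak\to X^\kernel$ is completely analogous, using only $h\in B$ and the fact that $X(F)_h=U_h(F)$ in that case, which is weak Zariski open.

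Finally I would check commutativity and functoriality. The key commutation to verify is $\pi_X\circ\chi_{X,F}^\nul=\chi_{X,F}^\kernel$: for $\alpha\in X(F)$ we have $\pi_X(\nullker\Gamma\alpha)=\nullker\Gamma\alpha\cap B=\{b\in B\mid\Gamma\alpha(b)\in N_F\}$, and by \autoref{lemma: first properties of bands}\,(1) an element of $F$ lies in $N_F$ precisely when it equals $0$, so this intersection coincides with $\ker\Gamma\alpha=\chi_{X,F}^\kernel(\alpha)$. The remaining commutations (identity maps and subspace inclusions) are tautological. Functoriality of each individual visualization has been established earlier (\autoref{thm: fine topology}\eqref{fine2}, \autoref{thm: null space}, \autoref{lemma: k-space as K-rational points}, and the fact that closed points of $X^\kernel$ map to closed points of $Y^\kernel$ under morphisms, which handles $X^\Tits$); naturality of $\chi_{X,F}^\nul$ and $\chi_{X,F}^\kernel$ in $X$ follows from the identity $\Gamma(\varphi\circ\alpha)^+=\Gamma\alpha^+\circ\Gamma\varphi^+$ for a $k$-morphism $\varphi:X\to Y$, which shows null kernels and kernels pull back compatibly. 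The main (mild) obstacle is bookkeeping: matching the notation $U(F)_h$ from \autoref{prop: Zariski topology} with the basic opens of $\Null(B)$, and being careful that the index $h$ ranges over $B^+$ in the null/strong case but over $B$ in the kernel/weak case, so that both continuity arguments line up properly.
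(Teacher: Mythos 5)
Your proposal is correct and follows essentially the same route as the paper's proof: reduce to the affine case by the local nature of all the visualizations, observe that only $\chi_{X,F}^\kernel$ and $\chi_{X,F}^\nul$ need new continuity arguments, compute the preimage of a basic open $U_h$ of $\Null(B)$ as the strong Zariski open $X(F)_h$ via \autoref{prop: Zariski topology}, and deduce commutativity from $\nullker\Gamma\alpha\cap B=\ker\Gamma\alpha$. Your extra remarks on well-definedness of $\chi_{X,F}^\nul$ and on the distinction between indices $h\in B^+$ versus $h\in B$ are accurate and only make explicit what the paper handles in the paragraphs preceding the theorem.
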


\begin{proof}
 The functoriality follows from the local construction of the various spaces and maps, which also allows us to assume that $X=\Spec B$ is affine. The diagram commutes since the kernel of a morphism $f:B\to F$ is the intersection of the null kernel of $f$ with $B$, as subsets of $B^+$.
 
 We know already that all maps of the diagram are continuous, except possibly $\chi_{X,F}^\kernel$ and $\chi_{X,F}^\nul$. The map $\chi_{X,F}^\kernel$ is continuous by the definitions of the weak topology for $X(F)$ and the topology of $X^\kernel$ as a subspace of $\underline{X}$.
 
 In order to prove that $\chi_{X,F}^\nul$ is continuous, consider a basic open $U_h=\{\fp\in X^\nul\mid h\notin h\}$ of $X^\nul$ where $h\in B$. Then
 \[
  (\chi_{X,F}^\nul)^{-1}(U_h) \ = \ \{f:B\to F\mid h\notin\nullker f\} \ = \ \{f:B\to F\mid f^+(h)\notin N_F\}
 \]
 is strong Zariski open by \autoref{prop: Zariski topology}, and thus $\chi_{X,F}^\nul$ is continuous as claimed.
\end{proof}


\begin{small}
 \bibliographystyle{plain}
 \bibliography{refences}
\end{small}


\end{document}